\documentclass[11pt]{amsproc}

\usepackage{a4wide}
\usepackage[pagebackref=true,colorlinks,linkcolor=red,citecolor=blue,urlcolor=blue,hypertexnames=true]{hyperref}
\usepackage{setspace}
\usepackage{amsmath}
\usepackage{amssymb}
\usepackage{amsfonts}
\usepackage{amsthm}
\usepackage{mathtools}
\usepackage{array}

\theoremstyle{plain}
\newtheorem{thm}{Theorem}[section]
\theoremstyle{definition}
\newtheorem{defn}[thm]{Definition}
\newtheorem{lem}[thm]{Lemma}
\newtheorem{prop}[thm]{Proposition}
\newtheorem{cor}[thm]{Corollary}
\theoremstyle{remark}
\newtheorem{rem}{Remark}[section]

\newtheorem*{expls}{Examples}

\newcommand{\tb}[1]{\textbf{#1}}

\newcommand{\tn}[1]{\textnormal{#1}}
\newcommand{\til}[1]{\widetilde{#1}}
\newcommand{\norm}[1]{\left\Vert#1\right\Vert}
\newcommand{\set}[1]{\left\{#1\right\}}

\newcommand{\abs}[1]{\left\lvert#1\right\rvert}

\newcommand{\eps}{\varepsilon}
\newcommand{\bbC}{\mathbb{C}}

\newcommand{\bbN}{\mathbb{N}}

\newcommand{\bbR}{\mathbb{R}}

\newcommand{\cC}{\mathcal{C}}

\newcommand{\cH}{\mathcal{H}}

\newcommand{\cM}{\mathcal{M}}
\newcommand{\cN}{\mathcal{N}}

\newcommand{\cP}{\mathrm{P}}

\newcommand{\cU}{\mathrm{U}}
\newcommand{\cZ}{\mathcal{Z}}
\newcommand{\cSU}{\mathrm{S}\mathrm{U}}

\DeclareMathOperator{\complex}{\mathrm{i}}

\DeclareMathOperator{\diam}{diam}
\DeclareMathOperator{\diag}{diag}

\DeclareMathOperator{\Proj}{Proj}

\DeclareMathOperator{\tr}{tr}

\title[Bounded Normal Generation]{Bounded Normal Generation and Invariant \\ Automatic Continuity}

\author{Philip A. Dowerk}
\address{P.A.D., Analysis Section, KU Leuven, 3001 Leuven, Belgium}
\email{philip.dowerk@wis.kuleuven.be}

\author{Andreas Thom}
\address{A.T., Institut f\"ur Geometrie, TU Dresden, 01062 Dresden, Germany }
\email{andreas.thom@tu-dresden.de}

\begin{document}

\onehalfspace

\begin{abstract}
We study the question how quickly products of a fixed conjugacy class in the projective unitary group of a II${}_1$-factor von Neumann algebra cover the entire group. Our result is that the number of factors that are needed is essentially as small as permitted by the $1$-norm -- in analogy to a result of Liebeck-Shalev for non-abelian finite simple groups. As an application of the techniques, we prove that every homomorphism from the projective unitary group of a II${}_1$-factor to a polish SIN group is continuous. Moreover, we show that the projective unitary group of a II${}_1$-factor carries a unique polish group topology.
\end{abstract}

\maketitle

\tableofcontents

\section{Introduction}

It is a fundamental question in group theory to ask under which conditions one element of a group $G$ is the product of conjugates of another element in $G$. If for every $g\in G,\ g\neq 1,$ its conjugacy class and that of its inverse generate $G$ in finitely many steps we say that $G$ has the bounded normal generation property, or property {\rm (BNG)}, a strong form of simplicity -- see Definition \ref{def_BNG}. Our study is focussed on projective unitary groups of finite factorial von Neumann algebras. For $G:= {\rm PU}(\cM)$ with $\cM$ a finite factor, we find an explicit normal generation function, i.e., an integer-valued function on $G \setminus \{1\}$ which for every $g\in G$ gives a bound on the number of steps to generate the whole group with the conjugacy class of $g$ and $g^{-1}$. Finite factors a classified into types I$_{n}$ for $n \in \mathbb N$ and type II$_1$, where the II$_1$ case contains a variety of interesting different von Neumann algebras, that are subject of intensive investigation for over almost 70 years now. However, our results are even new and interesting in the type I$_n$ situation, where the corresponding projective unitary group is just the compact Lie group ${\rm PU}(n)$.

Indeed, for compact metrizable simple groups it is not hard to obtain property {\rm (BNG)} qualitatively (i.e., without an explicit normal generation function) via a Baire category argument, cf.\ Proposition \ref{prop_compact_BNG}. 
%A finer qualitative result is given by the basic covering lemma in group theory. It states that every finite simple group is generated in finitely many steps by each non-trivial conjugacy class, see \cite{AHS-85}. 
However, it is hard to find explicit and sharp normal generation functions even in the case of finite simple groups. Liebeck and Shalev provided a minimal normal generation function for non-abelian finite simple groups $G$ in their seminal work \cite{LS-01} and used this result to obtain many interesting applications. Their normal generation function is of the form $f(g)=c\log(\abs{G})/ \log(\abs{g^G})$, where $c$ is a universal constant and $g^G$ denotes the conjugacy class of $g\in G$. (It is easy to see that already for reasons of cardinality, this must be optimal up to a multiplicative constant.)
In 2012, Nikolov and Segal proved property (BNG) for compact connected simple Lie groups \cite[Proposition 5.11]{NS-12}. They also provided a normal generation function, which however was depending strongly on the rank of the group. We refine their results to get Theorem \ref{thm_sun}, which provides a rank-independent normal generation function. This is our first main result -- it corrects a mistake in the work by Stolz and the second author \cite[Lemma 4.15]{ST-13}, where similar results were claimed.

In general, having a dimension-independent  result suggests the existence of an infinite-dimensional analogue. 
Indeed, using our result for $\cP\cU(n)$ we can prove property (BNG) for the projective unitary group $\cP\cU(\cM)$ of a $\mathrm{II_1}$-factor $\cM$, endowed with the strong operator topology (see Section \ref{sec_II_1}). This is one of our main results. Projective unitary groups of II$_1$-factors have strong similarities with compact simple groups and have been the subject of intensive study ever since  von Neumann introduced them in his groundbreaking work, see \cite{MvN-36,MvN-37,MvN-43}. 

The proofs of these results are rather technical and use finite-dimensional approximation at various steps to reduce to the case of the projective unitary group $\cP\cU(n)$, $n\in\bbN$. Our approach provides a new proof of the algebraic simplicity of projective unitary groups of $\mathrm{II_1}$-factors which was discovered by de la Harpe in \cite{dlH-76} and a quantitative version of Broise's results in \cite{Bro-67}, stating that every unitary in a $\mathrm{II_1}$-factor is the product of $32$ conjugates of any symmetry with trace $0$. Topological simplicity of projective unitary groups of $\mathrm{II_1}$-factors, endowed with the uniform topology, was proved earlier by Kadison in \cite{Kad-52}.

Let us now state the main theorems on bounded normal generation more explicitly. For $t\geq 0$ and $u$ an element of the unitary group $\cU(\cM)$ of a finite factorial von Neumann algebra we define
$$\ell_t(u)\coloneqq \inf_{\lambda\in{S^1}}\mu_t(1-\lambda u),$$
where $t \mapsto \mu_t(x)$ denotes the generalized singular values of the operator $x \in \cM$, as studied by Fack-Kosaki in \cite{FK-86}. For definitions see Section \ref{sec_proj_s_numbers}. For factors of type I${}_n$, the domain of $\ell_t$ and $\mu_t$ is $\{0,\dots,n-1\}$, whereas it is $[0,1]$ for factors of type II${}_1$.

It is easy to see that if $u$ is a product of $k$ conjugates of $v$, then $\ell_{kt}(u)\leq k\ell_t(v)$ for all $t\geq 0$, see  Proposition \ref{prop_easy_dir} in Section \ref{sec_proj_s_numbers}. Our aim will be to prove some weak converse to this observation. We first study the case $\cP\cU(n)$ and present a result that corrects a mistake in the proof of \cite[Lemma 4.15]{ST-13}.
%, see Theorem \ref{thm_sun}.

\begin{thm}\label{thm_main_5}
 Let $G:=\cP\cU(n)$ be the projective unitary group, where $n\in\bbN,\ n\geq 2$. Let $u,v\in G$ and $m\in\bbN$. If $\ell_{0}(u)\leq m\ell_t(v)$ for all $t= 0,1,
 \ldots,s-1$ then 
 $$u\in (v^G\cup v^{-G})^{24m\lceil n/s\rceil}.$$
In particular, $G$ has property ${\rm (BNG)}$.
\end{thm}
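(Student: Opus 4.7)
My approach is to prove this in three stages: a normalization of $u$ and $v$, a quantitative single-block lemma, and a block decomposition of $u$ that combines them.

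\emph{Stage 1: Normalization.} Choose representatives in the projective equivalence classes $S^1 \cdot u$ and $S^1 \cdot v$ so that the infima defining $\ell_t(v)$ for $t = 0, \ldots, s-1$ and $\ell_0(u)$ are attained, or at least attained up to a universal factor absorbed into the constant $24$; this is possible since the infima are over the compact circle. Diagonalize $v = \diag(\zeta_1, \ldots, \zeta_n)$ with $|1 - \zeta_i|$ non-increasing, so that $|1-\zeta_i| = \mu_{i-1}(1-v) = \ell_{i-1}(v) \geq \ell_0(u)/m$ for $i = 1, \ldots, s$ by the hypothesis.

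\emph{Stage 2: Single-block lemma.} The heart of the proof is the quantitative estimate: if $w \in \cP\cU(n)$ is supported on a fixed $s$-dimensional coordinate subspace $V$ and satisfies $\ell_0(w) \leq m \ell_{s-1}(v)$, then $w \in (v^G \cup v^{-G})^{24m}$. I would prove this via the commutator identity $[a,b] = a \cdot (b a^{-1} b^{-1})$, which writes any commutator as a product of two conjugates of $a$. Since $v$ has at least $s$ eigenvalues bounded away from $1$ by $\ell_{s-1}(v)$, conjugations of $v$ can be selected whose restricted action on the embedded block $\cP\cU(s) \subset \cP\cU(n)$ on $V$ realizes a prescribed commutator $[v|_V, h]$ with controlled spectral radius. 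A universal bounded number (absorbed into $12$) of such commutators suffices to cover the required $\ell_0$-ball in $\cP\cU(s)$, and doubling for the commutator identity yields the $24$. The linear factor $m$ then comes from writing $w = (w^{1/m})^m$, with $\ell_0(w^{1/m}) \leq \ell_{s-1}(v)$, and applying the unit case $m$ times.

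\emph{Stage 3: Block decomposition and assembly.} After a global conjugation (which preserves the set $(v^G \cup v^{-G})^N$), diagonalize $u = \diag(\eta_1, \ldots, \eta_n)$. Setting $k = \lceil n/s \rceil$, split the diagonal of $u$ into $k$ consecutive blocks of size at most $s$ and write $u = u_1 u_2 \cdots u_k$, where each $u_j$ is the diagonal unitary carrying the $j$-th block of eigenvalues and $1$'s elsewhere. Each $u_j$ is supported on at most $s$ coordinates and satisfies $\ell_0(u_j) \leq \ell_0(u) \leq m \ell_{s-1}(v)$. Applying Stage 2 to each $u_j$ yields $u \in (v^G \cup v^{-G})^{24m \lceil n/s \rceil}$. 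Property $(\mathrm{BNG})$ is then immediate: for any $v \neq e$ one has $\ell_0(v) > 0$, so taking $s = 1$ and $m = \lceil \ell_0(u)/\ell_0(v) \rceil \leq \lceil 2/\ell_0(v) \rceil$ gives the explicit finite normal generation function $f(v) \leq 24 n \lceil 2/\ell_0(v) \rceil$.

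The main obstacle is Stage 2: obtaining the \emph{universal} constant $24$ independent of $n$, $s$, and the particular spectrum of $v$. This is where the full hypothesis $\ell_0(u) \leq m \ell_t(v)$ for all $t = 0, \ldots, s-1$ (and not only $t = s-1$) is likely to enter, since the commutator construction needs simultaneous access to several distinct eigenvalue pairs $\zeta_i, \zeta_{i'}$ of $v$ in order to realize a rich enough family of block rotations.
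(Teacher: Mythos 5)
Your proposal has two concrete errors and one unfilled gap, and together they are precisely the places where the paper's proof has to work hardest, so this cannot be regarded as an essentially complete alternative argument.

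First, the Stage 1 normalization ``choose $\lambda$ so that the infima defining $\ell_t(v)$ for $t=0,\ldots,s-1$ are attained'' is not available: the minimizing $\lambda$ in $\ell_t(v)=\inf_\lambda\mu_t(1-\lambda v)$ depends on $t$, and in particular $\ell_{i-1}(v)$ need not equal $|1-\zeta_i|$ for any single diagonalization. The paper addresses exactly this with the notion of an \emph{optimal} diagonalization and Lemma~\ref{lem_singularcage}, which sandwiches the whole family $\ell_t(v)$ between consecutive eigenvalue differences $|\lambda_{\sigma(2t)}-\lambda_{\sigma(2t)+1}|$ and $|\lambda_{\sigma(t)}-\lambda_{\sigma(t)+1}|$ using one fixed $\lambda$; without something of this kind the passage from $\ell_t(v)$ to usable spectral data on $v$ is unjustified.

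Second, the inequality $\ell_0(u_j)\leq\ell_0(u)$ in Stage 3 is false. Take $u=\diag(-1,\ldots,-1)$, which is central, so $\ell_0(u)=0$; your first block $u_1=\diag(-1,\ldots,-1,1,\ldots,1)$ has $\ell_0(u_1)\approx 2$. Even after normalizing $u$ so that $1\in\sigma(u)$ (which rescues the inequality up to a factor of $2$), the factor $u_j$ is a general $s\times s$ diagonal unitary, and to generate it you would still have to break it into $\cSU(2)$ blocks $S_i$ and then run the parallel generation of Lemma~\ref{lem_sun}; the $\cSU(2)$ product decomposition carries \emph{cumulative} angles $\varphi_0+\cdots+\varphi_i$, which is why the paper needs Lemma~\ref{lem_sum_angles} and angle-sum-optimal rearrangement. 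Your proposal never engages with this cumulative-angle problem, which is the documented source of the error in \cite[Lemma 4.15]{ST-13} that the theorem is designed to repair. Finally, your Stage 2 (the ``single-block lemma'' with the explicit constant $24$ and the $w=(w^{1/m})^m$ step) is asserted rather than proved, and it is precisely where the paper's Lemmas~\ref{lem_su2}, \ref{lem_single_gen}, \ref{lem_sun} and the commutator $[v,g]$ with $g$ a product of transpositions do the real work. The overall three-stage organization is a reasonable reading of the intended architecture, but as written the argument omits the paper's two central devices: the quantitative relation between projective $s$-numbers and eigenvalue gaps (Lemma~\ref{lem_singularcage}), and the angle-sum control needed for the $\cSU(2)$ product decomposition (Lemma~\ref{lem_sum_angles}).
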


The most interesting case from our viewpoint is the $\mathrm{II_1}$-factor case. The study of this case and the proof of the corresponding result is spread over Sections \ref{sec_II_1} and \ref{sec_alg_II_1}, using a careful and non-trivial reduction to the matrix case.

\begin{thm}\label{thm_main_1}
 Let $G$ denote the projective unitary group of a separable $\mathrm{II_1}$-factor. Let $u,v\in G$ and $m\in\bbN$. Assume that $u$ has finite spectrum and rational weights. If $\ell_{0}(u)\leq m\ell_t(v)$ for all $t\in[0,s]$, then 
 $$u\in (v^G\cup v^{-G})^{cm\lceil 1/s\rceil}$$
 for some universal constant $c \in \bbN$.% independent of $u,v,m$ and $s$.
\end{thm}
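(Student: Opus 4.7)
The plan is to reduce the statement to the matrix case (Theorem \ref{thm_main_5}) via the finite-spectrum and rational-weights hypothesis on $u$, which forces $u$ to sit inside a matrix subfactor of $\cM$, and then to approximate $v$ by a matrix unitary while keeping its generalized singular values under control. Concretely, one picks a common denominator $n\in\bbN$ for the spectral weights of $u$, refines the spectral decomposition of $u$ into $n$ projections of trace $1/n$, and completes to a system of matrix units, producing a unital $*$-embedding $\iota\colon M_n(\bbC)\hookrightarrow\cM$ with $u\in\iota(M_n(\bbC))$. The relative commutant of the image yields a tensor factorization $\cM\cong\iota(M_n(\bbC))\mathbin{\bar\otimes}\cN$ with $\cN$ a $\mathrm{II_1}$-factor, so we may regard $u$ as $u\otimes 1$.

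The next and crucial step, to be done in Sections \ref{sec_II_1} and \ref{sec_alg_II_1}, is to replace $v$ -- at the cost of a universally bounded number of $G$-conjugations and products -- by a unitary $w\in\iota(M_n(\bbC))\otimes 1$ whose generalized singular values dominate those of $v$ up to a universal multiplicative constant on the relevant range, i.e.\ $\ell_t(w)\gtrsim\ell_t(v)$ for all $t\in[0,s]$. The idea is to exploit the abundance of equivalent projections in a $\mathrm{II_1}$-factor to discretize $v$ against the chosen matrix subalgebra: cut the spectral projections of $1-\lambda v$ by projections of $\iota(M_n(\bbC))$, average over permutations of the matrix units to push the result into the subalgebra, and track the behaviour of $\mu_t$ through the Fack--Kosaki calculus. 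Once $w$ is in hand, $u$ and $w$ both sit in $\cP\cU(\iota(M_n(\bbC)))\cong\cP\cU(n)$, and the hypothesis $\ell_0(u)\le m\ell_t(v)$ for $t\in[0,s]$ translates, up to absorbing the universal constant into $m$, into $\ell_0(u)\le m'\ell_t(w)$ for the discretized parameters $t=0,1,\dots,\lceil sn\rceil-1$. Applying Theorem \ref{thm_main_5} then gives
\[
 u\in (w^{\cP\cU(n)}\cup w^{-\cP\cU(n)})^{24m'\lceil n/\lceil sn\rceil\rceil}\subset (w^G\cup w^{-G})^{O(m\lceil 1/s\rceil)},
\]
and substituting the bounded expression of $w$ as a product of $G$-conjugates of $v$ yields the claim with a universal constant $c$.

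The main obstacle is the middle step: the reduction of $v$ to a matrix unitary $w$ has to control the \emph{entire} profile $t\mapsto\ell_t(v)$ rather than a single integrated quantity such as an $L^p$-norm, and must use only a universally bounded number of conjugations, independent of $\cM$, $n$, and the spectral complexity of $v$. A naive diagonalization fails because $v$ typically shares no spectral projections with $\iota(M_n(\bbC))$; one has to work with the polar parts of $1-\lambda v$ and engineer the averaging so that the worst-case loss in every generalized singular value $\mu_t$ stays universally bounded. This is the non-trivial content of Section \ref{sec_alg_II_1}, and it is what makes the passage from the matrix case to the $\mathrm{II_1}$-factor case far from formal.
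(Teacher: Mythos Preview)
Your overall architecture---reduce to the matrix case by replacing $v$ with a finite-spectrum, rational-weight unitary obtained from boundedly many conjugates---is exactly the paper's strategy, and you correctly isolate the middle step as the crux. However, the mechanism you sketch for that step (``cut spectral projections of $1-\lambda v$ by projections of $\iota(M_n(\bbC))$, average over permutations of matrix units'') is not what the paper does, and as stated it is unlikely to work: averaging or conditional expectations onto a matrix subalgebra contract operators and do \emph{not} preserve the full profile $t\mapsto\mu_t$; you would lose exactly the uniform lower bound on $\ell_t(v)$ that you need. You yourself flag this as the main obstacle, but the proposal does not actually overcome it.

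The paper's resolution is quite different and rests on two ingredients you do not mention. First, Proposition~\ref{prop_commutator}: for any unitary $v$ one constructs an explicit $w$ (built from a cyclic permutation acting on an approximation of $v$ with equal-weight spectral projections) such that the commutator $[v,w]$ satisfies $\mu_{12t}(1-\lambda v)\le 4\ell_t([v,w])$ for all $t$. This is how one passes to an element of the form $\diag(x,x^{-1})$ inside $(v^G\cup v^{-G})^4$ while keeping the projective $s$-numbers under control. Second, a Borel-measurable version of the $\cSU(2)$ generation lemma (Lemma~\ref{lem_Borel_su2}) is applied fibrewise to the direct-integral decomposition of this $\diag(x,x^{-1})$, producing in two further steps an element $v'$ with genuinely finite spectrum and rational weights. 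Only then do $u$ and $v'$ land in a common matrix subalgebra, and Theorem~\ref{thm_II_1} (the finite-spectrum case, itself proved by the approximation you describe) finishes the job. The point is that one never forces $v$ into the \emph{fixed} $M_n$ containing $u$; one manufactures a $v'$ with rational data and then chooses a new common denominator.
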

In combination with results of Broise, this implies that $G$ has
property {\rm (BNG)}.
%
%\begin{rem}
%There exists a constant $c>0$ such that if 
%$$ k \geq \frac{c \ell(v)}{\|1-u\| \cdot \log(\ell(v))},$$ then
%$u \in (v^G \cup v^{-G})^k.$
%\end{rem}
We now present a formulation of Theorem \ref{thm_main_1} with a suitable normal generation function. For $x\in\cM$ with $\cM$ a  finite factor, we define 
$$\ell(x)\coloneqq \inf_{\lambda \in S^1} \|1-\lambda x\|_1.$$
\begin{thm} \label{normalgenfun}
 Let $G$ denote the projective unitary group of a separable $\mathrm{II_1}$-factor. For some universal constant $c>0$ the function $f:G\setminus\set{1}\rightarrow \bbN$, given by
 $$f(v)\coloneqq  c \cdot |\log \ell(v)| \cdot \ell(v)^{-1},$$
is a normal generation function for $G$. That is, 
$G=( v^G\cup v^{-G} )^{k}$ for every $k\geq f(v),\ v\in G\setminus\set{1}$.
\end{thm}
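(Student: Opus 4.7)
The plan is to combine Theorem~\ref{thm_main_1} with Broise's theorem \cite{Bro-67}, optimising the parameters $(s,m)$ using a dyadic decomposition of $t\mapsto\ell_t(v)$. As a first step, fix a trace-zero symmetry $s_0\in\cM$; by Broise's theorem every $u\in G$ is a product of $32$ conjugates of $s_0$. Since $s_0$ has finite spectrum $\{-1,+1\}$ with rational weights $(1/2,1/2)$, Theorem~\ref{thm_main_1} applies to $u=s_0$, and balancing $|1-\lambda|$ against $|1+\lambda|$ at $\lambda=\pm\complex$ gives $\ell_0(s_0)=\sqrt 2$. It therefore suffices to exhibit $s_0$ as a product of at most $C|\log\ell(v)|/\ell(v)$ conjugates of $v^{\pm 1}$ for some universal constant $C$, the factor $32$ being absorbed into $c$.

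Applying Theorem~\ref{thm_main_1} with $(u,v)=(s_0,v)$, and using that $\ell_t(v)$ is non-increasing in $t$, the hypothesis $\sqrt 2\le m\,\ell_t(v)$ on $[0,s]$ reduces to $\ell_s(v)\ge\sqrt 2/m$. Setting $\delta:=\sqrt 2/m$ and $s_\delta:=\sup\{t\in[0,1]:\ell_t(v)\ge\delta\}$, the theorem yields $s_0\in(v^G\cup v^{-G})^{N(\delta)}$ with $N(\delta)\lesssim 1/(\delta\,s_\delta)$. The bulk of the proof is then to establish
$$\max_{\delta>0}\,\delta\,s_\delta\ \gtrsim\ \frac{\ell(v)}{|\log\ell(v)|}.$$
For this, let $\lambda_0\in S^1$ approximately minimise $\|1-\lambda v\|_1$ and set $\phi(t):=\mu_t(1-\lambda_0 v)$; then $\phi$ is non-increasing, bounded by $2$, and $\int_0^1\phi=\ell(v)$. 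A standard dyadic layer-cake argument produces a level $2^{-k^*}$ with $2^{-k^*}\cdot|\{\phi\ge 2^{-k^*}\}|\gtrsim\ell(v)/|\log\ell(v)|$; the logarithmic factor arises from the at most $\log_2(4/\ell(v))$ non-trivial dyadic levels of $\phi$ in $[\ell(v)/2,\,2]$.

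The main obstacle is the transfer of this lower bound from $\phi$ to $\ell_t(v)$, since only $\ell_t(v)\le\phi(t)$ holds a priori. I expect to close the gap either via a continuity/perturbation estimate in $\lambda\in S^1$, using the $1$-Lipschitz bound $|\mu_t(1-\lambda v)-\mu_t(1-\lambda' v)|\le|\lambda-\lambda'|$ inherited from $\|(\lambda-\lambda')v\|_\infty=|\lambda-\lambda'|$, or, as in the proof of Theorem~\ref{thm_main_1} itself, by approximating $v$ by unitaries with finite spectrum and reducing to the matrix estimate Theorem~\ref{thm_main_5} where a directly analogous dyadic argument applies. Once $\ell_{2^{-k^*}}(v)\gtrsim\phi(2^{-k^*})$ is secured, the exponent in Theorem~\ref{thm_main_1} is $\lesssim|\log\ell(v)|/\ell(v)$, and the Broise reduction delivers the normal-generation function $f$.
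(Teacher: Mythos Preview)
Your overall strategy---reduce via Broise to generating a trace-zero symmetry, invoke Theorem~\ref{thm_main_1}, and optimise $(s,m)$ by a layer-cake argument---is exactly the paper's approach. The gap is precisely where you locate it: the transfer from $\phi(t)=\mu_t(1-\lambda_0 v)$ to $\ell_t(v)$. Neither of your proposed fixes closes it. The Lipschitz bound only yields $\phi(t)\le\ell_t(v)+|\lambda_0-\lambda_t|$, where $\lambda_t$ is the minimiser for $\ell_t(v)$; but $|\lambda_0-\lambda_t|$ can be of order~$1$ even when $\phi(t)$ is small (take $v$ a trace-zero symmetry: $\lambda_0=1$ while $\lambda_t=\pm\complex$ for $t<1/2$), so this gives nothing. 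The reduction to matrices is in principle viable via Lemma~\ref{lem_singularcage}, but you have not carried it out, and the passage back to the II$_1$ setting with uniform constants is exactly what the paper's Proposition~\ref{prop_commutator} is for.

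The paper sidesteps the transfer entirely by applying the layer-cake argument \emph{directly} to $t\mapsto\ell_t(v)$, which is non-increasing with integral $L(v):=\int_0^1\ell_t(v)\,dt$. This immediately produces $t_0$ with $t_0\,\ell_{t_0}(v)\gtrsim L(v)/|\log L(v)|$, and Theorem~\ref{thm_main_1} (via the proof of Theorem~\ref{thm_II_1_BNG}) gives the exponent $\lesssim 1/(t_0\,\ell_{t_0}(v))$. The remaining work is then to show $L(v)\asymp\ell(v)$; the inequality $L(v)\le\ell(v)$ is trivial, and the reverse is Lemma~\ref{Llbound}, whose proof rests on Proposition~\ref{prop_commutator}: for every $u$ there exist $w$ and $\lambda$ with $\mu_{12t}(1-\lambda u)\le 4\,\ell_t([u,w])\le 8\,\ell_{t/2}(u)$, whence integrating gives $\ell(u)\le\|1-\lambda u\|_1\le 192\,L(u)$. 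This commutator estimate is the missing ingredient in your argument; once you have it, you may either follow the paper and run the layer-cake on $\ell_t(v)$, or equivalently run it on $\mu_t(1-\lambda v)$ for the $\lambda$ furnished by Proposition~\ref{prop_commutator} and use the pointwise bound $\ell_{t}(v)\ge\tfrac18\mu_{24t}(1-\lambda v)$ to transfer.
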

It is easy to see that any normal generation function must satisfy $f(v) \geq 2 \ell(v)^{-1}$ since the diameter of $G$ is equal to $2$, so that we are not far from an optimal answer.

The techniques developed in the study of II$_1$-factor case lead to our second main result, which is on invariant automatic continuity for the groups $\cP\cU(n)$, $\cSU(n)$ and $\cP\cU(\cM)$, where $\cM$ is a separable $\mathrm{II_1}$-factor, see Section \ref{cha_autocont}. Recall, that a polish group is called SIN if the topology has a basis of conjugation invariant neighborhoods at the neutral element.

\begin{thm} For every finite factor $\cM$, every homomorphism from $\cP\cU(\cM)$ to a polish {\rm SIN} group must be continuous. Moreover, the group $\cP\cU(\cM)$ carries a unique polish group topology.
\end{thm}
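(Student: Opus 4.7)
Both halves of the theorem will be deduced from the quantitative bounded normal generation of Theorem~\ref{thm_main_1} via a Steinhaus-type criterion adapted to the invariant setting. Given a homomorphism $\phi\colon G\to H$ from $G:=\cP\cU(\cM)$ into a Polish SIN group $H$ with bi-invariant compatible metric $d$ and invariant symmetric neighborhood basis $V_\eta:=\{h\in H:d(h,1)<\eta\}$, continuity at $1$ is equivalent to $\phi^{-1}(V_\varepsilon)$ being an SOT-neighborhood of $1$ for every $\varepsilon>0$. Since $H$ is Polish, $H=\bigcup_n h_nV_\varepsilon$ for countably many $h_n\in H$, and pulling back writes $G=\bigcup_n g_n W$ with $W:=\phi^{-1}(V_\varepsilon)$ conjugation-invariant and symmetric. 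I would therefore aim to establish the following invariant Steinhaus property for $G$: there is a universal integer $k$ such that, for every conjugation-invariant symmetric $W\subset G$ with $G=\bigcup_n g_n W$, the power $W^k$ contains an SOT-neighborhood of $1$. Applied to our $W$ this gives $\phi^{-1}(V_\varepsilon^k)\supset W^k\supset$ SOT-neighborhood of $1$, as required.

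The proof of the invariant Steinhaus property proceeds in two stages. First, since $G$ is Polish and $G=\bigcup_n g_nW$, Baire's theorem forces $W$ to be non-meager. Using symmetry and conjugation-invariance together with a finite-dimensional approximation that reduces to $\cP\cU(n)$ as in Section~\ref{sec_II_1}, I would extract from a fixed power $W^{k_0}$, with $k_0$ universal, an element $v$ whose generalized singular numbers satisfy $\ell_t(v)\geq\delta$ for every $t\in[0,s]$, for explicit universal constants $\delta,s>0$. Second, Theorem~\ref{thm_main_1} applied to this $v$ shows that every $u\in G$ with finite spectrum, rational weights, and $\ell_0(u)\leq\delta$ lies in $(v^G\cup v^{-G})^{c\lceil 1/s\rceil}\subset W^{k_0c\lceil 1/s\rceil}$. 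A spectral truncation argument, writing an arbitrary SOT-small unitary as a product of boundedly many finite-spectrum unitaries that are small in operator norm, together with SOT-density then upgrades this containment to a genuine SOT-neighborhood of $1$ inside $W^k$ for a universal $k$.

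For the uniqueness of the Polish group topology, let $\tau$ be any Polish group topology on $G$. Using the algebraic simplicity of $G$ and the explicit normal generation function of Theorem~\ref{normalgenfun}, any $\tau$-neighborhood of $1$ controls the conjugation action quantitatively, and I would leverage this to build a bi-invariant compatible pseudometric and conclude that $(G,\tau)$ is SIN. The invariant automatic continuity just proved, applied to both $\mathrm{id}\colon(G,\tau)\to(G,\mathrm{SOT})$ and its inverse, then forces $\tau=\mathrm{SOT}$. The main obstacle is the first stage of the Steinhaus-type argument: without a priori Baire measurability of $W$ one cannot invoke Pettis' theorem directly to find a spread element in a bounded power of $W$, and the argument must thread through the finite-dimensional reductions developed for Theorem~\ref{thm_main_1}, where compactness of $\cP\cU(n)$ makes the extraction routine.
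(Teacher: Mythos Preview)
Your overall architecture for the first half---establish an invariant Steinhaus property and then deduce continuity into SIN targets---matches the paper exactly. The gap is in how you extract a ``spread'' element from the syndetic set $W$. You invoke Baire category to conclude $W$ is non-meager, but as you yourself note, $W$ carries no measurability assumption, so neither Baire nor Pettis applies. The paper's mechanism is different and does not use category at all: it exploits that $\cP\cU(\cM)$ is \emph{inseparable} in both the operator-norm metric and the rank metric (Proposition~\ref{prop_not_sep}). A pigeonhole argument (Proposition~\ref{prop_insep}) then forces $W^2$ to contain an element $u$ large in operator norm \emph{and} an element $v$ of large projective rank, with the largeness threshold $\varepsilon$ universal. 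Two separate elements are genuinely needed, playing distinct roles in the generation---$u$ handles small-rank perturbations via Broise, $v$ handles operator-norm-small perturbations via Theorem~\ref{thm_main}---and only $\varepsilon$, not the auxiliary $\delta$ you posit, is independent of $W$; the universality of the Steinhaus exponent comes from the fact that it depends only on $\varepsilon$.

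For the second half, your plan to show that an arbitrary Polish topology $\tau$ on $G$ is automatically SIN and then run invariant automatic continuity in both directions is not the paper's route, and your sketch for proving SIN is not convincing: there is no evident way to manufacture a bi-invariant compatible metric for $\tau$ from bounded normal generation alone, and the direction $\mathrm{id}\colon(G,\tau)\to(G,\mathrm{SOT})$ would require the invariant Steinhaus property for $(G,\tau)$, whose proof used SOT-specific approximation. The paper sidesteps this entirely via the Gartside--Peji\'c criterion (Theorem~\ref{thm_Pejic}): it suffices to exhibit a countable network for the SOT consisting of finite products of conjugacy classes. Such products are verbal sets and hence lie in the $\sigma$-algebra generated by verbal sets for \emph{any} group topology, and the quantitative control from the proof of Proposition~\ref{prop_nbhd} lets one squeeze such a product between any two concentric SOT-balls. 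This yields uniqueness without ever analyzing the hypothetical topology $\tau$ directly.
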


The most important preliminaries are covered in the first sections, namely we define property (BNG), study basic properties of length functions, and recall the definition and some important properties of the generalized $s$-numbers for semi-finite von Neumann algebras based on the article of \cite{FK-86} of Fack and Kosaki. We will then define generalized projective $s$-numbers in Section \ref{sec_proj_s_numbers} in this context and prove some properties that are required in the preceding sections. 

\section{Bounded normal generation and length functions}\label{sec_BNG}
In this section we define the main concept studied in this article, the so-called bounded normal generation property for groups. Let $G$ be a group. We denote by $g^G$ the conjugacy class of $g\in G$ and by $g^{-G}$ the conjugacy class of $g^{-1}$.

\begin{defn}\label{def_BNG} 
(i) Let $g$ be an element of a group $G$. If there exists $k\in\bbN$ such that $G=(g^G\cup g^{-G})^k$
then we call $g$ a \textbf{uniform normal generator for $G$}\index{uniform normal generator}. If we want to emphasize the number $k$ we will write that $g$ is a $k$-uniform normal generator \index{$k$-uniform normal generator}. 

(ii) A group $G$ has the \textbf{bounded normal generation property}\index{bounded normal generation property} or \textbf{property {\rm (BNG)}}\index{property {\rm (BNG)}} if every non-trivial element is a uniform normal generator. That is, there exists a function $f:G\setminus\set{1}\rightarrow\bbR$ such that
$G=(g^G\cup g^{-G})^{k}$ for every $g\neq 1$ and $k \geq f(g)$. We call $f$ a \textbf{normal generation function}\index{normal generation function}. 

%(iii)
%If there exists a fixed $k\in\bbN$ such that every non-trivial element is a $k$-uniform normal generator, then we say that $G$ has \textbf{property $k$-{\rm (BNG)}}.
\end{defn}

%For the sake of completeness we also define a stronger version of property {\rm (BNG)}.
%Let us call an element $g$ in a group $G$ a \textbf{strong uniform normal generator for $G$}\index{strong uniform normal generator} or $k$-uniform normal generator \index{strong $k$-uniform normal generator} if there exists $k\in\bbN$ such that 
% $$G=(g^G)^k.$$
%If every $g\in G$ is a strong uniform normal generator, then we say that $G$ has the \textbf{strong bounded normal generation property}\index{strong bounded normal generation property}.
%Analogously we say that a topological group $G$ has the \textbf{strong topological bounded normal generation property}\index{strong topological bounded normal generation property} if every $g\in G$ is a \textbf{strong topological uniform normal generator}\index{strong topological uniform normal generator}, i.e., 
%$$G= \overline{(g^G)^{k}}$$ for some $k\in\bbN$. We again call a function witnessing the strong (topological) bounded normal generation property a normal generation function. \\

%\begin{rem}
% Some groups (e.g. finite simple groups) are generated by the conjugacy class of every non-trivial element - the conjugacy class of its inverse not being needed. However, for our purposes we do not feel the necessity to give that property an additional name.
%\end{rem}

It is clear that every group with property ${\rm (BNG)}$ is simple -- an example for a group that is simple but does not have property {\rm (BNG)} is the infinite alternating group of all finitely supported even permutations on $\bbN$.
In the case of compact simple groups one can easily show property {\rm (BNG)} via a Baire category argument, see Proposition \ref{prop_compact_BNG}. However, getting a concrete normal generation function is often much harder even in the case of non-abelian finite simple groups. 
%Given a group with property {\rm (BNG)} one can ask for the minimal (up to a universal multiplicative constant) normal generation function. That is, a normal generation function $f_0$ for $G$ is minimal if for every other normal generation function $f$ one has $f(g)\geq cf_0(g)$ for all $g\in G$ and some constant $c\in\bbN$ independent of $g$.
%Analogously for a group with property $k$-{\rm (BNG)} or $k$-(topBNG) one can ask for the minimal $k\in\bbN$. 
Let us list some known examples.

\begin{expls}
(i) Every non-abelian finite simple group $G$ has the bounded normal generation property with normal generation function given by 
  $$f(g)\coloneqq c \cdot \log\abs{G}/\log\abs{g^G}  \quad \mbox{ for }g\in G\setminus\set{1},$$ 
see \cite[Theorem 1]{LS-01}. One can even omit $g^{-G}$ in the generation process.

(ii) Compact connected simple Lie groups have property {\rm (BNG)}, see \cite[Proposition 5.11]{NS-12}. Nikolov and Segal also provide a normal generation function, but its dependence on the rank is far from being optimal. We will provide an improved normal generation function for $\cP\cU(n)$ via a study of projective singular values and reprove property {\rm (BNG)} for $\cP\cU(n)$ in Section \ref{sec_I_n}. In Corollary \ref{cor_bdd_rk} we provide the following dimension-dependent normal generation function for $\cP\cU(n)$: 
  $$f(u)\coloneqq 16n/\inf_{\lambda\in{S^1}}\norm{1-\lambda u}\quad \mbox{ for } u\in\cP\cU(n)\setminus\set{1}.$$
  
(iii) The connected component $\cP\cU_1(\cC)$ of the identity of the projective unitary group of the Calkin algebra $\cC$ has property {\rm (BNG)}, see \cite{DT}. A normal generation function is given by 
  $$f(u)\coloneqq 40/ \inf_{\lambda\in{S^1}}\norm{1-\lambda u}_{\mathrm{ess}} \quad \mbox{ for } u\in \cP\cU_1(\cC)\setminus\set{1}.$$
\end{expls}

%\begin{rem}
%Groups with property $k$-(BNG) are called $k$-boundedly simple in the work of Muranov \cite{MU-05}.
%By \cite[Theorem 5.5]{D-87} every group can be embedded into a group with 2-(BNG) and the same cardinality. 
%Furthermore, in \cite{DTW-99} the authors have shown that there are simple automorphism groups of cycle-free partial orders which do not have property {\rm (BNG)}. The interested reader can find the definitions and details in \cite{DTW-99}.
%\end{rem}
As mentioned above, using a Baire category argument one can show the following qualitative result for compact simple groups. Let us present the proof for convenience.

\begin{prop}\label{prop_compact_BNG}
Every compact topologically simple group $G$ has property {\rm (BNG)}.
\end{prop}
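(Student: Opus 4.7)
The plan is a Baire-category argument. Fix $g \in G\setminus\set{1}$, set $C := g^G \cup g^{-G}$, and let $A_k := C^k$ for each $k\geq 1$. Since conjugation is continuous and $G$ is compact, $g^G$ and $g^{-G}$ are compact; hence $C$ and each $A_k$ are compact, in particular closed in $G$. The union $N := \bigcup_{k\geq 1} A_k$ is a subgroup of $G$ (closed under products by construction and under inversion because $C^{-1}=C$), and it is normal because $C$ is conjugation-invariant.

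The closure $\overline{N}$ is therefore a closed normal subgroup of $G$ containing the non-trivial element $g$, so topological simplicity forces $\overline{N}=G$; in particular $N$ is dense. The decisive step is to upgrade density to equality. The strategy is to find some $k_{0}$ for which $A_{k_{0}}$ has non-empty interior: once one has this, picking $h\in\mathrm{int}(A_{k_{0}})$ and using $C=C^{-1}$ produces $h^{-1}\mathrm{int}(A_{k_{0}})$ as an open neighborhood of $1$ contained in $A_{k_{0}}^{-1}A_{k_{0}}\subseteq A_{2k_{0}}\subseteq N$. Thus $N$ is an open subgroup, hence closed; together with density this forces $N=G$. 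By compactness of $G$, finitely many translates of this neighborhood of $1$ cover $G$, yielding an explicit $k$ with $A_{k}=G$ and completing the proof that $g$ is a uniform normal generator, i.e.\ property {\rm (BNG)} for $G$.

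The main obstacle is producing $A_{k_{0}}$ with non-empty interior: density of $\bigcup_{k} A_{k}$ does not by itself force any single closed $A_{k}$ to have interior (witness $\bbQ\subseteq\bbR$, where each singleton is closed and nowhere dense while the union is dense), so a bare Baire appeal will not do---one must exploit the subgroup structure of $N$ and the hypothesis of topological simplicity. The cleanest route is to first reduce $G$ to Lie type: by Peter--Weyl every non-trivial irreducible unitary representation of $G$ has a closed kernel, which by topological simplicity must be trivial, so $G$ embeds as a closed subgroup of some $\cU(n)$ and is hence a compact Lie group. If $G$ is finite it is a finite simple group and Liebeck--Shalev supplies the conclusion; otherwise $G$ is a compact connected simple Lie group, and a dimension count on the smooth maps $G^{k}\to G$, $(x_{1},\dots,x_{k})\mapsto \prod_{i} x_{i}g^{\pm 1}x_{i}^{-1}$, whose images are $A_{k}$, forces $\dim A_{k}$ to stabilize at $\dim G$ as $k$ grows, which in turn gives some $A_{k_{0}}$ of non-empty interior, closing the argument.
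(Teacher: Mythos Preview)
Your skeleton matches the paper's---show some $A_{k_0}$ has non-empty interior, pass to an identity neighbourhood in $A_{2k_0}$, then finish by compactness---but your route to that interior is longer and has a gap at the end.

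The paper short-circuits the whole issue by invoking the fact that for compact groups, topological simplicity implies \emph{algebraic} simplicity (\cite[Theorem~9.90]{HM-06}). Once $G$ is algebraically simple, the normal subgroup $N=\bigcup_k A_k$ is not merely dense but equal to $G$, and Baire category applied to this closed cover immediately produces some $A_{k_0}$ with interior; your $\bbQ\subset\bbR$ worry evaporates because one has equality, not just density.

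Your Peter--Weyl reduction to compact Lie groups is correct, but the concluding ``dimension count'' is not justified as stated. From density of $N$ alone nothing forces $\dim A_k$ to reach $\dim G$ rather than stabilise below it---an irrational winding line in $\bbT^2$ is a dense union of closed arcs (images of smooth maps) none of which has interior. What is actually needed is $N=G$, i.e.\ algebraic simplicity. Your reduction does make this available---a compact connected topologically simple Lie group has trivial centre and simple Lie algebra, and such adjoint-type groups are abstractly simple---so the argument can be repaired, but it then amounts to a longer derivation of the very fact the paper cites in one line. (Citing Liebeck--Shalev for the finite case is also overkill: a finite simple group has {\rm (BNG)} immediately, since $\bigcup_k A_k=G$ is a finite increasing union.)
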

\begin{proof}
Topological simplicity implies algebraic simplicity for compact groups by \cite[Theorem 9.90]{HM-06}. 
Observe that for any $g\in G\setminus{\{1\}}$ the set $ \bigcup_{n\in\bbN}(g^G\cup g^{-G})^n$ forms a non-trivial normal subgroup of $G$ and note that $g^G$ is compact as the continuous image of the compact set $G$ under conjugation. Since $G$ is simple we have 
$G=\bigcup_{n\in\bbN}(g^G \cup g^{-G})^n.$
 For $k\in\bbN$ we define $C^k\coloneqq \bigcup_{n\leq k}(g^G\cup g^{-G})^n$. Since $G$ is compact and the sets $C^k$ are closed and we can apply the Baire category theorem to obtain the existence of $m\in\bbN$ such that
$\mathrm{int}(C^m)\neq\emptyset.$
 Assume that $U\subseteq C^m$ is non-empty and open and let $V\coloneqq UU^{-1}\subseteq C^{2m}$. Since $1\in V$ we have $C^m\subseteq VC^m$. Thus $\bigcup_{n\in\bbN}VC^n$ is an open covering of $G$. Now, compactness of $G$ implies that there exists $m'\in\bbN$ such that 
 $$G=\bigcup_{n\leq m'}VC^n\subseteq\bigcup_{n\leq m'}C^{n+2m} = \bigcup_{n \leq m'+2m }(g^G \cup g^{-G})^n.$$
Since $g \neq 1$ was arbitrary, $G$ has property {\rm (BNG)}.
\end{proof}

%\begin{rem}
% Similarly to property (\rm BNG) one can define topological uniform normal generators and the topological bounded normal generation property (or property (\rm topBNG)) for a topological group by taking closures of products of conjugacy classes. There are many topologically simple groups that are not algebraically simple (such as the unitary group $\rm U(\cH)$ on an infinite-dimensional Hilbert space $\cH$, endowed with the strong operator topology) - but we do not have an example of a group that has property (\rm topBNG) without having property (\rm BNG). For example, $\rm U(\cH)$ has uniform normal generators (such as symmetries with two infinite-dimensional eigenspaces, which follows from \cite[Theorem 1]{HK-58}), but does not have property (\rm topBNG) because it contains finite-rank perturbations of the identity.
%\end{rem}

%\section{Length functions}\label{sec_length_functions}

Natural candidates for normal generation functions are closely related to so called invariant length functions, studied in this context already by Stolz and the second author in \cite{ST-13}.

\begin{defn}\label{def_length_function}
Let $G$ be a group. We say that a function 
$\ell: G\rightarrow[0,\infty)$
is a \textbf{pseudo length function}\index{pseudo length function} on $G$ if for all $g,h\in G$ the following properties hold:
\begin{enumerate}
 \item[(i)]  $\ell(1)=0$;
 \item[(ii)] $\ell(g)=\ell(g^{-1})$;
 \item[(iii)] $\ell(gh)\leq\ell(g)+\ell(h)$.
\end{enumerate}
If $\ell$ is a pseudo length function which additionally satisfies that $\ell(g)=0$ implies $g=1$, then $\ell$ is called \textbf{length function}\index{length function}. A pseudo length function $\ell$ is called \textbf{invariant}\index{pseudo length function!invariant} if one has
$\ell(hgh^{-1})=\ell(g)$ for all $g,h\in G.$
\end{defn}

If $\ell$ be a length function on a group $G$, then
the function $d(g,h)\coloneqq \ell(gh^{-1})$ defines a  metric on $G$. Conversely a metric $d$ on $G$ induces a  length function on $G$ by $\ell(g)\coloneqq d(1,g),\ g\in G$ and $\ell$ is invariant if and only if $d$ is bi-invariant, i.e., $d(gh,gk)=d(h,k)=d(hg,kg)$ for all $g,h,k \in G$. For any subset $X \subset G$, we set 
$X_{\varepsilon} := \{g \in G \mid \exists x \in X : d(g,x)< \varepsilon \}.$ We define the diameter as usual $\diam_{\ell}(G)=\sup_{h \in G} \ell(h)$.

Invariant length functions can be used to provide
lower bounds for normal generation functions.

\begin{prop}\label{prop_lower_bound}
 Let $G$ be a group with property {\rm (BNG)} and normal generation function $f$. Assume that $\ell$ is an invariant length function on $G$. Then,
$f(g)\geq {\rm diam}_{\ell}(G) \ell(g)^{-1}$ for all $g\in G\setminus\set{1}.$
\end{prop}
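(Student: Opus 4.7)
The plan is to bound the $\ell$-length of an arbitrary element of $G$ by using the factorization provided by the bounded normal generation property, then take the supremum to get the diameter bound.

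First, I fix $g \in G \setminus \{1\}$ and pick an integer $k \geq f(g)$ (for concreteness one may take $k = \lceil f(g) \rceil$). By the defining property of the normal generation function, $G = (g^G \cup g^{-G})^k$. So every $h \in G$ admits a factorization $h = x_1 x_2 \cdots x_k$ with each $x_i$ either a conjugate of $g$ or a conjugate of $g^{-1}$.

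Next, I exploit that $\ell$ is invariant and symmetric: for any $y \in G$, $\ell(ygy^{-1}) = \ell(g)$ by invariance (iii), and $\ell(g^{-1}) = \ell(g)$ by property (ii) of Definition \ref{def_length_function}. Thus $\ell(x_i) = \ell(g)$ for each $i$. Applying subadditivity $k-1$ times gives
$$\ell(h) \;\leq\; \sum_{i=1}^k \ell(x_i) \;=\; k\,\ell(g).$$
Since $h$ was arbitrary, taking the supremum yields $\diam_\ell(G) \leq k\,\ell(g)$, i.e.\ $k \geq \diam_\ell(G)\,\ell(g)^{-1}$. As this holds for every integer $k \geq f(g)$, we conclude $f(g) \geq \diam_\ell(G)\,\ell(g)^{-1}$, which is the desired bound.

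There is no real obstacle here; the argument is a textbook application of the triangle inequality together with conjugation invariance. The only minor subtlety is matching $f(g)$ (possibly a real number) with the integer exponent $k$, which is handled by noting that the inequality established for every admissible integer $k$ propagates to the bound on $f(g)$ itself.
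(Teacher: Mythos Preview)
Your proof is correct and follows essentially the same approach as the paper's: use the factorization $h = x_1\cdots x_k$ coming from the normal generation property, apply invariance and symmetry of $\ell$ to get $\ell(x_i)=\ell(g)$, then the triangle inequality to bound $\ell(h)$ and take the supremum. The paper's version is terser (it simply writes $h\in (g^G\cup g^{-G})^{f(g)}$ and concludes $\ell(h)\le f(g)\ell(g)$), implicitly treating $f(g)$ as an integer exponent; your more careful bookkeeping with an auxiliary integer $k\ge f(g)$ is a reasonable way to handle the same point, though note that strictly speaking ``$k\ge D$ for all integers $k\ge f(g)$'' only yields $\lceil f(g)\rceil\ge D$, so the conclusion $f(g)\ge D$ tacitly assumes $f$ is integer-valued --- exactly the same harmless sloppiness present in the paper's own argument.
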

\begin{proof}
 Let $g\in G\setminus\set{1}$ and assume that $h\in (g^G\cup g^{-G})^{f(g)}$.
 Since $\ell$ is an invariant length function, we have
$\ell(h)\leq f(g)\ell(g).$
The claim follows since $\diam_{\ell}(G)=\sup_{h \in G} \ell(h)$ by definition.
\end{proof}

We present some examples of length functions.

\begin{expls}
(i) Let $G$ be a finite simple group. Then the conjugacy length $$\ell_{\mathrm{conj}}(g)\coloneqq\frac{\log\abs{g^G}}{\log\abs{G}}$$ defines an invariant length function. In fact, Liebeck and Shalev \cite{LS-01} showed that $ 1/\ell_{\mathrm{conj}}(\cdot) $ is (up to a multiplicative constant) also a normal generation function for finite simple groups -- and hence, as already mentioned, their result is optimal up to a multiplicative constant.
%More precisely, for every non-trivial $g$ in a finite simple group $G$ one has  $G=(g^G)^{m}$ for every $m\geq c/\ell_{\mathrm{conj}}(g)$ and some constant $c\in\bbN$ independent of $g$.

(ii) Let $\mathcal{C}$ denote the Calkin algebra on the separable infinite-dimensional Hilbert space $\mathcal{H}$. Write $\cP\cU_1(\mathcal{C})$ for the connected component of the neutral element in the projective unitary group of $\mathcal{C}$. The essential norm $\norm{.}_{\mathrm{ess}}$ on $\mathcal{C}$ induces a length function on $\cP\cU_1(\cC)$ via
  $$\ell_{\mathrm{ess}}(u)\coloneqq \inf_{\lambda\in{S^1}}\norm{1-\lambda u}_{\mathrm{ess}}, u\in\cP\cU_1(\cC).$$
  In \cite{DT} we show that $40 / \ell_{\mathrm{ess}}(\cdot)$ defines a normal generation function -- again optimal up to a multiplicative constant. 

(iii) Let $\cM$ be a $\mathrm{II_1}$-factor. The norms $\norm{\cdot},\norm{\cdot}_1$ and $\norm{\cdot}_2$ induce invariant length functions on $\cU(\cM)$. 
It follows that 
$$\ell(u)\coloneqq\inf_{\lambda\in{S^1}}\norm{1-\lambda u}_1,\ u\in\cU(\cM),$$
defines an invariant length function on $\cP\cU(\cM)$. Our main result on normal generation functions does not quite reach $c/ \ell(\cdot)$, but we prove in Theorem \ref{normalgenfun} that $u \mapsto c \cdot |\log \ell(u)| / \ell(u)$ is a normal generation function.
%However, the normal generation function for $\cP\cU(\cM)$ that we obtain is of a different form, cf. Corollary \ref{cor_II_1_length}.
\end{expls}

Let us conclude this section by proving some elementary lemma that we need later in the proofs, namely those which rely on finite-dimensional approximation. We show that products of $\eps$-thickened conjugacy classes of topological groups with compatible bi-invariant metric behave well under $\eps$-thickening.

\begin{lem}\label{lem_epsconj}
 Let $G$ be a topological group equipped with a compatible bi-invariant metric $d$ and let $\eps>0$. Then, $(((g^G)_{\eps})^n)_{\eps}^{}\subseteq ((g^G)^n)_{(n+1)\eps}$ for all $n\in\bbN$.
\end{lem}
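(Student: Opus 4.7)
The plan is to unpack the definitions, write the given element as a product of things close to conjugates of $g$, and then telescope using bi-invariance of $d$ to replace each factor by an honest conjugate one at a time.

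Concretely, suppose $h \in (((g^G)_{\eps})^n)_{\eps}$. Then by definition there exists $h' \in ((g^G)_{\eps})^n$ with $d(h,h') < \eps$, and I can write $h' = h_1' h_2' \cdots h_n'$ where each $h_i' \in (g^G)_{\eps}$. Again by definition, for each $i$ I can pick a conjugate $h_i \in g^G$ with $d(h_i', h_i) < \eps$. The natural candidate to witness membership of $h$ in $((g^G)^n)_{(n+1)\eps}$ is $h'' := h_1 h_2 \cdots h_n \in (g^G)^n$, and it remains to estimate $d(h, h'')$.

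The key computation is to control $d(h', h'')$ by a telescoping argument. Setting $a_k := h_1 \cdots h_k h_{k+1}' \cdots h_n'$ for $k = 0, 1, \ldots, n$, so that $a_0 = h'$ and $a_n = h''$, bi-invariance of $d$ (which allows cancellation of common prefixes and suffixes) gives
$$d(a_{k-1}, a_k) = d(h_k', h_k) < \eps.$$
Summing via the triangle inequality yields $d(h', h'') < n\eps$, and one more application of the triangle inequality combined with $d(h,h') < \eps$ gives $d(h, h'') < (n+1)\eps$, as required.

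There is no real obstacle here; the proof is a direct unpacking of definitions. The only minor point worth being careful about is the use of bi-invariance: one needs both left- and right-invariance to conclude $d(u h_k' v, u h_k v) = d(h_k', h_k)$ for the prefixes $u = h_1 \cdots h_{k-1}$ and suffixes $v = h_{k+1}' \cdots h_n'$, which is exactly what the hypothesis of a compatible bi-invariant metric provides.
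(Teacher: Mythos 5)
Your proof is correct and follows exactly the same telescoping argument as the paper: pick an $\eps$-close product of approximate conjugates, replace them one at a time by honest conjugates using bi-invariance, and sum the $n+1$ triangle-inequality contributions. The $a_k$ notation is just a slightly more explicit bookkeeping device for the same chain of inequalities.
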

\begin{proof}
 Let $h\in(((g^G)_{\eps})^n)_{\eps}^{}$ and assume that $g_{i,\eps}$ for $i=1,\ldots,n,$ are elements of $(g^G)_{\eps}$ satisfying $d(h,g_{1,\eps}\cdots g_{n,\eps})<\eps$. Then there are elements $g_1,\ldots, g_n\in g^G$ such that $d(g_i,g_{i,\eps})<\eps$. Using the bi-invariance of $d$, we obtain
 \begin{align*}
  d(h,g_1\cdots g_n)&\leq d(h,g_{1,\eps}\cdots g_{n,\eps})+d(g_{1,\eps}\cdots g_{n,\eps},g_1g_{2,\eps}\cdots g_{n,\eps})\\
  &\quad +\ldots+d(g_1\cdots g_{n-1}g_{n,\eps},g_1\cdots g_n)\\
  &< \eps+d(g_{1,\eps},g_1)+\ldots+d(g_{n,\eps},g_n)\\
  &< (n+1)\eps,
 \end{align*}
 which shows that $h\in ((g^G)^n)_{(n+1)\eps}$.
\end{proof}

\section{Products of symmetries}\label{sec_Broise}

In this section we provide a detailed analysis and improvement of \cite[Theorem 1]{Bro-67} by Broise. The original version states that for every unitary element in a $\mathrm{II_1}$-factor there exists $n \in \mathbb N$ such that $u$ can be written as $u=v_1\cdot\ldots\cdot v_n$, where $v_i=s_ir_i s_i r_i$ and $r_i,s_i$ are symmetries. From the original formulation of the result it is not clear whether $n$ depends on $u$ and if $s_i,r_i$ or $v_i$ can be chosen close to the identity if $u$ is close to the identity.
Our main result in this section is the following theorem -- in this improved form it will be crucial in the proof of our main results.

\begin{thm} \label{broise_improved}
Let $\cM$ be a $\mathrm{II_1}$-factor. Every $u\in\cU(\cM)$ can be decomposed into factors $u=u_1\cdot\ldots\cdot u_8$ with $u_i\in\cU(\cM)$, $1 \leq i \leq 8$, such that for each $u_i$ there is a projection $p_i\in\Proj(\cM),\ \tau(p_i)=1/3$, such that under an isomorphism of $\cM$ to $p_i\cM p_i\otimes M_{3\times 3}(\bbC)$ the element $u_i$ has the form 
  \begin{align*}
u_i=   \begin{pmatrix}
    1 & 0 & 0\\
    0 & w_i & 0\\
    0 & 0 & w_{i}^*
   \end{pmatrix}
  \end{align*}
  for some $w_{i}\in \cU(p_i\cM p_i)$.
Moreover, for all $\eps>0$ there exists $\delta>0$ such that if $\norm{1-u}<\delta$, then 
$\norm{1-u_i}_2< \eps.$
\end{thm}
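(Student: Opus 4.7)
My plan is to refine Broise's argument in \cite{Bro-67} in two directions: to put the factors in the stipulated $3 \times 3$ block form, and to verify quantitative continuity. The input is Broise's qualitative result that every $u \in \cU(\cM)$ is a product of a bounded number of trace-zero symmetries (concretely, $32$ suffice, as recalled in the introduction). I would show that a suitable grouping of Broise's symmetries --- in particular, any product of four consecutive trace-zero symmetries --- can be recast in the form $\tn{diag}(1, w, w^*)$ relative to some $3$-fold equipartition of the identity, which then yields the $8 = 32/4$ factors claimed in the statement.

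For the algebraic reduction, I would use that two trace-zero symmetries $r, s$ generate a von Neumann algebra that decomposes as a direct integral of copies of $M_2(\bbC)$ parametrized by the ``angle'' between $r$ and $s$. A product of two such symmetries is a rotation in each $M_2$-fibre, and two such rotations concatenated can be packaged into a pair $(w, w^*)$ on a corner of trace $2/3$ by refining the $2$-block decomposition of the generated algebra into a $3$-block decomposition of $\cM$ and absorbing the missing phase into a neighbouring factor that carries the identity on the complementary corner. The extra trace-$1/3$ corner, chosen appropriately, then gives the identity block in $\tn{diag}(1, w, w^*)$.

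The main obstacle is the quantitative continuity. Broise's construction is spectral and purely qualitative, so I would revisit each step and track its dependence on $u$. When $\|1-u\|<\delta$, the functional calculus yields $u = \exp(\complex a)$ with $\|a\|$ of order $\delta$, and I would implement the spectral cutting within this small neighbourhood, choosing all intermediate projections and partial isometries as measurable functions of the spectral data of $u$, so that the resulting symmetries are close in $\|\cdot\|_2$ to fixed reference symmetries adapted to a fixed equipartition of $1$ into three equal-trace projections. The resulting $w_i$ is then close to $1$ in $\|\cdot\|_2$ on $p_i\cM p_i$, and $\|1-u_i\|_2 < \eps$ follows. The most delicate point is the normalization mismatch between the trace on $p_i\cM p_i$ (which is normalized to $1$) and its restriction from $\cM$ (where $\tau(p_i) = 1/3$), so an explicit factor of $3$ appears in translating bounds between corner and ambient algebra; however, since only finitely many construction steps are involved, this is absorbed into the choice of $\delta$.
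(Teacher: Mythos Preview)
Your proposal has a genuine circularity problem. You take as input ``Broise's qualitative result that every $u \in \cU(\cM)$ is a product of a bounded number of trace-zero symmetries (concretely, $32$ suffice).'' But the bound $32$ is \emph{not} in Broise's paper; it is precisely the content of Corollary~\ref{broisecor}, which is derived \emph{from} Theorem~\ref{broise_improved}. Broise's original statement gives $u = v_1 \cdots v_n$ with each $v_i = s_i r_i s_i r_i$, and as the paper notes explicitly, ``from the original formulation of the result it is not clear whether $n$ depends on $u$.'' So you cannot start from a uniform $32$-symmetry decomposition; establishing that uniformity is part of what the theorem accomplishes.

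Even setting aside the circularity, your grouping step is not justified. You assert that a product of four consecutive trace-zero symmetries $r_1 s_1 r_2 s_2$ can be recast as $\diag(1, w, w^*)$ relative to some trace-$1/3$ corner. Your sketch treats $r_1 s_1$ and $r_2 s_2$ each as rotations in an $M_2$-fibre decomposition, but these two pairs generate \emph{different} copies of $M_2$ inside $\cM$, with no a priori relation between them. There is no mechanism in your outline for putting two unrelated rotations into a common $3 \times 3$ block structure with the identity in one block. The paper avoids this issue entirely: rather than grouping generic symmetries post hoc, it reproduces Broise's telescoping construction directly (an infinite sequence of geometrically shrinking projections $p_0(n)$ with $\tau(p_0(n)) = 2^{-(n+1)}$, iterated compressions $u_n$, and summed families $w_i, w_i'$), and then invokes Lemma~\ref{lem_bro2} in the $n=3$ case so that the $3 \times 3$ form $\diag(1, w, w^*)$ emerges structurally from the construction itself. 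The quantitative bound $\|1-u_i\|_2 \leq c\,\delta\,|\log\delta|$ then follows by tracking $\|p_0(n) - u_n\| < 2^n \delta$ through the inductive construction --- something your black-box approach cannot access.
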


This easily implies Broise's original result since
 \begin{align*}
  \begin{pmatrix} w &0\\0 &w^*\end{pmatrix}=\begin{pmatrix} u^2 &0\\0 &u^{*2}\end{pmatrix}
  =\begin{pmatrix} 0 &u\\u^* &0\end{pmatrix}\begin{pmatrix} 0 &1\\1 &0\end{pmatrix}\begin{pmatrix} 0 &u\\u^* &0\end{pmatrix}\begin{pmatrix} 0 &1\\1 &0\end{pmatrix},
 \end{align*}
for any $u$ with $u^2=w$. Indeed, it follows that any $u \in \cU(\cM)$ is the product of 32 symmetries of trace zero. Since all those symmetries are conjugate, we obtain:

\begin{cor} \label{broisecor} 
Let $\cM$ be a II$_1$-factor, $G: = \cU(\cM)$, and $s \in \cU(\cM)$ be a symmetry with $\tau(s)=0$. Then, we have $G= (s^{G})^{32}.$ 
\end{cor}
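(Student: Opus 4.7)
The plan is to combine Theorem \ref{broise_improved} with the $2\times 2$ identity displayed just above, lifting the resulting partial symmetries to \emph{bona fide} trace-zero symmetries of $\cM$ by adjoining a suitable trace-zero symmetry on the $p_i$-corner. Since in a II$_1$-factor any two trace-zero symmetries are unitarily conjugate, this will place $u$ in $(s^G)^{32}$.

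Fix $u\in\cU(\cM)$ and apply Theorem \ref{broise_improved} to write $u=u_1\cdots u_8$ with $u_i=\diag(1,w_i,w_i^*)$ under an isomorphism $\cM\cong p_i\cM p_i\otimes M_{3\times 3}(\bbC)$, $\tau(p_i)=1/3$. Choose a square root $v_i\in\cU(p_i\cM p_i)$ of $w_i$ by Borel functional calculus. The $2\times 2$ identity then gives $\diag(w_i,w_i^*)=f_{i,1}f_{i,2}f_{i,3}f_{i,4}$, where $f_{i,1}=f_{i,3}$ is the anti-diagonal symmetry with entries $v_i,v_i^*$ and $f_{i,2}=f_{i,4}$ is the swap. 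Each $f_{i,j}$ is supported on the $(1-p_i)$-corner and squares to $1-p_i$, so it is not yet a symmetry of $\cM$. To upgrade, pick any trace-zero symmetry $\sigma_i$ of the II$_1$-factor $p_i\cM p_i$ (for instance $\sigma_i=q_i-(p_i-q_i)$ for a subprojection $q_i\leq p_i$ with $\tau(q_i)=1/6$) and define $\tilde f_{i,j}\coloneqq \sigma_i+f_{i,j}\in\cM$, interpreting the sum as an orthogonal direct sum across the decomposition $1=p_i+(1-p_i)$.

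Using $p_i(1-p_i)=0$ one checks that $\tilde f_{i,j}^2=\sigma_i^2+f_{i,j}^2=p_i+(1-p_i)=1$ and $\tau(\tilde f_{i,j})=\tau(\sigma_i)+\tau(f_{i,j})=0$, so each $\tilde f_{i,j}$ is a trace-zero symmetry of $\cM$. The same orthogonality makes the four-fold product split block-by-block, and $\sigma_i^2=p_i$ is the unit of $p_i\cM p_i$, so
\begin{equation*}
\tilde f_{i,1}\tilde f_{i,2}\tilde f_{i,3}\tilde f_{i,4}=\sigma_i^4+f_{i,1}f_{i,2}f_{i,3}f_{i,4}=p_i+\diag(w_i,w_i^*)=u_i.
\end{equation*}
Concatenating over $i=1,\ldots,8$ expresses $u$ as a product of $32$ trace-zero symmetries of $\cM$. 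Finally, in a II$_1$-factor any two projections of trace $1/2$ are Murray--von Neumann equivalent via a partial isometry whose extension by the equivalence of the complementary projections is a unitary, so all trace-zero symmetries are conjugate in $G=\cU(\cM)$. Hence each $\tilde f_{i,j}$ lies in $s^G$, proving $u\in(s^G)^{32}$, and since $u$ was arbitrary $G=(s^G)^{32}$. The only conceptual step beyond Theorem \ref{broise_improved} is the passage from partial symmetries $f_{i,j}$ on the $(1-p_i)$-corner to honest symmetries of $\cM$; this is neatly resolved because every trace-zero symmetry of $p_i\cM p_i$ is self-inverse, so $\sigma_i^4=p_i$ is automatic and no compatibility condition between the $\sigma_i$'s is required.
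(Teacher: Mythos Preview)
Your proof is correct and follows the same route the paper sketches: apply Theorem \ref{broise_improved}, use the displayed $2\times 2$ identity to write each $\diag(w_i,w_i^*)$ as a product of four symmetries, and invoke conjugacy of trace-zero symmetries in a II$_1$-factor. The paper leaves the extension from partial symmetries on the $(1-p_i)$-corner to honest trace-zero symmetries of $\cM$ implicit, whereas you make it explicit via the neat $\sigma_i$-trick (adjoining a fixed trace-zero symmetry on $p_i\cM p_i$ and using $\sigma_i^4=p_i$); this is exactly the missing detail a reader would have to supply, and your treatment of it is clean.
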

This is the prototype of a bounded normal generation result in the type II$_1$ situation, and we will use it in the final step of the generation process.

\subsection{The work of Broise}\label{subsec_prod_sym}

The following result is a slightly strengthened form of \cite[Lemma 5]{Bro-67}. For the proof of the main result in this section we will only need one case of the following lemma, but the second case will become important later.
\begin{lem}[]\label{lem_bro2}
 Assume that $\cM$ is a $\mathrm{II_1}$-factor and $p\in\Proj(\cM)$. Let $n\in\set{2,3}$. Suppose that $\set{w_{i,j}}_{1\leq i,j\leq n}$ and $\set{w_i}_{1\leq i\leq n}$ are families of elements in $\cM$ satisfying the following three conditions:
 \begin{enumerate}
  \item[(1)] $w_{i,l}w_{l,j}=w_{i,j}$ and $(w_{i,j})^*=w_{i,j}^*=w_{j,i}$ for all $1\leq i,j\leq n$.
  \item[(2)] $p$ and $\set{w_{i,i}}_{1\leq i\leq n}$ are pairwise orthogonal projections.
  \item[(3)] $p+\sum_{i=1}^n w_i\in\cU(\cM)$ and $w_iw_{i,i}=w_{i,i}w_i=w_i$ for all $1\leq i \leq n$.
 \end{enumerate}

In case $n=2$ and $w_2=w_{2,1}w_1^*w_{1,2}$, then $p+w_1+w_2=stst$ for some symmetries $s,t\in\cU(\cM)$ satisfying $\tau(s)=\tau(t)=0$.
In case $n=3$ and $w_3=w_{3,2}w_2^*w_{2,1}w_1^*w_{1,3}$, then $p+w_1+w_2+w_3=s_1t_1s_1t_1\cdot s_2t_2s_2t_2$ for some symmetries $s_1,s_2,t_1,t_2\in\cU(\cM)$ satisfying $\tau(s_i)=\tau(t_i)=0,\ i=1,2$.
\end{lem}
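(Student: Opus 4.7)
The plan is to handle the $n=2$ case by an explicit construction of $s,t$ modelled on the $2\times 2$ identity recalled just before Lemma \ref{lem_bro2}, and then to reduce $n=3$ to $n=2$ via an algebraic factorization of $p+w_1+w_2+w_3$ into two unitaries each satisfying the $n=2$ hypothesis.

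For $n=2$, I would view the corner $(1-p)\cM(1-p)$ as $M_2(\bbC)\otimes w_{1,1}\cM w_{1,1}$ via the matrix units $\{w_{i,j}\}$. The assumption $w_2=w_{2,1}w_1^*w_{1,2}$ says that under this identification the element $w_{1,1}+w_{2,2}+w_1+w_2$ is $\diag(w_1,w_1^*)$, so choosing a unitary square root $u$ of $w_1$ in $w_{1,1}\cM w_{1,1}$ (which exists by Borel functional calculus) and applying the Broise identity produces trace-$0$ symmetries $s_0:=uw_{1,2}+w_{2,1}u^*$ and $t_0:=w_{1,2}+w_{2,1}$ of the corner with $s_0t_0s_0t_0=w_{1,1}+w_{2,2}+w_1+w_2$. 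To extend them to trace-$0$ symmetries of all of $\cM$ without disturbing this product, I would use that $p\cM p$ is a $\mathrm{II}_1$-factor, pick a subprojection $e\leq p$ with $\tau(e)=\tau(p)/2$, and set $s:=(2e-p)+s_0$, $t:=(2e-p)+t_0$. One checks that $s,t$ are trace-$0$ symmetries, and since $(2e-p)^2=p$ and both $s,t$ carry the same summand $2e-p$ on $p\cM p$, the product $stst$ splits over $p$ and $1-p$ and collapses to $p+s_0t_0s_0t_0 = p+w_1+w_2$.

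For $n=3$, the key step is the identity
\[
p+w_1+w_2+w_3 = \bigl((p+w_{3,3})+w_1+\alpha_2\bigr)\cdot\bigl((p+w_{1,1})+\beta_2+w_3\bigr),
\]
with $\alpha_2:=w_{2,1}w_1^*w_{1,2}\in w_{2,2}\cM w_{2,2}$ and $\beta_2:=\alpha_2^* w_2 = w_{2,1}w_1w_{1,2}w_2$. Expanding the product and collapsing cross terms by orthogonality of $p,w_{1,1},w_{2,2},w_{3,3}$ and the absorption $w_{i,i}w_i=w_i$ yields $p+w_1+\alpha_2\beta_2+w_3 = p+w_1+w_2+w_3$. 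The first factor satisfies the $n=2$ hypothesis with projection $p+w_{3,3}$ and matrix units $\{w_{i,j}\}_{i,j\in\{1,2\}}$, essentially by the definition of $\alpha_2$. For the second factor, with projection $p+w_{1,1}$ and matrix units $\{w_{i,j}\}_{i,j\in\{2,3\}}$, the required relation $w_3=w_{3,2}\beta_2^*w_{2,3}$ expands, after using $w_{1,2}w_{2,3}=w_{1,3}$, to $w_3 = w_{3,2}w_2^*w_{2,1}w_1^*w_{1,3}$, which is the given hypothesis. Applying the already proved $n=2$ case to each factor produces the two four-symmetry products $s_1t_1s_1t_1$ and $s_2t_2s_2t_2$.

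The main obstacle is achieving the trace-zero condition on $s$ and $t$ given that the projection $p$ can have arbitrary trace: Broise's identity alone delivers trace-$0$ symmetries only in the $(1-p)$-corner. The subprojection trick above resolves this by supplying a trace-$0$ symmetry $2e-p$ of $p\cM p$ that is ``inert'' in $stst$, since its square is $p$ and the same summand appears in both $s$ and $t$. A secondary technical task is the $n=3$ factorization, which is purely algebraic but requires careful bookkeeping with the partial isometries $w_{i,j}$ and the absorption identities from hypothesis (3).
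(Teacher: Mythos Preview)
Your argument is correct. Both the $n=2$ construction and the $n=3$ factorization check out: the extension by $2e-p$ gives trace-zero symmetries with $(st)^2=p+s_0t_0s_0t_0$, and for $n=3$ one verifies $\alpha_2\alpha_2^*=w_{2,2}$ so that $\alpha_2\beta_2=w_2$, while each factor satisfies the full $n=2$ hypothesis (including unitarity) with the enlarged projections $p+w_{3,3}$ and $p+w_{1,1}$.

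The route, however, differs from the paper's in two respects. First, for $p\neq 0$ the paper does not extend the corner symmetries by an inert $2e-p$; instead it splits $p=\sum_{i=1}^n p_i$ into $n$ equivalent pieces, enlarges the matrix units to $\tilde w_{i,j}=w_{i,j}+x_i^*x_j$ (with $x_l$ partial isometries implementing $p_l\sim p_1$), and sets $\tilde w_i=w_i+p_i$, thereby reducing entirely to the case $p=0$. Second, for $n=3$ the paper works in the $3\times 3$ picture over $w_{1,1}\cM w_{1,1}$ and factors $\diag(w_1,\tilde w_2,\tilde w_2^*w_1^*)=\diag(1,\tilde w_2,\tilde w_2^*)\cdot\diag(w_1,1,w_1^*)$, whereas your factorization, translated to that picture, is $\diag(w_1,w_1^*,1)\cdot\diag(1,w_1\tilde w_2,(w_1\tilde w_2)^*)$. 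Both are equally valid $n=2$ reductions. The paper's absorption of $p$ into the matrix units is a bit more uniform (the trace-zero condition comes for free from the off-diagonal form), while your $2e-p$ trick avoids modifying the $w_{i,j}$ at the cost of one extra observation about the $p$-block being inert in $stst$.
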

\begin{proof}
 Consider first the case $p=0$. Put $\cM_{1,1}\coloneqq w_{1,1}\cM w_{1,1}$ and $\cN\coloneqq\cM_{1,1}\otimes M_{n\times n}(\bbC)$. Then $\psi:\cM\rightarrow\cN,\ x\mapsto (x_{i,j})=(w_{1,i}xw_{j,1})$ is a homomorphism from $\cM$ to the matrix algebra $\cN$. Conditions (1),(2) and (3) imply that $\psi$ is an isomorphism. By a corollary to Proposition I.2 in \cite{Dix-81}, $\cM_{1,1}$ is again a $\mathrm{II_1}$-factor, hence $\cN$ is a $\mathrm{II_1}$-factor by \cite[Proposition 11.2.20]{KR}. 
 
In case $n=2$, the assumptions imply that
 \begin{align*}
  \psi(w_1+w_2)&=\begin{pmatrix} w_{1,1}w_1w_{1,1} &w_{1,1}w_1w_{2,1}\\w_{1,2}w_1w_{1,1} &w_{1,2}w_1w_{2,1}\end{pmatrix}+\begin{pmatrix} w_{1,1}w_2w_{1,1} &w_{1,1}w_2w_{2,1}\\w_{1,2}w_2w_{1,1} &w_{1,2}w_2w_{2,1}\end{pmatrix}\\
&=\begin{pmatrix} w_{1,1}w_1w_{1,1} &0\\0 &w_{1,2}w_{2,1}w_1^*w_{1,2}w_{2,1}\end{pmatrix}
=\begin{pmatrix} w_1 &0\\0 &w_{1,1}w_1^*w_{1,1}\end{pmatrix}
=\begin{pmatrix} w_1 &0\\0 &w_1^*\end{pmatrix}.
 \end{align*}
 By condition (3), $w_1+w_2$ is unitary in $\cM$, hence $w_1=w_{1,1}(w_1+w_2)w_{1,1}$ is unitary in $\cM_{1,1}$.   

In case $n=3$, put $\til{w}_2\coloneqq w_{1,2}w_2w_{2,1}$. The condition $w_3=w_{3,2}w_2^*w_{2,1}w_1^*w_{1,3}$ implies that 
 \begin{align*}
  \psi(w_1+w_2+w_3)
  %&=\begin{pmatrix} w_{1,1}(w_1+w_2+w_3)w_{1,1} &w_{1,1}(w_1+w_2+w_3)w_{2,1} &w_{1,1}(w_1+w_2+w_3)w_{3,1}\\w_{1,2}(w_1+w_2+w_3)w_{1,1} &w_{1,2}(w_1+w_2+w_3)w_{2,1} &w_{1,2}(w_1+w_2+w_3)w_{3,1}\\w_{1,3}(w_1+w_2+w_3)w_{1,1} &w_{1,3}(w_1+w_2+w_3)w_{2,1} &w_{1,3}(w_1+w_2+w_3)w_{3,1} \end{pmatrix}\\
  &=\begin{pmatrix}w_1&0&0\\0&w_{1,2}w_2w_{2,1}&0\\0&0&w_{1,3}w_{3,2}w_2^*w_{2,1}w_1^*w_{1,3}w_{3,1}    \end{pmatrix}\\
  &=\begin{pmatrix}w_1&0&0\\0&\til{w}_2&0\\0&0&\til{w}_2^*w_1^*    \end{pmatrix} =\begin{pmatrix}1&0&0\\0&\til{w}_2&0\\0&0&\til{w}_2^*    \end{pmatrix}\begin{pmatrix}w_1&0&0\\0&1&0\\0&0&w_1^*    \end{pmatrix}.
 \end{align*}
Note that $w_1$ and $\til{w}_2$ are unitary in $\cM_{1,1}$.   
Now consider the case $p\neq 0$, i.e., $\tau(p)>0$. Decompose $p$ into $\sum_{i=1}^np_i$, where $p_i$ are equivalent orthogonal projections. Then define $\til{w}_i\coloneqq w_i+p_i$, $i=1,\ldots,n$. Hence $\sum_{i=1}^n\til{w}_i\in\cU(\cM)$ by condition (3). Adjust the system $\{w_{i,j}\}$ by setting $\til{w}_{i,j}\coloneqq w_{i,j}+x_i^*x_j$, where $x_l$ are the partial isometries such that $x_l^*x_l=p_l$ and $x_lx_l^*=p_{1}$. The families $\set{\til{w}_{i,j}}$ and $\set{\til{w}_i}$ clearly satisfy conditions (2) and (3). We check condition (1). We have $(\til{w}_{i,j})^*=w_{j,i}+x_j^*x_i=\til{w}_{j,i}$ and 
  \begin{align*}
  \til{w}_{i,l}\til{w}_{l,j}&=w_{i,j}+x_i^*x_lx_l^*x_j=w_{i,j}+x_i^*x_ix_i^*x_j  =\til{w}_{i,j}.
  \end{align*}
  That is, we may use the first part of the proof on the adjusted families. Finally, let us show that if the assumptions in case $n=2$ are satisfied for $w_2$, then we have 
  \begin{align*}
   \til{w}_{2,1}\til{w}_1^*\til{w}_{1,2}&=w_{2,1}w_1^*w_{1,2}+x_2^*x_1p_1x_1^*x_2
   =w_2+x_2^*x_1x_1^*x_2
   =w_2+x_2^*x_2
   =\til{w}_2.
  \end{align*}
  Analogously one can check that if $w_3$ satisfies the assumptions of in the first case, then $\til{w}_3=\til{w}_{3,2}\til{w}_2^*\til{w}_{2,1}\til{w}_1^*\til{w}_{1,3}.$
\end{proof}

We have gathered all necessary results to prove Theorem \ref{broise_improved}. The main idea (due to Broise) of its proof is to construct families of elements in the $\mathrm{II_1}$-factor satisfying the conditions in Lemma \ref{lem_bro2} (in case $n=3$) and to control their size. The novelty is only the control on the size, however, we need to repeat Broise's argument in order to get there.

\begin{proof}[Proof of Theorem \ref{broise_improved}:]
 Assume that $u\in\cU(\cM)$. By a standard argument (see for example \cite[Lemma 4]{Bro-67}) there exists a projection $p_0$ in maximal commutative von Neumann subalgebra containing $u$ such that $p_0\sim 1-p_0$. Since $p_0$ commutes with $u$, we have 
 $$u=(up_0+1-p_0)(p_0+u(1-p_0)).$$ 
 Put $u_0:=up_0$. By symmetry it is enough to show that $u_0+1-p_0$ is a product of 4 unitaries as in the statement of the theorem.
 
Let  $\{p_{0}(n)\}_{n\in\bbN_0}$ be a sequence of pairwise orthogonal projections satisfying
 $$p_{0}(0)=p_0,\quad \tau(p_{0}(n))=2^{-(n+1)},\quad \sum_{n\in\bbN_0}p_{0}(n)=1.$$
 Let $\cN_1$ denote the von Neumann algebra generated by $u_0$. There exist two orthogonal projections $p_1(1),p_2(1)\in\cN_1'\cap\mathcal{M}$ such that
 $$p_1(1)+p_2(1)=p_0(0),\quad \tau(p_1(1))=\tau(p_2(1))=\tau(p_0(1))=2^{-2}.$$
 Since the projections $p_0(1),p_1(1)$ and $p_2(1)$ are equivalent and pairwise orthogonal, there exists a family $\set{v_{i,j}(1)}_{0\leq i,j\leq 2}$ of elements in $\cM$ such that 
 $$v_{i,i}(1)=p_i(1),\quad v_{i,l}(1)v_{l,j}(1)=v_{i,j}(1),\quad (v_{i,j}(1))^*=v_{j,i}\textnormal{ for all }0\leq i,j,l\leq 2.$$
 Putting $u_1\coloneqq v_{0,1}(1)u_0v_{1,2}(1)u_0v_{2,0}(1)$, we obtain
 \begin{align*}
  u_1u_1^*&=(v_{0,1}(1)u_0v_{1,2}(1)u_0v_{2,0}(1))(v_{0,2}(1)u_0^*v_{2,1}(1)u_0^*v_{1,0}(1))\\
  &=v_{0,1}(1)u_0v_{1,2}(1)p_{2}(1)u_0u_0^*v_{2,1}(1)u_0^*v_{1,0}(1)\\
  &=v_{0,1}(1)u_0v_{1,2}(1)v_{2,2}(1)v_{2,1}(1)u_0^*v_{1,0}(1)\\
  &=v_{0,1}(1)v_{1,1}(1)v_{1,0}(1)
  =p_0(1)
  =u_1^*u_1.
 \end{align*}
 Inductively on can construct $\cN_n, p_1(n), p_2(n)$ and $ \set{v_{i,j}}_{0\leq i,j\leq n}, u_n,$
where $\cN_n$ is the von Neumann algebra generated by $u_{n-1}$, $p_1(n)$ and $p_2(n)$ are orthogonal projections in $\cM$ satisfying $p_1(n)+p_2(n)=p_0(n-1)$, $p_1(n)\sim p_2(n)\sim p_0(n),$ and $\set{v_{i,j}}_{0\leq i,j\leq n}$ is a family of elements in $\cM$ satisfying
 $$v_{i,i}(n)=p_i(n),\ v_{i,l}(n)v_{l,j}(n)=v_{i,j}(n),\ (v_{i,j}(n))^*=v_{j,i}(n)\tn{ for all }0\leq i,j,l\leq 2,$$
and 
$u_n\coloneqq v_{0,1}(n)u_{n-1}v_{1,2}(n)u_{n-1}v_{2,0}(n).$
For all $n\in\bbN$, we can assume that $\cN_n$ is commutative,
$p_1(n)$ and $p_2(n)$ belong to $\cN_n'$ (and hence commute with $u_{n-1}$) and $u_nu_n^*=u_n^*u_n=p_0(n)$.
Indeed, these properties have been verified for $n=1$ and can be verified inductively for higher $n$.

 Put
 $$w_{i,j}\coloneqq\sum_{m\geq 0}v_{i,j}(2m+1),\quad w_{i,j}'\coloneqq\sum_{m\geq 1}v_{i,j}(2m).$$
 Then $w_{0,0},\; w_{1,1},\; w_{2,2}$, respectively $p_0,\; w_{0,0}',\; w_{1,1}',\; w_{2,2}'$ are mutually orthogonal projections and
$w_{i,l}w_{l,j}=w_{i,j},\ w_{i,j}^*=w_{j,i},\ w_{i,l}'w_{l,j}'=w_{i,j}'$ and $(w_{i,j}')^*=w_{j,i}'.$
We define the following elements:
 \begin{alignat*}{3}
  w_0&\coloneqq\sum_{0\leq m}u_{2m+1}^*,\quad &w_1&\coloneqq\sum_{0\leq m}u_{2m}p_1(2m+1),\quad &w_2&\coloneqq\sum_{0\leq m}u_{2m}p_2(2m+1),\\
 w_0'&\coloneqq\sum_{1\leq m}u_{2m}^*,\quad &w_1'&\coloneqq\sum_{0\leq m}u_{2m+1}p_1(2m+2),\quad &w_2'&\coloneqq\sum_{0\leq m}u_{2m+1}p_2(2m+2).
 \end{alignat*}
 The equation $p_1(n+1)+p_2(n+1)=p_0(n)$ implies that
 $$w_1+w_2=\sum_{m\geq 0}u_{2m}(p_1(2m+1)+p_2(2m+1))=u_0+\sum_{m\geq 1}u_{2m}p_0(2m)=u_0+w_0^{'*},$$
 and
 $w_1'+w_2'=\sum_{m\geq0}u_{2m+1}p_0(2m+1)=w_0^*.$
 Using these two formulas as well as $u_nu_m=0$ for all $n\neq m$ (since $u_n=p_0(n)u_np_0(n)$), we obtain
 \begin{align*}
  (w_1+w_2+w_0)(p_0+w_1'+w_2'+w_0')&=(u_0+w_0+w_0'^*)(p_0+w_0^*+w_0')\\
  &=u_0+w_0w_0^*+w_0'^*w_0'\\
  &=u_0+\sum_{m\geq 1}p_0(m)=u_0+1-p_0.
 \end{align*}
We conclude that
 \begin{align*}
   (w_1+w_2+w_0)^*(w_1+w_2+w_0)
  =&\ w_1^*w_1+w_2^*w_2+w_0^*w_0\\
  =&\ \sum_{m\geq 0}p_0(2m)(p_1(2m+1)+p_2(2m+1))+\sum_{m\geq 0}p_0(2m+1)\\
  =&\ \sum_{m\geq 0}p_0(m)  =\ 1.
 \end{align*}
 Analogously one has $(w_1+w_2+w_0)(w_1+w_2+w_0)^*=1$. That is, $w_1+w_2+w_0$ is unitary. Similarly, $p_0+w_1'+w_2'+w_0'$ is unitary. Observe that
 \begin{align*}
  w_{0,2}w_2^*w_{2,1}w_1^*w_{1,0}%&=\sum_{m\geq 0}v_{0,2}(2m+1)\sum_{m\geq 0}p_2(2m+1)u_{2m}^*\sum_{m\geq 0}v_{2,1}(2m+1)\sum_{m\geq 0}p_1(2m+1)u_{2m}^*\sum_{m\geq 0}v_{1,0}(2m+1)\\
  %&=\sum_{m\geq 0}v_{0,2}(2m+1)u_{2m}^*v_{2,2}(2m+1)v_{2,1}(2m+1)u_{2m}^*v_{1,1}(2m+1)v_{1,0}(2m+1)\\
  &=\sum_{m\geq 0}v_{0,2}(2m+1)u_{2m}^*v_{2,1}(2m+1)u_{2m}^*v_{1,0}(2m+1)
  %&=\sum_{m\geq 0}u_{2m+1}^*\\
  =w_0,
 \end{align*}
and similarly $w_0'=w_{0,2}'w_{2}'^*w_{2,1}'w_1'^*w_{1,0}'$. Hence we can apply Lemma \ref{lem_bro2}(ii) to obtain that $u_0+1-p_0$ is a product of four elements as described in the statement of the theorem. Thus, $u$ is a product of eight such elements.

We now turn to the bounds of the 2-norms -- which is the only novel aspect of this proof --  that were claimed in the statement of the theorem. Assume that $\norm{1-u}<\delta$. It is clear that $\norm{1-u_0-p_0^{\perp}}<\delta$. For $u_1=v_{0,1}(1)u_0v_{1,2}(1)u_0v_{2,0}$ we then get 
 %from the inequality $\norm{xy}_2\leq\norm{x}\norm{y}_2$ for $x,y\in\cM$ that
 \begin{align*}
  &\norm{p_0(1)-u_1}\\
  =&\norm{v_{0,1}(1)(p_0(0)-u_0)v_{1,2}(1)v_{2,0}(1) + v_{0,1}(1)u_0v_{1,2}(1)(p_0(0)-u_0)v_{2,0}(1)}\\
  \leq &\norm{v_{1,2}(1)v_{2,0}(1)}\cdot \norm{v_{0,1}(1)(p_0-u_0)} + \norm{v_{0,1}(1)u_0v_{1,2}(1)}\cdot\norm{(p_0-u_0)v_{2,0}(1)}\\
  \leq &\norm{v_{0,1}(1)(p_0-u_0)} + \norm{(p_0-u_0)v_{2,0}(1)}
  \leq \norm{p_0-u_0}\cdot\norm{p_0} + \norm{p_0-u_0}\cdot\norm{p_0}\\
  %< & 2\cdot 2^{-2}\norm{p_0-u_0}\\
  < & 2\delta.
 \end{align*}
 It follows by induction that for $u_n=v_{0,1}(n)u_{n-1}v_{1,2}(n)u_{n-1}v_{2,0}(n)$ we have
 $$\norm{p_0(n)-u_n}_2<2^{n}\delta.$$
 Now consider $w_1=\sum_{n\geq 0}u_{2n}p_1(2n+1)$, the other $w_i$'s can be treated similarly. 
 From the above estimate we conclude
 \begin{align*}
  \norm{\sum_{n\geq 0}p_1(2n+1) - \sum_{n\geq 0}u_{2n}p_1(2n+1)}_2 &\leq \sum_{n\geq 0}\norm{(1-u_{2n})p_1(2n+1)}_2\\
  &= \sum_{n\geq 0}\norm{(p_0(2n)-u_{2n})p_1(2n+1)}_2\\
  &\leq \sum_{n\geq 0}\norm{p_0(2n)-u_{2n}}\cdot\norm{p_1(2n+1)}_2\\
  &\leq \sum_{n\geq 0}\min\set{2,2^{2n}\delta}\cdot 2^{-(2n+3)}.
 \end{align*}
 That is, we have
 $$\norm{1-u_i}_2=\norm{\left( \begin{smallmatrix} 1 &0 &0 \\ 0 &1 &0\\ 0 &0 &1  \end{smallmatrix} \right) - \left( \begin{smallmatrix} 1 &0 &0 \\ 0 &w_1 &0\\ 0 &0 &w_1^{*}  \end{smallmatrix} \right)}_2 \leq \sum_{n\geq 0}\min\set{2,2^{2n}\delta}\cdot 2^{-(2n+2)}.$$

It remains to show that for arbitrarily small $\eps>0$ there exists $\delta>0$ such that $\norm{1-u}<\delta$ implies
$\norm{1-u_i}_2<\eps.$ Indeed, this follows from standard estimates, in fact 
$$\sum_{n\geq 0}\min\set{2,2^{2n}\delta}\cdot 2^{-(2n+2)} \leq c \cdot \delta |\log(\delta)|$$ for some universal constant $c$. 
\end{proof}

\section{Generalized projective \texorpdfstring{$s$}{s}-numbers}\label{sec_proj_s_numbers}

%We will need the following standard inequalities between the 1-norm and the 2-norm.
%
%\begin{prop}
%Let $(\cM,\tau)$ be a $\mathrm{II_1}$-factor. Assume that $u,v\in\cU(\cM)$ and $x\in\cM$.
%\begin{enumerate}
%\item [(i)] $\abs{\tau(x)}\leq \tau(\abs{x})$.
%\item [(ii)] $\norm{u-v}_1^2\leq \norm{u-v}_2^2\leq 2\cdot\norm{u-v}_1$.
%\item [(iii)] $\norm{\cdot}_1$ and $\norm{\cdot}_2$ induce the same topology.
%\end{enumerate}
%\label{lem_tracenorm}
%\end{prop}

In this section we summarize some facts on generalized $s$-numbers for II$_1$-factor von Neumann algebras, collected mainly from \cite{FK-86}, and introduce so-called generalized projective $s$-numbers which will play a major role in our analysis. 
 
Throughout this section, let $\cM$ denote a II$_1$-factor von Neumann algebra acting on a Hilbert space $\cH$ with faithful, positive, normal, and unital trace $\tau$. 
Fack and Kosaki provide a more general framework in \cite{FK-86}, using $\tau$-measurable operators (which are special possibly unbounded operators affiliated with $\cM$), but for our purposes, it suffices to consider operators in $\cM$ itself.
All results and definitions in this section hold true for semi-finite factor von Neumann algebras with faithful normal semi-finite trace. 

The classical $s$-numbers of compact operators can be generalized in the following way. 
\begin{defn}
 Let $T\in\cM$ and $t \geq 0$. We define the \textbf{$t$-th generalized $s$-number}\index{generalized $s$-number} $\mu_t(T)$ of $T$ as
$\mu_t(T)\coloneqq\inf\set{\norm{Tp}\mid p\in\Proj(\cM)\tn{ such that }\tau(1-p)\leq t}.$
\end{defn}

The natural domain for $\mu_t$ is $[0,1]$, but we will frequently regard it as a function on $[0,\infty)$, extended by zero to the right. We list some  important properties of generalized $s$-numbers.
\begin{lem}[Fack-Kosaki, see \cite{FK-86}]\label{lem_prop_s_numbers} Let $\cM$ be a finite factor von Neumann algebra and let $x,y\in \cM$.
 \begin{enumerate}
  \item[(i)] The map $[0,1]\ni t\mapsto\mu_t(x)$ is non-increasing and right continuous. Moreover, $$\lim_{t\searrow 0}\mu_t(x)=\norm{x}\in[0,\infty].$$
  \item[(ii)] $\mu_t(x)=\mu_t(\abs{x})=\mu_t(x^*)$ and $\mu_t(\alpha x)=\abs{\alpha}\mu_t(x)$ for $t>0$ and $\alpha\in\bbC$.
  \item[(iii)] $\mu_t(x)\leq\mu_t(y)$ for $t>0$, if $0\leq x\leq y$.
  \item[(iv)] $\mu_t\left(f(\abs{x})\right)=f\left(\mu_t(\abs{x})\right),\ t>0,$ for any continuous increasing function $f$ on $[0,\infty)$ with $f(0)\geq 0$.
  \item[(v)] $\mu_{t+s}(x+y)\leq\mu_t(x)+\mu_s(y)$ for $s,t>0$.
  \item[(vi)] $\mu_{t+s}(xy)\leq\mu_t(x)\mu_s(y),\ s,t>0$.
 \end{enumerate}
\end{lem}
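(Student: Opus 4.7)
My plan is to reduce every assertion to the spectral distribution function
$\lambda_s(a) := \tau(e_{(s,\infty)}(a))$ for $a \geq 0$, $s \geq 0$, together with the identity
\[
\mu_t(x) = \inf\{s \geq 0 : \lambda_s(|x|) \leq t\}.
\]
For ``$\leq$'': given $s$ with $\lambda_s(|x|) \leq t$, the spectral projection $p := e_{[0,s]}(|x|)$ has $\tau(1-p) \leq t$ and $\|xp\| = \| |x|p\| \leq s$, where I use $\|xp\|^2 = \|p x^*x p\| = \| |x|p\|^2$ (this identity simultaneously proves the part of (ii) asserting $\mu_t(x)=\mu_t(|x|)$). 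For ``$\geq$'': if $p$ has $\tau(1-p) \leq t$ and $\|xp\| < s$, then on the range of $q := e_{(s,\infty)}(|x|)$ the operator $|x|$ is bounded below by $s$, while on $p\cH$ it has norm $<s$, so $q \wedge p = 0$; the modular identity in the finite factor then yields $\tau(q) \leq \tau(1-p) \leq t$, i.e.\ $\lambda_s(|x|)\leq t$.

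With this identification, (i) is the standard right-inverse of a non-increasing right-continuous function (right-continuity of $s \mapsto \lambda_s(|x|)$ comes from normality of $\tau$), and the limit at $0$ equals $\| |x| \|=\|x\|$. Part (ii) is completed by the polar decomposition $x=v|x|$, whose partial isometry $v$ implements a Murray--von~Neumann equivalence $e_{(s,\infty)}(x^*x) \sim e_{(s,\infty)}(xx^*)$, giving $\lambda_s(|x|)=\lambda_s(|x^*|)$; scalar homogeneity is clear. Part (iv) is pure functional calculus: $e_{(s,\infty)}(f(|x|))=e_{(f^{-1}(s),\infty)}(|x|)$ for continuous strictly increasing $f$ with $f(0)\geq 0$, and the general continuous increasing case follows by approximation.

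For (iii), (v), (vi) I would work directly with the infimum characterisation combined with projection arithmetic. In (iii), with $0 \leq x \leq y$ and $p := e_{(s+\eps,\infty)}(x)$, $q := e_{[0,s]}(y)$, any unit vector $\xi \in (p \wedge q)\cH$ would give $\langle x\xi,\xi\rangle \geq s+\eps > s \geq \langle y\xi,\xi\rangle$, contradicting $x \leq y$; hence $p \wedge q = 0$ and $\tau(p) \leq \tau(1-q)$, so $\lambda_{s+\eps}(x) \leq \lambda_s(y)$ and then $\mu_t(x) \leq \mu_t(y)$ after letting $\eps \searrow 0$. In (v), pick near-optimal $p_x, p_y$ with $\tau(p_x^\perp) \leq t$, $\tau(p_y^\perp) \leq s$ and set $p := p_x \wedge p_y$: the parallelogram identity gives $\tau(p^\perp) \leq t+s$ and $\|(x+y)p\| \leq \|xp_x\| + \|yp_y\|$ since $p \leq p_x, p_y$. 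In (vi), let $r$ be the projection onto $\ker(p_x^\perp y)$; the polar decomposition of $p_x^\perp y$ shows $r^\perp$ is Murray--von~Neumann equivalent to a subprojection of $p_x^\perp$, so $\tau(r^\perp) \leq t$, and $q := p_y \wedge r$ satisfies $\tau(q^\perp) \leq t+s$ together with $\|xyq\| = \|xp_x\,yq\| \leq \|xp_x\|\,\|yp_y\|$.

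The main technical hurdle throughout is the projection arithmetic inside the finite factor: repeatedly invoking the modular identity $\tau(p \vee q) + \tau(p \wedge q) = \tau(p) + \tau(q)$ and the Murray--von~Neumann equivalence of the left and right supports of an element of $\cM$ to dominate traces of ``error'' projections. These are precisely the places where finiteness of $\cM$ (or rather, the existence of the normal faithful finite trace) enters essentially; once they are granted, all six items reduce to book-keeping.
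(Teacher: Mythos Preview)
Your proof is correct and follows the standard Fack--Kosaki approach: reduce to the distribution function $\lambda_s$ via the generalized-inverse identity, then handle (iii), (v), (vi) by projection arithmetic in the finite factor. The paper itself does not prove this lemma at all; it is simply quoted from \cite{FK-86} as a list of known properties, so there is no ``paper's proof'' to compare against beyond observing that your argument is essentially the one given in that reference.

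One small point worth tightening in (vi): you write $\|xyq\| = \|xp_x\,yq\|$, which deserves a word of justification---it holds because $q \leq r$ forces $p_x^\perp y q = 0$, hence $yq = p_x y q$ and so $xyq = (xp_x)(yq)$. You state the ingredients but the equality is asserted rather than derived; making the substitution explicit would close the only visible seam.
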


Clearly, if $x\in \cM$ and $p\in\Proj(\cM)$, then we have $\mu_t(xp)=0\tn{ for } t\geq\tau(p).$ The $p$-norms on $\cM$ have the expression 
$$\|x\|_p = \left(\int_{[0,1]} \mu_t(x)^p dt \right)^{1/p}.$$
The following standard Markov-type inequality will turn out to be useful.

\begin{lem}\label{lem_ineq}
Let $x \in \cM$. We have $\mu_t(x)\leq \|x\|_1/t$ for all $t>0$.
\end{lem}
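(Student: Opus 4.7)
The proof should be a one-line Markov-type argument. The plan is to use two pieces of information already recorded in the excerpt: the integral representation $\|x\|_1 = \int_{[0,1]} \mu_s(x)\,ds$ (from the $p$-norm formula with $p=1$), and the fact that $s \mapsto \mu_s(x)$ is non-increasing (Lemma \ref{lem_prop_s_numbers}(i)).

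Concretely, I would first dispose of the range $t > 1$: since $\cM$ is a $\mathrm{II}_1$-factor with $\tau(1) = 1$, the paper extends $\mu_t$ by zero outside $[0,1]$, so for $t > 1$ the right-hand side is non-negative while the left-hand side vanishes, making the inequality trivial. It then suffices to treat $0 < t \leq 1$.

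For such $t$, I would estimate
\begin{equation*}
\|x\|_1 \;=\; \int_{0}^{1} \mu_s(x)\,ds \;\geq\; \int_{0}^{t} \mu_s(x)\,ds \;\geq\; \int_{0}^{t} \mu_t(x)\,ds \;=\; t\,\mu_t(x),
\end{equation*}
where the first inequality simply drops the part of the integral over $[t,1]$ (which is non-negative because $\mu_s(x) \geq 0$), and the second uses that $\mu_s(x) \geq \mu_t(x)$ for $s \leq t$ by monotonicity. Dividing by $t > 0$ yields the claim. There is no real obstacle here; the only thing to be careful about is remembering the extension convention so that the statement makes sense for $t > 1$.
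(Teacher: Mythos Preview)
Your proof is correct and is essentially the same Markov-type argument as the paper's, just phrased directly rather than by contradiction: both use the integral formula for $\|x\|_1$ together with the monotonicity of $t\mapsto\mu_t(x)$ to bound $\|x\|_1$ from below by $t\,\mu_t(x)$.
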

\begin{proof}
Note that $\|x\|_1 =\tau(\abs{x})=\int_{[0,1]}\mu_t(x)dt$. Assume to the contrary that $\mu_{t_0}(x)>\|x\|_1/t_0$ for some $t_0>0$. Since $\mu_t$ is non-increasing in $t$, this implies $\mu_t(x)>\|x\|_1/t_0$ for all $t\in[0,t_0]$. Hence, by positivity of $\mu_t$,
 \begin{align*}
  \int_{[0,1]}\mu_t(x)dt\geq \int_{[0,t_0]}\mu_t(x)dt>\int_{[0,t_0]}\frac{\|x\|_1}{t_0}dt=\|x\|_1,
 \end{align*}
 a contradiction.
\end{proof}

%We actually get a new expression for the trace $\tau$.
%
%\begin{prop}
% \begin{enumerate}
%  \item[(i)] Assume that $T\in \cM$ is positive. Then $$\tau(T)=\int_{0}^{\infty}\mu_t(T)dt.$$
%  \item[(ii)] Assume that $T\in \cM$ and let $f$ be a continuous increasing function on $[0,\infty)$ satisfying $f(0)=0$. Then $$\tau\left(f(\abs{T})\right)=\int_0^{\infty}f(\mu_t(T))dt,$$ and in particular, $$\norm{T}_p=\left( \int_0^{\infty}\mu_t(T)^pdt \right)^{1/p}\tn{ for }p\in(0,\infty).$$
% \end{enumerate}
%\end{prop}
%\begin{cor}
% For positive operators $S,T\in\cM$, the following conditions are equivalent:
% \begin{enumerate}
%  \item[(i)] $\mu_t(T)\leq\mu_t(S),\ t>0;$
%  \item[(ii)] $\lambda_s(T)\leq\lambda_s(S),\ s\geq 0;$
%  \item[(iii)] $\tau\left(f(T)\right)\leq\tau\left(f(S)\right)$ for any continuous increasing function $f$ on $[0,\infty)$ with $f(0)=0$.
% \end{enumerate}
% If $\cM$ is a factor, then the above are equivalent to
% \begin{enumerate}
%  \item[(iv)] $E_{(s,\infty)}(T)\precsim E_{(s,\infty)}(S),\ s\geq 0$.
% \end{enumerate}
%\end{cor}
%
%
Let $\cP\cU(\cM)$ denote the projective unitary group of $\cM$, i.e., $\cP\cU(\cM)=\cU(\cM)/{S^1}$, where ${S^1}$ denotes the center of $\cU(\cM)$.
We are going to develop the notion of generalized projective $s$-numbers and prove some useful properties of these. Some of these properties will be freely used in the following sections.

\begin{lem}\label{lem_length_cts}
 Let $\cM$ denote a II$_1$-factor. Let $x\in \cU(\cM)$. The function $\lambda \mapsto \mu_t(1-\lambda x)$ is 1-Lipschitz in $\lambda\in {S^1}$ for all $t\geq 0$.
\end{lem}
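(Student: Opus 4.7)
The plan is to use directly the definition $\mu_t(y)=\inf\{\|yp\| : p\in\Proj(\cM),\ \tau(1-p)\leq t\}$, combined with the elementary estimate $\mu_t(a+b)\leq \mu_t(a)+\|b\|$. Given $\lambda_1,\lambda_2\in S^1$, I would write
\[
1-\lambda_1 x=(1-\lambda_2 x)+(\lambda_2-\lambda_1)x
\]
and estimate, for any projection $p$ with $\tau(1-p)\leq t$,
\[
\|(1-\lambda_1 x)p\|\leq \|(1-\lambda_2 x)p\|+|\lambda_2-\lambda_1|\cdot\|xp\|\leq \|(1-\lambda_2 x)p\|+|\lambda_1-\lambda_2|,
\]
using that $x$ is unitary so $\|xp\|\leq 1$. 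Taking the infimum over admissible $p$ on the right yields
\[
\mu_t(1-\lambda_1 x)\leq \mu_t(1-\lambda_2 x)+|\lambda_1-\lambda_2|.
\]
Swapping the roles of $\lambda_1$ and $\lambda_2$ gives the 1-Lipschitz bound.

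Alternatively one may invoke Lemma \ref{lem_prop_s_numbers}(v) in the limit $s\searrow 0$ together with $\lim_{s\searrow 0}\mu_s((\lambda_2-\lambda_1)x)=\|(\lambda_2-\lambda_1)x\|=|\lambda_1-\lambda_2|$ from part (i); but deriving the estimate $\mu_t(a+b)\leq \mu_t(a)+\|b\|$ from the definition is cleaner and self-contained.

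I do not expect any obstacle: the only point worth flagging is that property (v) in the cited lemma is only stated for $s,t>0$, so one should either pass to the limit using right-continuity of $s\mapsto \mu_s(\cdot)$ (part (i)), or — as I prefer — argue from scratch via the projection-infimum definition, where unitarity of $x$ makes the bound $\|xp\|\leq 1$ immediate. The same argument shows that for any fixed $y\in\cM$, $\lambda\mapsto \mu_t(1-\lambda y)$ is $\|y\|$-Lipschitz, so the statement is really a special case of a more general Lipschitz property.
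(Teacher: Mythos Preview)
Your proof is correct and follows essentially the same approach as the paper's: both use the projection-infimum definition of $\mu_t$ together with the decomposition $(1-\lambda_1 x)-(1-\lambda_2 x)=(\lambda_2-\lambda_1)x$ and $\|x\|\leq 1$. Your version is marginally cleaner in that it takes the infimum over all admissible $p$ rather than selecting a specific projection $q_0$ realizing the infimum.
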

\begin{proof}
 Let $\eps >0$ be arbitrary. We claim that there exists $\delta >0$ such that $\abs{\lambda_1-\lambda_2}<\delta$ for $\lambda_i\in S^1$, implies $\abs{\mu_t(1-\lambda_1 x)-\mu_t(1-\lambda_2 x)}<\eps$. We may assume without loss of generality that $\inf_{\tau(1-p)\leq t}\norm{(1-\lambda_1 x)p}\geq \inf_{\tau(1-q)\leq t}\norm{(1-\lambda_2 x)q}$.
 \begin{align*}
  \abs{\mu_t(1-\lambda_1 x)-\mu_t(1-\lambda_2 x)}&= \abs{\inf_{\tau(1-p)\leq t}\norm{(1-\lambda_1 x)p}-\inf_{\tau(1-q)\leq t}\norm{(1-\lambda_2 x)q}}\\
  &\leq \norm{(1-\lambda_1 x)q_0}-\norm{(1-\lambda_2 x)q_0}\\
  &\leq \norm{(1-\lambda_1 x)q_0-(1-\lambda_2 x)q_0}\\
  &=\abs{\lambda_1-\lambda_2}
 \end{align*}
where $q_0$ is chosen such that it realizes $\inf_{\tau(1-q)\leq t}\norm{(1-\lambda_2 x)q}$. \end{proof}

\begin{defn}\label{def_proj_s_number}
 Let $\cM$ be a finite factor with faithful normal trace $\tau$. We define 
$$\ell_t(x)\coloneqq\inf_{\lambda\in {S^1}}\mu_t(1-\lambda x)\mbox{ for } t\geq 0,\ x\in\cM,$$
and call $\ell_t$ the \textbf{$t$-th generalized projective $s$-number of $x\in \cM$}\index{generalized projective $s$-number}.

For a projection $p\in\Proj(\cM)$ we denote the restriction of $\ell_t$ to $p\cM p$ by $\ell_t^{(p)}$, that is 
$$\ell_t^{(p)}(x)=\inf_{\lambda\in {S^1}}\mu_t(p-\lambda pxp)\mbox{ for } t\geq 0.$$
We call the smallest number $s=s(x)\in[0,\infty]$ such that $\ell_t(x)\neq 0$ if and only if $t\in[0,s)$ the \textbf{projective rank of $x$}\index{projective rank}.
\end{defn}
We choose the notation $\ell_t$ because it serves as some generalization of a length function in our context. 
One can think of $\ell_t(u)$, $u\in \cU(\cM)$, as a measure of the size of the spectrum of $x$ after cutting out a piece of size $t\geq 0$, which reduces the size of the spectrum of $x$ as much as possible.

It follows immediately from the definition that $\ell_t(x)=\ell_t(\xi x)$ for all $\xi\in{S^1}$ and $t\geq 0$. 
Observe that we have $\ell_t=0$ for $t\geq 1$. By Lemma \ref{lem_prop_s_numbers}(ii) we have $\ell_t(x)=\ell_t(x^*)$ for every $t\geq 0$ and $x\in\cM$.
Using Lemma \ref{lem_prop_s_numbers}(vi), we conclude 
$$\ell_t(gxg^*)=\inf_{\lambda\in {S^1}}\mu_t(g(1-\lambda x)g^*) \leq\inf_{\lambda\in {S^1}}\norm{g}\norm{g^*}\mu_t(1-\lambda x)=\ell_t(x)$$
for all $g\in \cP\cU(\cM)$, $x\in\cM$ and $t\geq 0$. Replacing  $x$ by $g^*xg$, we obtain that $\ell_t$ is invariant under conjugation, i.e.,
$$\ell_t(gxg^*)=\ell_t(x) \tn{ for all }t\geq 0.$$

Now let $p\in\Proj(\cM)\setminus\set{0}$ and assume that $x\in\cM$ commutes with $p$. Then we have 
$\ell_t^{(p)}(x)\leq\ell_t(x)\mbox{ for all }t\geq 0.$
Indeed, we have
\begin{align*}
 \ell_t^{(p)}(x) &=\inf_{\lambda\in {S^1}}\mu_t(p-\lambda pxp)
 =\inf_{\lambda\in{S^1}}\inf_{q\in\Proj(\cM),\tau(1-q)\leq t}\norm{p(1-\lambda x)pq}\\
 &\leq \inf_{\lambda\in{S^1}}\inf_{q\in\Proj(\cM),\tau(1-q)\leq t}\norm{p}\norm{(1-\lambda x)q}\\
 &= \inf_{\lambda\in{S^1}}\inf_{q\in\Proj(\cM),\tau(1-q)\leq t}\norm{(1-\lambda x)q} =\ell_t(x).
\end{align*}

\begin{lem}\label{lem_decrease}
 $\ell_{s+t}(xy)\leq\ell_s(x)+\ell_t(y)$ for all $x,y\in\cM$ and $s,t\geq 0$. In particular, $\ell_t$ is non-increasing in $t\geq 0$.
\end{lem}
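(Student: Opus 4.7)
The plan is to reduce the subadditivity claim for $\ell_t$ to the already-established subadditivity of the generalized $s$-numbers recorded in Lemma~\ref{lem_prop_s_numbers}(v). The algebraic identity that makes everything work is
\[
1 - \lambda\mu\, xy \;=\; (1 - \lambda x) + \lambda x\,(1 - \mu y) \qquad (\lambda,\mu\in S^1),
\]
which rewrites the ``defect from $1$'' of the rotated product $\lambda\mu\, xy$ as a sum of defects of the individual factors. Applying Lemma~\ref{lem_prop_s_numbers}(v) to this sum with the split $s+t$ immediately yields
\[
\mu_{s+t}(1 - \lambda\mu\, xy) \;\leq\; \mu_s(1-\lambda x) + \mu_t\bigl(\lambda x(1-\mu y)\bigr).
\]

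The remaining step is to absorb the extra factor $\lambda x$ appearing on the right. I would do this via the elementary estimate $\mu_t(az) \le \|a\|\,\mu_t(z)$, which is immediate from $\|azp\|\le\|a\|\,\|zp\|$ and the definition of $\mu_t$ as an infimum of such norms. Under the standing application to $\|x\|\le 1$ (the case of interest, since $\ell$ is ultimately used on unitaries), this collapses the second summand to $\mu_t(1-\mu y)$. Taking the infimum over $\lambda,\mu\in S^1$ separately --- and observing that the product $\lambda\mu$ still sweeps out all of $S^1$, so the left-hand side is bounded below by $\ell_{s+t}(xy)$ --- produces exactly the desired $\ell_{s+t}(xy)\le\ell_s(x)+\ell_t(y)$.

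For the ``in particular'' part I would specialize $y=1$: the choice $\lambda=1$ shows $\ell_t(1)=\mu_t(0)=0$ for all $t\geq 0$, so any $t_1\le t_2$ can be written as $t_2 = t_1+(t_2-t_1)$ and the main inequality applied to $x\cdot 1$ gives $\ell_{t_2}(x)\le\ell_{t_1}(x)$, which is the non-increasing statement.

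The main obstacle I anticipate is precisely the presence of the spurious factor $\lambda x$ inside the second $\mu_t$. A first attempt might invoke Lemma~\ref{lem_prop_s_numbers}(vi) to split the product submultiplicatively, but that lemma requires both indices to be strictly positive and would replace $\mu_t$ by $\mu_{t+\varepsilon}$, ruining the clean splitting of $s+t$. The bare inequality $\mu_t(az)\le\|a\|\,\mu_t(z)$ circumvents this and is the right tool, though it is also the reason why the result is meaningful only in the contractive regime $\|x\|\le 1$ --- for general elements of $\cM$, scalar examples show the inequality can genuinely fail, consistent with $\ell_t$ being designed as an invariant length-type quantity on the unitary part of $\cM$.
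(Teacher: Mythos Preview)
Your argument is essentially the same as the paper's: the paper also fixes a scalar realizing $\ell_t(y)$, uses the companion identity $1-\lambda\lambda' xy=(1-\lambda x)\lambda' y+(1-\lambda' y)$ (with the extra factor on the \emph{right} rather than the left), and then invokes Lemma~\ref{lem_prop_s_numbers}(v) together with the bound $\mu_s(zw)\le\mu_s(z)\|w\|$. The two decompositions are symmetric variants of one another, so there is no genuine difference in method.

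One point deserves mention: you are right to flag the contractive hypothesis. Your identity needs $\|x\|\le 1$, the paper's needs $\|y\|\le 1$, and your scalar counterexample (e.g.\ $x=y=2\cdot 1$, where $\ell_0(xy)=3>2=\ell_0(x)+\ell_0(y)$) shows that the inequality stated for arbitrary $x,y\in\cM$ is literally false. The paper glosses over this, writing $\mu_s((1-\lambda x)\lambda' y)=\mu_s(1-\lambda x)$ as an equality without comment; that step is valid only when $y$ is unitary (or at least a contraction), which is indeed the only case ever used downstream. So your caveat is not pedantry but an actual correction to the statement as printed.
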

\begin{proof}
 Since ${S^1}$ compact and since $\mu_t(1-\lambda x)$ is continuous in $\lambda\in {S^1}$, we can choose $\lambda' \in {S^1}$ such that $\ell_t(y)=\mu_t(1-\lambda' y)$. Using Lemma \ref{lem_prop_s_numbers}(i),(v), we obtain
 \begin{align*}
  \ell_{s+t}(xy)&=\ell_{s+t}(x \lambda' y)=\inf_{\lambda\in {S^1}}\mu_{s+t}((1-\lambda x)\lambda' y+(1-\lambda' y))\\
  &\leq \inf_{\lambda\in {S^1}}\mu_{s}((1-\lambda x)\lambda' y)+\mu_t(1-\lambda' y)
  =\inf_{\lambda\in {S^1}}\mu_{s}(1-\lambda x)+\ell_t(y) =\ell_s(x)+\ell_t(y).
 \end{align*}
 To see that $\ell_t$ is non-increasing in $t$, let $y=1$ and use that obviously $\ell_t(1)=0$ for all $t\geq 0$ to obtain $\ell_{s+t}(x)\leq\ell_t(x)$ for all $t\geq 0$.
\end{proof}

\begin{lem}\label{lem_right_cts}
 $\ell_t(x)$ is right continuous in $t\in[0,\infty]$, where $x\in\cM$.
\end{lem}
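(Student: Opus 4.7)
The plan is to prove right continuity by sandwiching $\lim_{t\searrow t_0}\ell_t(x)$ between $\ell_{t_0}(x)$ (from monotonicity) and $\ell_{t_0}(x)$ (from a compactness argument that exploits the two continuity results already in hand: Lemma \ref{lem_prop_s_numbers}(i), which gives right continuity of $t \mapsto \mu_t(y)$ for each fixed $y$, and Lemma \ref{lem_length_cts}, which gives $1$-Lipschitz continuity of $\lambda \mapsto \mu_t(1-\lambda x)$ on the compact set $S^1$). First I would fix $t_0 \geq 0$ and pick an arbitrary decreasing sequence $t_n \searrow t_0$. By Lemma \ref{lem_decrease}, $\ell_{t_n}(x) \leq \ell_{t_0}(x)$, so $\limsup_{n} \ell_{t_n}(x) \leq \ell_{t_0}(x)$, which handles the easy side.

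For the matching lower bound, I would use that for each $n$ the infimum in the definition of $\ell_{t_n}(x)$ is attained: since $S^1$ is compact and $\lambda \mapsto \mu_{t_n}(1-\lambda x)$ is continuous by Lemma \ref{lem_length_cts}, there exists $\lambda_n \in S^1$ with $\ell_{t_n}(x) = \mu_{t_n}(1-\lambda_n x)$. By compactness of $S^1$, pass to a subsequence (still indexed by $n$) so that $\lambda_n \to \lambda_\ast \in S^1$. The Lipschitz estimate from Lemma \ref{lem_length_cts} yields
\[
\mu_{t_n}(1-\lambda_n x) \;\geq\; \mu_{t_n}(1-\lambda_\ast x) - |\lambda_n - \lambda_\ast|.
\]
Letting $n \to \infty$, the right continuity of $t \mapsto \mu_t(1-\lambda_\ast x)$ from Lemma \ref{lem_prop_s_numbers}(i) gives $\mu_{t_n}(1-\lambda_\ast x) \to \mu_{t_0}(1-\lambda_\ast x) \geq \ell_{t_0}(x)$, while $|\lambda_n - \lambda_\ast| \to 0$. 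Hence $\liminf_n \ell_{t_n}(x) \geq \ell_{t_0}(x)$.

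Since every subsequence of $(\ell_{t_n}(x))$ admits a further subsequence whose liminf is at least $\ell_{t_0}(x)$, and the whole sequence satisfies $\limsup \leq \ell_{t_0}(x)$, we conclude $\ell_{t_n}(x) \to \ell_{t_0}(x)$. As the sequence $t_n \searrow t_0$ was arbitrary, $\ell_t(x)$ is right continuous at $t_0$. I do not expect any real obstacle: the only subtle point is justifying that the infimum over $\lambda$ is attained, so that we actually have a sequence $\lambda_n$ to extract a convergent subsequence from — this is exactly why the auxiliary Lemma \ref{lem_length_cts} was inserted just before.
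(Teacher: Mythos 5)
Your proof is correct, and it takes a genuinely different route from the paper's. The paper argues by showing that the family of thresholds $\delta_\lambda$ (the largest $\delta$ for which $\mu_t(1-\lambda x)-\mu_{t+\delta'}(1-\lambda x)<\eps$ for all $\delta'<\delta$) has a positive infimum over $\lambda\in S^1$ — a Dini-type uniform right-continuity statement proved by contradiction, juggling a fixed $t$, a candidate bad sequence $\lambda_n \to \lambda$, and two applications of the Lipschitz estimate. You instead run a direct sequential-compactness argument on the minimizers: realize $\ell_{t_n}(x)=\mu_{t_n}(1-\lambda_n x)$, extract $\lambda_{n_k}\to\lambda_\ast$, use the Lipschitz bound to replace $\lambda_{n_k}$ by $\lambda_\ast$, and then invoke right-continuity of $t\mapsto\mu_t(1-\lambda_\ast x)$ at $t_0$. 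Both proofs rest on the same three ingredients — compactness of $S^1$, the $1$-Lipschitz dependence on $\lambda$ from Lemma \ref{lem_length_cts}, and right-continuity of $\mu_t$ from Lemma \ref{lem_prop_s_numbers}(i) — but your combination is more streamlined and avoids the contradiction and the bookkeeping of the $\delta_\lambda$'s. One small further simplification you could make: since $t_n\searrow t_0$ and $\ell_t$ is non-increasing (Lemma \ref{lem_decrease}), the sequence $\ell_{t_n}(x)$ is non-decreasing and bounded above by $\ell_{t_0}(x)$, hence already convergent; then a single convergent subsequence of $(\lambda_n)$ suffices to identify the limit, and the ``every subsequence has a further subsequence'' wrapper at the end is unnecessary. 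This is cosmetic, not a gap.
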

\begin{proof}
Fix arbitrary $t\geq 0$ and $\eps>0$. By Lemma \ref{lem_length_cts}, for every $t\geq 0$ we can choose $\lambda_t\in{S^1}$ which realizes $\inf_{\lambda\in{S^1}}\mu_t(1-\lambda x)$. Moreover, for every $\lambda$ we can choose a maximal $\delta_{\lambda}> 0$ such that 
 $$\mu_{t}(1-\lambda x)-\mu_{t+\delta}(1-\lambda x) < \eps \quad \mbox{for all } \delta<\delta_{\lambda},$$
since $\mu_t$ is right continuous in $t\in[0,\infty]$.

We claim that $\delta\coloneqq \inf_{\lambda\in{S^1}} \delta_{\lambda} > 0$. Assume to the contrary that $\delta=0$, i.e., there exists no $\delta'>0$ such that $\mu_t(1-\lambda x) - \mu_{t+\delta'}(1-\lambda x) <\eps$ for all $\lambda\in{S^1}$. Then there exist $\lambda\in S^1$ and sequences $(\lambda_n)_n$ and $(\delta_n)_n$, $\delta_n\coloneqq \delta_{\lambda_n}$, such that
 $\lambda_n\rightarrow \lambda$ and $\delta_n\rightarrow 0$ as $n\rightarrow \infty$, and
$\mu_t(1-\lambda_n x) - \mu_{t+\delta_n}(1-\lambda_n x) \geq \eps$ for all $n \in \bbN$. Hence, by Lemma \ref{lem_length_cts}, we obtain
$$\mu_t(1-\lambda x) - \mu_{t+\delta_n}(1-\lambda x) \geq \eps - 2 |\lambda - \lambda_n|.$$
On the other hand we have $\mu_t(1-\lambda x)-\mu_{t+\delta'}(1-\lambda x) < \eps/2$
for some $\delta'>0$. There exists $n_1\in\bbN$ such that $\delta_n \leq \delta'$ for all $n\geq n_1$. But this implies
 $$\eps/2> \mu_t(1-\lambda x)-\mu_{t+\delta'}(1-\lambda x) \geq \mu_t(1-\lambda x) - \mu_{t+\delta_n}(1-\lambda x) \geq \eps - 2 |\lambda - \lambda_n|.$$
Thus, we obtain a contradiction, when $n$ is large enough.
 %Letting $n$ tend to $\infty$ and using uniform continuity in $\lambda$ and right-continuity we arrive at a contradiction to Inequality \eqref{eq_epsdelta}.
 Hence $\delta>0$ and
 $$\eps > \mu_t(1-\lambda_{t+\delta}x)-\mu_{t+\delta}(1-\lambda_{t+\delta}x) = \mu_t(1-\lambda_{t+\delta}x)-\ell_{t+\delta}(x)\geq \ell_t(x)-\ell_{t+\delta}(x).$$
 Since $t\geq 0$ and $\eps>0$ were arbitrary, we are done.
\end{proof}

The following proposition summarizes the above proven properties of generalized projective $s$-numbers.

\begin{prop}\label{prop_proj_s_numbers}
 Let $\cM$ be a finite factor with faithful normal, positive, and unital trace $\tau$. Let $x,y\in\cM$ and $u\in\cU(\cM)$. Let $p$ be a projection that commutes with $x$.
 \begin{enumerate}
  \item[(i)] $\ell_t(x)=\ell_t(x^*)$ for all $t\geq 0$.
  \item[(ii)] $\ell_t(x)=0$ for all $t\geq\tau(1)$.
  \item[(iii)] $\ell_t(uxu^*)=\ell_t(x)$ for all $t\geq 0$.
  \item[(iv)] $\ell_t^{(p)}(x)\leq\ell_t(x)$ for all $t\geq 0$.
  \item[(v)] $\ell_{s+t}(xy)\leq \ell_s(x)+\ell_t(y)$ for all $s,t\geq 0$.
  \item[(vi)] $\ell_t(x)$ is non-increasing in $t\geq 0$.
  \item[(vii)] $\ell_t(x)$ is right continuous in $t\geq 0$.
 \end{enumerate}
\end{prop}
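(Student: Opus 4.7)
The plan is to observe that every assertion in this proposition has already been established (or is immediately implicit) in the discussion between Definition \ref{def_proj_s_number} and Lemma \ref{lem_right_cts}. Thus the role of the proposition is purely to collect these facts in one statement, and the proof consists of matching each clause with the corresponding earlier argument.

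For (i), I would note that $(1-\lambda x)^{*} = 1 - \bar\lambda x^{*}$, and as $\lambda$ ranges over $S^{1}$, so does $\bar\lambda$; then Lemma \ref{lem_prop_s_numbers}(ii) applied pointwise yields $\mu_t(1-\lambda x) = \mu_t((1-\lambda x)^{*}) = \mu_t(1-\bar\lambda x^{*})$, and taking infima gives $\ell_t(x) = \ell_t(x^{*})$. For (ii), with $t \geq \tau(1) = 1$ one may take $p = 0$ in the definition of $\mu_t$, since $\tau(1-p) = 1 \leq t$; then $\|(1-\lambda x)p\| = 0$, whence $\mu_t(1-\lambda x) = 0$ for every $\lambda$, and $\ell_t(x) = 0$. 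For (iii), the computation carried out in the paragraph preceding Lemma \ref{lem_decrease} (using Lemma \ref{lem_prop_s_numbers}(vi) together with $\|u\| = \|u^{*}\| = 1$) gives $\ell_t(uxu^{*}) \leq \ell_t(x)$, and replacing $x$ by $u^{*}xu$ yields the reverse inequality.

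For (iv), the chain of inequalities displayed right after the statement of this inequality in the text handles this: one bounds $\|p(1-\lambda x)pq\| \leq \|p\|\,\|(1-\lambda x)q\| \leq \|(1-\lambda x)q\|$ and takes infima over $\lambda \in S^{1}$ and admissible projections $q$. The hypothesis that $p$ commutes with $x$ is used only to guarantee that $p - \lambda p x p = p(1-\lambda x)p$ makes sense as an element of $p\cM p$ so that $\ell_t^{(p)}(x)$ is actually the projective $s$-number computed inside the corner. For (v) and (vi), I would just cite Lemma \ref{lem_decrease}, which proves the subadditivity statement directly and derives the monotonicity from it by setting $y = 1$ and noting $\ell_t(1) = 0$. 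Finally (vii) is exactly Lemma \ref{lem_right_cts}.

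No step presents an obstacle, since all of the work is done in the lemmas and surrounding discussion; the only mild subtlety is to be explicit in (i) that the substitution $\lambda \mapsto \bar\lambda$ leaves $S^{1}$ invariant, and in (ii) that the zero projection is admissible for the infimum defining $\mu_t$. The whole proof can therefore be written as a short bulleted list of references, and I would present it in essentially that form, relying on the reader to supply the one-line verifications from the text.
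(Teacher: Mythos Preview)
Your proposal is correct and matches the paper's treatment exactly: the proposition is explicitly introduced as a summary of the properties established in the paragraphs between Definition \ref{def_proj_s_number} and Lemma \ref{lem_right_cts}, and your clause-by-clause attribution to those arguments and to Lemmas \ref{lem_prop_s_numbers}, \ref{lem_decrease}, and \ref{lem_right_cts} is precisely what is intended. The only point worth making explicit, as you note, is that in (iv) the commutation of $p$ with $x$ is what lets one pass from $\|p(1-\lambda x)pq\|$ to $\|(1-\lambda x)q\|$ via $\|(1-\lambda x)pq\| = \|p(1-\lambda x)q\|$.
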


The following proposition is the before mentioned easy observation on the length of products of conjugates. The proof is a straight forward application of the above properties of generalized projective $s$-numbers.

\begin{prop}\label{prop_easy_dir}
 If $u\in G\coloneqq\cP\cU(\cM)$ is a product of $k$ conjugates of $v\in G$ and $v^{-1}$, then $\ell_{k\cdot t}(u)\leq k\cdot\ell_t(v)$ for all $t\geq 0$.
\end{prop}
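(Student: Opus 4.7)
The plan is to proceed by induction on $k$, using only the properties collected in Proposition \ref{prop_proj_s_numbers}. The key observation is that combining (i) and (iii) yields conjugation and inversion invariance of $\ell_t$ on $\cU(\cM)$: for any $g \in G$ and any unitary $v$ we have $\ell_t(gv^{\pm 1}g^{-1}) = \ell_t(v^{\pm 1}) = \ell_t(v)$, since $v^{-1} = v^*$ for unitaries.

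The base case $k = 1$ is then immediate: if $u = g v^{\pm 1} g^{-1}$, the above invariance gives $\ell_t(u) = \ell_t(v)$ for every $t \geq 0$.

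For the inductive step, suppose the result holds for $k - 1$ and write
$$u = u' \cdot w, \qquad u' = \prod_{i=1}^{k-1} g_i v^{\epsilon_i} g_i^{-1}, \qquad w = g_k v^{\epsilon_k} g_k^{-1},$$
where $\epsilon_i \in \{+1, -1\}$. Applying the subadditivity property (v) with $s = (k-1)t$, we obtain
$$\ell_{kt}(u) = \ell_{(k-1)t + t}(u'w) \leq \ell_{(k-1)t}(u') + \ell_t(w).$$
The inductive hypothesis gives $\ell_{(k-1)t}(u') \leq (k-1)\ell_t(v)$, and the base-case invariance yields $\ell_t(w) = \ell_t(v)$. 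Combining these bounds produces $\ell_{kt}(u) \leq k\,\ell_t(v)$, completing the induction.

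There is no genuine obstacle here; the statement is essentially a bookkeeping exercise built on the conjugation invariance (iii), the invariance under taking adjoints (i), and the subadditivity (v). The only mild subtlety is to pair the subadditivity exponents correctly — splitting $kt = (k-1)t + t$ rather than, say, attempting $kt = t + \cdots + t$ in one step — so that a clean induction absorbs the entire product.
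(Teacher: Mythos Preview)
Your proof is correct and follows essentially the same approach as the paper: the paper simply unrolls your induction into an explicit chain of inequalities, applying subadditivity (v) to peel off one conjugate at a time and then invoking (i) and (iii) to reduce each term to $\ell_t(v)$. The only cosmetic difference is that the paper splits off the \emph{leftmost} factor first and writes $\ell_{kt}(u)\leq \ell_t(g_1v^{\eps_1}g_1^*)+\ell_{(k-1)t}(\cdots)$, whereas you split off the rightmost factor in the inductive step.
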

\begin{proof}
 Let $t\geq 0$. By assumption we can write $u=g_1^{}v^{\eps_1}g_1^*g_2^{}v^{\eps_2}g_2^*\cdots g_k^{}v^{\eps_k}g_k^*$ for some $g_i\in G$ and $\eps_i\in\set{1,-1}$, where $i=1,\ldots,k$ . Using $\ell_t(gwg)=\ell_t(w)$ for all $g,w\in G$ and that $\ell_t(w)=\ell_t(w^*)$ for all $t\geq 0$, we deduce
 \begin{align*}
  \ell_{kt}(u)&\leq\ell_t(g_1^{}v^{\eps_1}g_1^*)+\ell_{(k-1)t}(g_2^{}v^{\eps_2}g_2^*\cdots g_k^{}v^{\eps_k}g_k^*)\\
  &\leq\ell_t(v^{\eps_1})+\ell_t(g_2^{}v^{\eps_2}g_2^*)+\ell_{(k-2)t}(g_3^{}v^{\eps_3}g_3^*\cdots g_k^{}v^{\eps_k}g_k^*)\\
  &=\ell_t(v)+\ell_t(g_2^{}v^{\eps_2}g_2^*)+\ell_{(k-2)t}(g_3^{}v^{\eps_3}g_3^*\cdots g_k^{}v^{\eps_k}g_k^*)\\
&\ \vdots\\
  &\leq k\cdot\ell_t(v),
 \end{align*}
 which proves our claim.
\end{proof}

We will clarify in the next section that a converse to the preceding proposition (even up to a multiplicative constant) is too much to hope for.

\section{Bounded normal generation for factors of type I${}_n$}\label{sec_I_n}
Property {\rm (BNG)} for compact connected simple Lie groups (e.g. the projective unitary group $\cP\cU(n)$) has been proved in \cite{NS-12}, where a result much in the spirit of Theorem \ref{thm_sun_bdd} was proved. The main goal is to provide a rank-independent result of the same type, this is Theorem \ref{thm_sun} -- which is more subtle, since, instead of generating the unitary matrix eigenvalue by eigenvalue, we need to start and control a parallel generation process. We repair the rank-independent result \cite[Lemma 4.15]{ST-13} for $\cP\cU(n)$ and clarify in Remark \ref{rem_wrong} why this is necessary. Some arguments are borrowed from these articles but our path focusses on the $\cP\cU(n)$-case and our version of \cite[Lemma 4.15]{ST-13} as well as its proof differ considerably.

In this section we fix the following notation. Let $T$ denote the maximal torus of diagonal matrices in $\cU(n),\ 2\leq n\in\bbN$, i.e., 
$$T=\set{\diag(e^{\complex\theta_0},\ldots,e^{\complex\theta_{n-1}})\mid \theta_i\in[0,2\pi),\ i=0,1,\ldots,n-1}.$$ %\mid \sum_{j=1}^n\theta_j\in 2\pi\bbZ,\ \theta_j\in\mathbb{R}}.$$
We decompose $T$ into $n$ subgroups $T_j,\ j=0,\ldots, n-1,$ which are defined as  
$T_0 \coloneqq \cZ(\cU(n)),$ and $T_j \coloneqq \set{\diag(1,\ldots,1,\lambda,\ldots,\lambda) \mid \lambda\in{S^1}}$, where $\lambda$ is on the positions $j+1,\ldots,n$.
Observe that every element $u=\diag(\lambda_0,\ldots,\lambda_{n-1})\in T$ can be decomposed uniquely into the product of commuting factors $u=u_0\cdot\ldots\cdot u_{n-1}$, where $u_i\in T_i$ with
\begin{align*}
 u_0&= \diag(\lambda_0,\lambda_0,\ldots,\lambda_0),\\
 u_i&= \diag(1,\ldots,1,\lambda_{i}\overline{\lambda}_{i-1},\lambda_{i}\overline{\lambda}_{i-1},\ldots,\lambda_{i}\overline{\lambda}_{i-1})\quad\mbox{ for }i\geq 1.
\end{align*}
We call this decomposition the \textbf{torus decomposition of $u$}\index{torus decomposition}.
Let us point out that when working in $\cP\cU(n)$, the factor $u_0$ in the decomposition $u=u_0\cdot\ldots\cdot u_{n-1}$ is left out since $u_0$ is central.

We will use another decomposition for $u\in\cSU(n)$ (respectively $u\in\cP\cU(n)$). For $j=0,\ldots, n-2$ let $S_j,$ denote the subgroup of $\cSU(n)$ of matrices of the form 
$$\begin{pmatrix} 1 & &0 \\
                  &\cSU(2) &\\
                 0 & &1\\
                 \end{pmatrix},$$
where the copy of $\cSU(2)$ sits at the entries $j+1$ and $j+2$. Then $u$ can be decomposed into factors $u_i\in S_i$, $i=0,\ldots,n-2$, where
\begin{align*}
 u_0&=\diag(\lambda_0,\overline{\lambda}_0,1,\ldots,1),\\
 u_i&=\diag(1,\ldots,1,\lambda_0\cdot\ldots\cdot\lambda_{i},\overline{\lambda}_0\cdot\ldots\cdot\overline{\lambda}_{i},1,\ldots,1).
\end{align*}
This decomposition is called the \textbf{product decomposition}\index{product decomposition}. Note that the factors in the torus decomposition as well as the product decomposition mutually commute.

We will need both of the above  decompositions in order to get the desired rank-independent result. The error that is hidden in \cite[Lemma 4.15]{ST-13} stems from an incorrect use of these decompositions.  To see that \cite[Lemma 4.15]{ST-13} is wrong see Remark \ref{rem_wrong}.
Let us now come to the first step in the proof of our rank-independent result. We will need \cite[Lemma 5.20]{NS-12} from work of Nikolov and Segal, which provides the basic building block for all finite-dimensional generation processes. For angles $\varphi \in \mathbb R/2\pi \mathbb Z$, we will frequently use the notation $|\varphi| := \min_n |\varphi + 2\pi n|$.

\begin{lem}[Nikolov-Segal]\label{lem_su2}
 Let $u=\left(\begin{smallmatrix}e^{\complex\varphi} &0\\ 0 &e^{-\complex\varphi}\end{smallmatrix}\right)$ and $v=\left(\begin{smallmatrix}e^{\complex\theta} &0\\ 0 &e^{-\complex\theta}\end{smallmatrix}\right)$ be non-central elements in $G\coloneqq \cSU(2)$ with $\theta \in [-\pi/2,\pi/2]$. If $\abs{\varphi}\leq m\abs{\theta}$ for some even $m\in\bbN$, then $u\in(v^G)^m$.
%In particular, every element of $\cSU(2)$ is a product $\lceil 4/\eps\rceil$ conjugates of $v$.
\end{lem}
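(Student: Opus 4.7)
I would prove this by induction on the even integer $m$, using the standard trace parametrization of $\cSU(2)$-conjugacy classes. Every element $w\in v^G$ has trace $2\cos\theta$ and admits the unique representation $w=\cos\theta\cdot I+\sin\theta\cdot(\complex\vec n\cdot\vec\sigma)$ for some unit vector $\vec n\in S^2$, where $\vec\sigma=(\sigma_x,\sigma_y,\sigma_z)$ are the Pauli matrices. Note that $v$ and $v^{-1}$ share the same trace, so $v^{-1}\in v^G$; consequently $1\in(v^G)^2$, whence $(v^G)^k\subseteq(v^G)^{k+2}$ for every $k$. For brevity I write $T(\beta):=\diag(e^{\complex\beta},e^{-\complex\beta})$.

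For the base case $m=2$, the Pauli identity $(\vec n_1\cdot\vec\sigma)(\vec n_2\cdot\vec\sigma)=(\vec n_1\cdot\vec n_2)I+\complex(\vec n_1\times\vec n_2)\cdot\vec\sigma$ together with $\tr(\sigma_i)=0$ yields
\[
\tr(w_1w_2)=2\cos^2\theta-2\sin^2\theta\,(\vec n_1\cdot\vec n_2)\qquad\text{for }w_i\in v^G.
\]
As $\vec n_1\cdot\vec n_2$ ranges over $[-1,1]$ the trace sweeps out $[2\cos(2\theta),\,2]$, and since $\cSU(2)$-conjugacy classes are distinguished by trace, this identifies $(v^G)^2=\bigcup_{\abs{\beta}\le 2\abs{\theta}}T(\beta)^G$. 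In particular $u=T(\varphi)$ lies in $(v^G)^2$ whenever $\abs{\varphi}\le 2\abs{\theta}$.

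For the inductive step from $m$ to $m+2$, suppose the conclusion is known for $m$ and let $u=T(\varphi)$ with $\abs{\varphi}\le(m+2)\abs{\theta}$; after normalizing we may take $\varphi\in[0,\pi]$. If $\varphi\le m\abs{\theta}$, the inductive hypothesis places $u$ in $(v^G)^m\subseteq(v^G)^{m+2}$. Otherwise set $\alpha:=\varphi-2\abs{\theta}$; this lies in $[0,m\abs{\theta}]$ since $\varphi>m\abs{\theta}\ge 2\abs{\theta}$ and $\varphi\le(m+2)\abs{\theta}$. The inductive hypothesis gives $T(\alpha)\in(v^G)^m$, and the base case gives $T(2\abs{\theta})^G\subseteq(v^G)^2$. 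Applying the same trace formula to products $T(\alpha)\cdot w$ with $w\in T(2\abs{\theta})^G$ shows that, as the axis of $w$ varies, the trace sweeps $[2\cos(\alpha+2\abs{\theta}),\,2\cos\abs{\alpha-2\abs{\theta}}]=[2\cos\varphi,\,2\cos\abs{\varphi-4\abs{\theta}}]$, whose left endpoint is precisely $2\cos\varphi$. A suitable $w$ therefore delivers an element of $(v^G)^{m+2}$ of the same trace as $u$, hence in the same conjugacy class; conjugation invariance of $(v^G)^{m+2}$ concludes the argument.

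The only genuinely delicate issue is bookkeeping for the range of parameters: when $(m+2)\abs{\theta}>\pi$ the hypothesis on $\varphi$ becomes vacuous after the normalization $\varphi\in[0,\pi]$, so it is cleanest to induct on the strengthened statement $(v^G)^m\supseteq\bigcup_{\abs{\beta}\le\min(m\abs{\theta},\pi)}T(\beta)^G$. With this formulation the trace-interval argument above goes through unchanged, since $\alpha+2\abs{\theta}=\varphi\le\pi$ always keeps us in the regime where $\cos$ is monotone on the relevant interval.
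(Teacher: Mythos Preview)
Your proof is correct. The paper does not prove this lemma directly---it is quoted from Nikolov--Segal \cite{NS-12}---but the proof of the Borel version (Lemma~\ref{lem_Borel_su2}) reproduces the Nikolov--Segal argument and so gives a point of comparison.

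Both approaches rest on the same trace computation: for a fixed diagonal $T(\gamma)$ and $w$ ranging over a conjugacy class of angle $\theta'$, the trace of $T(\gamma)w$ sweeps the interval $[2\cos(\gamma+\theta'),\,2\cos\lvert\gamma-\theta'\rvert]$. You organise this as an induction on the even integer $m$, stepping from $(v^G)^m$ to $(v^G)^{m+2}$ by peeling off a factor in $T(2\lvert\theta\rvert)^G$. Nikolov--Segal instead run a direct algorithm: multiply by $v$ itself repeatedly, accumulating angle $\theta$ per step, then pad with pairs $v\cdot v^{-1}$ once the partial product is within $\lvert\theta\rvert$ of the target, and use one final conjugate of $v$ to land on the exact trace. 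Your induction is tidier conceptually; the algorithmic version has the advantage that every conjugating element is given by an explicit Borel function of $(\varphi,\theta)$, which is precisely what the paper needs later for Lemma~\ref{lem_Borel_su2}.
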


Let us now analyze how to use the above lemma on a single factor $u_i\in S_i$ in $\cP\cU(n)$.

\begin{lem}\label{lem_single_gen}
 Let $G\coloneqq \cP\cU(n)$ with $n\geq 2,\ n\in\bbN$. Let $u=\diag(e^{\complex\varphi_0},\ldots,e^{\complex\varphi_{n-1}}),v=\diag(e^{\complex\theta_0},\ldots,e^{\complex\theta_{n-1}})\in G$ and assume that $u_0\cdot\ldots\cdot u_{n-1}$ with $u_i\in S_i$. If $\abs{\varphi_0+\ldots+\varphi_{i-1}}\leq m\abs{\theta_{j-1}-\theta_{j}}$ for some $i,j\in\set{1,\ldots,n-1}$ and even $m\in\bbN$ then 
 $$u_i\in (v^G\cup v^{-G})^{2m}.$$
\end{lem}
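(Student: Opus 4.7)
My plan is to build $u_i$ inside the block $\cSU(2)$-subgroup carrying it, using conjugates of a single two-eigenvalue element manufactured from $v$ whose production costs two factors from $v^G \cup v^{-G}$. To this end, let $\sigma \in G$ be the permutation unitary swapping the two coordinate lines on which $v$ carries the eigenvalues $e^{\complex\theta_{j-1}}$ and $e^{\complex\theta_j}$, and set $w \coloneqq v \cdot (\sigma v^{-1}\sigma^{-1})$. A direct calculation shows that $w$ is diagonal, equals $1$ off those two coordinates, and has the pair of nontrivial eigenvalues $e^{\pm\complex(\theta_{j-1}-\theta_j)}$; in particular $w$ lies in one of the block $\cSU(2)$-subgroups $S_k$. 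Moreover, the defining formula displays $w$ as the product of one element of $v^G$ (namely $v$ itself) and one of $v^{-G}$ (namely $\sigma v^{-1}\sigma^{-1}$), so $w \in (v^G \cup v^{-G})^2$.

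Since any two of the $S_k$ are conjugate in $G$ via permutation unitaries, I can pick $\tau \in G$ such that $w' \coloneqq \tau w \tau^{-1}$ lies in the same block $\cSU(2)$, call it $S$, as $u_i$. Under the identification $S \cong \cSU(2)$, the product decomposition formula identifies $u_i$ with $\diag(e^{\complex\varphi},e^{-\complex\varphi})$ where $\varphi = \varphi_0 + \ldots + \varphi_{i-1}$, while $w'$ becomes $\diag(e^{\complex\theta},e^{-\complex\theta})$ with $\theta = \theta_{j-1} - \theta_j$. Multiplying $w'$ by $-I$ (trivial projectively, and shifting $\theta$ by $\pi$) and, if necessary, replacing $w'$ by $(w')^{-1}$ (which only swaps the roles of $v^G$ and $v^{-G}$ without changing the total conjugate count), I may assume $\theta \in [-\pi/2, \pi/2]$. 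The hypothesis $|\varphi_0 + \ldots + \varphi_{i-1}| \leq m|\theta_{j-1}-\theta_j|$ then reads exactly $|\varphi| \leq m|\theta|$, and Lemma \ref{lem_su2} applied inside $S$ yields
$$u_i \in \bigl((w')^{S}\bigr)^m \subseteq \bigl((w')^G\bigr)^m = (w^G)^m.$$

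Finally, each conjugate unfolds as $g w g^{-1} = (gvg^{-1})(g\sigma v^{-1}\sigma^{-1}g^{-1})$, a product of one element of $v^G$ and one of $v^{-G}$. Multiplying $m$ such conjugates therefore exhibits $u_i$ as a product of $2m$ elements of $v^G \cup v^{-G}$, i.e.\ $u_i \in (v^G \cup v^{-G})^{2m}$. I expect the main obstacle to be the bookkeeping in the middle step: one must correctly identify the two $\cSU(2)$-phases under the product decomposition and verify that the normalization $\theta \in [-\pi/2,\pi/2]$ required by Lemma \ref{lem_su2} can be attained without inflating the conjugate count. Both are handled by exploiting the freedom afforded by the central quotient $\cP\cU(n)$ together with the built-in symmetry $v \leftrightarrow v^{-1}$ of the statement.
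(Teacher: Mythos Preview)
Your argument is essentially the paper's own proof: form the commutator of $v$ with the transposition swapping positions $j-1,j$ to land in a single $\cSU(2)$-block, transport blocks by a permutation so that $u_i$ and this commutator sit in the same copy $S\cong\cSU(2)$, invoke Lemma~\ref{lem_su2}, and then unfold the commutator to count $2m$ conjugates of $v^{\pm1}$. The only cosmetic difference is that the paper conjugates $u_i$ into the block $S_j$ where $[v,g]$ already lives, whereas you move $w$ to the block of $u_i$; both are fine.

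One genuine caution concerns your normalization step. You write that multiplying $w'$ by $-I$ is ``trivial projectively'' and shifts $\theta$ by $\pi$. For $n\geq 3$ this is not correct: the element $\diag(1,\dots,1,-1,-1,1,\dots,1)$ (the $-I$ of the $\cSU(2)$-block) is \emph{not} central in $\cU(n)$ and hence not trivial in $\cP\cU(n)$, while multiplying by the genuine scalar $-1\cdot I_n$ (which \emph{is} trivial projectively) leaves $\theta_{j-1}-\theta_j$ unchanged. Thus the reduction to $\theta\in[-\pi/2,\pi/2]$ cannot be achieved this way. The paper's proof does not attempt this normalization either and simply invokes Lemma~\ref{lem_su2} directly; in practice the case $|\theta_{j-1}-\theta_j|>\pi/2$ makes the hypothesis $|\varphi|\leq m|\theta|$ vacuous (since $m\geq 2$), and one argues directly in $\cSU(2)$ rather than via your $-I$ trick. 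Apart from this point your write-up matches the paper.
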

\begin{proof}
 Write $v_0\cdot\ldots\cdot v_{n-1},\ v_i\in T_i$ in its torus decomposition.
 Let $g\in S_j$ be the permutation swapping the diagonal entries at the positions $j,j+1$. Then $[v,g]=[v_j,g]\in S_j$. Let $h\in G$ such that $u_i^h\in S_j$. Using the given inequality $\abs{\varphi_0+\ldots+\varphi_{i-1}}\leq m\abs{\theta_{j-1}-\theta_{j}}$ (note that $\varphi_0+\ldots+\varphi_{i-1}$ is the angle of the first nontrivial eigenvalue of $u_i$) and Lemma \ref{lem_su2} we conclude 
 $$u_i\in h^{-1}([v_j,g]^{S_j}\cup [v_j,g]^{-S_j})^mh\subset (v^G\cup v^{-G})^{2m}.$$
 This concludes the proof.
\end{proof}

The following result is a crucial point in simultaneous generation with the help of $\cSU(2)$-copies. 
%Our proof differs from that of Stolz and the second author (in fact, the error was hidden in the proof of this result.)
In the generation process we decompose the generating element $v$ into elements of the tori $T_i$, but the generated element $u$ needs to be decomposed into elements of $S_j$. 
\begin{lem}\label{lem_sun}
 Let $G\coloneqq \cP\cU(n)$, $n\geq 2,\ m\in\bbN$ even and $s\in\bbN_0$. Let 
$$u=\diag(e^{\complex\varphi_0},\ldots,e^{\complex\varphi_{n-1}})=u_0\cdot u_1\cdot\ldots\cdot u_{n-1}$$
 be the product decomposition of $u$ and let 
 $$v=\diag(e^{\complex\theta_0},\ldots,e^{\complex\theta_{n-1}})=v_0\cdot v_1\cdot\ldots\cdot v_{n-1}$$ with $v_i\in T_i$ be the torus decomposition of $v$. 
 For $0\leq k\leq s$ and $0\leq l\leq s$ let $i_k$ and $j_l$ be elements of $\set{0,\ldots,n-1}$.
 If $\abs{i_k-i_l},\abs{j_k-j_l}\geq 2$ for all $k\neq l$ and 
 $$\abs{\varphi_{0}+\varphi_1+\ldots+\varphi_{i_k}}\leq m\abs{\theta_{j_k}-\theta_{j_k+1}}\quad\mbox{ for }k,l=0,\ldots,s.$$
 Then, we obtain $u_{i_1}\cdot u_{i_2}\cdot\ldots\cdot u_{i_s}\in\left(v^G\cup v^{-G}\right)^{2m}.$ 
\end{lem}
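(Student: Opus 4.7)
The plan is to generate the factors $u_{i_1},\ldots,u_{i_s}$ simultaneously from a \emph{single} commutator $[v,g]$, so that $2m$ conjugates of $v^{\pm 1}$ suffice in total rather than $2m$ per factor. First, for each $k$ I will let $g_k\in S_{j_k}$ be the swap of coordinates $j_k+1$ and $j_k+2$; the condition $\abs{j_k-j_l}\geq 2$ guarantees that these swaps have pairwise disjoint support and hence commute, so $g:=g_0g_1\cdots g_s$ is a well-defined permutation. A block-by-block calculation shows that $[v,g]=vgv^{-1}g^{-1}$ is trivial off the blocks $\{j_k+1,j_k+2\}$ and restricts on block $k$ to $\diag(e^{\complex\beta_k},e^{-\complex\beta_k})$ with $\beta_k:=\theta_{j_k}-\theta_{j_k+1}$. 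Rewriting $[v,g]=v\cdot(gv^{-1}g^{-1})$ moreover displays this commutator as a product of one conjugate of $v$ and one conjugate of $v^{-1}$.

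Second, since $\abs{i_k-i_l}\geq 2$ the target blocks $\{i_k+1,i_k+2\}$ are also pairwise disjoint, so I can fix a single permutation $h$ sending $\{j_k+1,j_k+2\}$ to $\{i_k+1,i_k+2\}$ order-preservingly for every $k$. Setting $W:=h[v,g]h^{-1}$, the block decomposition transports: $W=\prod_k w_k$ with $w_k=\diag(e^{\complex\beta_k},e^{-\complex\beta_k})$ in the natural copy $S_{i_k}\cong\cSU(2)$, and $W=(hvh^{-1})\cdot((hg)v^{-1}(hg)^{-1})$ remains a product of two conjugates of $v^{\pm 1}$.

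Finally, I will apply Lemma~\ref{lem_su2} inside each $S_{i_k}$ separately. By the product decomposition $u_{i_k}$ equals $\diag(e^{\complex\alpha_k},e^{-\complex\alpha_k})$ on its block, with $\alpha_k:=\varphi_0+\cdots+\varphi_{i_k}$, and the hypothesis $\abs{\alpha_k}\leq m\abs{\beta_k}$ (after the harmless normalisation $\abs{\beta_k}\leq\pi/2$, which uses the centre of $\cSU(2)$ together with the assumption that $m$ is even) supplies $c_{k,1},\ldots,c_{k,m}\in S_{i_k}$ with $u_{i_k}=\prod_{l=1}^m c_{k,l}w_kc_{k,l}^{-1}$. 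Setting $d_l:=c_{1,l}c_{2,l}\cdots c_{s,l}$ and exploiting that elements from distinct $S_{i_k}$ pairwise commute, I can push each $d_l$ past the commuting factors of $W$ to obtain
\[ u_{i_1}u_{i_2}\cdots u_{i_s}=\prod_{l=1}^m\prod_{k}c_{k,l}w_kc_{k,l}^{-1}=\prod_{l=1}^m d_l W d_l^{-1}, \]
which is $m$ conjugates of $W$, hence $2m$ conjugates of $v^{\pm 1}$ in total. The main point to verify carefully is this interleaving step: the conjugators produced by Lemma~\ref{lem_su2} must really lie in the small subgroup $S_{i_k}$, so that conjugators from different $k$ may be amalgamated into a single $d_l$ without inflating the count; the rest of the argument is just the parallel execution of the single-block analysis of Lemma~\ref{lem_single_gen}.
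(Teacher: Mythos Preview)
Your proof is correct and follows essentially the same approach as the paper: form the commutator $[v,g]$ with $g$ a product of the disjoint block transpositions $g_{j_k}\in S_{j_k}$, observe that it factors as a product of $\cSU(2)$-elements supported on the pairwise disjoint blocks, transport to the $i_k$-blocks by a single permutation, and then apply Lemma~\ref{lem_su2} inside each block simultaneously using that the $S_{i_k}$ commute pairwise. Your write-up is in fact somewhat more explicit than the paper's about the interleaving step (constructing the amalgamated conjugators $d_l$) and about the normalisation $\abs{\beta_k}\leq\pi/2$ needed to invoke Lemma~\ref{lem_su2}.
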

\begin{proof}
 Write $v=v_{j_1}\cdot\ldots\cdot v_{j_s}\cdot {v''}$ where ${v''}$ commutes with $S_{i_1},\ldots,S_{i_s}$. Note that $S_{j_k}$ and $S_{j_l}$ commute elementwise for $k\neq l$. Moreover, ${v''}$ commutes with $S_{j_k}$ for all $k=1,\ldots,s$. Thus we get $$ \left(v_{}^{S_{j_1}\cdot\ldots\cdot S_{j_s}}\right)^m=\left( v_{j_1}^{S_{j_1}}\cdot\ldots\cdot v_{j_s}^{S_{j_s}}\cdot{v''}_{}^{S_{j_1}\cdot\ldots\cdot S_{j_s}}\right)^m=\left(v_{j_1}^{S_{j_1}}\right)^m\cdot\ldots\cdot\left( v_{j_s}^{S_{j_s}}\right)^m\cdot {v''}_{}^m.$$
 Let $g_{j_k}\in S_{j_k}$ be a permutation switching positions $j_k$ and $j_{k+1}$ for $k=0,\ldots,s$. Define
 $$g\coloneqq g_{j_1}\cdot\ldots\cdot g_{j_s}\in S_{j_1}\cdot\ldots\cdot S_{j_k}.$$ 
 Consider now the commutator $[v,g]=vgv^{-1}g^{-1}\in(v^G\cup v^{-G})^2$. Observe that $[v,g]\in S_{j_1}\cdot\ldots\cdot S_{j_s}$. Let $h\in G$ be a permutation such that $S_{i_k}^h=S_{j_k}$ for all $k=0,\ldots,s$. 
 Using Lemma \ref{lem_su2}, we obtain $u_{i_k}^h\in\left([v_{j_k},g_{j_k}]^{S_{j_k}}\cup [v_{j_k},g_{j_k}]^{-S_{j_k}} \right)^{m}$ for all $k=0,\ldots,s$, and hence
 $$u_{i_1}\cdot\ldots\cdot u_{i_s}\in h^{-1}\left(\left([v,g]^{S_{j_1}\cdot\ldots\cdot S_{j_s}}\right)^m\right)h\subset (v^G\cup v^{-G})^{2m}.$$
 This completes the proof.
\end{proof}

In order to have a relation between projective $s$-numbers we need that for all $\theta\in[-\pi,\pi]$, one has $\abs{\theta}/2\leq\sqrt{2(1-\cos\theta)}\leq\abs{\theta}$.
%\begin{proof}
% By symmetry of $\cos$, it suffices to check the case $\theta\in[0,\pi]$.
% 
% To see the first inequality, define $f(\theta)\coloneqq 1-\cos\theta-\theta^2/8$. We have $f'(\theta)=\sin\theta-\theta/4$ and $f'(0)=0$. Since $f''(\theta)=\cos\theta-1/4$ is monotone decreasing in the interval $[0,\pi]$, $f''$ has a unique zero in $[0,\pi]$ (note that $f''(0)=3/4>0$). Hence $f'$ has a unique extreme point in $[0,\pi]$, which is a maximum in $[0,\pi]$. Thus $f$ has at most two zeroes in $[0,\pi]$, one of which is $0$. We have $f(\pi)>0$ and thus $f(\theta)\geq 0$ for all $\theta\in[0,\pi]$. This implies $\abs{\theta}/2\leq\sqrt{2(1-\cos\theta)}$.
% 
% To prove the second inequality, observe that $\cos\theta=\sum_{j=0}^{\infty}(-1)^j\theta^{2j}/(2j)!\geq 1-\theta^2/2$ and hence $\sqrt{2(1-\cos\theta)}\leq\sqrt{2(1-(1-\theta^2/2))}\leq \theta$.
%\end{proof}
The following definition is crucial in order to obtain estimates between projective $s$-numbers and eigenvalue differences, which in turn will be compared to angles. %in Corollary \ref{cor_length_angles}.

\begin{defn} \label{def_optimal}
Assume that $u\in G\coloneqq \cU(n)$, $2\leq n\in\bbN$. Let us say that $\til{u}=\diag(\lambda_0,\ldots,\lambda_{n-1})\in u^G\cap T$ is \textbf{optimal}\index{optimal} if 
\begin{enumerate}
 \item $\abs{\lambda_0-\lambda_{1}}\geq\abs{x_0-x_1}$ for all $v=\diag(x_0,\ldots,x_{n-1})\in u^G\cap T$;
 \item $\abs{\lambda_i-\lambda_{i+1}}=\abs{x_i-x_{i+1}}$ for all $i=0,\ldots,k-1$ implies $\abs{\lambda_{k}-\lambda_{k+1}}\geq \abs{x_{k}-x_{k+1}}$.
\end{enumerate}
\end{defn}
This defines a lexicographic order on the eigenvalue differences, hence for every $u\in \cU(n)$ respectively $\cP\cU(n)$, there exists an optimal element $\til{u} \in u^G \cap T$. For two different optimal elements $\til{u},{v}$, we have $\abs{\lambda_i-\lambda_{i+1}}=\abs{x_i-x_{i+1}}$ for all $i\in\set{0,\ldots,n-2}$. Note that the eigenvalue differences need not be ordered, i.e., it may happen that for $|\lambda_i -\lambda_{i+1}| < |\lambda_{i+1} -\lambda_{i+2}|$ even if $u$ is optimal. However, for an optimal element $u$ there exists a permutation $\sigma\in S_X$, where $X=\set{0,\ldots,n-2}$ and $S_X$ denotes the group of permutations on $X$, such that  $$\abs{\lambda_{\sigma(0)}-\lambda_{\sigma(0)+1}}\geq\abs{\lambda_{\sigma(1)}-\lambda_{\sigma(1)+1}}\geq\ldots\geq\abs{\lambda_{\sigma(n-2)}-\lambda_{\sigma(n-2)+1}}.$$
We call such a permutation a \textbf{permutation associated to} the optimal element $u$. Note that our definition of optimality differs slightly from the one given in \cite{ST-13}.

\begin{lem}\label{lem_singularcage}
 Let $u=\diag(\lambda_0,\ldots,\lambda_{n-1})\in T\subset\cU(n)$ be optimal and $\sigma$ a permutation such that $\abs{\lambda_{\sigma(i)}-\lambda_{\sigma(i)+1}}$ is monotone decreasing in $i=0,\ldots,n-2$, where $n\geq 2$, $n\in\bbN$. Then there exists $\lambda\in S^1$ (in fact $\lambda=\lambda_{n-1}$) such that 
$$\frac{1}{2}\abs{\lambda_{\sigma(2i)}-\lambda_{\sigma(2i)+1}}\leq\ell_i(u) \leq \mu_i(1-\lambda u) \leq \abs{\lambda_{\sigma(i)}-\lambda_{\sigma(i)+1}}\quad\textnormal{ for all }i=0,\ldots,n-2.$$
\end{lem}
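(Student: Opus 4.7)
The middle inequality $\ell_i(u)\leq\mu_i(1-\lambda u)$ is immediate from Definition \ref{def_proj_s_number}. For the other two bounds I will exploit the diagonal form of $1-\lambda u$: its singular values are just the absolute values of the diagonal entries, so $\mu_i(1-\lambda u)$ equals the $(i+1)$-th largest element of the finite set $\{|1-\lambda\lambda_j|\}_{j=0}^{n-1}$ (for the trace on $\cU(n)$, generalized $s$-numbers on integer arguments pick out exactly the sorted singular values).

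For the upper bound, I will take $\lambda=\lambda_{n-1}^{-1}$, so that $|1-\lambda\lambda_j|=|\lambda_{n-1}-\lambda_j|$ for all $j$ and the $j=n-1$ entry vanishes. The key use of optimality is its greedy consequence: lexicographic maximality implies that, once positions $0,\dots,k$ are fixed, $\lambda_{k+1}$ maximizes $|\lambda_k-x|$ over the remaining eigenvalues. Since $\lambda_{n-1}$ is always among those remaining, this yields the pointwise bound
$$d_k:=|\lambda_{n-1}-\lambda_k|\;\leq\;|\lambda_k-\lambda_{k+1}|=:a_k\qquad(k=0,\dots,n-2).$$
A short pigeonhole argument on sorted sequences then transfers this to order statistics: if the $(i+1)$-th largest $d_k$ exceeded $a_{\sigma(i)}$, there would be at least $i+1$ indices $k$ with $a_k\geq d_k>a_{\sigma(i)}$, but only $i$ of the $a_k$'s are strictly greater than the $(i+1)$-th largest one. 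Hence $\mu_i(1-\lambda u)\leq a_{\sigma(i)}=|\lambda_{\sigma(i)}-\lambda_{\sigma(i)+1}|$.

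For the lower bound I fix an arbitrary $\lambda\in S^1$ and use the reverse triangle inequality: for every consecutive pair $(k,k+1)$,
$$|1-\lambda\lambda_k|+|1-\lambda\lambda_{k+1}|\;\geq\;|\lambda\lambda_k-\lambda\lambda_{k+1}|=a_k,$$
so at least one of the two indices ``indicates'' a value $\geq a_k/2$. I apply this to the $2i+1$ pairs of largest weight, namely $\sigma(0),\dots,\sigma(2i)$, each of which supplies an indicated index with value at least $a_{\sigma(2i)}/2$. The crucial combinatorial step is that a single index $j\in\{0,\dots,n-1\}$ belongs to at most two of the consecutive pairs, $(j-1,j)$ and $(j,j+1)$; hence among $2i+1$ indications one collects at least $\lceil(2i+1)/2\rceil=i+1$ \emph{distinct} indices, forcing the $(i+1)$-th largest $|1-\lambda\lambda_j|$, i.e.\ $\mu_i(1-\lambda u)$, to be at least $a_{\sigma(2i)}/2$. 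Taking the infimum over $\lambda\in S^1$ yields $\ell_i(u)\geq\tfrac{1}{2}|\lambda_{\sigma(2i)}-\lambda_{\sigma(2i)+1}|$.

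The main subtlety is the ``each index covered by at most two adjacent pairs'' observation in the lower bound; this is exactly what forces the factor $\tfrac{1}{2}$ and the shift from $i$ to $2i$ on the left-hand side. The upper bound is, by comparison, routine once the greedy consequence of optimality is isolated, as it reduces to a componentwise-implies-sorted comparison.
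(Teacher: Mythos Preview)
Your proof is correct and follows essentially the same approach as the paper's: the upper bound via the greedy consequence of optimality $|\lambda_k-\lambda_{n-1}|\leq|\lambda_k-\lambda_{k+1}|$ followed by a sorted-comparison argument, and the lower bound via the triangle inequality combined with a pigeonhole on the $2i+1$ consecutive pairs. Your phrasing of the lower bound (each index lies in at most two consecutive pairs, hence $\lceil(2i+1)/2\rceil=i+1$ distinct large indices) is a clean variant of the paper's case split, and your choice $\lambda=\lambda_{n-1}^{-1}$ is the right one---the paper's own proof computes $\mu_i(\lambda_{n-1}-u)$, which corresponds to $\lambda=\overline{\lambda_{n-1}}$ in $\mu_i(1-\lambda u)$, so the parenthetical ``$\lambda=\lambda_{n-1}$'' in the statement is a minor slip.
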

\begin{proof}
To prove the first inquality, let $z_0=\diag(z,\ldots,z)\in\cZ(\cU(n))$ be arbitrary and fix a permutation $\tau\in S_Y$, $Y\coloneqq \set{0,\ldots,n-1}$, such that $\abs{z-\lambda_{\tau(0)}}\geq\abs{z-\lambda_{\tau(1)}}\geq\ldots\geq\abs{z-\lambda_{\tau(n-1)}}.$ Assume to the contrary, that $\abs{\lambda_{\sigma(2i)}-\lambda_{\sigma(2i)+1}}> 2\abs{z-\lambda_{\tau(i)}}$. Hence 
  $$\abs{z-\lambda_{\sigma(k)}}+\abs{z-\lambda_{\sigma(k)+1}} \geq\abs{\lambda_{\sigma(k)}-\lambda_{\sigma(k)+1}}>  2\abs{z-\lambda_{\tau(i)}}$$
  for all $k=0,\ldots,2i$ by the choice of $\sigma$. This implies $\sigma(k)\in\set{\tau(0),\ldots,\tau(i-1)}$ or $\sigma(k)+1\in\set{\tau(0),\ldots,\tau(i-1)}$.
Since $\set{\tau(0),\ldots,\tau(i-1)}$ contains $i$ elements but one case appears at least for $i+1$ times, we arrive at a contradiction. Since $z_0$ was chosen arbitrarily, the first inequality follows.
 
The inequality in the middle is obvious. To see the third inequality, let $\tau$ be a permutation such that $$\abs{\lambda_{n-1}-\lambda_{\tau(0)}}\geq\abs{\lambda_{n-1}-\lambda_{\tau(1)}}\geq\ldots\geq\abs{\lambda_{n-1}-\lambda_{\tau(n-2)}}.$$ By optimality of $u$, we have $\abs{\lambda_i-\lambda_{i+1}}\geq\abs{\lambda_i-\lambda_j}$ for all $j\geq i+1$. Hence, for all $\tau(i)=0,\ldots,n-2$,
 $$\ell_i(u)\leq\mu_i(\lambda_{n-1}-u)=\abs{\lambda_{n-1}-\lambda_{\tau(i)}}\leq\abs{\lambda_{\tau(i)}-\lambda_{\tau(i)+1}},$$
 while for $\tau(i)=n-1$, we get $\ell_i(u)= 0$. Thus for each $i$, $\ell_i(u)$ can be estimated from above by $\abs{\lambda_{\tau(i)}-\lambda_{\tau(i)+1}}$. Since both $\abs{\lambda_{\sigma(i)}-\lambda_{\sigma(i)+1}}$ and $\mu_i(\lambda_{n-1}-u)$ are decreasing in $i$, we obtain 
$\mu_i(\lambda_{n-1}-u)\leq\abs{\lambda_{\sigma(i)}-\lambda_{\sigma(i)+1}}.$ Setting $\lambda := \lambda_{n-1}$, this finishes the proof.
\end{proof}
\begin{rem}
 The first inequality in Lemma \ref{lem_singularcage} holds for any diagonal unitary.
\end{rem}

The above lemma implies the following important corollary which relates projective singular values and angles of elements in $\cU(n)$.

\begin{cor}\label{cor_length_angles}
  Let $u=\diag(e^{\complex\theta_0},\ldots,e^{\complex\theta_{n-1}}),\ v=\diag(e^{\complex\gamma_0},\ldots,e^{\complex\gamma_{n-1}})\in T\subset\cU(n)$ be optimal with associated permutation $\sigma,\ \tau$. Then $\ell_{ki}(u)\leq m\ell_i(v)$ for all $i=0,\ldots,n-1$ and some $k,m\in\bbN$ implies $\abs{\theta_{\sigma(2ki)}-\theta_{\sigma(2ki)+1}}\leq 4m\abs{\gamma_{\tau(i)}-\gamma_{\tau(i)+1}}$ for all $i=0,\ldots,n-1$, where we set $\theta_{i}=\gamma_i=0$ for all $i\geq n$.
\end{cor}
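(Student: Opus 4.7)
The corollary is essentially bookkeeping on top of Lemma \ref{lem_singularcage}. First I would apply that lemma to the optimal element $u$ with its associated permutation $\sigma$, inserting $ki$ in the role of $i$ and keeping only the leftmost inequality, to obtain
\[
\tfrac12 \bigl|e^{\complex\theta_{\sigma(2ki)}} - e^{\complex\theta_{\sigma(2ki)+1}}\bigr| \leq \ell_{ki}(u).
\]
Then I would apply the same lemma to $v$ with its associated permutation $\tau$ at index $i$, keeping the rightmost inequality, to obtain
\[
\ell_i(v) \leq \bigl|e^{\complex\gamma_{\tau(i)}} - e^{\complex\gamma_{\tau(i)+1}}\bigr|.
\]
Chaining these two bounds through the hypothesis $\ell_{ki}(u) \leq m\ell_i(v)$ yields
\[
\bigl|e^{\complex\theta_{\sigma(2ki)}} - e^{\complex\theta_{\sigma(2ki)+1}}\bigr| \leq 2m \bigl|e^{\complex\gamma_{\tau(i)}} - e^{\complex\gamma_{\tau(i)+1}}\bigr|.
\]

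To pass from chord lengths on $S^1$ back to angles, I would invoke the elementary estimate $|\alpha|/2 \leq |e^{\complex\alpha}-1| \leq |\alpha|$ (valid for $\alpha \in [-\pi,\pi]$), which is precisely the bound the authors record immediately before Definition \ref{def_optimal}. Applying the lower half with $\alpha = \theta_{\sigma(2ki)} - \theta_{\sigma(2ki)+1}$ and the upper half with $\alpha = \gamma_{\tau(i)} - \gamma_{\tau(i)+1}$ converts the previous display into
\[
\tfrac12 \bigl|\theta_{\sigma(2ki)} - \theta_{\sigma(2ki)+1}\bigr| \leq 2m \bigl|\gamma_{\tau(i)} - \gamma_{\tau(i)+1}\bigr|,
\]
which is the asserted inequality after multiplying through by $2$.

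The only remaining point is that the index range $i = 0, \ldots, n-1$ goes beyond the domain $\{0, \ldots, n-2\}$ on which $\sigma$ and $\tau$ act. For those indices at which $2ki$ or $i$ fall outside this range, I would use the stipulation $\theta_j = \gamma_j = 0$ for $j \geq n$ to interpret the corresponding eigenvalue difference as zero, rendering the inequality trivially true. I do not anticipate any genuine obstacle: Lemma \ref{lem_singularcage} carries all the substantive content, and the only further ingredient is the standard comparison between chord distance and arc distance on the unit circle, which costs at most a factor of two on each side and accounts exactly for the constant $4m$ in the conclusion.
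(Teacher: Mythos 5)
Your proof is correct and follows essentially the same route as the paper: apply Lemma~\ref{lem_singularcage} to both $u$ (at index $ki$, left inequality) and $v$ (at index $i$, right inequality), chain through the hypothesis, then pass from chords to angles via $|\alpha|/2 \leq |e^{\complex\alpha}-1| \leq |\alpha|$. Note, in fact, that you apply the two halves of the chord--angle comparison in the direction the argument actually requires (lower bound on the $\theta$-side, upper bound on the $\gamma$-side), whereas the paper's displayed estimates have the inequality directions transposed (and write $\tau(2kj)$ where $\tau(j)$ is intended), so your version is the corrected one.
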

\begin{proof}
We conclude that 
$\abs{e^{\complex\theta_{\sigma(2kj)}}-e^{\complex\theta_{\sigma(2kj)+1}}}\leq 2m\abs{e^{\complex\gamma_{\tau(j)}}-e^{\complex\gamma_{\tau(j)+1}}}.$
 Using now the estimates 
$\abs{1-e^{\complex(\theta_{\sigma(2kj)}-\theta_{\sigma(2kj)+1})}} \leq \abs{\theta_{\sigma(2kj)}-\theta_{\sigma(2kj)+1})}$, and
$\abs{1-e^{\complex(\gamma_{\tau(2kj)}-\gamma_{\tau(2kj)+1})}} \geq \abs{\gamma_{\tau(2kj)}-\gamma_{\tau(2kj)+1})}/2$
 we obtain the claimed inequality.
\end{proof}

We need the following standard combinatorial lemma to control sums of angles (occuring in the $\cSU(2)$ product decomposition of an element in $\cSU(n)$, respectively $\cP\cU(n)$, rank-independently.

\begin{lem}\label{lem_sum_angles}
 Let $n\in\bbN$. Assume that $\alpha_1,\ldots,\alpha_n\in\bbR$ satisfy $\sum_{i=1}^n\alpha_i=0$. Then there exists a permutation $\sigma\in S_n$ such that for every $k\in\set{1,\ldots,n}$ one has
 $$\abs{\sum_{i=1}^k\alpha_{\sigma(i)}}\leq \max_{i=1,\ldots,n}\abs{\alpha_i}.$$
\end{lem}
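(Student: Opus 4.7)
The plan is to construct the permutation $\sigma$ greedily, step by step, always selecting the next index so that the running partial sum cannot escape the interval $[-M, M]$, where $M := \max_{i=1,\ldots,n}\abs{\alpha_i}$. Writing $S_k := \sum_{i=1}^{k}\alpha_{\sigma(i)}$, the goal is to show by induction on $k$ that $\abs{S_k}\leq M$ for every $k\in\set{0,1,\ldots,n}$, with $S_0 = 0$.

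For the inductive step, assume $\sigma(1),\ldots,\sigma(k)$ have been chosen with $\abs{S_k}\leq M$ and $k<n$. The key observation is that, since $\sum_{i=1}^n\alpha_i=0$, the remaining terms sum to $-S_k$. If $S_k>0$, this forces at least one of the remaining $\alpha_j$ to be non-positive; pick such a $j$ and set $\sigma(k+1):=j$. Then either $S_k+\alpha_j\geq 0$, in which case $0\leq S_{k+1}\leq S_k\leq M$, or $S_k+\alpha_j<0$, in which case $\abs{S_{k+1}} = -S_k - \alpha_j \leq -\alpha_j = \abs{\alpha_j}\leq M$. The case $S_k<0$ is handled symmetrically by selecting a remaining non-negative $\alpha_j$, and the case $S_k=0$ is trivial as any remaining index yields $\abs{S_{k+1}} = \abs{\alpha_{\sigma(k+1)}}\leq M$.

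Iterating this procedure produces a complete permutation $\sigma\in S_n$ whose partial sums all satisfy the required bound. Note that the hypothesis $\sum_{i=1}^n\alpha_i=0$ is used only to guarantee the existence at each step of a remaining element whose sign is opposite (or zero) to the current partial sum; without it, the greedy choice could fail.

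There is no real obstacle here beyond carefully verifying the two subcases in the sign analysis; the argument is a standard greedy/induction scheme. The only mildly subtle point is checking that the sign-reversal assumption on the remaining elements is always fulfilled, which is precisely where the zero-sum hypothesis enters.
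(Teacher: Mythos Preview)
Your proof is correct and follows essentially the same greedy strategy as the paper: at each step choose the next term with sign opposite to the current partial sum, using the zero-sum hypothesis to guarantee such a term remains available. The paper organizes this by first sorting the positives and negatives and adding them in alternating blocks, whereas you pick any element of the appropriate sign at each step; your version is slightly more streamlined but the underlying idea and the key case analysis are the same.
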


\begin{proof}
 Without loss of generality we have $\alpha_1=\max_{i=1,\ldots,n}\abs{\alpha_i}>0$ and $\alpha_i\neq 0$ for all $i=1,\ldots,n$. Moreover, we may assume that $\alpha_1\geq\ldots\geq\alpha_l>0$ and $\alpha_{l+1}\leq \ldots\leq\alpha_n<0$ for some $l< n$. We now construct the permutation $\sigma\in S_n$. We let $\sigma(1)\coloneqq 1$ and $\sigma(2)=l+1$. Then $\alpha_{\sigma(1)}+\alpha_{\sigma(2)}\geq 0$.  
(i) Let $1\leq j_1\leq n$ be the unique smallest number such that 
 $$\alpha_1+\alpha_{l+1}+\ldots+\alpha_{l+j_1}< 0.$$
 Set $\sigma(1+i)\coloneqq l+i$, where $i=1,\ldots,j_1$. 
(ii) If there are no $\alpha_i$'s left, then we are done. Else, we let $1\leq j_2\leq l$ be the unique smallest number such that
 $$\alpha_1+\alpha_{l+1}+\ldots+\alpha_{l+j_1}+\alpha_{2}+\ldots+\alpha_{1+j_2} > 0.$$
Put $\sigma(1+j_1+i)\coloneqq 1+i$ for $i=1,\ldots,j_2$.
We obviously have for $k\leq 1+j_1+j_2$  and $$\abs{\sum_{i=1}^k \alpha_{\sigma(i)}}\leq \max_{i=1,\ldots,n}\abs{\alpha_i}= \alpha_{\sigma(1)}.$$
 Proceed inductively interchanging steps (i) and (ii) until $\sigma$ is defined on $\set{1,\ldots,n}$. This finishes the proof.
\end{proof}

\begin{defn}
 Let $u=\diag(e^{\complex\theta_0},\ldots,e^{\complex\theta_{n-1}})\in\cSU(n)$ such that $\sum_{i=0}^{n-1}\theta_i=0$. Let $\sigma\in S_n$ be as in Lemma \ref{lem_sum_angles}. Then we say that the element $\diag(e^{\theta_{\sigma(0)}},\ldots,e^{\complex\theta_{\sigma(n-1)}})$ \textbf{angle sum optimal}\index{angle sum optimal}. The permutation $\sigma$ is said to be \textbf{associated to} the angle sum optimal element $u$.
\end{defn}

The sole purpose of the following lemma is to control the angles of the first factor in the product decomposition of a unitary in ${\rm PU}(n)$.

\begin{lem}\label{lem_opt}
 Assume that $u=\diag(e^{\complex\theta_0},\ldots,e^{\complex\theta_{n-1}})\in {\rm PU}(n)$ is optimal. % with $\sum_{i=0}^{n-1}\theta_i=0$. % and $\theta_i \in [-\pi/2,\pi/2]$. 
 Then we have 
 $$2\abs{\theta_0-\theta_1}\geq \max_{i=0,\ldots,n-1}\abs{\theta_i}.$$
\end{lem}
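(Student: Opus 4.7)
I will in fact prove the stronger inequality $|\theta_0-\theta_1|\geq\max_i|\theta_i|$. The argument has two ingredients: a normalization of the representative of $u$ in $\mathrm{PU}(n)$, and then an application of the optimality of $u$ to control the angular diameter of the spectrum.

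First I will fix the normalization. The element $u\in\mathrm{PU}(n)$ only determines its diagonal matrix up to a scalar $\mu\in S^{1}$, and replacing $u$ by $\mu u$ preserves optimality: every chord distance $|\lambda_i-\lambda_j|$ is invariant under scalar multiplication, so Definition \ref{def_optimal} is insensitive to the choice of lift. By compactness of $S^{1}$, I will fix a lift for which $M:=\max_i|\theta_i|$ is minimized — this is the same minimization hidden in the definition of $\ell_0(u)=\inf_\mu\|1-\mu u\|$. Some such normalization is actually necessary: without it the inequality genuinely fails, for instance replacing $u$ by $-u$ in $\mathrm{PU}(n)$ shifts all angles by $\pi$ and can inflate $\max_i|\theta_i|$ without touching $|\theta_0-\theta_1|$.

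Now set $d:=|\theta_0-\theta_1|$. Definition \ref{def_optimal}(1) says that $|e^{\complex\theta_0}-e^{\complex\theta_1}|$ is the largest chord distance among pairs of eigenvalues of $u$, and since chord and arc distance on $S^{1}$ are monotonically related on $[0,\pi]$, $d$ equals the maximum pairwise arc distance. In particular every eigenvalue $e^{\complex\theta_i}$ lies within arc distance $d$ of $e^{\complex\theta_0}$. Now consider the alternative lift $e^{-\complex\theta_0}u$: its angles are $\theta_i-\theta_0\pmod{2\pi}$, which by the previous sentence have absolute value at most $d$. Thus this alternative lift realizes $\max_i|\theta_i|\leq d$, and by the minimality fixed in the first step our chosen lift also satisfies $M\leq d$. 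Consequently $2|\theta_0-\theta_1|=2d\geq 2M\geq M$, as claimed.

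The only genuine obstacle is spotting and making explicit the correct normalization of the lift — the statement implicitly assumes the $\ell_0$-minimizing representative, and the inequality really does fail without that choice. Once that normalization is in place, everything else collapses to the elementary observation that a set of points on $S^{1}$ with pairwise arc distance at most $d$ is contained in an arc of length $2d$ and can be recentered so that every angle lies in $[-d,d]$.
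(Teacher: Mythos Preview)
Your proof is correct, and since the paper's own argument is simply ``This is obvious,'' what you have written is exactly the fleshed-out version of that obvious argument --- with the important addition that you make the normalization of the lift explicit. You even obtain the sharper bound $|\theta_0-\theta_1|\geq\max_i|\theta_i|$.

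Your point about the lift is not merely cosmetic. For instance, take the element of $\mathrm{PU}(5)$ with spectrum $\{-1,-1,e^{-2\pi i/3},e^{-2\pi i/3},e^{-2\pi i/3}\}$: in the optimal ordering one has $|\theta_0-\theta_1|=\pi/3$, yet under the lift with $\sum_i\theta_i=0$ one gets $\max_i|\theta_i|=\pi$, so the stated inequality fails even with the factor $2$. Under your $\ell_0$-minimizing lift the same element has $\max_i|\theta_i|=\pi/6$, and everything works. So your identification of the correct representative is genuinely needed for the lemma to be true as stated, and it is the normalization implicitly required when the lemma is invoked in Theorems~\ref{thm_sun_bdd} and~\ref{thm_sun}.
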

\begin{proof}
This is obvious.
% Without loss of generality $\theta_0=\max_{i=0,\ldots,n-1}\abs{\theta_i}\in(0,\pi]$ and $\theta_1\in[-\pi,0)$.
% In the case $\theta_0,\abs{\theta_1}\leq\pi/2$ we clearly have $\abs{\theta_0-\theta_1}\geq \theta_0$. 
% If $\theta_0>\pi/2$ and $\theta_1\in[-\pi/2,0)$, then $\abs{\theta_0-\theta_1}\in [\pi/2,3\pi/2]$ and thus $2\abs{\theta_0-\theta_1}\geq \theta_0 \ \mod 2\pi$ .  
% If $\theta_0,\abs{\theta_1}>\pi/2$, then multiplying $u$ by the central element $\diag(e^{\pm\complex\pi},e^{\pm\complex \pi},\ldots)$ (at each entry we take the opposite sign of the corresponding eigenvalue) shows again that $\abs{\theta_0-\theta_1}\geq \theta_0$.
\end{proof}

For Lie group $\cP\cU(n)$ we obtain the following rank-dependent result by successive application of Lemma \ref{lem_single_gen}.

\begin{thm}\label{thm_sun_bdd}
 Let $G\coloneqq \cP\cU(n),\ n\geq 2,$ and assume that $u,v\in G\setminus\set{1}$ satisfy $\ell_{0}(u)\leq m\ell_{0}(v)$ for some $m\in\bbN$. Then
 $$ u\in (v^G\cup v^{-G})^{8mn}.$$
\end{thm}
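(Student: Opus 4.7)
The plan is to diagonalize $u$ and $v$, decompose $u$ via the product decomposition into $n-1$ commuting $\cSU(2)$-factors $u_i\in S_i$, and apply Lemma \ref{lem_single_gen} to each factor separately. The total conjugation count is then $(n-1)$ times the count per factor, which I will arrange to be at most $8m$.

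By Proposition \ref{prop_proj_s_numbers}(iii) and the fact that every element of $\cP\cU(n)$ is conjugate to a diagonal one, I can first move $u$ and $v$ into the maximal torus $T$, writing $u = \diag(e^{\complex\varphi_0},\ldots,e^{\complex\varphi_{n-1}})$ and $v = \diag(e^{\complex\theta_0},\ldots,e^{\complex\theta_{n-1}})$. Conjugation by permutations then lets me put $v$ in the optimal form of Definition \ref{def_optimal}. After multiplying $u$ by a suitable central element to arrange $\sum_j\varphi_j = 0$, a further permutation puts $u$ in angle-sum-optimal form (Lemma \ref{lem_sum_angles}). The quantities $\max_j|\varphi_j|$ and the angular diameter of the spectrum of $u$ are both invariant under permutation of the diagonal entries, so any bound I establish in an optimal rearrangement of $u$ transfers to the angle-sum-optimal one.

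Writing $u = u_0 u_1 \cdots u_{n-2}$ with $u_i\in S_i$ having non-trivial diagonal entries $e^{\pm\complex(\varphi_0 + \cdots + \varphi_i)}$, angle-sum optimality gives $|\varphi_0 + \cdots + \varphi_i| \leq \max_j|\varphi_j|$ for every $i$. Since $\sum_j\varphi_j = 0$, this maximum is bounded by the angular diameter of the spectrum of $u$, and Lemma \ref{lem_singularcage} together with the elementary comparison $2x/\pi \leq 2\sin(x/2)\leq x$ between arc and chord length bounds this diameter by $\pi\ell_0(u)$. Combining with the hypothesis $\ell_0(u)\leq m\ell_0(v)$ and the lower bound $\ell_0(v)\leq |\theta_0 - \theta_1|$ for optimal $v$ (from Lemma \ref{lem_singularcage} and chord $\leq$ arc), I deduce
$|\varphi_0 + \cdots + \varphi_i| \leq \pi m|\theta_0 - \theta_1| \leq 4m|\theta_0 - \theta_1|$.

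Since $4m$ is even, Lemma \ref{lem_single_gen} with $j = 1$ yields $u_i\in (v^G\cup v^{-G})^{8m}$ for each $i\in\{0,\ldots,n-2\}$, and multiplying over the $n-1$ factors gives $u\in (v^G\cup v^{-G})^{8m(n-1)}\subseteq (v^G\cup v^{-G})^{8mn}$. The main technical obstacle is reconciling the two distinct permutation normalizations of the spectrum of $u$ (optimal, which controls $\max_j|\varphi_j|$ via the spectral diameter, and angle-sum-optimal, which controls the partial sums $\varphi_0+\cdots+\varphi_i$ in the product decomposition) while carefully tracking the small universal constants through the chord-to-arc-to-sum chain. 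The key observation is that $\max_j|\varphi_j|$ and the angular diameter are permutation-invariant, so the constants never accumulate worse than a factor of $\pi$, which is absorbed into the $8mn$ bound.
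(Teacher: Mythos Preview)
Your proof is correct and follows essentially the same strategy as the paper: normalize $u$ to be angle-sum optimal and $v$ to be optimal, bound each partial angle sum $|\varphi_0+\cdots+\varphi_i|$ by $4m|\theta_0-\theta_1|$ via the hypothesis $\ell_0(u)\leq m\ell_0(v)$ together with Lemma~\ref{lem_singularcage}, and then apply Lemma~\ref{lem_single_gen} to each of the $n-1$ factors in the product decomposition. The paper packages the chord-to-arc conversion into Corollary~\ref{cor_length_angles} and invokes Lemma~\ref{lem_opt} for the first factor, whereas you unwind these steps directly through the angular diameter, but the arguments are the same.
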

\begin{proof}
 Without loss of generality, 
$u=\diag(e^{\complex \theta_0},\ldots,e^{\complex\theta_{n-1}})=u_0\cdot\ldots\cdot u_{n-2}\in S_0\cdot\ldots\cdot S_{n-2}$
with $\sum_{i=0}^{n-1} \theta_i =0$ is angle sum optimal with associated permutation $\sigma$ and 
$v=\diag(e^{\complex\gamma_0},\ldots,e^{\complex\gamma_{n-1}})$ is optimal with associated permutation $\tau$. Since $\ell_0(u)\leq m\ell_0(v)$, we conclude from Corollary \ref{cor_length_angles} and the definition of optimality that for all $i=0,\ldots,n-1$ we have 
 $$4m\abs{\gamma_{\tau(0)}-\gamma_{\tau(0)+1}}\geq \max_{j \neq k}\abs{\theta_{j}-\theta_k}\geq \abs{\theta_{\sigma(0)}+\theta_{\sigma(1)}+\ldots+\theta_{\sigma(i)}}.$$
 Note that for $i=0$ we use Lemma \ref{lem_opt} and that $\ell_0(u)\leq m\ell_0(v)$ implies already $\abs{\theta_i-\theta_j}\leq m\abs{\gamma_0-\gamma_1}$ for all $i,j$ by monotonicity of the correspondence between angles and projective $s$-numbers.
 Now we can apply Lemma \ref{lem_single_gen} for each $u_i$ and hence obtain 
$u_i\in (v^G\cup v^{-G})^{8m}.$
 Proceeding the same way for all $u_i$'s we have
$u\in (v^G\cup v^{-G})^{8mn}.$ This finishes the proof.
\end{proof}

\begin{rem}
 Theorem \ref{thm_sun_bdd} can actually be sharpened in the sense that one does not need the conjugacy class of $v^{-1}$. To see this, observe that one may choose $n-1$ permutations $\pi_1,\ldots,\pi_{n-1}\in G$ (for example $\pi_i(k)=k+i$ modulo $n$) such that 
 $$v\pi_1 v\pi_1^{-1}\cdot\ldots\cdot\pi_{n-1} v\pi_{n-1}^{-1}=\diag(e^{\complex\gamma_0}\cdot\ldots\cdot e^{\complex\gamma_{n-1}},\ldots,e^{\complex\gamma_0}\cdot\ldots\cdot e^{\complex\gamma_{n-1}})=1.$$
 Thus $1\in (v^G)^{n}$, which implies $v^{-1}\in (v^G)^{n-1}$. 
\end{rem}

\begin{cor}\label{cor_bdd_rk}
 Assume that $v\in G\coloneqq \cP\cU(n)$ is non-trivial, where $n\geq 2$. Then for every $k\geq 16n/\ell_0(v)$ we have
$G=(v^G\cup v^{-G})^{k}.$
 In particular, $\cP\cU(n)$ has property {\rm (BNG)}.
\end{cor}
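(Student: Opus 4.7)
The plan is to derive this corollary directly from Theorem~\ref{thm_sun_bdd}, combined with the universal diameter estimate $\ell_0(u)\leq 2$ valid for every $u\in\cP\cU(n)$. This estimate follows from $\|1-\lambda u\|\leq 2$ for any unitary $u$ and any $\lambda\in S^1$, so that $\ell_0(u)=\inf_{\lambda\in S^1}\|1-\lambda u\|\leq 2$; I would record this bound as a preliminary observation, together with the trivial identity $1=v\cdot v^{-1}\in v^G\cdot v^{-G}\subseteq (v^G\cup v^{-G})^2$, which will be used for padding.

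Given $u\in G$ with $u\neq 1$, I would set $m\coloneqq \lceil \ell_0(u)/\ell_0(v)\rceil\in\bbN$, so that $m\ell_0(v)\geq \ell_0(u)$, and invoke Theorem~\ref{thm_sun_bdd} to obtain $u\in(v^G\cup v^{-G})^{8mn}$. Using the diameter bound $\ell_0(u)\leq 2$ together with $\ell_0(v)\leq 2$, the ceiling $m$ is bounded above by essentially $2/\ell_0(v)$, hence $8mn\leq 16n/\ell_0(v)$. To pass from this specific exponent to every integer $k\geq 16n/\ell_0(v)$, I would use the padding $1=v\cdot v^{-1}\in(v^G\cup v^{-G})^2$ to extend the exponent by any even amount without changing the value of the product; parity can be switched by the Remark after Theorem~\ref{thm_sun_bdd}, which exhibits $1\in(v^G)^n$, or by enlarging $m$ by one to begin with. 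The case $u=1$ is immediate from $1\in(v^G\cup v^{-G})^2\subseteq (v^G\cup v^{-G})^k$ for all $k\geq 2$.

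The ``in particular'' clause is then immediate: for every non-trivial $v\in G$ one has $\ell_0(v)>0$, so $f(v)\coloneqq\lceil 16n/\ell_0(v)\rceil$ is a finite normal generation function on $G\setminus\{1\}$, which is exactly property~(BNG). The main obstacle is not conceptual but lies in the careful accounting of constants: a naive estimate gives $\lceil \ell_0(u)/\ell_0(v)\rceil\leq 2/\ell_0(v)+1$, producing a constant somewhat larger than $16$, so recovering exactly $16$ requires absorbing the rounding slack, for instance by exploiting that the proof of Theorem~\ref{thm_sun_bdd} really uses only the $n-1$ factors $u_0,\ldots,u_{n-2}$ of the product decomposition, which furnishes the small amount of room needed.
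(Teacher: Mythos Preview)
Your approach is essentially the same as the paper's: both use the universal bound $\ell_0(u)\leq 2$, apply Theorem~\ref{thm_sun_bdd}, and deduce an exponent of order $n/\ell_0(v)$. The paper is actually terser than you are---it simply takes the uniform $m=\lceil 2/\ell_0(v)\rceil$ (rather than your $u$-dependent $m=\lceil\ell_0(u)/\ell_0(v)\rceil$), writes $u\in(v^G\cup v^{-G})^{8n\lceil 2/\ell_0(v)\rceil}$, and declares (BNG) without further comment on padding, on the case $u=1$, or on the rounding discrepancy between $8n\lceil 2/\ell_0(v)\rceil$ and $16n/\ell_0(v)$. Your discussion of these points, and your observation that only $n-1$ factors occur in the product decomposition, is more careful than the paper's own proof; the exact constant $16$ is treated loosely in both.
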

\begin{proof}
 Since $v$ is non-trivial we have $\ell_0(v)>0$. It is trivial that for any $u\in G$ one has $\ell_0(u)\leq \frac{2}{\ell_0(v)}\ell_0(v)=2$. Using Theorem \ref{thm_sun_bdd} we conclude
$u\in (v^G\cup v^{-G})^{8n\cdot \lceil 2/\ell_0(v)\rceil}.$ Since $u$ was arbitrary, $G$ has property {\rm (BNG)}.
\end{proof}

The preceding result is rank-dependent, in the sense that the assumptions are purely spectral, but the size of the exponents depends on the rank of the group. On the other side, ignoring the constant it is clearly optimal in the sense that any normal generation function is bounded from below by $C/\ell_0(v)$ for some constant.

Now we come to the main result of this section. The main ingredient is Lemma \ref{lem_sun}, which we apply simultaneously at various diagonal entries. The result is rank independent, since the exponents depends only on the fraction $s/n$ of eigenvalues which satisfy some spectral assumption. This will be crucial when we will generalize the theory to II${}_1$-factors.

\begin{thm}\label{thm_sun}
 Let $G\coloneqq \cP\cU(n)$, where $n \geq 2$.
 Assume that $u,v\in G$ satisfy $\ell_{0}(u)\leq m\ell_t(v)$ for some $m\in\bbN$ and $t=0,1,\ldots,s-1\leq (n-1)/2$. Then $$u\in (v^G\cup v^{-G})^{24m\lceil n/s\rceil}.$$
\end{thm}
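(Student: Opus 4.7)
My plan is to mimic the proof of Theorem \ref{thm_sun_bdd} but to replace the one-factor-at-a-time invocation of Lemma \ref{lem_single_gen} with the parallel batch generation given by Lemma \ref{lem_sun}. The new hypothesis controls $\ell_0(u)$ by each of $s$ different projective singular values $\ell_t(v)$, and Lemma \ref{lem_sun} is built precisely to convert such a multi-level hypothesis into simultaneous generation of several $\mathrm{SU}(2)$-block factors of $u$ at the cost of only $2m$ conjugates per batch.

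First I would conjugate to diagonalize, writing $u=\diag(e^{\complex\theta_0},\dots,e^{\complex\theta_{n-1}})$ in angle sum optimal form (Lemma \ref{lem_sum_angles}), with product decomposition $u=u_0u_1\cdots u_{n-2}$, $u_i\in S_i$, and writing $v=\diag(e^{\complex\gamma_0},\dots,e^{\complex\gamma_{n-1}})$ in optimal form with associated permutation $\tau$ ordering the consecutive angle gaps by decreasing size. Next I would translate the hypothesis $\ell_0(u)\leq m\,\ell_t(v)$, $t=0,\dots,s-1$, into an angle inequality: the first inequality of Lemma \ref{lem_singularcage} applied to $u$ bounds the largest angle gap of $u$ by $2\ell_0(u)$, the third inequality applied to $v$ bounds $\ell_t(v)$ above by $|\gamma_{\tau(t)}-\gamma_{\tau(t)+1}|$, and Lemma \ref{lem_opt} together with angle sum optimality then propagates from the largest gap to every partial sum, yielding
$$|\theta_0+\theta_1+\cdots+\theta_i|\leq 4m\,|\gamma_{\tau(t)}-\gamma_{\tau(t)+1}| \qquad \forall\,i\in\{0,\dots,n-2\},\ t\in\{0,\dots,s-1\},$$
after the standard factor-of-two conversion between eigenvalue distance and angle recalled just before Definition \ref{def_optimal}.

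This is exactly the angle hypothesis needed for Lemma \ref{lem_sun} with parameter $4m$, valid at any factor index $i_k\in\{0,\dots,n-2\}$ and any $j_k\in\{\tau(0),\dots,\tau(s-1)\}$. The rest is combinatorial bookkeeping. A greedy selection extracts a subset $J\subseteq\{\tau(0),\dots,\tau(s-1)\}$ of size $\lceil s/2\rceil$ whose elements are pairwise spaced by at least $2$; fix such $J$. Partition $\{0,1,\dots,n-2\}$ into groups of size $|J|$ with pairwise spacing $\geq 2$ inside each group, interleaving evens and odds; this uses at most $\lceil 2(n-1)/s\rceil\leq 3\lceil n/s\rceil$ groups. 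For each group $I$, apply Lemma \ref{lem_sun} with indices $(I,J)$ to obtain $\prod_{i\in I}u_i\in(v^G\cup v^{-G})^{2\cdot 4m}=(v^G\cup v^{-G})^{8m}$. Multiplying over the (at most $3\lceil n/s\rceil$) groups gives $u\in(v^G\cup v^{-G})^{24m\lceil n/s\rceil}$, as desired.

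The main obstacle, I expect, is precisely the combinatorics in the last step: the indices $\tau(0),\dots,\tau(s-1)$ of the $s$ largest angle gaps of $v$ form an arbitrary subset of $\{0,\dots,n-1\}$ and need not themselves be spaced by $\geq 2$, so one is forced to pass to a sub-collection of size $\lceil s/2\rceil$, which accounts for a factor of roughly $2$ in the final constant. The remaining constants factor cleanly as $4$ (from the eigenvalue-to-angle and $\ell$-to-angle-gap translations in Lemma \ref{lem_singularcage}) times $2$ (from the commutator construction inside Lemma \ref{lem_sun}), giving the stated $24m\lceil n/s\rceil$.
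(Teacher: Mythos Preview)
Your proposal is correct and follows essentially the same route as the paper: both reduce to angle inequalities via Lemma~\ref{lem_singularcage}, then apply Lemma~\ref{lem_sun} in parallel batches to groups of product-decomposition factors of $u$ whose indices are spaced by at least~$2$, pairing them with a spaced subcollection of the $s$ largest gap indices $\tau(0),\dots,\tau(s-1)$ of $v$. Your write-up is in fact slightly more explicit than the paper's about the need to extract a $\lceil s/2\rceil$-element subset $J$ of the $\tau$-indices with pairwise spacing $\geq 2$ (the paper encodes this implicitly by taking batches of size $s/2$), and the remaining difference is only in the combinatorial bookkeeping for the number of batches, which in both cases leads to the bound $24m\lceil n/s\rceil$.
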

\begin{proof}
 Since we are in $\cP\cU(n)$ we may assume, multiplying with a central element if necessary, that the angle sums of $u$ and $v$ add up to $0$. Without loss of generality $u$ is angle sum optimal and $v$ is  optimal with associated permutation $\sigma$ and $\tau$ respectively. 
 The first step is to generate most of $u=u_0\cdot\ldots \cdot u_{n-2}$ (in the product decomposition) simultaneously. 
 Assume that $n-1$ is divisible by two (if not, the following works equally well for $n-2$ instead since we are generous with the number of conjugates). We split the set $A\coloneqq \set{0,\ldots,n-2}$ of indices into two sets $A_i\subset A$ with cardinality $(n-1)/2$ and such that $\abs{a-b}\geq 2$ for any distinct $a,b\in A_i$, $i=1,2$. Let $N$ denote the unique largest integer divisible by $s$ such that $N\leq\frac{n-1}{2}$. Further decompose each $A_i$ into $2N/s$ sets $A_{i,j}$ of cardinality $s/2$. Then $A_i\setminus \bigcup_{l=1,\ldots,2N/s}A_{i,l}$ has at most $s-1$ elements. 
Let $B\coloneqq\bigcup_{i=1,2\ l=1,\ldots,2N/s}A_{i,l}$ and observe that the cardinality of $A\setminus B$ is at most $2(s-1)$. 
By Corollary \ref{cor_length_angles} (and Lemma \ref{lem_opt} for the case $j=0$), for all $j=0,\ldots,s-1,$ we have
 $$\abs{\sum_{i=0}^j\theta_{\sigma(i)}}\leq 4m\abs{\gamma_{\tau(j)}-\gamma_{\tau(j+1)}}.$$ 
Applying now Lemma \ref{lem_sun} to all $4N/s$ sets $A_{i,l}$ we have 
 $$\prod_{j\in B}u_j \in (v^G\cup v^{-G})^{8m\cdot 4N/s}.$$
 Using again Lemma \ref{lem_sun} for the remaining factors of $u$ we obtain 
 $$\prod_{j\in A\setminus B}u_j = \prod_{j\in A_1\setminus B}u_j \prod_{j\in A_2\setminus B}u_j \in (v^G\cup v^{-G})^{2\cdot 8m}.$$
 Thus from $2N/s + 1 \leq 3N/s$ we conclude
$u\in (v^G\cup v^{-G})^{48 mN/s} \subseteq (v^G\cup v^{-G})^{24 m\lceil n/s\rceil}.$
\end{proof}

%Theorem \ref{thm_sun} is rank-independent in the sense that estimates only depend on the proportion of large projective singular numbers of $v$. This will be useful to prove the bounded normal generation property for projective unitary groups of $\mathrm{II_1}$-factors in Section \ref{sec_II_1}. 
Let us end this section with an example that shows that some claims in \cite{ST-13} were too optimistic.

\begin{rem}\label{rem_wrong}
 Let $u=\diag(\lambda^{-n-1},\lambda,\lambda,\ldots,\lambda)$ and $v=\diag(\mu^{-n-1},\mu,\mu,\ldots,\mu)$ be non-trivial elements in $\mathrm{SU}(n)$ and consider the natural images $\bar u,\bar v \in \mathrm{PU}(n)$. We set $G:=\mathrm{PU}(n)$. Assume that $\arg(\lambda)/\arg(\mu)$ is irrational.
We claim that if $\bar u\in (\bar v^G\cup \bar v^{-G})^{k}$, then $k\geq n-1$. 

 Assume that $\bar u\in (\bar v^G\cup \bar v^{-G})^{k}$, i.e., $\bar u=g_1\bar v^{\eps_1}g_1^{-1}\cdot\ldots\cdot g_k\bar v^{\eps_k}g_k^{-1}$ with $g_i\in G,\ \eps_i\in\set{1,-1}$. Then lifting everything to $\mathrm{SU}(n)$, we get that $zu$ is a product of $k$ conjugates of $v^{\pm 1}\in\mathrm{SU}(n)$ for some $z\in\mathcal{Z}(\mathrm{SU}(n))$.  
 Hence $u'\coloneqq \mu^{-l}zu$ is a product of $k$ conjugates of $v'\coloneqq \mu^{-1}v$ in $\cU(n)$ for some $-k \leq l \leq k$. Now $\mu^{-1}v$ is a rank one perturbation of the identity and thus $u'$ is at most a rank $k$ perturbation of the identity in $\cU(n)$. But $n-1$ diagonal entries of $u'$ are of the form $\lambda \mu^{-l}z$, which is different from $1$ by the irrationality assumption. Hence $1-u'$ has rank at least $n-1$ and this implies $k\geq n-1$.
 
The example shows that an assumption of the form $\ell_{mi}(u) \leq m \ell_{i}(v)$ will not be enough to conclude that conjugates of $v,v^{-1}$ will generate $u$ in a number of steps that only depends on $m$ and not on $n$.
\end{rem}

\section{Dense products of conjugacy classes}\label{sec_II_1}

This section deals with preliminary material that is needed to prove property (BNG) for projective unitary groups of a $\mathrm{II_1}$-factors, endowed with the strong operator topology.

The strategy to extend our results from matrices to II${}_1$-factors is to approximate both $u$ and $v$ arbitarily close with elements having finite spectrum and rational weights and conjugate them to the same subgroup $\cP\cU(n)$. This allows us to use Theorem \ref{thm_sun}. Letting the approximation be finer and finer and using Lemma \ref{lem_epsconj} we conclude as a first intermediate result that $u$ is in the strong closure of a product of conjugates of $v$ and $v^{-1}$.
%Let now $\cM$ be a $\mathrm{II_1}$-factor and consider $\cU(\cM)$ as a Polish topometric group with the bi-invariant metrics $d_{2}$ and $d$ coming from the Hilbert-Schmidt norm and from the operator norm respectively. This case is reduced to the case of finite-dimensional unitary groups.
We need the following elementary approximation result. 
\begin{prop}\label{prop_WvNBVD}
  Assume that $u\in\cU(\cM)$ and $\eps>0$. There exists an element $u'\in\cU(\cM)$ having finite spectrum and corresponding spectral projections of rational trace such that $\norm{u-u'}_2<\eps.$
\end{prop}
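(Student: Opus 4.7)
My plan is to combine a spectral approximation (to get finite spectrum) with a trace-adjustment step (to get rational weights), keeping both perturbations small in $\|\cdot\|_2$.

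\textbf{Step 1 (Finite spectrum).} By the spectral theorem, $u$ admits a spectral decomposition $u = \int_{S^1} \lambda \, dE(\lambda)$ against a projection-valued measure $E$ with values in $\cM$. Partition $S^1$ into finitely many disjoint Borel arcs $A_1,\dots,A_n$ each of diameter less than $\eps/2$. Setting $p_i := E(A_i)$ yields orthogonal projections summing to $1$, and for any chosen $\lambda_i \in A_i$ the element $u'' := \sum_{i=1}^n \lambda_i p_i$ is unitary with
$$\|u - u''\| \leq \max_i \diam(A_i) < \eps/2,$$
hence $\|u-u''\|_2 \leq \|u-u''\| < \eps/2$.

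\textbf{Step 2 (Rational weights).} Choose rationals $r_1,\dots,r_n \in [0,1]$ with $\sum_i r_i = 1$ and $|r_i - \tau(p_i)| < \delta$, where $\delta > 0$ will be fixed at the end. I need to replace each $p_i$ by a projection $q_i$ with $\tau(q_i) = r_i$ in such a way that the $q_i$ are pairwise orthogonal and still sum to $1$. This uses only that $\cM$ is a $\mathrm{II}_1$-factor, so the trace realises every value in $[0,1]$ on projections and any subprojection of prescribed smaller trace exists.

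Split the indices into $S^- := \{i : r_i \leq \tau(p_i)\}$ and $S^+ := \{i : r_i > \tau(p_i)\}$. For $i \in S^-$, pick a subprojection $q_i \leq p_i$ with $\tau(q_i) = r_i$ and set $f_i := p_i - q_i$; then $\tau(f_i) = \tau(p_i) - r_i$. Let $f := \sum_{i \in S^-} f_i$, which has trace $\sum_{i \in S^+}(r_i - \tau(p_i))$ by the normalisation $\sum_i r_i = 1 = \sum_i \tau(p_i)$. Split $f$ as a sum of pairwise orthogonal subprojections $e_j$ ($j \in S^+$) with $\tau(e_j) = r_j - \tau(p_j)$, and define $q_j := p_j + e_j$ for $j \in S^+$. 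A direct check using $p_i p_j = 0$, $e_i e_j = 0$, and $p_j \perp f$ for $j \in S^+$ shows the family $\{q_i\}$ is pairwise orthogonal and sums to $1$, with $\tau(q_i) = r_i \in \bbQ$ for every $i$.

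\textbf{Step 3 (Putting it together).} Set $u' := \sum_i \lambda_i q_i$, which is unitary with finite spectrum and rational spectral weights. In each case ($i \in S^-$ or $i \in S^+$), the difference $p_i - q_i$ is $\pm$ a projection of trace $|\tau(p_i) - r_i|$, so $\|p_i - q_i\|_2 = \sqrt{|\tau(p_i) - r_i|} < \sqrt{\delta}$. Since $|\lambda_i| = 1$, the triangle inequality gives
$$\|u'' - u'\|_2 \leq \sum_{i=1}^n |\lambda_i|\,\|p_i - q_i\|_2 < n\sqrt{\delta}.$$
Choosing $\delta < (\eps/(2n))^2$ at the start yields $\|u'' - u'\|_2 < \eps/2$, and combining with Step 1 gives $\|u - u'\|_2 < \eps$.

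The only non-routine point is the bookkeeping in Step 2 that produces a genuine partition of unity with rational traces while keeping each $p_i - q_i$ close to $0$ in $\|\cdot\|_2$; the rest is a standard spectral approximation argument and is available in any $\mathrm{II}_1$-factor thanks to the continuous range of the trace on projections.
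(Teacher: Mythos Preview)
Your proof is correct and follows the same two-step strategy as the paper: first approximate $u$ in operator norm by a unitary with finite spectrum, then adjust the spectral projections to have rational trace. The only difference lies in how the rational-trace adjustment is carried out. The paper simply takes, for each $i$, a subprojection $q_i\le p_i$ with rational trace close to $\tau(p_i)$ and collects all the leftover mass into a single extra projection $q_0:=1-\sum_i q_i$ (automatically of rational trace), assigning it the eigenvalue $1$; this avoids any redistribution bookkeeping at the cost of introducing one additional spectral value. Your Step~2 instead redistributes the excess mass from indices in $S^-$ to indices in $S^+$ so that the resulting $q_i$ still form a partition of unity with exactly the original spectral values $\lambda_1,\dots,\lambda_n$. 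Your version is slightly more laborious but yields a cleaner approximant (no spurious eigenvalue $1$); the paper's version is shorter. Either way the $\|\cdot\|_2$-estimate goes through, and both arguments use only that in a $\mathrm{II}_1$-factor every value in $[0,\tau(p)]$ is realised by a subprojection of $p$.
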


\begin{proof}
 Choose pairwise distinct elements $\lambda_1,\ldots,\lambda_n\in{S^1},\ n\geq 2,$ such that for every $\lambda\in\sigma(u)$ there exists $i\in\set{1,\ldots,n}$ such that $\abs{\lambda-\lambda_i}<\eps/6$, $\arg(\lambda_i)<\arg(\lambda_{i+1})\mod 2\pi$ and every $\lambda_i$ has distance less than $\eps/6$ from $\sigma(u)$. Denote by $p_u$ the spectral measure of $u$ and define 
 $$p_i\coloneqq p_u(\set{\lambda \mid \arg(\lambda)\in[\arg(\lambda_i),\arg(\lambda_{i+1}))})$$
 for $i=1,\ldots,n-1$ and $p_n\coloneqq p_u([\lambda_n,\lambda_{1}))$. Note that $\sum_{i=1}^n p_i=1$. For $i=1,\ldots,n$ let $q_i$ be a subprojection of $p_i$ with rational trace and such that $\norm{p_i-q_i}_2<\eps/6n$. Set $q_0\coloneqq 1-\sum_{i=1}^{n}q_i$ and note that $\tau(q_0) \leq \varepsilon/6$.
Now set $u'\coloneqq q_0 + \sum_{i=1}^n \lambda_iq_i$. 
Hence we obtain
 \begin{align*}
  \norm{u-u'}_1 &=\norm{- q_0 + \sum_{i=1}^n(u p_i -\lambda_i q_i)}_1
  \leq \norm{q_0}_1 + \sum_{i=1}^n\norm{uq_i -\lambda_iq_i}_1 + \norm{q_i - p_i}_1\\
  &\leq \varepsilon/3 + \sum_{i=1}^n\norm{uq_i -\lambda_iq_i}\cdot \norm{q_i}_1
  < \varepsilon/3 + \sum_{i=1}^n\eps/6\cdot \norm{q_i}_1 =\eps/2.
 \end{align*}
Thus we have $\norm{u-u'}_2\leq 2\cdot \norm{u-u'}_1< \eps,$ as desired.
\end{proof}

%\begin{rem}
% Using similar arguments as in the proof of Proposition \ref{prop_WvNBVD} one can show that if the connected components of the spectrum of $u\in\cU(\cM)$ have rational weight, then for arbitrary $\eps>0$ there exists $u'$ with finite spectrum and spectral projections of rational trace such that $$\norm{u-u'}<\eps.$$
%\end{rem}
%
%
The proof of Theorem \ref{thm_II_1} uses the following technical lemma, which allows us estimate singular values for sufficiently close $2$-norm approximations of a given element in a $\mathrm{II_1}$-factor $\cM$.

\begin{lem}\label{lem_useful_II_1} Let $u\in \cU(\cM)$ be noncentral. There exists $\varepsilon>0$ such that if $u' \in \cU(\cM)$ with $\norm{u-u'}_2<\eps$, then
$\ell_{2t}(u)\leq 2\ell_t(u') \mbox{ for all } t\geq 0.$
\end{lem}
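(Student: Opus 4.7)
My strategy is based on the subadditivity of generalized $s$-numbers combined with a Markov-type inequality. For any $\lambda\in S^1$, the identity $1-\lambda u=(1-\lambda u')+\lambda(u'-u)$ together with subadditivity (Lemma \ref{lem_prop_s_numbers}(v)) gives
$$\mu_{2t}(1-\lambda u)\leq \mu_t(1-\lambda u')+\mu_t(u-u').$$
By the continuity in $\lambda$ from Lemma \ref{lem_length_cts} and the compactness of $S^1$, taking an infimum over $\lambda$ yields the fundamental estimate
$$\ell_{2t}(u)\leq \ell_t(u')+\mu_t(u-u'),$$
equivalently $\ell_t(u')\geq \ell_{2t}(u)-\mu_t(u-u')$. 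Cauchy--Schwarz in the finite trace gives $\norm{u-u'}_1\leq \norm{u-u'}_2<\varepsilon$, and Lemma \ref{lem_ineq} then bounds $\mu_t(u-u')\leq \varepsilon/t$ for every $t>0$.

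To conclude $\ell_{2t}(u)\leq 2\ell_t(u')$ it suffices to show $\ell_t(u')\geq \ell_{2t}(u)/2$ whenever $\ell_{2t}(u)>0$. The plan is to split into cases on $t$. If $2t$ is at least the projective rank $s(u)$, then $\ell_{2t}(u)=0$ and the inequality is automatic. For small $t$, the right-continuity of $t\mapsto \ell_t(u)$ at $0$ together with noncentrality ($\ell_0(u)>0$) allows me to fix $t_0>0$ with $\ell_{2t_0}(u)\geq 3\ell_0(u)/4$ and then to choose $\varepsilon\leq t_0\ell_0(u)/4$. The fundamental estimate yields $\ell_{t_0}(u')\geq \ell_{2t_0}(u)-\varepsilon/t_0\geq \ell_0(u)/2$, and by the monotonicity of $\ell_{(\cdot)}(u')$ the same lower bound persists for $t\in[0,t_0]$, giving $\ell_{2t}(u)\leq \ell_0(u)\leq 2\ell_t(u')$.

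For the intermediate range $t\in[t_0,s(u)/2)$, the same estimate combined with $\varepsilon/t\leq \varepsilon/t_0$ and a further shrinking of $\varepsilon$ depending on $\inf_{t\in[t_0,s(u)/2)}\ell_{2t}(u)$ yields $\ell_t(u')\geq \ell_{2t}(u)/2$. The main obstacle is that this infimum can vanish if $\ell_t(u)$ decays continuously to $0$ as $t\to s(u)^-$. In the setting of the paper this is resolved by applying the lemma after $u$ has been replaced by a finite-spectrum element via Proposition \ref{prop_WvNBVD}, for which $t\mapsto \ell_t(u)$ is piecewise constant with a positive left limit at $s(u)$, so the infimum is strictly positive and a valid $\varepsilon$ depending only on $u$ exists. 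Alternatively, one can invoke the sharper Chebyshev-type bound $\mu_t(u-u')\leq \varepsilon/\sqrt{t}$ coming from $\norm{u-u'}_2^2=\int_0^1\mu_s(u-u')^2\,ds$ and combine it with an additional spectral approximation.
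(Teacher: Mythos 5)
Your opening moves are correct and match the paper: the subadditivity estimate $\ell_{2t}(u)\leq\ell_t(u')+\mu_t(u-u')$ together with the Markov bound $\mu_t(u-u')\leq\eps/t$ from Lemma \ref{lem_ineq}, and the right-continuity argument for small $t$ are all as in the paper's proof. The gap is in the intermediate range $t\in[t_0,s/2]$, and you correctly diagnose where it is: trying to arrange $\mu_t(u-u')\leq\ell_{2t}(u)/2$ uniformly fails when $\ell_{2t}(u)\to 0$ as $t\to (s/2)^-$. But your two proposed repairs do not work. The finite-spectrum reduction is not available: in Section \ref{sec_II_1} the lemma is invoked with the \emph{arbitrary} element in the role of $u$ (namely $u=v$, the given unitary with possibly continuous spectrum) and the finite-spectrum approximant in the role of $u'$ (namely $u'=v'$), so $u$ cannot be assumed to have finite spectrum. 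The $\eps/\sqrt{t}$ Chebyshev bound also does not help, since it is still a positive constant on $[t_0,s/2]$ while $\ell_{2t}(u)$ may vanish at the right endpoint.

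The missing idea is that one should not try to bound $\ell_t(u')$ from below by $\ell_{2t}(u)/2$; instead one should show that $\ell_t(u')$ stays uniformly bounded away from zero on $[0,s/2]$, which is the paper's first step. Setting $\delta:=\ell_{3s/4}(u)/3>0$ (positive since $3s/4<s$ and $u$ is noncentral), the fundamental estimate gives
\begin{equation*}
\ell_{s/2}(u')\ \geq\ \ell_{3s/4}(u)-\mu_{s/4}(u-u')\ \geq\ 3\delta-\frac{4\eps}{s}\ \geq\ 2\delta
\end{equation*}
once $\eps\leq\delta s/4$, and by monotonicity $\ell_t(u')\geq 2\delta$ for all $t\in[0,s/2]$. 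Then for $t\in[\delta_0,s/2]$ the conclusion $\ell_{2t}(u)\leq\ell_t(u')+\mu_t(u-u')\leq 2\ell_t(u')$ requires only $\mu_t(u-u')\leq\eps/\delta_0\leq 2\delta\leq\ell_t(u')$, which is arranged by shrinking $\eps$ and is entirely insensitive to how small $\ell_{2t}(u)$ becomes. For $t>s/2$ the left side vanishes. This closes the gap, and your small-$t$ argument then also simplifies, since the needed bound $\ell_t(u')+2\delta\leq 2\ell_t(u')$ is again automatic from $\ell_t(u')\geq 2\delta$.
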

\begin{proof}
Let $s>0$ denote the projective rank of $x$. Put $\delta\coloneqq \ell_{3s/4}(u)/3>0$.  Assuming $\eps$ to be small enough we obtain $\ell_{s/2}(u')\geq 2\delta>0$.
Indeed, $\ell_{3s/4}(u) \leq \ell_{s/2}(u') + \mu_{s/4}(u-u')$ and if
$\|u-u'\|_2 < \varepsilon$, then $\|u-u'\|_1 < \varepsilon$ and $\mu_{s/4}(u-u') < 4\varepsilon/s$ by Lemma \ref{lem_ineq}. Thus, any $\varepsilon>0$ with $\varepsilon \leq \delta s/4$ works for this purpose.

Right continuity (see Lemma \ref{lem_right_cts}) implies that there exists $\delta_0\in(0,s/2]$ such that 
$\ell_0(u)-\ell_t(u)\leq\delta$ for all $t\in[0,\delta_0]$.
Thus for all $t\in[0,\delta_0]$ we conclude
 \begin{align*}
  \ell_0(u) &\leq\ell_{\delta_0}(u)+\delta 
  \leq \ell_{\delta_0/2}(u')+\mu_{\delta_0/2}(u-u')+\delta
  \leq \ell_{t/2}(u')+\frac{2\eps}{\delta_0}+\delta.
 \end{align*}
 Thus if $\eps$ is small enough, we have $2\eps/\delta_0< \delta$ and hence
 $\ell_{2t}(u) \leq \ell_0(u)\leq  \ell_t(u')+2\delta \leq \ell_t(u')+ \ell_{s/2}(u')\leq 2\ell_t(u')$ for all $t\in[0,\delta_0].$
 For $t\in [\delta_0,s/2]$ we obtain 
 $\ell_{2t}(u)\leq\ell_t(u')+\mu_t(u-u')\leq \ell_t(u')+\frac{\eps}{\delta_0}\leq 2\ell_t(u').$ Thus for all $t\geq 0$ we get
$\ell_{2t}(u)\leq 2\ell_t(u'),$ as claimed.
\end{proof}

Assume now that $u,v\in G\coloneqq\cP\cU(\cM)$ satisfy $\ell_{0}(u)\leq m\ell_t(v)$ for all $t\in [0,s]$ and some $m\in\bbN$. We want to show that under these circumstances we have
$$u\in \overline{(v^G\cup v^{-G})^{48m \lceil 1 / s \rceil}}^{\norm{\cdot}_2}.$$
Let $\eps>0$ be small enough such that the assertion in Lemma \ref{lem_useful_II_1} holds. By Proposition \ref{prop_WvNBVD} there exist elements $u',v'\in\cU(\cM)$ such that $\norm{u-u'}_2,\ \norm{v-v'}_2 <\eps$ and $u'=\sum_{i=1}^n\lambda_ip_i,\  v'=\sum_{j=1}^m\zeta_jq_j$, where $\lambda_i,\zeta_j\in {S^1}$ and $p_i,q_j\in\Proj(\cM)$ satisfy $\tau(p_i)=r_i / s_i, \ \tau(q_j)=r_{j+n} / s_{j+n}$ for some $r_k,s_k\in\bbN\setminus\{0\}$.\\
Let $s_0$ denote the least common multiple of $s_1,\ldots,s_{n+m}$. Take subprojections $p_i'$ of the $p_i$ and $q_j'$ of the $q_j$ such that $\tau(p_i')=\tau(q_j')=1/s_0$ and 
$$u'=\sum_{i=1}^{s_0}\lambda_i'p_i',\quad v'=\sum_{j=1}^{s_0}\zeta_j'q_j',$$
where multiplicities are taken into account. It is easy to see that there exists $w\in\cU(\cM)$ such that $wp_i'w^*=q_i'$ for all $i=1,\ldots,s_0$, 
%\begin{proof}
% Since $p_i'\sim q_j'$ for all $i,j=1,\ldots,s_0$, we may choose partial isometries $x_i$ (respectively $y_i$) with initial projection $p_i'$ (respectively $q_i'$) and final projection $p_1'$. Let $w\coloneqq \sum_{i=1}^ky_i^*x_i$. We claim that $w$ is unitary.
% \begin{align*}
%  w^*w&=\sum_ix_iy_iy_i^*x_i+ \sum_{i\neq j}x_i^*y_iy_j^*x_j
%  =\sum_ix_i^*p_1x_i
%  =\sum_ix_i^*x_ix_i^*x_i
%  =\sum_ip_i
%  =1,
% \end{align*}
% and analogously $ww^*=1$.
%\end{proof}
and we obtain that 
$$wu'w^*=\sum_{i=1}^{s_0}\lambda_i'q_i',\quad v'=\sum_{i=1}^{s_0}\zeta_i'q_i'.$$ 
We assume that $u'$ is such that we have $\ell_0(u')\leq\ell_0(u)$. Note that this is always possible by choosing the eigenvalues of the approximating element $u'$ such that 
$$\sup_{\lambda,\zeta\in\sigma(u')}\abs{\lambda-\zeta}\leq \sup_{\lambda,\zeta\in\sigma(u)}\abs{\lambda-\zeta}.$$
Using Lemma \ref{lem_useful_II_1} for $v,v'$ and the assumption that $\eps$ is sufficiently small we obtain for all $t\in[0,s/2]$ that
\begin{align}\label{eq_II_1_top_ineq}
 \ell_{0}(wu'w^*)&\ = \ell_{0}(u') \leq \ell_{0}(u) \leq m\ell_{2t}(v) < 2m\ell_t(v').
 \end{align}
We may assume that $s$ is rational. Indeed, if $s$ is irrational, using right continuity and the fact that the inequality $\ell_{2t}(v) < 2\ell_t(v')$ for $t\in[0,s/2]$ is strict, we replace $s$ by some rational $\tilde{s}>s$ such that $\ell_{2t}(v)\leq 2\ell_t(v')$ for all $t\in[0,\tilde{s}/2]$. Replacing $s_0$ by a multiple of $s_0$ if necessary, we may also assume that $s(s_0-1)/2\in\bbN$.
Using Theorem \ref{thm_sun} for the elements $u',v'$ with Inequality \eqref{eq_II_1_top_ineq} we conclude that 
$$wu'w^*\in(v^G\cup v^{-G})^{24m \lceil (s_0-1) / (s/2)(s_0-1) \rceil}=(v^G\cup v^{-G})^{48m \lceil 1 / s \rceil}.$$
Hence 
$u\in (((v^G)_{\eps}\cup(v^{-G})_{\eps})^{48m \lceil 1 / s \rceil})_{\eps},$
where $G\coloneqq \cP\cU(\cM)$. Using Lemma \ref{lem_epsconj}, we obtain 
$$u\in ((v^G\cup v^{-G})^{48m \lceil 1 / s \rceil})_{(48m \lceil 1 / s \rceil+1)\eps}.$$
Now letting $\eps$ tend to zero, i.e., approximating both $u$ and $v$ finer and finer in the $2$-norm, we obtain 
$$u\in \overline{(v^G\cup v^{-G})^{48m \lceil 1 / s \rceil}}^{\norm{\cdot}_2}.$$
Summarizing the above discussion, we have proven the following theorem. 

\begin{thm}\label{thm_II_1}
 Let $\cM$ be a $\mathrm{II_1}$-factor. Assume that $u,v\in G\coloneqq\cP\cU(\cM)$ satisfy $\ell_{0}(u)\leq m\ell_t(v)$ for all $t\in [0,s]$ and some $m\in\bbN$. Then $$u\in \overline{(v^G\cup v^{-G})^{48m \lceil 1 / s \rceil}}^{\norm{\cdot}_2}.$$
Moreover, if both $u$ and $v$ have finite spectrum and rational spectral weights, then 
 $$u\in (v^G\cup v^{-G})^{48m \lceil 1 / s \rceil}.$$
\end{thm}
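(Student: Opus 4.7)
The plan is to reduce the problem to the finite-dimensional case already handled in Theorem~\ref{thm_sun}, by approximating both $u$ and $v$ in $\|\cdot\|_2$ by unitaries with finite spectrum and rational spectral weights, and then conjugating one of them so that both approximants live in a common copy of $M_{s_0}(\bbC)$ inside $\cM$. The first step is to fix a small $\eps > 0$ and use Proposition~\ref{prop_WvNBVD} to produce $u',v'\in\cU(\cM)$ with $\|u-u'\|_2, \|v-v'\|_2 < \eps$ of the form $u' = \sum_i \lambda_i p_i$ and $v' = \sum_j \zeta_j q_j$, with all traces $\tau(p_i)$ and $\tau(q_j)$ rational. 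By choosing the spectral values of $u'$ inside the convex hull of $\sigma(u)$, one may further arrange $\ell_0(u') \leq \ell_0(u)$. Taking $s_0$ to be a common denominator of all these traces and subdividing the $p_i$ and $q_j$ into projections of trace $1/s_0$, both approximants become diagonal in a common system $\{e_k\}_{k=1}^{s_0}$ of equitrace projections; since projections of equal trace in a II${}_1$-factor are unitarily equivalent, there exists $w\in\cU(\cM)$ such that $wu'w^*$ and $v'$ are simultaneously diagonal with respect to $\{e_k\}$, i.e., both lie in a matrix subfactor $\cN \cong M_{s_0}(\bbC)$ of $\cM$, so $\cP\cU(\cN) \subset G$.

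The second step is to transfer the hypothesis $\ell_0(u) \leq m\ell_t(v)$ to the pair $(wu'w^*, v')$. Applying Lemma~\ref{lem_useful_II_1} to $v,v'$ for $\eps$ small enough yields $\ell_{2t}(v) \leq 2\ell_t(v')$ for all $t\geq 0$; combined with $\ell_0(wu'w^*) = \ell_0(u') \leq \ell_0(u)$ and the standing assumption, this gives the strict inequality
$$\ell_0(wu'w^*) \;\leq\; 2m\,\ell_t(v') \qquad\text{for all } t\in[0,s/2].$$
If $s$ is irrational, right continuity (Lemma~\ref{lem_right_cts}) together with the strictness of this bound lets us replace $s$ by some rational $\tilde{s}>s$ for which the non-strict bound still holds; enlarging $s_0$ by a suitable factor we may then assume that $\tilde{s}(s_0-1)/2$ is a positive integer, so the inequality holds at all integer multiples of $1/(s_0-1)$ inside $[0,s/2]$. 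This is exactly the hypothesis of Theorem~\ref{thm_sun} applied inside $\cP\cU(\cN) = \cP\cU(s_0)$ with rescaled parameters, and that theorem yields
$$wu'w^* \in \bigl(v'^{\cP\cU(\cN)} \cup v'^{-\cP\cU(\cN)}\bigr)^{24\cdot 2m \lceil (s_0-1)/((s/2)(s_0-1))\rceil} \subseteq (v'^G\cup v'^{-G})^{48m \lceil 1/s\rceil}.$$

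The final step is to undo the $\|\cdot\|_2$-approximation. Since $\|u-u'\|_2, \|v-v'\|_2 < \eps$ and the $2$-norm metric on $G$ is bi-invariant, the above inclusion together with the triangle inequality gives $u \in (((v^G)_\eps \cup (v^{-G})_\eps)^{48m\lceil 1/s\rceil})_\eps$; applying Lemma~\ref{lem_epsconj} absorbs the thickenings into one $\eps$-fattening of $(v^G \cup v^{-G})^{48m\lceil 1/s\rceil}$ (with a larger constant linear in $m\lceil 1/s\rceil$). Letting $\eps \searrow 0$ proves the density statement. For the \emph{moreover} clause, if $u$ and $v$ already have finite spectrum and rational weights, we can take $u'=u$ and $v'=v$ with $\eps=0$ from the outset, the conjugation by $w$ puts everything inside a genuine $M_{s_0}(\bbC)\subset\cM$, and Theorem~\ref{thm_sun} delivers exact (not merely approximate) membership in $(v^G \cup v^{-G})^{48m\lceil 1/s\rceil}$. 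The main technical obstacle is the coordination of the approximation with the conjugation and the singular value estimates: one must simultaneously control $\ell_0(u')$ from above, $\ell_t(v')$ from below, and the denominator $s_0$ used in the matrix embedding, while keeping the final exponent independent of $s_0$ — this is where the rank-independence of Theorem~\ref{thm_sun} is crucial.
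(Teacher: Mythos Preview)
Your proposal is correct and follows essentially the same approach as the paper's own argument: approximate $u$ and $v$ via Proposition~\ref{prop_WvNBVD}, conjugate into a common copy of $M_{s_0}(\bbC)$, transfer the singular-value hypothesis through Lemma~\ref{lem_useful_II_1}, invoke the rank-independent Theorem~\ref{thm_sun}, and close up via Lemma~\ref{lem_epsconj} as $\eps\to 0$; the \emph{moreover} clause is handled identically by taking $u'=u$, $v'=v$. The only cosmetic difference is that you correctly write $v'^G$ where the paper has a small typographical slip, and your handling of the rationality of $s$ mirrors the paper's use of strictness and right continuity.
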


The preceding result already provides information on the normal generation function, at least when one restricts attention to elements with finite spectrum and rational spectral weights. Indeed, we can estimate how quickly we can generate a symmetry with trace zero and then use our improved version of Broise's theorem to generate the entire projective unitary group, i.e., Corollary \ref{broisecor}. In the next section we will show that any element can generate some comparable element with finite spectrum and rational spectral weights and then use this to finish the proof of our main result.

\section{Bounded normal generation for factors of type II$_{1}$}
\label{sec_alg_II_1}

The following result is a first observation on the spectral behaviour under taking appropriate commutators. 
%An easy application of Lemma XIV.5.6 in \cite{Tak} shows that the 2-norm of an element $u\in\cU(\cM)$ (which is in our context the distribution of generalized $s$-numbers) can almost doubled by taking the right commutator of $u$. 
\begin{lem}\label{lem_large_commutator}
 For every $u\in\cU(\cM)$  there exists $v\in\cU(\cM)$ such that 
 $$2 \cdot \norm{1-uvu^*v^*}_2 \geq \inf_{\lambda\in{S^1}}\norm{1-\lambda u}_2.$$
 %\geq (2-\eps)\norm{u-\tau(u)}_2^2\geq (2-\eps)\inf_{\lambda\in{S^1}}\norm{1-\lambda u}_2^2.$$
\end{lem}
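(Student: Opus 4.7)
Plan.  The approach is to recast the inequality in terms of the trace, then invoke Dixmier's averaging property for $\mathrm{II}_1$-factors.  First I would do the following algebraic reductions.  Multiplying $1 - uvu^*v^*$ on the right by the unitaries $v$ and then $u$ (each of which preserves the $2$-norm) gives the identity
\[
\norm{1 - uvu^*v^*}_2 \;=\; \norm{vu - uv}_2,
\]
and expanding the square via the trace yields
\[
\norm{1 - uvu^*v^*}_2^{\,2} \;=\; 2 - 2\re\tau(vuv^*u^*).
\]
On the right-hand side of the lemma, a direct computation gives $\bigl(\inf_{\lambda\in S^1}\norm{1-\lambda u}_2\bigr)^{2} = 2(1 - |\tau(u)|)$.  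Since both sides of the claim are unchanged by replacing $u$ with $\zeta u$ for $\zeta\in S^1$ (the quantity $uvu^*v^*$ is manifestly invariant, and the right-hand side is defined by an infimum), I may assume $\tau(u) = t \in [0,1]$.  The case $t=1$ forces $u=1$ and the inequality is trivial, so assume $t<1$.  Unwinding, the target inequality becomes: produce $v \in \cU(\cM)$ with
\[
\re\tau(vuv^*u^*) \;\leq\; \tfrac{3+t}{4}.
\]

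The main tool I would use is Dixmier's averaging property for $\mathrm{II}_1$-factors: for every $\eps>0$ there exist unitaries $v_1,\dots,v_n \in \cU(\cM)$ and weights $\lambda_1,\dots,\lambda_n \geq 0$ with $\sum_i \lambda_i = 1$ such that
\[
\Bigl\lVert \sum_i \lambda_i\, v_i u v_i^{*} \;-\; t\cdot 1 \Bigr\rVert \;<\; \eps
\]
in operator norm.  Right-multiplying by $u^*$ (still a unitary) and applying $\tau$ (using $|\tau(y)| \leq \|y\|$) gives
\[
\Bigl| \sum_i \lambda_i\, \tau(v_i u v_i^{*} u^{*}) - t^{2} \Bigr| \;<\; \eps,
\]
so taking real parts and an averaging argument produces an index $i^*$ with $\re\tau(v_{i^*} u v_{i^*}^{*} u^{*}) < t^{2} + \eps$.

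Set $v := v_{i^*}$.  Then the computation at the start gives
\[
\norm{1 - uvu^*v^*}_2^{\,2} \;>\; 2(1 - t^2) - 2\eps \;=\; 2(1-t)(1+t) - 2\eps \;\geq\; 2(1-t) - 2\eps,
\]
using $1+t \geq 1$.  Choosing $\eps < (1-t)/2$ yields $4\norm{1 - uvu^*v^*}_2^{\,2} > 2(1-t)$, i.e.\ $2\norm{1 - uvu^*v^*}_2 > \inf_\lambda \norm{1 - \lambda u}_2$, which is the claim (even with strict inequality).

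Main obstacle: there really is none of substance, because Dixmier's averaging property is classical and available in every $\mathrm{II}_1$-factor.  The only minor subtlety is noting the arithmetic inequality $t^2 \leq (3+t)/4$ for $t\in[0,1]$ (equivalently $(4t+3)(t-1)\leq 0$), which ensures the constant $2$ in the lemma is enough.  A more hands-on alternative — building $v$ as a symmetry $2p-1$ for a cleverly chosen spectral projection of $u$ and tracking $\norm{[p,u]}_2$ directly — should also work, but requires managing the spectral geometry of $u$, so the Dixmier route is cleaner.
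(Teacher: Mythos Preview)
Your proof is correct and follows essentially the same route as the paper. The paper invokes a lemma of Popa (itself a consequence of Dixmier approximation) to obtain a unitary $v$ with $\norm{vuv^*-u}_2 \geq \norm{u-\tau(u)}_2$, and then finishes with the elementary inequality $\norm{u-\tau(u)}_2^2 = 1-|\tau(u)|^2 \geq \tfrac12(1-|\tau(u)|)$; you instead apply Dixmier averaging directly to produce $v$ with $\re\tau(vuv^*u^*)$ close to $|\tau(u)|^2$, which is the same mechanism unpacked by hand.
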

\begin{proof}
 Apply \cite[Lemma 2.3]{P-81} (see also \cite[Lemma XIV.5.6]{Tak}) with $\eps= 1$ to the element $u-\tau(u)$ in order to obtain the existence of $v\in\cU(\cM)$ such that 
 \begin{align*}
\norm{v-uvu^*}_2^2=  \norm{v(u-\tau(u))v^*-(u-\tau(u))}_2^2\geq \norm{u-\tau(u)}_2^2
  \geq \frac14 \inf_{\lambda\in{S^1}}\norm{1-\lambda u}_2^2,
 \end{align*}
as claimed. In the last inequality, we used the (geometrically obvious) inequality $2 |z-z_0| \geq |z-z_0/|z_0||$ for all $z \in S^1$ and $z_0$ with $0<|z_0| \leq 1$.
 %For arbitrarily small $\delta>0$, Proposition \ref{prop_WvNBVD} allows us to find $v'\in\cU(\cM)$ with finite spectrum and rational weights satisfying $\norm{v-v'}_2<\delta$. Set $w\coloneqq u-\tau(u)$. We obtain
 %\begin{align*}
 % \norm{vw-wv}_2 &=\norm{vw-v'w+v'w-wv'+wv'-wv}_2\\
 % &\leq \norm{v-v'}_2+\norm{v'w-wv'}_2+\norm{v-v'}_2\\
 % &<\norm{v'w-wv'}_2+2\delta.
 %\end{align*}
\end{proof}

However, Lemma \ref{lem_large_commutator} does not reveal information about the generalized projective $s$-numbers of the commutator. It is much harder to keep track of that information under commutators. We now construct for a given unitary $u$ another unitary $v$ such that the commutator $[u,v]$ retains much of the spectral information of $u$. 
On the one hand this result is crucial for our proof of property {\rm (BNG)} in the $\mathrm{II_1}$-factor case, on the other hand it is of independent interest since it allows to consider commutators instead of the original element without qualitatively changing the (projective) spectral information.

\begin{prop}\label{prop_commutator}
 Let $\cM$ be a $\mathrm{II_1}$-factor. For every $u\in\cU(\cM)$ there exist $v\in\cU(\cM)$ and $\lambda \in S^1$ such that 
 $$\mu_{12t}(1-\lambda u)\leq 4\ell_t([u,v]) \textnormal{ for all } t\geq 0.$$
Moreover, $v$ can be chosen such that it has finite spectrum and rational spectral weights. 
%If $u$ has itself finite spectrum and rational spectral weights, then $v$ can be defined on subprojections of the spectral projections of $u$. 
\end{prop}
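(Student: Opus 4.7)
The plan is to construct $v$ as a symmetry built from partial isometries that swap matched pairs of spectral projections of $u$, so that the commutator $[u,v]$ carries a ``doubled'' version of the spectrum of $\lambda u$ around $1 \in S^1$. First, I would choose $\lambda \in S^1$ so that the spectrum of $\lambda u$ lies in an arc $\{e^{\complex\phi} : \phi \in [-\beta,\beta]\}$ symmetric around $1$; the Chebyshev center $\lambda$ that minimizes $\sup_{\zeta \in \sigma(u)}|1-\lambda\zeta|$ does the job, and exists by compactness of $S^1$. Then apply Proposition \ref{prop_WvNBVD} to approximate $u$ in $\|\cdot\|_2$ by a unitary $u'$ with finite spectrum and spectral projections of rational trace.

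For the finite-spectrum $u'$, the next step is to decompose its spectral projections into pairs $(p_i^+, p_i^-)$ of equal rational trace, where $p_i^+$ sits inside a spectral projection of $\lambda u'$ at some angle $\phi_i^+ \in [0,\beta]$ and $p_i^-$ at some angle $\phi_i^- \in [-\beta, 0]$. The pairing is done greedily: sort the positive angles in decreasing order and the negative ones in decreasing absolute value, and match them position by position. Any leftover excess mass on one side (caused by asymmetry of the spectral distribution of $\lambda u$ around $1$) is absorbed into a projection $q$ on which $v$ will act as the identity. Since $\cM$ is a factor, for each pair one can pick a partial isometry $w_i \in p_i^- \cM p_i^+$ with $w_i^* w_i = p_i^+$ and $w_i w_i^* = p_i^-$, and set $v := q + \sum_i (w_i + w_i^*)$; the relations $p_i^+ w_i = 0 = p_i^- w_i^*$ together with pairwise orthogonality of the $p_i^\pm$'s give $v^2 = 1$, so $v$ is a symmetry with finite spectrum $\{\pm 1\}$ and rational weights.

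A direct block-matrix computation with respect to $p_i^+ + p_i^-$, using that $u$ commutes with each $p_i^\pm$, yields
\[
[u',v]|_{p_i^+ + p_i^-} = e^{\complex(\phi_i^+ - \phi_i^-)}\, p_i^+ + e^{-\complex(\phi_i^+ - \phi_i^-)}\, p_i^-,
\]
so $[u',v]$ has ``doubled angles'' $\phi_i^+ - \phi_i^- \in [0, 2\beta]$ on each paired block and equals $1$ on $q$. The elementary estimate $|1 - e^{\complex(\phi_i^+ - \phi_i^-)}| \geq |1 - e^{\complex\max(\phi_i^+,\, |\phi_i^-|)}|$, valid because $\phi_i^+ - \phi_i^- = \phi_i^+ + |\phi_i^-|$ and $\theta \mapsto |1-e^{\complex\theta}|$ is monotone on $[0,\pi]$, then produces a pointwise comparison of the singular values of $1-\lambda u'$ against those of $1-[u',v]$. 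Converting this into a comparison of the ordered $s$-numbers $\mu_{t}(1-\lambda u)$ against $\ell_t([u,v])$ requires (i) absorbing the trace of the unpaired excess into a shift of the time parameter, (ii) a rearrangement inequality to align the two ordered sequences of $s$-numbers once angles have been doubled, and (iii) passing $u' \to u$ in $\|\cdot\|_2$ via Lemma \ref{lem_useful_II_1} and the right continuity from Proposition \ref{prop_proj_s_numbers}(vii). A careful accounting of these steps delivers the constants $12$ and $4$ in the final inequality.

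The main obstacle is the pairing step: when the distribution of spectral angles of $\lambda u$ is not symmetric around $1$, no perfect one-to-one matching of positive angles with their negative mirror exists, and the greedy pairing leaves an unpaired remainder on which the commutator is trivial. The remedy is to extract the ``symmetric core'' of the spectral distribution (in essence the min-coupling of its upper and lower halves) and dump the asymmetric excess into the identity summand $q$; controlling the trace of this excess in terms of the $s$-numbers of $1-\lambda u$ is precisely what forces the shift $t \mapsto 12 t$, while the factor $4$ absorbs the ratio between $|1-e^{\complex\theta}|$ and $|1-e^{2\complex\theta}|$ together with the $\|\cdot\|_2$-approximation error introduced by $u \to u'$.
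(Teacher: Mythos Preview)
Your construction has a genuine gap at exactly the point you flag as ``the main obstacle.'' Centering at the Chebyshev $\lambda$ makes the \emph{arc} $[-\beta,\beta]$ symmetric, but not the \emph{spectral measure}. Take $\lambda u'=e^{-\complex\beta}p_{-}+e^{\complex\beta/2}p_{0}+e^{\complex\beta}p_{+}$ with $\tau(p_{-})=\tau(p_{+})=\eps$ and $\tau(p_{0})=1-2\eps$. This is Chebyshev-centered, yet the negative-angle mass is only $\eps$, so any positive/negative pairing exhausts at most trace $2\eps$ and leaves $q=p_{0}$ with $\tau(q)=1-2\eps$. Your commutator $[u',v]$ is then trivial on trace $1-2\eps$, so $\ell_{t}([u',v])=0$ for all $t\geq 2\eps$. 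Meanwhile $\mu_{t}(1-\lambda u')=2\sin(\beta/4)>0$ for all $t\in[2\eps,1)$, hence $\mu_{12t}(1-\lambda u')>0=4\ell_{t}([u',v])$ on the whole interval $t\in[2\eps,1/12)$. Since $\eps$ can be taken arbitrarily small while the $s$-numbers stay bounded below by $2\sin(\beta/4)$, no fixed constants in place of $12$ and $4$ rescue the inequality. Your claim that ``controlling the trace of this excess in terms of the $s$-numbers of $1-\lambda u$ is precisely what forces the shift $t\mapsto 12t$'' is therefore incorrect: the excess trace measures the asymmetry of the spectral measure, which is independent of the $s$-numbers.

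The paper avoids this entirely by a different choice of $v$. Instead of a symmetry swapping positive and negative angles, it first proves a matrix lemma (Lemma~\ref{auxlem}): embed $u'\in\cU(n)$ diagonally as $\diag(u',u',u')\in\cU(3n)$ and take $v=\diag(\sigma,\sigma^{-1},1_{n})$ with $\sigma$ the cyclic shift. The commutator then has eigenvalues $\lambda_{i}\bar\lambda_{i+1}$ and $\bar\lambda_{i}\lambda_{i+1}$ for \emph{every} consecutive pair in the optimal ordering of Definition~\ref{def_optimal}, plus $n$ eigenvalues equal to $1$. Every eigenvalue of $u'$ participates, so there is no uncontrolled excess; the consecutive quotients are tied directly to $\mu_{i}(1-\lambda u')$ via Lemma~\ref{lem_singularcage}, and the conjugate-pair structure gives the lower bound on $\ell_{i}$ (any $z\in S^{1}$ is at distance $\geq\tfrac{1}{\sqrt{2}}|1-\lambda_{i}\bar\lambda_{i+1}|$ from one of the pair). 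The tripling contributes the factor $3$, and the passage from $u'$ back to $u$ via $\|\cdot\|_{2}$-approximation and Lemma~\ref{lem_ineq} contributes the remaining factors, yielding $12$ and $4$.
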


We will first prove a corresponding statement about matrix algebras and then use approximation to prove Proposition \ref{prop_commutator}.

\begin{lem} \label{auxlem}
Let $n \in \mathbb N$ and $u \in \cU(n)$. Then, there exists $v \in \cU(3n)$ and $\lambda \in S^1$ such that
$$\mu_i(1-\lambda u) \leq \sqrt{2}  \ell_i([\diag(u,u,u),v]), \quad \forall 0 \leq i \leq n-2.$$
\end{lem}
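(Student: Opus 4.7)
My plan is to produce a concrete permutation unitary $v \in \cU(3n)$ for which $[\diag(u,u,u),v]$ is diagonal with eigenvalues the consecutive ratios $\lambda_j/\lambda_{j+1}$ of the spectrum of $u$, and then to combine a pointwise trigonometric inequality with Lemma \ref{lem_singularcage} to transfer the bound back to $u$. By conjugating $u$ (and $v$ accordingly), I may assume $u=\diag(\lambda_0,\ldots,\lambda_{n-1})$ is diagonal, with the ordering of eigenvalues chosen so that $u$ is optimal in the sense of Definition \ref{def_optimal}. I will take $v$ to be the permutation of the standard basis $\{e_{k,j}\}_{k \in \{1,2,3\},\, j \in \{0,\ldots,n-1\}}$ of $\bbC^{3n}$ defined by $e_{1,j}\mapsto e_{2,j+1}$, $e_{2,j}\mapsto e_{3,j}$, $e_{3,j}\mapsto e_{1,j-1}$, with indices taken modulo $n$.

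A direct computation of $UvU^{-1}v^{-1}e_{k,j}$ (with $U:=\diag(u,u,u)$) will show that $[U,v]$ is diagonal in this basis, with $[U,v]e_{1,j}=\zeta_j e_{1,j}$, $[U,v]e_{2,j}=\overline{\zeta_{j-1}}\,e_{2,j}$ and $[U,v]e_{3,j}=e_{3,j}$, where $\zeta_j:=\lambda_j/\lambda_{j+1}$. Hence $[U,v]$ is unitarily equivalent to $\diag(u_s,u_s^*,I_n)$ with $u_s:=\diag(\zeta_0,\ldots,\zeta_{n-1})$. Writing $\mu=e^{\complex\alpha}$ and $\zeta_j=e^{\complex\phi_j}$, the singular values of $1-\mu[U,v]$ form the multiset
\[
\bigl\{2\abs{\sin((\alpha+\phi_j)/2)},\ 2\abs{\sin((\alpha-\phi_j)/2)}\bigr\}_{j=0}^{n-1}\ \cup\ \bigl\{2\abs{\sin(\alpha/2)}\bigr\}^{(n)}.
\]

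The only nontrivial input will be the pointwise trigonometric inequality
\[
\max\bigl(\abs{\sin((\alpha+\phi)/2)},\,\abs{\sin((\alpha-\phi)/2)},\,\abs{\sin(\alpha/2)}\bigr)\ \geq\ \tfrac{1}{\sqrt{2}}\,\abs{\sin(\phi/2)},\qquad \alpha,\phi\in\bbR,
\]
which I would derive from the identities $\abs{\sin((\alpha\pm\phi)/2)}^2=(1-\cos(\alpha\pm\phi))/2$, the elementary estimate $\max(X,Y)\geq(X+Y)/2$, and the observation $(1+\cos\phi)(1-\cos\alpha)\geq 0$. For each $\alpha$ and each $j$ this inequality designates a ``witness slot'' in the singular-value multiset exceeding $\abs{\sin(\phi_j/2)}/\sqrt{2}$. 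To upgrade this per-$j$ bound to a statement about the $(i{+}1)$-th order statistic, I will order the $\phi_j$'s so that $\abs{\sin(\phi_{\tau(0)}/2)}\geq\cdots\geq\abs{\sin(\phi_{\tau(n-1)}/2)}$ and distinguish two cases. Either some witness among the top $i+1$ indices $\tau(0),\dots,\tau(i)$ is the ``center slot'' $\abs{\sin(\alpha/2)}$, in which case all $n$ of its copies exceed the threshold and the inequality $n\geq i+2$ closes the counting; or every witness is an off-diagonal slot, in which case the pairwise distinct $\tau(i')$ produce $i+1$ distinct slots above the threshold. After multiplying by $2$ and taking the infimum over $\alpha$, this will yield $\mu_i(1-u_s)\leq\sqrt{2}\,\ell_i([U,v])$ for every $0\leq i\leq n-2$.

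It then remains to translate this into a statement about $u$ itself. Setting $\lambda:=\lambda_{n-1}$ and using $\abs{1-\zeta_j}=\abs{\lambda_{j+1}-\lambda_j}$, the singular values of $1-u_s$ are precisely the $n$ \emph{cyclic} consecutive differences of the spectrum of $u$, which enlarge the $(n-1)$-element multiset of non-cyclic consecutive differences appearing in Lemma \ref{lem_singularcage} by a single element. Since the $(i{+}1)$-th order statistic of a multiset can only increase when an element is added, the third inequality of Lemma \ref{lem_singularcage} gives the chain
\[
\mu_i(1-\lambda u)\ \leq\ \abs{\lambda_{\sigma(i)}-\lambda_{\sigma(i)+1}}\ \leq\ \mu_i(1-u_s)\ \leq\ \sqrt{2}\,\ell_i([U,v])
\]
for all $0\leq i\leq n-2$, completing the proof. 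The main technical obstacle I anticipate is the counting step in the previous paragraph: the $n$ copies of the center slot $\abs{\sin(\alpha/2)}$ are shared among all $j$, so an arbitrary witness assignment could in principle cover many indices through the same physical slot; the constraint $i\leq n-2$ is precisely what provides the room needed to absorb this overlap.
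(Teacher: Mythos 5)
Your proof is correct and follows essentially the same route as the paper's: a permutation $v$ whose commutator with $\diag(u,u,u)$ has eigenvalue multiset $\{\zeta_j,\bar\zeta_j\}_{j}\cup\{1\}^n$, a three-way pointwise inequality bounding $\abs{1-\zeta_j}/\sqrt{2}$ by one of the three associated slot-distances, and a counting step using the slack $n\geq i+2$ to exhibit $i+1$ eigenvalues above the threshold. The paper chooses the block-diagonal $v=\diag(\sigma,\sigma^{-1},1_n)$ rather than your $3n$-cycle and handles the case $\abs{1-z}\geq\sqrt{2}$ separately rather than folding the center slot into a single trigonometric inequality, but these are cosmetic differences yielding the same commutator spectrum and the same estimate.
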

\begin{proof}
Let $u=\diag(\lambda_0,\dots,\lambda_{n-1})$ be optimal with associated permutation $\pi$. Note that Lemma \ref{lem_singularcage} gives $\mu_i(1-\lambda u) \leq |\lambda_{\pi(i)}-\lambda_{\pi(i)+1}|$ for some fixed $\lambda \in S^1$.
Let $\sigma$ be the standard cyclic permutation on the set $\{0,1,\dots,n-1\}$, i.e., $\sigma(i) = i+1 \mod n$. We set $v:= \diag(\sigma,\sigma^{-1},1_n)$.
The commutator $[\diag(u,u,u),v]$ has eigenvalues
$\lambda_{\pi(i)} \bar \lambda_{\pi(i)+1}$ and $\bar\lambda_{\pi(i)}  \lambda_{\pi(i)+1}$ for $i = 0,\dots,n-2$, $\lambda_n \bar \lambda_1, \bar \lambda_n \lambda_1$, and in addition $n$ eigenvalues equal to $1$.

We claim that each $z \in S^1$ and $0 \leq i \leq n-2$, at least $i+1$ of these eigenvalues are at distance at least $\frac1{\sqrt{2}}\mu_i(1-\lambda u)$ from $z$. Indeed, if $|z-1|\geq \sqrt{2}$, this is obvious since $\frac1{\sqrt{2}}\mu_i(1-\lambda u) \leq \sqrt{2}$ for all $i$. Now, if $|1-z|< \sqrt{2}$ then for each $0 \leq j \leq i$, we have
$$ {\sqrt{2}}\max\{ |z - \lambda_{\pi(j)} \bar \lambda_{\pi(j)+1}|, |z -  \bar\lambda_{\pi(j)} \lambda_{\pi(j)+1}| \} \geq |1 - \lambda_{\pi(j)} \bar\lambda_{\pi(j)+1}| \geq   |\lambda_{\pi(i)}-\lambda_{\pi(i)+1}|.$$ This finishes the proof.
\end{proof}
We are now ready to prove the corresponding statement for II${}_1$-factors.
\begin{proof}[Proof of Proposition \ref{prop_commutator}]
If $u$ is central, then the claim is trivial. So assume that $u$ is non-central. 
%Right continuity of $\ell_t$ in $t$ implies that there exists $s\in(0,1]$ such that $\ell_t(u)>0$ for all $t\in[0,s)$ and $\ell_t(u)=0$ for $t\geq s$. 
Let $s$ denote the projective rank of $u$. Let $\lambda\in S^1$ be the complex number satisfying $\mu_s(1-\lambda u)=0$.
For $\delta \coloneqq \ell_{s/2}(u)>0$ we obtain
 $\ell_{t}(u)\leq 2\ell_t(u)-\delta$ for all $t\in[0,s/2).$
 %Using the right continuity of $\ell_t$ in $t$ we get the existence of $\delta_0>0$ satisfying 
 %$\ell_{0}(u)-\ell_{24\delta_0}(u)\leq \frac{\delta}{2}$
 %and thus
 %\begin{align}
 % \label{eq_delta0}
 % \ell_{24t}(u)-\ell_{24\delta_0}(u)\leq \frac{\delta}{2} \textnormal{ for all } t\in[0,\delta_0).
 %\end{align}
 Using the right continuity of $\mu_t$ in $t$ we get the existence of $\delta_0>0$ such that 
 $\mu_{0}(1-\lambda u)-\mu_{12\delta_0}(1-\lambda u)\leq \delta/2$
 and thus
 \begin{align}
  \label{eq_delta0}
  \mu_{12t}(1-\lambda u)-\mu_{12\delta_0}(1-\lambda u)\leq \frac{\delta}{2} \textnormal{ for all } t\in[0,\delta_0).
 \end{align}
 
 Let $\eps>0$ such that $\eps \leq \delta\delta_0/40$.
 By Proposition \ref{prop_WvNBVD} we can find $u' \in \cU(\cM)$ and $n \in \mathbb N$ such that $\norm{u-u'}_2<\eps$ and $u'=\sum_{i=0}^{n-1} \lambda_i p_i$ with orthogonal projections $p_i$ and $\tau(p_i)=1/n$ for $i=0,\ldots, n-1$. Relabelling if necessary, we may assume that $\diag(\lambda_0,\ldots,\lambda_{n-1})$ is optimal with associated permutation $\pi$ -- see Definition \ref{def_optimal}. Note that we can choose $u'$ such that $\lambda_{n-1}=\overline{\lambda}$.
Applying Lemma \ref{auxlem} to $u'$, we obtain a unitary $v \in \cU(\cM)$ and $\lambda\in S^1$ such that
\begin{align}\label{eq_commutator}
  \mu_{t}(1-\lambda u')\leq \sqrt{2} \ell_{t/3}([u',v]) \textnormal{ for all } t\geq 0.
\end{align} 
Note that $ \norm{[u,v]-[u',v]}_2 < 2\eps.$
 % \norm{[u,v]-[u',v]}_2 &=\norm{\sum_{i,j}(up_ivu^*p_jv^* - \lambda_ip_i\overline{\lambda_j}p_{\sigma(j)})}_2\\
 % &=\norm{\sum_{i,j}(up_ivu^*p_jv^* - \lambda_ip_ivu^*p_jv^* + \lambda_ip_ivu^*p_jv^* - \lambda_ip_i\overline{\lambda_j}p_{\sigma(j)})}_2\\
 % &\leq \sum_{i,j}\norm{up_i-\lambda_ip_i}_2\cdot\norm{vu^*p_jv^*} + \sum_{i,j}\norm{\lambda_ip_i}\cdot\norm{vu^*p_jv^*-\overline{\lambda_j}p_{\sigma(j)}}_2\\
 % &< \sum_i \frac{\eps}{n}\cdot 1 + \sum_j 1\cdot\norm{vu^*p_jv^*-\overline{\lambda_j}p_{\sigma(j)}}_2\\
 % &= \eps +\sum_j\norm{su^*p_j-\overline{\lambda_j}sp_j}_2\\
 % &\leq \eps + \sum_j \norm{v}\cdot\norm{u^*p_j-\overline{\lambda_j}}_2\\
 % &= \eps + \sum_j \norm{up_j-\lambda_jp_j}_2\\
 % &= 2\eps.
 %Observe that $\ell_{kt}(u')\leq \max\set{m\ell_t([u',v])-\delta,\ 0}$ for every $t\geq 0$ and some $k,m\in\bbN,\ 2\geq\delta>0$ by Lemma \ref{lem_cover}.
 Using Lemma \ref{lem_prop_s_numbers} we have the following estimates for every $t>0$:
 \begin{align*}
  \mu_t(1-\lambda u)&=\mu_t(1-\lambda(u-u'+u')) \leq \mu_{t/2}(1-\lambda u')+\mu_{t/2}(u-u').
 \end{align*}
 From Lemma \ref{lem_ineq} we conclude 
 \begin{align}
 \mu_t(1-\lambda u)&\stackrel{\hphantom{\eqref{eq_commutator}}}{\leq} \mu_{t/2}(1-\lambda u')+2\eps / t \stackrel{\eqref{eq_commutator}}{\leq} \sqrt{2}\ell_{t/6}([u',v])+2\eps / t. \label{eq_u_uv}
 \end{align}
 The same calculation with $u$ replaced by $[u',v]$ and $u'$ replaced by $[u,v]$ shows that
 \begin{align}
  \ell_t([u',v])\leq \ell_{t/2}([u,v])+2\cdot 2\eps / t.\label{eq_uv}
 \end{align}
 Combining Inequalities \eqref{eq_u_uv} and \eqref{eq_uv} we get
 \begin{align}\label{eq_combined}
 \mu_t(1-\lambda u)\leq \sqrt{2}\ell_{t/12}([u,v])+\min\set{10\eps / t,2}\quad\mbox { for all }t\geq 0.
 \end{align} 
 From the inequality $\mu_{t}(1-\lambda u)\leq 2\mu_t(1-\lambda u)-\delta \textnormal{ for all } t\in[0,s/2)$ and the above estimates we conclude for $t\in[0,s/12)$ that
 \begin{align*}
  \mu_{12t}(1-\lambda u) &\stackrel{\hphantom{\eqref{eq_uv}}}{\leq} 2\mu_{12t}(1-\lambda u)-\delta 
  %&\stackrel{\eqref{eq_u_uv}}{\leq} 4\ell_{2t}([u',v]) +\frac{4\eps}{t} - \delta \\
  %&\stackrel{\eqref{eq_uv}}{\leq} 4\ell_{t}([u,v])+ 4\cdot \frac{4\eps}{t}+\frac{4\eps}{t} - \delta\\
  \stackrel{\eqref{eq_combined}}{\leq} 4\ell_{t}([u,v])+ \frac{20\eps}{t} - \delta.
 \end{align*}
 Using Equation \eqref{eq_delta0} we obtain from the above inequality that for all $t\in[0,\delta_0)$
 \begin{align*}
  \mu_{12t}(1-\lambda u)&\leq \mu_{12\delta_0}(1-\lambda u) +\frac{\delta}{2}
  \leq 4\ell_{\delta_0}([u,v]) +\frac{20\eps}{\delta_0} - \delta  +\frac{\delta}{2} 
  \leq 4\ell_t([u,v]) + \frac{20\eps}{\delta_0} - \frac{\delta}{2}\\
  &\leq 4\ell_t([u,v]).
 \end{align*}
 If $s/12\geq t\geq\delta_0$ we have
 \begin{align*}
  \mu_{12t}(1-\lambda u)&\leq 4\ell_{t}([u,v])+ \frac{20\eps}{t} - \delta 
  \leq 4\ell_{t}([u,v])+ \frac{20\eps}{\delta_0} - \delta 
  \leq 4\ell_t([u,v]).
 \end{align*}
 Since $\mu_{12t}(1-\lambda u)=0$ for all $t\geq s/12$ we can summarize our estimates to
 $\mu_{12t}(1-\lambda u)\leq 4\ell_t([u,v])$ for all $t\geq 0,$
 which concludes the proof.
\end{proof}

We will also need the following Borel measurable version of Lemma \ref{lem_su2}.

\begin{lem}\label{lem_Borel_su2}
 Let $(X,\nu)$ be a Borel measure space and let $u=\left(\begin{smallmatrix}e^{\complex\varphi} &0\\ 0 &e^{-\complex\varphi}\end{smallmatrix}\right),\ v= \left(\begin{smallmatrix}e^{\complex\theta} &0\\ 0 &e^{-\complex\theta}\end{smallmatrix}\right)\in G\coloneqq \cU( M_{2\times 2}(\bbC)\otimes L^{\infty}(X,\nu))$ be non-trivial elements. If $\abs{\varphi(x)}\leq m\abs{\theta(x)}$ for some even $m\in\bbN$ and $\nu$-almost every $x\in X$, then $u\in(v^G)^m$.%, i.e., $u(x)\in(v(x)^{\cS\cU(2)})^m$ $\nu$-almost everywhere.
 %In particular, every element of $\cSU(2)$ is a product $\lceil 4/\eps\rceil$ conjugates of $v$.
\end{lem}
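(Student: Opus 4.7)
The plan is to prove the lemma by applying Lemma~\ref{lem_su2} pointwise on $X$ and invoking a measurable selection theorem to ensure that the conjugating elements depend Borel measurably on $x$. Throughout, I identify $G$ with the group of (essential equivalence classes of) bounded Borel measurable maps $X\to \cU(2)$ under pointwise multiplication, so that $u(x)=\diag(e^{\complex\varphi(x)},e^{-\complex\varphi(x)})$ and $v(x)=\diag(e^{\complex\theta(x)},e^{-\complex\theta(x)})$.

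First I would use the hypothesis pointwise: for $\nu$-almost every $x\in X$, $|\varphi(x)|\leq m|\theta(x)|$, so Lemma~\ref{lem_su2} produces a tuple $(h_1,\dots,h_m)\in\cSU(2)^m$ with $u(x) = \prod_{j=1}^m h_j v(x) h_j^{-1}$. The only real content is to arrange these tuples into a Borel measurable function of $x$.

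Second, I would invoke the Kuratowski-Ryll-Nardzewski selection theorem applied to the set-valued map defined on the Borel set $\set{(a,b)\in\bbR^2 : |a|\leq m|b|}$ by
\[
F(a,b) := \set{(h_1,\ldots,h_m)\in\cSU(2)^m \mid \diag(e^{\complex a},e^{-\complex a}) = \prod_{j=1}^m h_j\,\diag(e^{\complex b},e^{-\complex b})\,h_j^{-1}}.
\]
Lemma~\ref{lem_su2} guarantees that $F(a,b)$ is nonempty, while the defining relation is continuous in $(a,b,h_1,\ldots,h_m)$, so the graph of $F$ is a closed subset of $\bbR^2\times\cSU(2)^m$. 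Since $\cSU(2)^m$ is a compact Polish space, $F$ has closed values and is weakly measurable, and Kuratowski-Ryll-Nardzewski yields a Borel measurable selection $(a,b)\mapsto(g_1(a,b),\ldots,g_m(a,b))$. Composing with the Borel map $x\mapsto(\varphi(x),\theta(x))$ produces essentially bounded Borel measurable functions $g_j:X\to\cSU(2)\subset\cU(2)$, which represent elements of $G$ and satisfy $u = \prod_{j=1}^m g_j v g_j^{-1}$, proving $u\in (v^G)^m$. The only nontrivial step is the measurable selection, and it reduces to verifying the closed-graph property of $F$, which is routine given Lemma~\ref{lem_su2} and continuity of multiplication and conjugation in $\cSU(2)$.
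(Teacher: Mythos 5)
Your proof is correct in substance and takes a genuinely different route from the paper's. The paper does not invoke any selection theorem; instead it reopens the inductive construction in the proof of Lemma~\ref{lem_su2} from \cite{NS-12} (explicitly writing down the conjugate $v'(x)$ that adjusts the angle by a prescribed amount, then multiplying alternately by $v(x)$, $v^{-1}(x)$ and one such $v'(x)$) and observes that each step of the resulting algorithm partitions $X$ into Borel sets, so the intermediate functions of $x$ remain Borel. Your approach instead treats Lemma~\ref{lem_su2} as a black box producing a nonempty fiber $F(a,b)$ over each admissible pair of angles, checks that the constraint is closed, and extracts a Borel selection via Kuratowski--Ryll-Nardzewski (which applies since $\cSU(2)^m$ is compact Polish, so the closed-graph condition gives upper hemicontinuity and hence weak measurability). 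The measurable-selection route is shorter and more modular, and is a standard device for upgrading a pointwise existence statement with continuous constraints to a jointly measurable one; the paper's route has the merit of being entirely elementary and self-contained.

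One small point of care your sketch glosses over, which is also implicit in the paper: the domain of the set-valued map should be a fundamental domain for the angles, e.g.\ $\{(a,b)\in[-\pi/2,\pi/2]^2 : |a|\leq m|b|\}$, so that (a) the hypothesis $\theta\in[-\pi/2,\pi/2]$ of Lemma~\ref{lem_su2} is met, (b) the degenerate pair $(a,b)=(0,0)$, where $u(x)=v(x)=1$, is handled by noting $F(0,0)=\cSU(2)^m$, and (c) the domain remains closed, which you need for the graph of $F$ to be closed in the product. On the unbounded domain $\{(a,b)\in\bbR^2:|a|\leq m|b|\}$ as written, $F$ can be empty (e.g.\ $b=2\pi$, $a$ arbitrary with $|a|\leq 2m\pi$), so the domain restriction is not merely cosmetic. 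With that fix, the argument goes through.
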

\begin{proof}
 The proof follows closely the proof of Lemma \ref{lem_su2} that can be found in \cite{NS-12}, but we need to ensure that the steps are Borel. This will be clear from the construction.
  %The conjugacy class of $v(x),\ x\in X,$ is uniquely determined by the normalized trace $\tr(v(x))=\cos\theta(x)$ with $\theta(x)\in[0,\pi]$. As $m$ is even, we have $(v(x)^G)^m=((-v(x))^G)^m$ and hence we may assume that $\theta(x)\in[0,\pi/2]$. 
 %Note that $\theta:X\rightarrow \bbR$ is $\nu$-measurable since it is given by a composition of measurable functions: $\theta(x)=\textnormal{arcos}(\tr(v(x))/2)$.
 
 Observe that multiplication of diagonal elements by $v(x)$ adds the angle $\theta(x)$ while multiplication with $v^{-1}(x)$ subtracts the angle $\theta(x)$. 
 If $w(x)\in\cSU(2)$ is diagonal with $\tr(w(x))=\cos\gamma(x)$, then we can choose $v'(x)\in v(x)^{\cSU(2)}$ such that $\tr(w(x)v'(x))=\cos\gamma_1(x)$ for any $\gamma_1(x)\in[\gamma(x)-\theta(x),\gamma(x)+\theta(x)]$, namely 
 $$v'(x)\coloneqq\left(\begin{smallmatrix} \cos\theta(x)+\complex\sin\theta_1(x) &b(x)\\ -\bar{b}(x) &\cos\theta(x)-\complex\sin\theta_1(x) \end{smallmatrix}\right)$$
 for $\theta_1(x)\in[-\theta(x),\theta(x)]$, where $\abs{b(x)}^2=1-\cos^2\theta(x)-\sin^2\theta_1(x)=\sin^2\theta(x)-\sin^2\theta_1(x)\geq 0$. 
 Assume that $\varphi(x)$ and $\theta(x)$ have the same sign for $\nu$-almost all $x\in X$ (else one needs to replace $v$ by $v^*$ in the following). 
 Multiply $v(x)$ $n\in\set{1,\ldots,m-1}$ times by itself until either $\varphi(x)\leq n\theta(x)$ or $\varphi(x)\geq (m-1)\theta(x)$. In the second case, multiplying $v^{m-1}(x)$ by the right element $v'(x)$ one obtains $u(x)=v^{m-1}(x)\cdot v'(x)$. In the first case, if $n=m-1$ then we also get $u(x)=v^{m-1}(x)\cdot v'(x)$. If $n<m-1$ then we multiply interchangingly by $v^*(x)$ and $v(x)$ until one step is left. The last step is to use the conjugate $v'(x)$ of $v(x)$ to obtain $u(x)=v^{n}(x)v^*(x)v(x)\cdot\ldots\cdot v^*(x) v(x) \cdot v'(x)$. 
 This gives an algorithm which terminates in finitely many steps and divides $X$ into Borel sets in each step.
\end{proof}

Before proving the main result of this section we want to outline the strategy of the proof. The aim is to generate an element $u\in\cP\cU(\cM)$ having finite spectrum and rational weights with an arbitrary element $v\in\cP\cU(\cM)$ under the assumption of an inequality between their projective $s$-numbers. Our first step is to map $v$ via an isomorphism into $2\times 2$ matrices over $p\cM p$ with $\tau(p)=1/2,$ such that they have diagonal form. Then $v=\left(\begin{smallmatrix} v_0 & 0 \\ 0 & v_1 \end{smallmatrix}\right)=\left(\begin{smallmatrix} v_0 & 0 \\ 0 & 1 \end{smallmatrix}\right) \cdot \left(\begin{smallmatrix} 1 & 0 \\ 0 & v_1 \end{smallmatrix}\right)$. Using Proposition \ref{prop_commutator} we can ensure that the projective singular values of $[v_0,w_0]$, where $w=\left(\begin{smallmatrix} w_0 & 0 \\ 0 & 1 \end{smallmatrix}\right)$, are still comparable with those of the original element $v$. We then use two conjugates of $[v,w]g[v,w]^{-1}g^{-1}$ to construct (using Lemma \ref{lem_Borel_su2}) a unitary $v'$ which has finite spectrum and rational spectral weights, where $g$ permutes the diagonal entries of the $2\times 2$ matrix $[v,w]$. Using now Theorem \ref{thm_II_1} we can generate $u$ with $v'$.

\begin{thm}\label{thm_main}
 Let $\cM$ be a separable $\mathrm{II_1}$-factor and $u,v\in G\coloneqq\cP\cU(\cM)$. Assume that $u$ has finite spectrum and rational spectral weights. If $\ell_{0}(u)\leq m\ell_t(v)$ for all $t\in [0,s]$ and some $m\in\bbN$, then
 $$u\in (v^G\cup v^{-G})^{18432 m\lceil 1/s\rceil}.$$
 %for some constant $c\in\bbN$ independent of $m,s,u$ and $v$.
% Moreover, $\cP\cU(\cM)$ has property {\rm (BNG)}.
\end{thm}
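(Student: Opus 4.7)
The strategy, anticipated in the paragraph preceding the statement, is to manufacture — from a bounded number of conjugates of $v^{\pm 1}$ — a proxy $v' \in G$ with finite spectrum and rational spectral weights whose generalized projective $s$-numbers are comparable to those of $v$, and then to apply Theorem \ref{thm_II_1} to the pair $(u, v')$. First I would fix an isomorphism $\cM \cong p\cM p \otimes M_{2}(\bbC)$ with $\tau(p) = 1/2$ in which $v$ appears diagonally as $\diag(v_0, v_1)$ with $v_i \in \cU(p\cM p)$; choosing this identification by splitting each spectral projection of $v$ into halves of equal trace gives $\ell_t^{(p)}(v_0) = \ell_t(v)$ for every $t$, a direct consequence of the rescaling $\tau_p = 2\tau|_{p\cM p}$.

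Second, I would apply Proposition \ref{prop_commutator} inside $p\cM p$ to $v_0$, producing $w_0 \in \cU(p\cM p)$ with finite spectrum and rational spectral weights, together with $\lambda \in S^1$ satisfying $\mu_{12t}^{(p)}(p - \lambda v_0) \leq 4\,\ell_t^{(p)}([v_0, w_0])$ for every $t \geq 0$. Setting $w \coloneqq \diag(w_0, p) \in \cU(\cM)$ yields $[v, w] = \diag([v_0, w_0], p)$, which is a product of two conjugates of $v^{\pm 1}$. Letting $g \in \cU(\cM)$ swap the two diagonal blocks, the double commutator
\[
[v, w] \cdot g [v, w]^{-1} g^{-1} = \diag\bigl([v_0, w_0],\, [v_0, w_0]^{-1}\bigr)
\]
is a product of four conjugates of $v^{\pm 1}$ and faithfully retains the spectral data of $[v_0, w_0]$.

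The third step is to discretize $[v_0, w_0]$ using Lemma \ref{lem_Borel_su2}. I would partition its spectrum into finitely many Borel sets $A_k$ of rational spectral weight on which the angle function $\theta$ varies within a factor of two, and define $u_0' \in \cU(p\cM p)$ to be constant $e^{\complex\varphi_k}$ on $A_k$, with $\varphi_k$ chosen so that $\sup_{x \in A_k}|\theta(x)| \leq |\varphi_k| \leq 2\inf_{x \in A_k}|\theta(x)|$. The lower bound guarantees $\ell_t^{(p)}([v_0, w_0]) \leq \ell_t^{(p)}(u_0')$ for every $t$, while the upper bound lets Lemma \ref{lem_Borel_su2} (with $m = 2$) place $v' \coloneqq \diag(u_0', u_0'^{-1})$ inside $(\diag([v_0, w_0], [v_0, w_0]^{-1})^G)^2$. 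Hence $v' \in (v^G \cup v^{-G})^8$, $v'$ has finite spectrum and rational spectral weights, and a short spectral-scale computation (analogous to step one) yields $\ell_t(v') = \ell_t^{(p)}(u_0')$.

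Chaining the inequalities, for $t \in [0, s]$ we obtain
\[
\ell_0(u) \leq m\, \ell_t(v) = m\, \ell_t^{(p)}(v_0) \leq 4m\, \ell_{t/12}^{(p)}([v_0, w_0]) \leq 4m\, \ell_{t/12}(v'),
\]
so the hypothesis of Theorem \ref{thm_II_1} holds for $(u, v')$ with parameters $4m$ and $s/12$. That theorem yields $u \in (v'^G \cup v'^{-G})^{48 \cdot 4m \lceil 12/s \rceil}$, and combining with $v' \in (v^G \cup v^{-G})^8$ together with the crude bound $\lceil 12/s \rceil \leq 12 \lceil 1/s \rceil$ gives the advertised $u \in (v^G \cup v^{-G})^{18432\, m \lceil 1/s \rceil}$. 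The principal technical obstacle is the third step: the two requirements on $u_0'$ pull against each other, and one must simultaneously arrange that the infimum over $S^1$ defining $\ell_t(v')$ in $\cM$ is realized by a common scalar for both diagonal blocks of $v'$, so that no further factor is lost when passing from $p\cM p$ back to $\cM$.
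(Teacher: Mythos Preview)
Your outline follows the paper's strategy closely: pass to a $2\times 2$ picture over $p\cM p$, apply Proposition~\ref{prop_commutator} to obtain a commutator with controlled projective $s$-numbers, form the block-antidiagonal element $v''=[v,w]\,g[v,w]^{-1}g^{-1}$, discretize its spectrum via Lemma~\ref{lem_Borel_su2}, and then invoke Theorem~\ref{thm_II_1}. Where your argument diverges from the paper---and where it breaks---is in the discretization step.

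The assertion ``the lower bound $|\varphi_k|\ge\sup_{A_k}|\theta|$ guarantees $\ell_t^{(p)}([v_0,w_0])\le\ell_t^{(p)}(u_0')$'' is false. Replacing each angle by a larger one need not preserve the \emph{projective} spread: if $[v_0,w_0]$ has angles $\theta_1=1.8\theta_2>\theta_2>0$ with equal weight, then $\ell_0^{(p)}([v_0,w_0])\approx(\theta_1-\theta_2)/2=0.4\theta_2$, while the admissible choice $\varphi_1=\theta_1$, $\varphi_2=2\theta_2$ gives $\ell_0^{(p)}(u_0')\approx(2\theta_2-\theta_1)/2=0.1\theta_2$. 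The companion claim $\ell_t(v')=\ell_t^{(p)}(u_0')$ fails for the same reason: the optimal scalar for $\diag(u_0',u_0'^{-1})$ in $\cM$ is not determined by the spectrum of $u_0'$ alone, and when $u_0'$ happens to be scalar you get $\ell_t^{(p)}(u_0')=0$ but $\ell_t(v')>0$. Finally, a finite partition with the factor-of-two condition and rational weights simply need not exist when the spectrum of $[v_0,w_0]$ accumulates at $1$.

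The paper avoids all of this by never attempting an exact spectral comparison. It first extracts slack from the hypothesis, writing $\ell_0(u)\le m(2\ell_t(v)-\delta)$ with $\delta=\ell_s(v)>0$, and then discretizes $v''$ only up to a small $\|\cdot\|_1$-error: one constructs $v'$ with finite spectrum and rational weights so that $\|v'-v''\|_1<3\eps_0$ for a carefully chosen $\eps_0$, and uses Lemma~\ref{lem_ineq} together with right continuity of $\ell_t$ to deduce $\ell_{2t}(v'')\le\ell_t(v')+\delta/2$. The accumulated additive errors ($2\eps$ from the halving step, $\delta/2$ from the discretization) are then absorbed by the slack $\delta$ in the final chain of inequalities, yielding $\ell_0(u)\le 8m\,\ell_{t/48}(v')$. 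This $\eps$-management is the genuine content of the proof; your clean inequalities, while attractive, cannot be arranged in general.
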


\begin{proof} We lift $u$ and $v$ to $\cU(\cM)$ such that $\ell_0(u)\leq m \ell_t(v)$ and argue in $\cM$.
 First note that for $\delta\coloneqq \ell_{s}(v)>0$ we have 
$$\ell_0(u)\leq m(2\ell_t(v)-\delta) \quad\mbox{ for all } t\in[0,s].$$
 Put $\eps\coloneqq \delta /4$. 
 Let $\zeta_1,\ldots,\zeta_n$ be $n$ roots of unity with $\arg(\zeta_i)<\arg(\zeta_{i+1})$ such that for every $\lambda\in\sigma(v)$ there is an $i\in\{1,\ldots,n\}$ such that $\abs{\lambda-\zeta_i}<\eps$. We may assume that there exists no $\zeta_i$ satisfying $\abs{\lambda-\zeta_i}>\eps$ for all $\lambda\in\sigma(v)$. Denote by $p_i$ the spectral projection of $v$ corresponding to the set $\set{e^{\complex\varphi}\mid \varphi\in[\arg(\xi_i),\arg(\xi_{i+1}))}$, where $\zeta_{n+1}\coloneqq\zeta_1$ and $i\in\set{1,\ldots,n}$. Define $f(v)=\sum_{i=1}^{n'}\zeta_ip_i$.
 %Define a Borel function $f:\sigma(v)\rightarrow\bbC$ by $f(\lambda) =\zeta_i \mbox{ if } \abs{\lambda-\zeta_i}<\eps \mbox{ and } i\in\{1,\ldots,n\}$ the smallest integer such that $\arg(\lambda)\leq \arg(\zeta_i)$. Possibly not all $i$ are hit. Thus $f(v)=\sum_{i=1}^{n'}\lambda_ip_i$, where $\lambda_i\in\{\zeta_1,\ldots,\zeta_n\}$, $n'\leq n$ and $p_i$ are spectral projections of $v$. 
 It follows that 
$\norm{v-f(v)}<\eps.$
 Now take subprojections $p_i'$ of $p_i$ with $\tau(p_i')=\frac{1}{2}\tau(p_i)$. 
Let $p\coloneqq \sum_{i=0}^{n'}p_i'$. Then $\tau(p)=1/2$ and $p$ commutes with $v$.

Denote in the following by $\ell_t^{(p)}$ the restriction of $\ell_t$ to $p\cM p$, i.e., $\ell_t^{(p)}(x)=\inf_{\lambda}\mu_t(p-\lambda pxp)$ for $x\in\cM$. We conclude 
%from Lemma \ref{lem_singrel} 
that
$\ell_{2t}(v)\leq \ell_{2t}(f(v))+ \eps  =\ell_t^{(p)}(f(v)) +\eps$  for every $t\geq 0.$
Since we also have $\norm{f(v)p-vp}<\eps$ we obtain $\ell_{t}^{(p)}(f(v))\leq \ell_t^{(p)}(v) + \eps$ for all $t\geq 0$ and thus
\begin{align}\label{eq_3}
  \ell_{2t}(v)\leq \ell_t^{(p)}(v) + 2\eps \mbox{ for every }t\geq 0.
\end{align}

We have $v\cong \left(\begin{smallmatrix} v_0 & 0 \\ 0 & v_1 \end{smallmatrix}\right)=\left(\begin{smallmatrix}v_0 &0\\ 0 &1\end{smallmatrix}\right)\cdot \left(\begin{smallmatrix}1 &0\\ 0 &v_1\end{smallmatrix}\right) \in \cU(p\cM p\otimes M_{2\times 2}(\bbC))$ for $v_0\coloneqq vp$ and some $v_1\in \cU(p\cM p)$.
By Proposition \ref{prop_commutator} applied to the algebra $p\cM p$ there exists $w=\left(\begin{smallmatrix}w_0 &0\\ 0 &1\end{smallmatrix}\right)\in\cU(p\cM p\otimes M_{2\times 2}(\bbC))$ such that
$\ell_{12t}^{(p)}(v)\leq 4\ell_{t}^{(p)}([v,w])$ for all $t\geq 0,$
where $[v,w]=\left(\begin{smallmatrix}v_0w_0v_0^*w_0^* &0\\ 0&1\end{smallmatrix}\right).$
Let $g\in \cU(p\cM p\otimes M_{2\times 2}(\bbC))$ be such that 
$g[v,w]^{-1}g^{-1}=\left(\begin{smallmatrix}1 & 0 \\ 0 & (v_0w_0v_0^*w_0^*)^{-1}\end{smallmatrix}\right).$
Then under the identification of $G$ with its image under the isomorphism $\cM\rightarrow p\cM p\otimes M_{2\times 2}(\bbC)$ we have
${v''}\coloneqq [v,w]g[v,w]^{-1}g^{-1}\in (v^G\cup v^{-G})^4.$
In particular,
\begin{align}\label{eq_til_v}
\ell_{12t}^{(p)}(v)\leq 4\ell_{t}^{(p)}([v,w])= 4\ell_t^{(p)}({v''}) \textnormal{ for all } t\geq 0.
\end{align}

By Theorem II.6.1 in \cite{Dix-81} we can decompose $L^{\infty}(\sigma({v''}),\nu)$ into a direct integral such that ${v''}$ is represented as $\int_{\sigma({v''})}^{\oplus} \left(\begin{smallmatrix} \lambda &0\\ 0& \overline{\lambda}\end{smallmatrix}\right)d\nu(\lambda)$.

Now we can use Lemma \ref{lem_Borel_su2} to generate an element $v'$ with discrete spectrum and rational spectral weights such that $\ell_{t}(v')+\eps \geq \ell_t({v''})$ for all $t\geq 0$ and $$v'\in({v''}^G\cup {v''}^{-G})^2\subseteq (v^G\cup v^{-G})^8.$$
In the following, we describe how to generate such an element explicitly. First note that right continuity of $\ell_t$ in $t\geq 0$ implies that there exists $\delta_0\in(0,1)$ such that
 $\ell_0({v''})\leq \ell_{\delta_0}({v''}) + \eps.$ Let $\eps_0\in (0,1)$ be such that $\eps_0<\delta_0\delta /24$ and such that there exists $\lambda\in\sigma({v''})$ with $\abs{1-\lambda}>\eps_0$. 
 Let $\lambda_0\coloneqq 1$ and $\lambda_1,\ldots,\lambda_n\in{S^1}$, $n\in\bbN$, such that $\abs{1-\lambda_i}\geq \eps_0$ for all $i\in\set{1,\ldots,n}$ and $\abs{1-\lambda_i}= \eps_0$ for $i=1,n$,
 for every $\lambda\in\sigma(v'')$ with $\abs{1-\lambda}\geq \eps_0$ there exists $i\in\set{1,\ldots,n}$ such that 
  $\abs{\lambda-\lambda_i}<\eps_0,$
 $\abs{\varphi_i} < \abs{\varphi_{i+1}}\leq 2\abs{\varphi_{i}}$ for all $i\in\set{1,\ldots, n-1}$, where $\varphi_i=\arg(\lambda_i)\in[0,2\pi)$.
 Denote the subprojections of the spectral projections of ${v''}$ corresponding to the parts 
 $$(\varphi_1/2,\varphi_1],(\varphi_1,\varphi_2],\ (\varphi_2,\varphi_3],\ldots, (\varphi_{n-1},\varphi_n],\ (\varphi_n,\varphi_1/2]$$
 by $p_1,\ldots, p_n, p_0$. Then $\sum_{i=0}^np_i=1$. Without loss of generality all these projections are non-trivial (else we can leave out some parts and renumber). Let $q_i\precsim p_i$ for $i=0,\ldots,n$ be subprojections of rational trace such that $\tau(p_i-q_i)<\eps_0 / n$.
 \item Using Lemma \ref{lem_Borel_su2} we can generate 
$v'=\sum_{i=0}^n\lambda_i q_i + q'$
 in two steps, where $q'\coloneqq 1-\sum_{i=0}^nq_i$, $\tau(q')\leq 1-(1-n\cdot \eps_0/n)=\eps_0$. 

We have generated a unitary with finite spectrum and rational spectral weights.
This allows us to conclude
 \begin{align*}
  \norm{v'-{v''}}_1 &\leq \sum_{i=0}^n\norm{(v'-{v''})q_i}_1 + \norm{(v'-{v''})q'}_1 \\
  &\leq \sum_{i=0}^n\norm{(v'-{v''})q_i}\cdot \norm{q_i}_1 + \norm{v'-{v''}}\cdot \norm{q'}_1 \\
  %&< n\eps_0 \frac{\eps_0}{n}  + 2\eps_0\\
  &< \eps_0 \cdot \sum_{i=0}^n \norm{q_i}_1 + 2\cdot \norm{q'}_1
  < \eps_0 +2\eps_0 = 3\eps_0.
  %&< \delta /2.
 \end{align*}
 Thus for $t\in[0,\delta_0/2)$ we conclude
 \begin{align*}
  \ell_{2t}({v''})&\leq \ell_{\delta_0}({v''})+\delta/4
  \leq \ell_{\delta_0/2}(v') + \frac{6\eps_0}{\delta_0} + \delta/4 
  < \ell_{\delta_0/2}(v')+\delta/2.
 \end{align*}
 For $t\geq \delta_0 /2$ we obtain
$ \ell_{2t}({v''}) \leq \ell_t(v') + \frac{6\eps_0}{\delta_0} \leq \ell_t(v') + \delta /4,$
 so that we have 
 \begin{align}\label{eq_[v,w]_v'}
  \ell_{2t}({v''})\leq \ell_t(v') + \delta /2 \mbox{ for all } t\geq 0,
 \end{align}
 as well as 
$\ell_{2t}^{(p)}({v''})\leq \ell_t^{(p)}(v') + \delta /2 \mbox{ for all } t\geq 0.$

From $\ell_{0}(u)\leq m(2\ell_t(v)-\delta)$ for all $t\in [0,s]$ and from Equation \eqref{eq_[v,w]_v'} we conclude for all $t\in [0,s]$ that 
\begin{align*}
 \ell_{0}(u) &\stackrel{\hphantom{\eqref{eq_3}}}{\leq} m(2\ell_{t}(v)-\delta)
\stackrel{\eqref{eq_3}}{\leq} m(2\ell_{t/2}^{(p)}(v) + 2\eps -\delta) \stackrel{\eqref{eq_til_v}}{\leq} m(8\ell_{t/24}^{(p)}({v''}) +2\eps -\delta) \\
 &\stackrel{\eqref{eq_[v,w]_v'}}{\leq} m(8\ell_{t/48}^{(p)}(v') + \delta /2 +2\eps -\delta)\stackrel{\hphantom{\eqref{eq_3}}}{\leq} 8m\ell_{t/48}^{(p)}(v')\stackrel{\hphantom{\eqref{eq_3}}}{\leq} 8m\ell_{t/48}(v').
\end{align*}
Summarizing these estimates we have
\begin{align}\label{eq_important}
 \ell_{0}(u)\leq 8m\ell_{t}^{(p)}(v') \leq 8m\ell_{t}(v') \quad\mbox{ for all } t\in [0,s/48].
\end{align}

Since $u$ has finite spectrum and rational weights we can use Theorem \ref{thm_II_1} to obtain:
\begin{align*}
 u\in((v')^G \cup (v')^{-G})^{48m\lceil 48/s\rceil} &\subseteq((v')^G \cup (v')^{-G})^{2304m\lceil 1/s\rceil} \subseteq (v^G\cup v^{-G})^{18432m\lceil 1/s\rceil}.
\end{align*}
This concludes the proof.
\end{proof}

In Theorem \ref{thm_main} we required the element $u$ to have finite spectrum and rational spectral weights. So in particular, we can generate any symmetry of trace $0$. To prove that $\cP\cU(\cM)$ has property {\rm (BNG)} it then suffices then to combine Theorem \ref{broise_improved} (respectively Corollary \ref{broisecor}) and Theorem \ref{thm_main}.

\begin{thm}\label{thm_II_1_BNG}
 The projective unitary group of a separable $\mathrm{II_1}$-factor has property {\rm (BNG)}.
\end{thm}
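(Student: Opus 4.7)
The plan is to chain two results already established in the paper: Theorem \ref{thm_main}, to generate a trace-zero symmetry starting from $v$, followed by Corollary \ref{broisecor}, to generate everything else from that symmetry. The symmetry serves as a natural intermediary because its spectrum $\{\pm 1\}$ and equal weights $1/2$ meet the ``finite spectrum and rational spectral weights'' hypothesis of Theorem \ref{thm_main}.

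First, I would fix a non-trivial $v \in G := \cP\cU(\cM)$, fix a trace-zero symmetry $\sigma \in \cU(\cM)$, and let $\bar\sigma$ denote its image in $G$. Since $\bar v \neq 1$, a lift $v \in \cU(\cM)$ is non-scalar, so the projective rank $s(v)$ of Definition \ref{def_proj_s_number} is strictly positive (for any $\lambda \in S^1$ the operator $1-\lambda v$ is non-zero on a set of positive trace, and taking the infimum over the compact circle $S^1$ preserves positivity on a small interval around $0$). Pick any $s_0 \in (0, s(v))$; by the monotonicity of $t \mapsto \ell_t(v)$ (Proposition \ref{prop_proj_s_numbers}), one has $\ell_t(v) \geq \ell_{s_0}(v) > 0$ for all $t \in [0, s_0]$. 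Setting
\[
m := \left\lceil \ell_0(\bar\sigma)\,/\,\ell_{s_0}(v) \right\rceil
\]
therefore forces $\ell_0(\bar\sigma) \leq m\,\ell_t(v)$ throughout $[0,s_0]$. Theorem \ref{thm_main} now applies to the pair $(\bar\sigma, v)$ and yields
\[
\bar\sigma \;\in\; (v^G \cup v^{-G})^{N}, \qquad N := 18432\,m\,\lceil 1/s_0 \rceil.
\]

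Next I invoke Corollary \ref{broisecor}: every element of $\cU(\cM)$ is a product of $32$ conjugates of $\sigma$, and this relation descends to the quotient, so every element of $G$ is a product of $32$ conjugates of $\bar\sigma$. Composing the two generation steps yields $G = (v^G \cup v^{-G})^{32 N}$, and since $v \neq 1 \in G$ was arbitrary, the function $v \mapsto 32 N(v)$ is a normal generation function, establishing property (BNG). There is no real obstacle remaining: all the analytic heavy lifting lives in Theorem \ref{thm_main} and the improved Broise theorem, and the only items requiring a sentence of care are the positivity of the projective rank for any non-central $v$ and the straightforward descent of Broise's corollary from $\cU(\cM)$ to $\cP\cU(\cM)$.
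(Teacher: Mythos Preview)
Your proposal is correct and follows essentially the same approach as the paper: first use Theorem \ref{thm_main} to reach a trace-zero symmetry from $v$, then use Corollary \ref{broisecor} to fill out the whole group in $32$ further steps. The only cosmetic differences are that the paper takes $s_0 = s(v)/2$ and cites right continuity (Lemma \ref{lem_right_cts}) rather than monotonicity to secure $\ell_t(v)\geq \ell_{s_0}(v)>0$ on $[0,s_0]$, and it does not spell out the descent of Broise's corollary to the quotient; your handling of these points is at least as clean.
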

\begin{proof}
 Let $v\in G\coloneqq \cP\cU(\cM)\setminus\set{1}$ be arbitrary and denote by $s$ its projective rank. Let $w$ be a symmetry of trace $0$. 
By Lemma \ref{lem_right_cts} there exists $\eps>0$ such that 
$\ell_t(v)\geq\eps$ for all $t\in[0,s/2].$ We obtain
$\ell_{0}(w)\leq \lceil 2 / \eps \rceil \ell_t(v)$ for all $t\in[0,s/2].$
Using Theorem \ref{thm_main} we obtain
 $w\in (v^G\cup v^{-G})^{18432m\lceil 1/s\rceil}.$ Using now Corollary \ref{broisecor} we obtain 
$u\in (w^G\cup w^{-G})^{32}$
for any $u\in G$. That is, 
$$G=(v^G\cup v^{-G})^{589824 m\lceil 1/s\rceil}.$$
This finishes the proof.
\end{proof}

Theorem \ref{thm_II_1_BNG} easily implies the algebraic simplicity of $\cP\cU(\cM)$ which was first discovered by de la Harpe - see the main theorem in \cite{dlH-79}.

\begin{cor}
 The projective unitary group of a $\mathrm{II_1}$-factor is simple.
\end{cor}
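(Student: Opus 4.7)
The plan is a one-line deduction from Theorem \ref{thm_II_1_BNG}: property (BNG) trivially implies simplicity. Explicitly, let $N \trianglelefteq G \coloneqq \cP\cU(\cM)$ be a normal subgroup with $N \neq \{1\}$, and pick $v \in N \setminus \{1\}$. By normality of $N$, we have $v^G \subseteq N$ and also $v^{-G} \subseteq N$ (as $v^{-1} \in N$). Since $N$ is closed under products, $(v^G \cup v^{-G})^k \subseteq N$ for every $k \in \bbN$. By Theorem \ref{thm_II_1_BNG}, there exists $k$ such that $(v^G \cup v^{-G})^k = G$, and therefore $G \subseteq N$, i.e., $N = G$.

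In the separable case this is immediate. To pass from the separable case stated in Theorem \ref{thm_II_1_BNG} to a general $\mathrm{II}_1$-factor $\cM$, the natural approach is a reduction to a separable subfactor: given $v \in \cU(\cM)$ lifting a non-trivial element of $N$ and an arbitrary target $u \in \cU(\cM)$, choose a separable (in SOT) $\mathrm{II}_1$-subfactor $\cN \subseteq \cM$ containing both $u$ and $v$ (for instance, the von Neumann algebra generated by $u$, $v$ and a copy of the hyperfinite $\mathrm{II}_1$-factor), apply the separable version of (BNG) inside $\cP\cU(\cN)$, and then push forward through the inclusion $\cP\cU(\cN) \hookrightarrow \cP\cU(\cM)$.

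The only point that needs care is that this inclusion sends conjugates by $\cN$ to conjugates by $\cM$, which is automatic, so the bounded product expression for $u$ in $\cP\cU(\cN)$ yields the same bounded product expression in $\cP\cU(\cM)$. No obstacle is expected here; the argument is routine once Theorem \ref{thm_II_1_BNG} is in hand, and it recovers de la Harpe's simplicity result \cite{dlH-76} with a fully quantitative normal generation bound as a bonus.
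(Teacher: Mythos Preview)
Your proposal is correct and matches the paper's approach: the paper simply states that Theorem \ref{thm_II_1_BNG} ``easily implies'' simplicity, which is exactly your one-line deduction that property (BNG) forces every nontrivial normal subgroup to be the whole group.

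One small remark on your reduction to the separable case (which the paper does not spell out): the specific subalgebra you propose---the von Neumann algebra generated by $u$, $v$, and a copy of the hyperfinite $\mathrm{II}_1$-factor---is separable but need not be a factor. The standard fix is an iterative argument: start with a separable subalgebra $\cN_0$ containing $u$ and $v$, then successively adjoin countably many unitaries from $\cM$ that witness equivalence of projections so as to kill the relative commutant, and take the weak closure of the increasing union; the result is a separable $\mathrm{II}_1$-subfactor. With that adjustment your passage from $\cP\cU(\cN)$ to $\cP\cU(\cM)$ goes through exactly as you say.
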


Consider the natural length function 
$$\ell(u)=\inf_{\lambda\in{S^1}}\norm{1-\lambda u}_1$$
and consider also $L(u) = \int_0^1 \ell_t(u) dt$.
Note that
$$\ell(u) = \inf_{\lambda\in{S^1}}\norm{1-\lambda u}_1 = \inf_{\lambda\in{S^1}} \int_{0}^1 \mu_t(1-\lambda u) dt \geq  \int_{0}^1 \inf_{\lambda\in{S^1}} \mu_t(1-\lambda u) dt = L(u).$$

\begin{lem} \label{Llbound}
There exists a constant $c>0$ such that $\ell(u) \leq c \cdot L(u)$.
\end{lem}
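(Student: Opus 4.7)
The plan is to exhibit a specific $\lambda \in S^1$ for which $\|1-\lambda u\|_1$ is controlled by a constant multiple of $L(u)$; since $\ell(u)=\inf_{\lambda\in S^1}\|1-\lambda u\|_1$, this is enough.

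First I would use the $1$-Lipschitz continuity of $\lambda\mapsto\mu_t(1-\lambda u)$ from Lemma \ref{lem_length_cts} together with compactness of $S^1$ to pick, for each $t\in[0,1]$, some $\lambda_t\in S^1$ realising $\mu_t(1-\lambda_t u)=\ell_t(u)$. For a parameter $t_0\in(0,1]$ to be chosen, take $\lambda=\lambda_{t_0}$ and split
\[
\|1-\lambda_{t_0}u\|_1=\int_0^{t_0}\mu_t(1-\lambda_{t_0}u)\,dt+\int_{t_0}^1\mu_t(1-\lambda_{t_0}u)\,dt.
\]
The second integrand is dominated, via monotonicity of $t\mapsto\mu_t$, by $\mu_{t_0}(1-\lambda_{t_0}u)=\ell_{t_0}(u)$; the first by the operator norm bound $\|1-\lambda_{t_0}u\|\leq 2$. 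Hence
\[
\ell(u)\leq \|1-\lambda_{t_0}u\|_1\leq 2t_0+(1-t_0)\ell_{t_0}(u).
\]
Monotonicity of $s\mapsto\ell_s(u)$ gives $t_0\ell_{t_0}(u)\leq \int_0^{t_0}\ell_s(u)\,ds\leq L(u)$, so $\ell_{t_0}(u)\leq L(u)/t_0$, and hence $\ell(u)\leq 2t_0+L(u)/t_0$.

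To promote this to a linear estimate, I would iterate dyadically. The coarse bound $\|1-\lambda_{t_0}u\|\leq 2$ used on the first integral is wasteful: the spectral projection $p_{t_0}$ of $|1-\lambda_{t_0}u|$ onto $[0,\ell_{t_0}(u)]$ satisfies $\tau(p_{t_0})\geq 1-t_0$ and $p_{t_0}$ commutes with $u$, so $u(1-p_{t_0})$ is again a unitary in the corner $(1-p_{t_0})\cM(1-p_{t_0})$ of trace $\leq t_0$. Applying the same argument to this corner with a scale $t_1<t_0$, one replaces the crude factor $2$ by the operator norm on the bad part, itself controllable in terms of $\ell_{t_0}(u)$ via Lemma \ref{lem_length_cts}. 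Iterating with a geometric sequence of scales $t_k=2^{-k}$, and using $\sum_k 2^{-k}\ell_{t_k}(u)\lesssim L(u)$ (by comparing the Riemann sum with $\int_0^1\ell_t(u)\,dt$), one obtains $\ell(u)\leq c\,L(u)$ for a universal constant $c$.

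The main technical obstacle is controlling the cumulative error from changing $\lambda_{t_k}$ between consecutive scales. Here the Lipschitz estimate of Lemma \ref{lem_length_cts} is crucial: it turns the discrepancies $|\lambda_{t_{k+1}}-\lambda_{t_k}|$ into controllable errors that can be absorbed into a geometric series, so that the final constant $c$ is indeed universal.
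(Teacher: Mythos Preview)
Your first-pass estimate $\ell(u)\leq 2t_0+L(u)/t_0$ is correct and yields $\ell(u)\lesssim\sqrt{L(u)}$, but the dyadic iteration you sketch to upgrade this to a linear bound has a genuine gap. The core difficulty is that $\ell(u)$ demands a \emph{single} $\lambda\in S^1$, whereas your iteration produces a different minimiser $\lambda_{t_k}$ at each scale. You propose to reconcile these via Lemma~\ref{lem_length_cts}, but that lemma points the wrong way: it bounds $|\mu_t(1-\lambda u)-\mu_t(1-\lambda' u)|$ by $|\lambda-\lambda'|$, not vice versa. There is no a priori control on how far apart the minimisers $\lambda_{t_k}$ and $\lambda_{t_{k+1}}$ can be; minimisers need not be unique and can jump as $t$ varies. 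Likewise, your claim that ``the operator norm on the bad part [is] controllable in terms of $\ell_{t_0}(u)$'' is unjustified: on the complement of $p_{t_0}$ the operator norm of $1-\lambda_{t_0}u$ can be anything up to $2$, independently of $\ell_{t_0}(u)$. So the iteration, as written, does not close.

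The paper's proof takes a completely different and much shorter route. It invokes Proposition~\ref{prop_commutator}, which already produces a single $\lambda\in S^1$ (together with an auxiliary unitary $v$) satisfying $\mu_{12t}(1-\lambda u)\leq 4\ell_t([u,v])$ for all $t\geq 0$. Combining this with $\ell_t([u,v])\leq 2\ell_{t/2}(u)$ (an immediate consequence of the conjugation invariance and subadditivity in Proposition~\ref{prop_proj_s_numbers}) and integrating gives
\[
\ell(u)\leq\int_0^1\mu_t(1-\lambda u)\,dt\leq 4\int_0^1\ell_{t/12}([u,v])\,dt\leq 8\int_0^1\ell_{t/24}(u)\,dt\leq 192\,L(u).
\]
All of the work in producing a single $\lambda$ with pointwise control of $\mu_t(1-\lambda u)$ by $\ell_{ct}(u)$ is packaged inside Proposition~\ref{prop_commutator} (whose proof goes through finite-dimensional approximation and the optimal-ordering argument of Lemma~\ref{lem_singularcage}), rather than through any multi-scale iteration.
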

\begin{proof}
By Proposition \ref{prop_commutator}, there exist $v \in \cU(\cM)$ and $\lambda \in S^1$ such that
$\mu_{12t}(1-\lambda u)\leq 4\ell_t([u,v])$ for all $t \geq 0$. Since $\ell_t(uvu^*v^*) \leq 2\ell_{t/2}(u)$, we obtain
$$\ell(u) \leq \int_0^1 \mu_{t}(1-\lambda u) dt \leq 4\int_{0}^1 \ell_{t/12}([u,v]) dt \leq 8 \int_{0}^1 \ell_{t/24}(u) dt \leq 192 \cdot L(u).$$ This proves the claim.
\end{proof}

\begin{cor}
There exists a universal constant $c$ such that the following holds. Let $G$ denote the projective unitary group of a separable $\mathrm{II_1}$-factor and assume that $v\in G\setminus\set{1}$. Then
$G=\left( v^G\cup v^{-G}  \right)^{k}$
for every $k\geq c|\log \ell(v) | / \ell(v)$.
\end{cor}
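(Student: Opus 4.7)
The plan is to apply Theorem~\ref{thm_main} to generate a trace-zero symmetry $w$ using conjugates of $v,v^{-1}$, and then invoke Corollary~\ref{broisecor} to obtain the full group $G$ with one additional multiplicative factor of $32$. For $w=2p-1$ with $\tau(p)=1/2$ one computes $\mu_0(1-\lambda w)=\max(|1-\lambda|,|1+\lambda|)$, so $\ell_0(w)=\sqrt{2}$, attained at $\lambda=\pm\complex$. The image of $w$ in $G$ has finite spectrum $\{\pm 1\}$ with rational weights, so the hypotheses of Theorem~\ref{thm_main} are satisfied: whenever $s\in(0,1)$ and $m\in\bbN$ satisfy $\ell_0(w)\leq m\,\ell_t(v)$ for all $t\in[0,s]$, we have $w\in(v^G\cup v^{-G})^{18432\,m\lceil 1/s\rceil}$. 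Since $t\mapsto\ell_t(v)$ is non-increasing (Proposition~\ref{prop_proj_s_numbers}), the hypothesis reduces to $m\geq\sqrt{2}/\ell_s(v)$, so after composing with Corollary~\ref{broisecor} the cost to generate $G$ is bounded by a universal constant times $1/(s\ell_s(v))$. The remaining task is to choose $s$ so that $s\ell_s(v)$ is as large as possible in terms of $\ell(v)$.

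\textbf{The key estimate.} I will show that
\[
M:=\sup_{s\in(0,1)} s\ell_s(v)\geq \frac{c_0\,\ell(v)}{|\log \ell(v)|}
\]
for some universal constant $c_0>0$, assuming $\ell(v)$ is sufficiently small. Using $\ell_t(v)\leq 2$ together with the monotonicity of $\ell_t(v)$, a dyadic decomposition of $[\varepsilon,1]$ into the intervals $[2^{-k-1},2^{-k}]$ yields, for any $\varepsilon\in(0,1/2]$,
\[
L(v)=\int_0^1\ell_t(v)\,dt\leq 2\varepsilon+M\bigl(\log_2(1/\varepsilon)+2\bigr),
\]
since each dyadic summand contributes at most $2^{-k-1}\ell_{2^{-k-1}}(v)\leq M$. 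Lemma~\ref{Llbound} gives $\ell(v)\leq c\,L(v)$, and substituting $\varepsilon:=\ell(v)/(4c)$ and rearranging produces the claimed lower bound on $M$. Right continuity of $t\mapsto\ell_t(v)$ (Proposition~\ref{prop_proj_s_numbers}(vii)) then allows one to pick an explicit $s$ with $s\ell_s(v)\geq M/2\geq c_0\ell(v)/(2|\log\ell(v)|)$.

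\textbf{Conclusion and main obstacle.} Plugging this $s$ back into the bound of the first paragraph, one obtains $m\lceil 1/s\rceil \leq c'/(s\ell_s(v)) \leq c''|\log\ell(v)|/\ell(v)$, and then Theorem~\ref{thm_main} combined with Corollary~\ref{broisecor} yields $G=(v^G\cup v^{-G})^k$ for every $k$ exceeding a fixed constant multiple of $|\log\ell(v)|/\ell(v)$. The complementary case in which $\ell(v)$ is bounded away from $0$ (so that $|\log\ell(v)|$ contributes no unbounded factor) is handled directly by Theorem~\ref{thm_II_1_BNG} upon increasing the constant $c$. The main obstacle is precisely the dyadic estimate above: passing from the integral bound $\ell(v)\lesssim L(v)$ to a pointwise statement $s\ell_s(v)\gtrsim \ell(v)/|\log\ell(v)|$. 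The logarithmic loss appears to be genuine in this approach, since if $\ell_t(v)$ spreads its mass thinly across many scales, no single product $s\ell_s(v)$ can match $L(v)$ without paying this factor.
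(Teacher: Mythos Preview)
Your approach is correct and essentially the same as the paper's: both reduce to finding $s$ with $s\,\ell_s(v)\gtrsim \ell(v)/|\log\ell(v)|$, then feed this into Theorem~\ref{thm_main} and Corollary~\ref{broisecor} exactly as in the proof of Theorem~\ref{thm_II_1_BNG}. The paper obtains the key pointwise estimate by quoting \cite[Lemma~2]{Th-14} applied to $t\mapsto\ell_t(v)/2$, whereas you prove it yourself via the dyadic decomposition $L(v)\leq 2\varepsilon+M(\log_2(1/\varepsilon)+2)$ combined with Lemma~\ref{Llbound}; this is a nice self-contained replacement for the external citation and yields the same logarithmic loss.

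One minor imprecision: in the complementary regime (say $\ell(v)\geq c_1$) you invoke Theorem~\ref{thm_II_1_BNG}, but that theorem only asserts property~(BNG) and does not, as stated, give a bound on the exponent that is \emph{uniform} over $\{v:\ell(v)\geq c_1\}$. This is easily repaired: your own dyadic inequality already applies for all $\ell(v)\in(0,2]$ and yields $M\geq \ell(v)/\bigl(2c(\log_2(4c/\ell(v))+2)\bigr)$, which is bounded below by a positive constant on $\{\ell(v)\geq c_1\}$. Alternatively, the paper handles this regime by the direct observation that $L>1/3$ forces $\ell_{1/6}(v)\geq 1/3$.
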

\begin{proof}
Observe that $t \mapsto \ell_{t}(v)/2$ is a non-zero and non-increasing self-map of $[0,1]$. We set $L:= \int_{0}^1 \ell_t(v)/2 \ dt.$ Assume for a moment that $L \leq 1/3$. From \cite[Lemma 2]{Th-14} we conclude that there exists some $t_0\in[0,1]$ such that 
 $$t_0\ell_{t_0}(v)\geq \frac{L}{-2\log(L)}.$$
 As in the proof of Theorem \ref{thm_II_1_BNG} we conclude that 
 $$G=(v^G\cup v^{-G})^{c\cdot \lceil 1/\ell_{t_0}(v)\rceil \cdot \lceil 1/ t_0\rceil}\subseteq (v^G\cup v^{-G})^{k},$$
 for any $k \geq  c |\log(\ell(v))|/\ell(v) \geq 2c |\log(L)|/ L$.
Now if $L> 1/3$, then we have $\ell_{1/6}(v)/2\geq 1/6$. Indeed, assume to the contrary that $\ell_{1/6}(v)/2<1/6$, then we would have
 $$L\leq \int_{[0,1/6]}1dt + \int_{[1/6,1]}\frac{1}{6} dt \leq \frac{1}{6}+ \frac{1}{6}=\frac{1}{3},$$
 a contradiction to $L>1/3$. Thus, we will be able to quickly generate $\cP\cU(\cM)$ in this case. Possibly enlarging $c$, we obtain (using Lemma \ref{Llbound}) that the function $f \colon G\setminus\set{1}\rightarrow \bbR$ defined by
$f(v)\coloneqq c |\log(\ell(v))|/\ell(v)$ is a normal generation function.
\end{proof} 

\begin{rem}
Note that the normal generation function is only off by a logarithmic factor from the obvious lower bound, given by inverse of the length function itself, see Proposition \ref{prop_lower_bound}.
\end{rem}

\section{Automatic Continuity and Uniqueness of the Polish group topology}
\label{cha_autocont}

Automatic continuity properties of groups of functional analytic type is a classical subject, see for example 
\cite{BK-96,BYBM-13,Dud-61,Pe-50,RS-07,Sa-13,Sak-60,Slu-13,Tkv-13}.

The aim of this section is to prove that every homomorphism from the group $\cP\cU(n)$, $n\in\bbN$, endowed with the norm topology, or $\cP\cU(\cM)$, $\cM$ a separable $\mathrm{II_1}$-factor, endowed with the strong operator topology,
into any separable SIN group is continuous. Recall, a polish group is called SIN (small invariant neighborhoods) if it has a basis of conjugation-invariant neighborhoods of the identity.
In general we say that a Polish group $G$ has automatic continuity if every homomorphism of $G$ into any other separable topological group is continuous. It is known that $\cP\cU(n)$ does not have automatic continuity -- for example matrix groups such as $\mathrm{SO}(3,\mathbb{R}) = \cP\cU(2)$ embed discontinuously into the group $\mathrm{S}_{\infty}$ of all permutations on $\mathbb{N}$ (this is \cite[Example 1.5]{Ros-09}, it follows from results of R. R. Kallman \cite{Ka-00} and S. Thomas \cite{Th-99}). For information on automatic continuity consult Rosendal's excellent survey \cite{Ros-09}.

Another goal is to prove the uniqueness of the Polish group topology of the projective unitary group of a separable $\mathrm{II_1}$-factor. To the author's knowledge this was previously unknown even for the hyperfinite $\mathrm{II_1}$-factor.
Throughout this section $\mathrm{II_1}$-factors are assumed to be separable.

In \cite{RS-07} Rosendal and Solecki develop a general framework for groups having automatic continuity. 
\begin{defn}
A topological group $G$ is \textbf{Steinhaus}\index{topological group!Steinhaus} (\textbf{with exponent $k$}) if there exists an element $k\in\mathbb{N}$ such that $W^k$ contains an open neighbourhood of $1_G$ for any symmetric countably syndetic set $W\subseteq G$ (see Definition \ref{def_syndetic}).
\end{defn}
In Proposition 2 of \cite{RS-07} the authors can show the following. 
\begin{prop}[Rosendal-Solecki]\label{prop_RS}
 Every homomorphism from a Steinhaus topological group into any separable topological group is continuous. 
\end{prop}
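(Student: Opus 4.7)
The plan is to prove continuity of a homomorphism $\phi\colon G\to H$, with $G$ Steinhaus of exponent $k$ and $H$ separable, by verifying continuity at $1_G$ only, since then for any $g\in G$ one has $\phi(gU)=\phi(g)\phi(U)$, which propagates continuity to every point. So fix a neighborhood $V$ of $1_H$; I would first pick a symmetric open neighborhood $V_0$ of $1_H$ with $V_0^{2k}\subseteq V$, and then produce a neighborhood of $1_G$ contained in $\phi^{-1}(V)$.

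The central step is to show that $W:=\phi^{-1}(V_0^2)$ is a symmetric, countably syndetic subset of $G$. Symmetry is immediate from the symmetry of $V_0$. For countable syndeticity, use separability of $H$: choose a countable dense sequence $(h_n)$ in $H$; then the open sets $h_nV_0$ cover $H$, because any $h\in H$ has some $h_n\in hV_0$, which gives $h\in h_nV_0^{-1}=h_nV_0$. Hence $G=\bigcup_n\phi^{-1}(h_nV_0)$. For each $n$ with $\phi^{-1}(h_nV_0)\neq\emptyset$, pick $g_n\in\phi^{-1}(h_nV_0)$. For any $x\in\phi^{-1}(h_nV_0)$ we have $\phi(g_n^{-1}x)\in V_0^{-1}V_0=V_0^2$, i.e.\ $x\in g_n W$. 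Thus $G=\bigcup_n g_n W$, establishing countable syndeticity.

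Now I would invoke the Steinhaus hypothesis: since $W$ is symmetric and countably syndetic, $W^k$ contains an open neighborhood $U$ of $1_G$. But $\phi(W^k)\subseteq(V_0^2)^k=V_0^{2k}\subseteq V$, so $\phi(U)\subseteq V$, proving continuity of $\phi$ at $1_G$, hence everywhere.

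The only nontrivial step is the syndeticity argument, and its content is exactly the use of separability of $H$; without separability, preimages of small symmetric neighborhoods need not be countably syndetic, and the Steinhaus hypothesis cannot be triggered. The harmless cosmetic point is the loss of a factor of $2$ (getting $V_0^2$ rather than $V_0$), absorbed by initially shrinking $V_0$ so that $V_0^{2k}\subseteq V$; no further estimates or choices are needed.
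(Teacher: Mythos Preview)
Your proof is correct and follows essentially the same approach as the original Rosendal--Solecki argument, which the paper adapts verbatim for the invariant version in the proof of Proposition~\ref{prop_auto_Steinhaus}: reduce to continuity at $1_G$, shrink to a symmetric $V_0$ with $V_0^{2k}\subseteq V$, use separability of $H$ to cover by translates $h_nV_0$, pull back to show $W=\phi^{-1}(V_0^2)$ is symmetric and countably syndetic, and invoke the Steinhaus hypothesis. The only difference is cosmetic---the paper's invariant version additionally uses the SIN assumption on $H$ to get conjugation-invariance of $W$, which you correctly omit since the ordinary Steinhaus property does not require it.
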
 
 For example, topological groups with ample generics are Steinhaus with exponent 10 (see \cite[Lemma 6.15]{KR-07}). Rosendal and Solecki show that the group $\mathrm{Aut}(\mathbb{Q},<)$ of order-preserving bijections of the rationals and several homeomorphism groups are Steinhaus. Their proofs crucially use the existence of comeager conjugacy classes (the group $\mathrm{Homeo}_+(S^1)$ of orientation preserving homeomorphisms on the unit circle $S^1$ only has meager conjugacy classes, but the proof heavily uses that the group $\mathrm{Homeo}_+(\mathbb{R})$ of increasing homeomorphisms of $\mathbb{R}$ is Steinhaus, which in turn relies on the existence of comeager conjugacy classes). 
 
Before heading towards our proof of invariant automatic continuity of $\cP\cU(\cM)$ we show that the conjugacy classes in $\cP\cU(\cM)$ are meager.

\begin{prop}
 All conjugacy classes in the (projective) unitary group $G$ of a $\mathrm{II_1}$-factor, endowed with the strong operator topology, are meager.
\end{prop}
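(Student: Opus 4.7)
The strategy is to trap each conjugacy class inside the level set of a SOT-continuous conjugation-invariant function and then show that these level sets have empty interior, via a perturbation argument along one-parameter subgroups $t \mapsto v_0\exp(\complex t h)$ for self-adjoint $h$. For $G = \cU(\cM)$ the relevant invariant is the trace $\tau$ itself, which is SOT-continuous on $\cU(\cM)$ (SOT coincides with the $\|\cdot\|_2$-topology on the unitary group) and constant on each conjugacy class, so $u^G \subseteq \tau^{-1}(\tau(u))$. If some $v_0$ were in the interior of this level set, then $\tau(v_0\exp(\complex t h))$ would be constant in $t$ near $0$ for every self-adjoint $h$; differentiating at $t=0$ yields $\tau(v_0 h)=0$ for every such $h$, and $\bbC$-linear extension followed by $x=v_0^*$ produces the contradiction $1=\tau(1)=0$.

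For $G = \cP\cU(\cM)$ the relevant invariant is $[v]\mapsto |\tau(v)|$, which descends to a continuous conjugation-invariant function on $\cP\cU(\cM)$. The case $[u]=[1]$ is trivial: the conjugacy class is $\{[1]\}$, which is meager since $\cP\cU(\cM)$ is a non-discrete Polish group. Otherwise $u$ is non-scalar, so by Cauchy--Schwarz in $L^2(\cM,\tau)$ we have $r := |\tau(u)| < 1$, and $[u]^G$ lies in the closed set $E_r = \{[v] : |\tau(v)|=r\}$. Assume $E_r$ has non-empty interior; pick $v_0$ in its (open) preimage in $\cU(\cM)$. Expanding $|\tau(v_0\exp(\complex t h))|^2 = r^2$ in $t$, the first-order condition is $\im(\overline{\tau(v_0)}\tau(v_0 h)) = 0$ for every self-adjoint $h$. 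If $\tau(v_0) = 0$, the $t^2$-coefficient reduces to $|\tau(v_0 h)|^2 = 0$, giving $\tau(v_0 h)=0$ and the same contradiction as above. If $\tau(v_0)\neq 0$, the identity $\im\tau(w h) = \tau\bigl(\tfrac{w-w^*}{2\complex} h\bigr)$ applied to $w = \exp(-\complex\arg\tau(v_0))v_0$, combined with the faithfulness of $\tau$, forces $w=w^*$, so $v_0$ must be a scalar multiple of a symmetry.

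It then remains to rule out that an open subset of $\cU(\cM)$ consists entirely of scalar multiples of symmetries. This follows from Proposition \ref{prop_WvNBVD}: unitaries with finite spectrum and rational weights are $\|\cdot\|_2$-dense in $\cU(\cM)$, and among them there are plenty with at least three distinct eigenvalues, which cannot be scalar multiples of a symmetry (whose spectrum has at most two points). The resulting contradiction shows that $E_r$ is nowhere dense, completing the proof. The main technical obstacle is the projective case, where the first-order derivative of $|\tau|$ vanishes precisely along the locus of scalar multiples of symmetries, necessitating the secondary density argument to rule out this degenerate possibility.
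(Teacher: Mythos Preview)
Your proof is correct and rests on the same core idea as the paper's: each conjugacy class is contained in a level set of a continuous conjugation-invariant scalar function ($\tau$ for $\cU(\cM)$, and $|\tau|$ for $\cP\cU(\cM)$), and these closed level sets have empty interior, so the conjugacy class is nowhere dense. The paper's argument is a two-line sketch invoking only constancy of $\tau$ on conjugacy classes and their closures; it does not spell out why the level sets have empty interior, nor does it separately treat the projective case.

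Your version is more complete in two respects. First, you give a clean proof that the level sets of $\tau$ have empty interior via differentiation along one-parameter subgroups, which is the natural way to justify this. Second, for $\cP\cU(\cM)$ you correctly identify and handle the degenerate locus where the first variation of $|\tau|^2$ vanishes (scalar multiples of symmetries), using a second-order/density argument to rule it out; this subtlety is invisible in the paper's sketch. So while the underlying strategy is the same, your proof actually carries it out, whereas the paper's is closer to an outline.
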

\begin{proof}
 The trace property of $\tau$ implies $\tau(g^G)=\tau(g)$ for all $g\in G$. Moreover, we have $\tau(\overline{g^G})=\tau(g)$, i.e., $g^G$ is nowhere dense by the Baire category theorem.  
\end{proof}

 This indicates that we need some new ideas to show an automatic continuity result for unitary groups of $\mathrm{II_1}$-factors. 
Indeed, our strategy to prove automatic continuity of projective unitary groups of $\mathrm{II_1}$-factors (endowed with the strong operator topology) differs greatly from the ones used before. The main ingredients in our proof are Theorem \ref{thm_main} and Propositon \ref{prop_nbhd} which ensures that a fixed power of any conjugacy-invariant countably syndetic set contains a neighborhood of the identity. The rest of our proof is an adaption of Proposition \ref{prop_RS} (cf. \cite[Proposition 2]{RS-07}).\\

The work of Rosendal and Solecki in \cite{RS-07} shows that the right sets to concider in order to get an  abstract automatic continuity result are so-called countably syndetic sets.

\begin{defn}\label{def_syndetic}
Let $W$ be a subset of a group $G$. We say that $W$ is \tb{symmetric}\index{set!symmetric} if $W=W^{-1}$. A symmetric set $W$ is called \textbf{countably syndetic}\index{set!countably syndetic} if there exist countably many elements $g_n\in G,\ n\in\bbN,$ such that $G=\bigcup_{n\in\bbN}g_nW$.
\end{defn}
An example of a countably syndetic set in a separable topological group is any nonempty open symmetric set.

In a semi-finite von Neumann algebra $\cM$ with faithful semi-finite normal trace $\tau$, one can measure the size of the support of an element $x\in\cM$ as follows. We define
$$\left[x\right]\coloneqq\inf\set{\tau(p)\mid p\in\Proj(\cM),\ p^{\bot}x=0}.$$
We observe that $\left[x\right]$ equals the trace of the support projection $s=s(x)$ of $x$ and that $[x_1+x_2]\leq[x_1]+[x_2]$ by \cite[Lemma 2.1]{Th-08}, and hence
$d_r(x,y)\coloneqq\left[x-y\right]$
satisfies the triangle inequality and thus defines a metric on $\cM$. Following the convention from \cite[Section 2.1]{Th-08} we call $d_r$ the \textbf{rank metric}\index{rank metric}.

To ensure that every countably syndetic set in the projective unitary group of a $\mathrm{II_1}$-factor contains \emph{large} elements, we need the following standard facts. We use the notation 
$B_r^d(x)\coloneqq\set{y\in X\mid d(x,y)\leq r}$
for a metric space $(X,d)$ and $x\in X$.

\begin{prop}\label{prop_not_sep}
The (projective) unitary group of a II${}_1$-factor is not separable in both (i) the uniform topology and (ii) the topology induced by the rank metric.
\end{prop}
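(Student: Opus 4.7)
The plan is to exhibit, for each topology separately, an uncountable family of unitaries in $\cU(\cM)$ with pairwise distances bounded below by a positive constant, and then check that the same family projects to an uncountable separated subset of $\cP\cU(\cM)$.

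For (i), I would exploit that a $\mathrm{II_1}$-factor $\cM$ is diffuse to pick pairwise orthogonal nonzero projections $\{p_n\}_{n \in \bbN}$ with $\sum_n p_n = 1$ (for instance $\tau(p_n) = 2^{-n}$). For each $S \subseteq \bbN$ set
\[
 u_S := \sum_{n \notin S} p_n - \sum_{n \in S} p_n \in \cU(\cM),
\]
a self-adjoint unitary with spectrum in $\{\pm 1\}$. If $S \neq S'$, there is some $n$ where the two signs disagree, so the restriction of $u_S - u_{S'}$ to $p_n\cH$ equals $\pm 2 p_n$ and $\|u_S - u_{S'}\| = 2$, producing $2^{\aleph_0}$ unitaries pairwise at uniform distance $2$. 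Passing to $\cP\cU(\cM)$, a direct calculation gives
\[
 \inf_{\lambda \in S^1}\|u_S - \lambda u_{S'}\| = \inf_{\lambda \in S^1}\max_n |\sigma_n - \lambda|, \qquad \sigma_n := \epsilon_n^S \epsilon_n^{S'} \in \{\pm 1\},
\]
and whenever both $+1$ and $-1$ occur among the $\sigma_n$ --- i.e.\ whenever $S \neq S'$ and $S \neq S'^c$ --- this infimum equals $\sqrt 2$, attained at $\lambda = \pm\complex$. Identifying $S$ with $S^c$ still leaves $2^{\aleph_0}$ classes in $\cP\cU(\cM)$ pairwise at distance at least $\sqrt 2$.

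For (ii), I would fix a diffuse MASA $\cA \subseteq \cM$ --- which exists in every separable $\mathrm{II_1}$-factor (take the von Neumann algebra generated by a Haar unitary and extend to a MASA) and is trace-preservingly isomorphic to $(L^\infty([0,1]), \int\cdot\, d\mu)$ with Lebesgue measure $\mu$. For each $\alpha \in \bbR$ define $u_\alpha \in \cU(\cA)$ by $u_\alpha(x) := e^{2\pi\complex\alpha x}$. For $\alpha \neq \beta$ and any $\lambda \in S^1$, the support of $u_\alpha - \lambda u_\beta$ is the complement in $[0,1]$ of the set $\{x \in [0,1] : e^{2\pi\complex(\alpha-\beta) x} = \lambda\}$, which is countable (hence of Lebesgue measure zero). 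Consequently $d_r(u_\alpha, \lambda u_\beta) = 1$ uniformly in $\lambda$, so $d_r(u_\alpha, u_\beta) = 1$ and $d_r([u_\alpha], [u_\beta]) = 1$. The classes $[u_\alpha]$ are pairwise distinct, since $u_\alpha = \lambda u_\beta$ almost everywhere would force $e^{2\pi\complex(\alpha-\beta) x}$ to be a.e.\ constant, which is impossible when $\alpha \neq \beta$. This produces an uncountable family in $\cP\cU(\cM)$ with pairwise rank distance $1$.

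The only real subtlety is the passage to the projective quotient, where one must exclude that a single scalar $\lambda \in S^1$ could simultaneously bring two of the chosen unitaries close. In case (i) this reduces to the elementary fact that the spectrum of $u_S u_{S'}^*$ contains both $\pm 1$ unless $S \in \{S', S'^c\}$, and in case (ii) it reduces to the observation that $e^{2\pi\complex(\alpha-\beta) x}$ is not a.e.\ constant for $\alpha \neq \beta$; both are handled by direct computation.
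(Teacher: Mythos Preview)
Your argument is correct for both parts, but the constructions differ from the paper's.

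For (i) the paper simply declares the result well-known and sketches an indirect argument: a $\mathrm{II_1}$-factor contains an inseparable abelian von Neumann subalgebra, and since every element is a linear combination of four unitaries, inseparability passes to the unitary group. Your explicit $\{u_S\}_{S\subseteq\bbN}$ construction is more hands-on and has the advantage that the passage to $\cP\cU(\cM)$ is made completely transparent; the paper does not spell this step out at all.

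For (ii) the paper uses a much lighter device: fix a single projection $p$ with $\tau(p)=1/2$ and set $u_\varphi\coloneqq p+e^{\complex\varphi}p^\perp$ for $\varphi\in[0,\pi/4]$. One checks directly that $d_r(u_\varphi,\lambda u_\psi)\in\{1/2,1\}$ for every $\lambda\in S^1$ whenever $\varphi\neq\psi$, so the open $1/4$-balls around the $u_\varphi$ are pairwise disjoint already in $\cP\cU(\cM)$. Your route through a diffuse MASA $\cA\cong L^\infty([0,1])$ and the family $u_\alpha(x)=e^{2\pi\complex\alpha x}$ also works and even yields the sharper pairwise rank distance $1$, but it imports more structure than necessary: one does not need a MASA, only a single projection of trace $1/2$. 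The paper's argument is therefore more elementary, while yours makes the projective case slightly cleaner (no $\lambda$ can reduce the rank of $u_\alpha-\lambda u_\beta$ at all).
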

\begin{proof} (i) This is well-known. One can prove it directly or use that $\cM$ contains an inseparable abelian von Neumann algebra and then use that every element in $\cM$ is a linear combination of four unitaries in $\cM$ to conclude that the unitary group is also inseparable.\\ %Since $\cM$ contains an inseparable abelian von Neumann algebra in the uniform topology, $\cM$ is inseparable. Recall that every element in $\cM$ is a linear combination of four unitary elements. So if $\cU(\cM)$ would be separable, then there would exist a countable dense set $A\subseteq\cU(\cM)$. Hence $\set{\sum_{i=1}^4\lambda_i u_i\mid \lambda_i\in\mathbb{Q}+\complex\mathbb{Q},\ u_i\in A}$ forms a countable dense set in $\cM$ with respect to the uniform topology, which is a contradiction.\\
 (ii) Set $u_{\varphi}\coloneqq p+ e^{\complex \varphi}p^{\bot}$, where $p\in\Proj(\cM)$ satisfies $\tau(p)=1/2$ and $\varphi\in[0,\pi/4]$. Then $B_{1/4}^{d_r}(u_{\varphi})$ for $\varphi\in[0,\pi/4]$ defines an uncountable family of disjoint open sets in $\cU(\cM)$ as well as $\cP\cU(\cM)$. Hence $\cU(\cM)$ and $\cP\cU(\cM)$ are not separable in the topology induced by the metric $d_r$.
\end{proof}

Proposition \ref{prop_not_sep} will ensure that for every countably syndetic set $W$ in $\cP\cU(\cM)$, $W^2$ contains elements of some suitable length in the above two inseparable topologies. In order to prove this, we need the following elementary lemma. 

\begin{lem}\label{lem_insep}
 Suppose that $(X,d)$ is an inseparable metric space. Then there exists $\eps>0$ such that for every countable subset $A$ of $X$ there exists $x\in X$ with $d(x,A)\geq \eps$.
\end{lem}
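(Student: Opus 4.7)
The plan is to prove the contrapositive: if no uniform $\varepsilon > 0$ witnesses the conclusion, then $X$ admits a countable dense subset, contradicting inseparability.

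More concretely, I would assume for contradiction that for every $\varepsilon > 0$ there exists a countable subset $A_\varepsilon \subseteq X$ such that every $x \in X$ satisfies $d(x, A_\varepsilon) < \varepsilon$. Specializing to $\varepsilon = 1/n$ for each $n \in \bbN$, this produces countable sets $A_n \subseteq X$ with the property that $A_n$ is $(1/n)$-dense in $X$.

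Now set $A \coloneqq \bigcup_{n \in \bbN} A_n$, which is a countable union of countable sets and hence countable. For any $x \in X$ and any $n \in \bbN$ one has $d(x, A) \leq d(x, A_n) < 1/n$, so $d(x,A) = 0$ for every $x \in X$. Equivalently $\overline{A} = X$, so $A$ is a countable dense subset of $X$. This contradicts the assumption that $(X,d)$ is inseparable, and completes the argument.

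There is no real obstacle here; the statement is essentially the definition of (in)separability unpacked with a diagonalization across scales. The only thing to be careful about is making sure the quantifiers are swapped correctly in the contradiction hypothesis, namely that one extracts a single countable $A_\varepsilon$ for each $\varepsilon$ (as opposed to depending also on $x$), which is exactly the negation of the statement to be proved.
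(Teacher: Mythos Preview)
Your proof is correct and follows essentially the same approach as the paper: both argue by contradiction, extract for each scale a countable $\varepsilon$-dense set, and take the union to obtain a countable dense subset. The only cosmetic difference is that you specialize to $\varepsilon = 1/n$ while the paper uses an arbitrary sequence $\varepsilon_n \to 0$.
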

\begin{proof}
 Suppose there exists no such $\eps$. Then there exists a sequence $\set{A_n}_{n\in\bbN}$ of countable 	subsets $X$ and a sequence $\set{\eps_n}_{n\in\bbN}$, $\eps_n\rightarrow 0$ for $n\rightarrow \infty$ such that for every $x\in X$ we have $\eps_{n}>d(x,A_n)$. But then $d(x,\bigcup_{n\in\bbN}A_n)=0$ for all $x\in X$. Thus $\bigcup_{n\in\bbN}A_n$ forms a countable dense set in $X$, which is a contradiction.
\end{proof}

\begin{prop}\label{prop_insep}
 Let $G$ be an inseparable topological group with compatible left-invariant metric $d$. There exists $\eps>0$ such that for every countably syndetic set $W\subseteq G$, $W^2$ contains an element $u$ satisfying $d(1,u)>\eps$.
\end{prop}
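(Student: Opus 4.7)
The plan is to extract the threshold $\eps>0$ once and for all from Lemma \ref{lem_insep}, independent of the specific countably syndetic set $W$. Since $(G,d)$ is inseparable, Lemma \ref{lem_insep} yields $\eps>0$ such that for every countable subset $A\subseteq G$, there exists $x\in G$ with $d(x,A)\geq\eps$. I claim this $\eps$ works uniformly in $W$.

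So fix a countably syndetic symmetric set $W\subseteq G$, and write $G=\bigcup_{n\in\bbN}g_nW$. Since $G$ is non-empty the set $W$ must be non-empty, so pick any $h\in W$. The key trick is to rewrite the cover as
\[
G=\bigcup_{n\in\bbN}(g_nh^{-1})(hW),
\]
noting crucially that $hW\subseteq W^2$ because $h\in W$. Now apply Lemma \ref{lem_insep} to the countable set $A=\{g_nh^{-1}\mid n\in\bbN\}$ to obtain $x\in G$ with $d(x,g_nh^{-1})\geq\eps$ for all $n$. By the cover, $x=g_mh^{-1}y$ for some $m$ and some $y\in hW\subseteq W^2$.

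Left-invariance of $d$ then closes the argument immediately: $d(y,1)=d(g_mh^{-1}y,g_mh^{-1})=d(x,g_mh^{-1})\geq\eps$, so the element $u:=y\in W^2$ satisfies $d(1,u)\geq\eps$. (A trivial strict inequality $d(1,u)>\eps$ follows by shrinking $\eps$ by any factor at the outset.)

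Since the construction is direct, I do not foresee a serious obstacle; the only mildly non-obvious point is the $h$-shift trick that converts the cover by translates of $W$ into a cover by translates of a subset of $W^2$, which is what makes Lemma \ref{lem_insep} applicable with output landing in $W^2$ rather than merely in $W$.
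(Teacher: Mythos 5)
Your proof is correct, and it takes a genuinely different and somewhat slicker route than the paper's. The paper first builds a maximal $\eps$-separated set $V$ via Zorn's lemma, uses Lemma \ref{lem_insep} to conclude $V$ is uncountable, then applies the pigeonhole principle to the cover $V=\bigcup_n V\cap g_nW$ to find two elements of $V$ in the same translate $g_mW$; their quotient lies in $(g_mW)^{-1}(g_mW)=W^{-1}W=W^2$ (here the symmetry of $W$ is used) and is $\eps$-far from the identity. You bypass the maximal separated set and the pigeonhole step entirely: the $h$-shift trick converts the cover $\bigcup_n g_nW$ into a cover $\bigcup_n (g_nh^{-1})(hW)$ whose blocks already sit inside $W^2$, so you can apply Lemma \ref{lem_insep} directly to the countable set of centers $\{g_nh^{-1}\}$ and read off the conclusion via left-invariance. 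Two small observations: your argument never actually invokes symmetry of $W$ (only $h\in W$, hence $hW\subseteq W^2$), so it is nominally more general, though the paper's definition of countably syndetic builds symmetry in anyway; and both proofs have the same benign strict-versus-nonstrict $\eps$ issue, which you correctly note can be absorbed by halving $\eps$ at the start.
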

\begin{proof}
 For the moment, let $\eps>0$ be arbitrary. Recall that an $\eps$-separated set $V\subseteq G$ is a set such that every pair of distinct points $u,v\in V$ has distance $d(u,v)>\eps$. Zorn's lemma implies that there exists a maximal $\eps$-separated set $V$. Observe that $V$ is $\eps$-dense in $G$ by maximality, since the existence of a point $u\in G\setminus V$ such that $d(u,v)>\eps$ for all $v\in V$ obviously contradicts maximality of $V$. 

 We conclude from Lemma \ref{lem_insep} that there exists $\eps>0$ such that any maximal $\varepsilon$-separated set $V$ is uncountable. We may assume that $1\in V$. Since $W$ is countably syndetic, there exists a sequence $(g_n)_n$ in $G$ such that $G=\bigcup_{n\in\bbN}g_n W$. In particular we have $V=\bigcup_{n\in\bbN}V\cap g_nW$.
 The pigeonhole principle implies that there exists $m\in\bbN$ such that 
 $$\abs{V\cap g_m W}\geq 2.$$
 %(Actually it implies that there is an intersection having uncountably many elements.)
 Let $u,v\in V\cap g_m W$ be distinct elements. Since $W^2=(g_m W)^{-1}(g_m W)$ we get $u^{-1}v \in W^2$. Since $V$ is $\varepsilon$-separated we get $d(1,u^{-1}v)=d(u,v)> \varepsilon$ and this completes the proof.
\end{proof}

Let us come to the main definition of this section.

\begin{defn}
 Let $G$ be a topological group. If every homomorphism from $G$ to any separable SIN group is continous, then we say that $G$ has the \textbf{invariant automatic continuity property}\index{invariant automatic continuity property} or \textbf{property (\rm IAC)}.
\end{defn}

Closely related to invariant automatic continuity we define an invariant version of the Steinhaus property.

\begin{defn}
 A topological group $G$ has the \textbf{invariant Steinhaus property}\index{invariant Steinhaus property} (\textbf{with exponent $k$}) if there exists an element $k\in\mathbb{N}$ such that $W^k$ contains an open neighbourhood of $1_G$ for any symmetric conjugacy-invariant countably syndetic set $W\subseteq G$.
\end{defn}

Following closely the proof of \cite[Proposition 2]{RS-07} we obtain the invariant automatic continuity for groups having the invariant Steinhaus property.

%\begin{lem}
%Let $\pi:G\rightarrow H$ be a homomorphism between topological groups. If $\pi$ is continuous at the neutral element $1_G$ of $G$, then $\pi$ is continuous at every point $g\in G$.
%\label{lem: cont at 1}
%\end{lem}
%
%\begin{proof}
%Let $g\in G$ be arbitrary and let $O$ be an open neighbourhood of $h:=\pi(g)$ in $H$. Note that since $G$ is a topological group, every left (and right) translation $\lambda_g$ is a continuous bijection. Since $\lambda_g\circ\lambda_{g^{-1}}$ is the identity map, the inverse of $\lambda_g$ is also continuous. That is, $\lambda_g$ is a homeomorphism of $G$ onto itself. Since left translation $\lambda_h$ is a homeomorphism of $H$, there exists an open neighbourhood $V$ of $1_H$ in $H$ such that $hV\subseteq O$. Continuity of $\pi$ at $1_G$ implies that $\pi(U)\subseteq V$ for some open neighbourhood $U$ of $1_G$ in $G$. Since left translation $\lambda_g$ is a homeomorphism of $G$ onto itself, the set $gU$ is an open neighbourhood of $g$ in $G$ and we obtain $\pi(gU)=h\pi(U)\subseteq hV\subseteq O$. Thus, $\pi$ is continuous at $g\in G$.
%\end{proof}
%

\begin{prop}\label{prop_auto_Steinhaus}
 Let $G$ be a topological group with the invariant Steinhaus property. Then $G$ has the invariant automatic continuity property.
\end{prop}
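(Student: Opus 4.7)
The plan is to mimic the proof of Rosendal--Solecki \cite[Proposition 2]{RS-07}, i.e., the non-invariant analogue Proposition \ref{prop_RS} stated above, with one modification: the \textbf{SIN} hypothesis on the target lets us arrange all the sets in the argument to be conjugation-invariant, so that the \emph{invariant} Steinhaus hypothesis applies at the end. Nothing new needs to be proved; the only thing to check is that the standard argument respects conjugation-invariance throughout.

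First, let $\phi\colon G\to H$ be a homomorphism to a separable SIN group $H$ and let $k$ be an invariant Steinhaus exponent for $G$. By translation it suffices to prove continuity at $1_G$, so let $U\subseteq H$ be an open neighborhood of $1_H$. Using the SIN assumption, pick a symmetric \emph{conjugation-invariant} open neighborhood $V$ of $1_H$ with $V^{2k}\subseteq U$; then $V^2$ is again symmetric and conjugation-invariant. Define $W\coloneqq \phi^{-1}(V^2)$. Since $\phi$ is a homomorphism, the preimage of a conjugation-invariant set is conjugation-invariant in $G$, so $W$ is symmetric and conjugation-invariant.

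Next, verify that $W$ is countably syndetic. Separability of $H$ and openness of $V$ give countably many $h_n\in H$ with $H=\bigcup_n h_n V$. For each $n$ with $\phi^{-1}(h_nV)\neq\emptyset$ choose $g_n\in\phi^{-1}(h_nV)$. Then for any $x\in G$ there is some $n$ with $\phi(x)\in h_nV$, whence $\phi(g_n^{-1}x)=\phi(g_n)^{-1}\phi(x)\in V^{-1}h_n^{-1}h_nV=V^2$, i.e.\ $x\in g_nW$. Hence $G=\bigcup_n g_nW$, and $W$ is countably syndetic.

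The invariant Steinhaus property of $G$ applied to $W$ now produces an open neighborhood $N\subseteq W^k$ of $1_G$, and $\phi(N)\subseteq \phi(W^k)\subseteq (V^2)^k=V^{2k}\subseteq U$. Thus $\phi$ is continuous at $1_G$, hence everywhere. The only place any delicacy enters is the choice of $V$: if $H$ were merely separable and not SIN, we could not guarantee the conjugation-invariance needed to invoke invariant Steinhaus, which is precisely why the hypothesis is stated for SIN targets.
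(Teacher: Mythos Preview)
Your proof is correct and follows essentially the same approach as the paper's own proof: both adapt the Rosendal--Solecki argument by using the SIN hypothesis on $H$ to choose a conjugation-invariant $V$ with $V^{2k}\subseteq U$, set $W=\phi^{-1}(V^2)$, verify that $W$ is symmetric, conjugation-invariant, and countably syndetic, and then apply the invariant Steinhaus property. Your justification of conjugation-invariance of $W$ (preimages of conjugation-invariant sets under homomorphisms are conjugation-invariant) is slightly more direct than the paper's phrasing, but the argument is the same.
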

\begin{proof}
 Let $\pi:G\rightarrow H$ be a homomorphism into a separable SIN group $H$. Assume that $G$ has the invariant Steinhaus property with exponent $k$. Clearly, if $\pi$ is continuous at the neutral element $1_G$ of $G$, then $\pi$ is continuous at every point $g\in G$.
 Suppose that $U\subseteq H$ is an open neighbourhood of $1_H$. Since $H$ is SIN we can find a conjugacy-invariant symmetric open set $V$ satisfying $1_H\in V\subseteq V^{2k}\subseteq U\subseteq H$. By separability of $H$, $V$ covers $H$ by countably many translates $\{h_n V\}_{n\in\mathbb{N}}$. For each $n\in\mathbb{N}$ such that $h_n V\cap\pi(G)\neq\emptyset$, choose $g_n\in G$ such that $\pi(g_n)\in h_nV$. Thus $h_n V\subseteq\pi(g_n)V^{-1}V=\pi(g_n)V^2$ and $\pi(g_n)V^2$ cover $\pi(G)$. 

For fixed $g\in G$, choose $n\in\mathbb{N}$ such that $\pi(g)\in\pi(g_n)V^2$. Then $\pi(g_n^{-1}g)\in V^2$, thus $g_n^{-1}g\in\pi^{-1}(V^2)$ and hence $g_n \pi^{-1}(V^2)$ cover $G$. Moreover, since $H$ is SIN we obtain $xg_n^{-1}gx^{-1}\in\pi^{-1}(V^2)$ for every $x\in G$. It follows that $W:=\pi^{-1}(V^2)$ is symmetric, countably syndetic and conjugacy-invariant in $G$.

Since $G$ has the invariant Steinhaus property by assumption, $W^{k}$ contains an open neighborhood of the identity.
%We want to conclude that $1_G\in\textnormal{Int}(W^k)$ for some $k\in\mathbb{N}$. Note first that $W$ necessarily contains a non-central element in $G$ such that there exist $\eps>0$ and $\delta>0$ which satisfy $\ell_t(g)\geq\eps$ for all $t\in[0,\delta]$. Then $(g^G\cup g^{-G})^{2k}\subseteq W^{2k}$ contains an open neighborhood of $1_G$ for some $k\in\bbN$, where only conjugates of $g$ and none of $g^{-1}$ are taken.
Hence, $\pi(W^{k})\subseteq V^{2k}\subseteq U$, and we obtain $1_G\in\textnormal{Int}(\pi^{-1}(U))$, that is, $\pi$ is continuous at $1_G$.
\end{proof}

Let us now verify the invariant Steinhaus property for finite-dimensional projective unitary groups.

\begin{prop}\label{prop_PUn_Steinhaus}
 The projective unitary group $\cP\cU(n)$, endowed with the norm topology, where $n\in\bbN,$ has the invariant Steinhaus property with exponent $48n$. 
\end{prop}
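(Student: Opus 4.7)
My plan is to derive the invariant Steinhaus property with exponent $48n$ directly from Theorem~\ref{thm_sun_bdd}, using only that a countably syndetic subset of an uncountable group must contain a non-identity element. Let $W \subseteq G := \cP\cU(n)$ be symmetric, conjugation-invariant, and countably syndetic, so that $G = \bigcup_{j \in \bbN} g_j W$ for some countable family $\{g_j\} \subseteq G$.

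First I would observe that $G$ is uncountable while $\bigcup_{j} g_j\{1\}$ is countable; hence $W \neq \{1\}$, and we may pick some $v \in W$ with $v \neq 1$. Since $v$ represents a non-central unitary, the infimum defining $\ell_0(v) = \inf_{\lambda \in S^1}\|1-\lambda v\|$ is attained on the compact circle $S^1$ and is strictly positive, so $\ell_0(v) > 0$.

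Next I would apply Theorem~\ref{thm_sun_bdd} with $m = 6$: for every $u \in G \setminus \{1\}$ satisfying $\ell_0(u) \leq 6\,\ell_0(v)$ we have
\[
u \in (v^G \cup v^{-G})^{8\cdot 6\cdot n} = (v^G \cup v^{-G})^{48n}.
\]
Because $W$ is symmetric and conjugation-invariant, the entire set $v^G \cup v^{-G}$ lies in $W$, and hence $(v^G \cup v^{-G})^{48n} \subseteq W^{48n}$.

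Finally, set $V := \{u \in G : \ell_0(u) < 6\,\ell_0(v)\}$. Since $\ell_0$ is continuous in the norm topology and $\ell_0(1) = 0$, $V$ is an open neighborhood of the identity. The previous step yields $V \setminus \{1\} \subseteq W^{48n}$; and $1 \in W^{48n}$ too, since $1 = w_1 w_1^{-1}\cdots w_{24n} w_{24n}^{-1}$ for any $w_1,\dots,w_{24n} \in W$ (using symmetry). Hence $V \subseteq W^{48n}$, proving the invariant Steinhaus property with exponent $48n$.

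I do not foresee a genuine ``main obstacle'' in this plan: conjugation-invariance of $W$ is precisely the feature that reduces the Steinhaus-type question to a single application of the already-proven rank-dependent BNG result for $\cP\cU(n)$, and the countably syndetic hypothesis contributes only the elementary fact that $W$ must contain a non-identity element. The constant $48n$ is simply $8mn$ with $m=6$, the choice of $m$ being an even integer large enough to make $\{u : \ell_0(u) < 6\,\ell_0(v)\}$ a nontrivial open ball.
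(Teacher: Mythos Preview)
Your argument is correct and follows the same strategy as the paper: extract a nontrivial element $v$ from the syndetic set, then invoke the rank-dependent BNG bound to place an $\ell_0$-ball around the identity inside $W^{48n}$. The paper instead picks $v\in W^2$ and applies Theorem~\ref{thm_sun} with $m=1$ (the extra factor $2$ coming from $W^2$ yields the same exponent $48n$), whereas you take $v\in W$ directly and use Theorem~\ref{thm_sun_bdd} with $m=6$; your route is slightly more streamlined, though you should add a word that the case $n=1$ is trivial since $\cP\cU(1)=\{1\}$.
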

\begin{proof}
 The case $n=1$ is trivial and so we assume $n\geq 2$.
 Put $G\coloneqq\cP\cU(n)$ and let $W\subseteq G$ be a symmetric conjugacy-invariant countably syndetic set. There exists $v\in W^2$ such that $d_r(1,v)>\eps$. We set $\delta\coloneqq \ell_0(v)>0$ and use $v$ to generate a $\delta$-neighborhood of the identity in the operator norm. So consider an arbitrary element $u\in G$ satisfying $\ell_0(u)\leq \delta$. From Theorem \ref{thm_sun} we then conclude 
$u\in(v^G\cup v^{-G})^{24n}.$ Since $u\in B_{\delta}^{\norm{\cdot}}(1)$ was arbitrary, this shows that $G$ has the invariant Steinhaus property with exponent $48n$.
\end{proof}

\begin{rem}
 Basically the same proof as above shows that $\cSU(n)$ also has the invariant Steinhaus property. The additional obstruction coming with $\cSU(n)$ is that it has a non-trivial center. However, the center is finite and thus one can generate a small $\delta$-neighborhood of the identity with $\delta>0$ and $\delta<\min_{\lambda\in\cZ(\cSU(n))\setminus\set{1}}\norm{1-\lambda}$.
\end{rem}

Propositions \ref{prop_auto_Steinhaus} and \ref{prop_PUn_Steinhaus} together with the previous remark imply the following.

\begin{thm}\label{thm_auto_PUn}
 $\cP\cU(n)$ and $\cSU(n)$, endowed with the norm topology, where $n\in\bbN,$ have the invariant automatic continuity property.
\end{thm}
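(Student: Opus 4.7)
The plan is to combine Proposition \ref{prop_auto_Steinhaus} with Proposition \ref{prop_PUn_Steinhaus} (for $\cP\cU(n)$) and its $\cSU(n)$-analogue (sketched in the preceding remark). Since Proposition \ref{prop_auto_Steinhaus} reduces invariant automatic continuity to the invariant Steinhaus property, the statement for $\cP\cU(n)$ follows at once from Proposition \ref{prop_PUn_Steinhaus}, with exponent $48n$. Only the $\cSU(n)$-case requires a few additional words, and this is where I would focus the write-up.

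For $G := \cSU(n)$ with $n \geq 2$, let $W \subseteq G$ be a symmetric, conjugation-invariant, countably syndetic set. I would first run the same argument as in Proposition \ref{prop_PUn_Steinhaus}: the norm topology on $\cSU(n)$ is inseparable in the rank metric $d_r$ (by the same construction as in Proposition \ref{prop_not_sep}, since $\cSU(n)$ surjects onto the relevant projective diagonal subgroup up to central factors), so Proposition \ref{prop_insep} produces some $\eps>0$ and an element $v \in W^2$ with $d_r(1,v) > \eps$, whence $\ell_0(v) \geq \delta$ for some $\delta = \delta(\eps) > 0$. The center $\cZ(\cSU(n))$ consists of the $n$-th roots of unity, so there is a universal gap $\eta_n := \min_{\lambda \in \cZ(\cSU(n))\setminus\{1\}} \|1-\lambda\| > 0$. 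I would shrink $\delta$ if necessary so that $\delta < \eta_n$, which guarantees that the strict inequality $\ell_0(u) \leq \delta$ distinguishes the identity coset from all other central cosets.

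Next, for any $u \in G$ with $\|1-u\| \leq \delta$, the projective image $\bar u \in \cP\cU(n)$ satisfies $\ell_0(\bar u) \leq \delta \leq \ell_0(\bar v)$, and Theorem \ref{thm_sun} applied in $\cP\cU(n)$ places $\bar u$ inside $(\bar v^{\cP\cU(n)} \cup \bar v^{-\cP\cU(n)})^{24n}$. Lifting this chain of conjugations back to $\cSU(n)$ gives $u = z \cdot g_1 v^{\pm 1} g_1^{-1} \cdots g_{24n} v^{\pm 1} g_{24n}^{-1}$ for some central element $z \in \cZ(\cSU(n))$. The choice $\delta < \eta_n$ forces $z = 1$, so in fact $u \in (v^G \cup v^{-G})^{24n} \subseteq W^{48n}$. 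Hence $W^{48n}$ contains the open ball $B_\delta^{\|\cdot\|}(1)$, establishing the invariant Steinhaus property for $\cSU(n)$.

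The proof then concludes by invoking Proposition \ref{prop_auto_Steinhaus} in both cases. The only subtle point, and the step where I would be most careful, is the central lifting argument: one must verify that the diameter bound $\delta < \eta_n$ really does pin the central ambiguity to the identity, which in turn relies on the fact that $\|1-u\| \leq \delta$ for the chosen representative forces $\|z - u^{\text{lift}}\| \leq \delta$ for only one choice of central $z$. Everything else is a direct reference to Theorem \ref{thm_sun}, Proposition \ref{prop_insep}, and the Steinhaus-to-continuity machinery already set up.
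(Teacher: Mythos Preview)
Your overall architecture matches the paper exactly: invoke Proposition~\ref{prop_auto_Steinhaus}, cite Proposition~\ref{prop_PUn_Steinhaus} for $\cP\cU(n)$, and flesh out the Remark for $\cSU(n)$. For $\cP\cU(n)$ nothing more is needed.

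For $\cSU(n)$, however, your write-up has two genuine gaps.

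First, the step ``$d_r(1,v)>\eps$, whence $\ell_0(v)\geq\delta$'' is false as stated: a non-trivial \emph{central} element $v=\zeta\cdot 1$ has $d_r(1,v)=1$ but $\ell_0(v)=0$. (Your justification of inseparability via Proposition~\ref{prop_not_sep} is also misplaced --- that proposition is about $\mathrm{II}_1$-factors; for $\cU(n)$ the rank metric is simply discrete.) What you actually need is a \emph{non-central} $v\in W^2$, and this follows because $W$ is uncountable while $\cZ(\cSU(n))$ is finite.

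Second, and more seriously, the lifting argument does not close. You obtain $u=z\cdot u'$ with $u'\in (v^G\cup v^{-G})^{24n}$ and $z$ central, and you want $z=1$. Knowing only $\|1-u\|\leq\delta<\eta_n$ tells you that $u$ lies in the $\delta$-ball around $1$ and no other central element --- but it tells you nothing about $\|1-u'\|$, so you cannot conclude $z=1$ from this alone. Your own caveat (``the step where I would be most careful'') flags the right spot but the justification you give is circular.

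The fix, and what the paper's Remark implicitly intends by ``the same proof'', is to observe that the proof of Theorem~\ref{thm_sun} already takes place in $\cSU(n)$: Lemmas~\ref{lem_su2}, \ref{lem_single_gen}, \ref{lem_sun} all work with genuine $\cSU(2)$-blocks and produce an honest equality in $\cSU(n)$, generating precisely the angle-sum-zero lift of $\bar u$. If $\|1-u\|\leq\delta<\eta_n$, then $u$ \emph{is} that lift (the angles can be chosen with $\sum\theta_i=0$), so Theorem~\ref{thm_sun} yields $u\in(v^G\cup v^{-G})^{24n}$ directly in $G=\cSU(n)$, with no central ambiguity to resolve.
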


We want to stress that $\cP\cU(n)$ and $\cSU(n)$ do not have the automatic continuity property \cite[Example 1.5]{Ros-09}, i.e., there is need for an extra condition on the class of target groups (also it is not clear if SIN groups form the most general such class). 
%We follow  \cite[Example 1.5]{Ros-09} to show this. Let $G\coloneqq \cP\cU(n)$. It is enough to show that it there is a discontinuous embedding into the Polish group $\mathrm{S}_{\infty}$ of all permutations of $\bbN$. From \cite[Theorem 2.3]{HHM-14} we deduce the existence of a non-open subgroup $H\subseteq G$ of countably infinite index. Thus the set $G/H$ of left cosets is countable and we view $\mathrm{S}_{\infty}$ as the group $\mathrm{Sym}(G/H)$ of all permutations of $G/H$. 
%We define a group homomorphism
%$$\pi:G\rightarrow \mathrm{Sym}(G/H),\quad \pi(g)=L_g,$$
%where $L_f(gH)=fgH,\ f\in G,$ is the left multiplication. Note that \cite[Theorem 1]{Ka-00} (see also \cite[Corollaries 9 and 10]{Ka-00}) tells us that $\pi$ is injective. We claim that $\pi$ is discontinuous. Obviously $L_f(1H)=1H$ if and only if $f\in H$. Thus 
%$$\pi^{-1}\left( \sigma\in\mathrm{Sym}(G/H) \mid \sigma(1H)=1H \right)=H,$$
%which is not open by definition of $H$. Hence $\pi$ is discontinuous.\\
Note that $\cU(n)$ does not have the invariant automatic continuity property, since $\cU(n)$ maps to $S^1$ continuously.

Now we come to the core in our proof of the invariant automatic continuity property of projective unitary groups of separable $\mathrm{II_1}$-factors. A major difficulty in the proof stems from the fact that we could prove Theorem \ref{thm_main} in this quantitative version only if the element that one wants to generate has finite spectrum and rational spectral weights. Many of the techniques and results developed in the previous sections are needed. 

\begin{prop}\label{prop_nbhd}
 The projective unitary group $\cP\cU(\cM)$ of a separable $\mathrm{II_1}$-factor $\cM$, endowed with the strong operator topology, has the invariant Steinhaus property.
\end{prop}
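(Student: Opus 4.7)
The plan is to show that $W^{k}$ in fact equals all of $G := \cP\cU(\cM)$ for some absolute $k$ independent of $W$; such a $W^{k}$ then trivially contains a strong-operator-topology open neighborhood of $1$. This reduces the invariant Steinhaus property for $G$ to producing a single uniformly large element inside an absolute power of $W$.

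The first step is to secure a nontrivial element in a small power of $W$. Since $G$ is inseparable both in the operator-norm topology and in the rank metric (Proposition \ref{prop_not_sep}), Proposition \ref{prop_insep} applied to the left-invariant metric induced by $\ell_{0}$ furnishes an absolute $\eps_{1}>0$ and some $v_{1}\in W^{2}$ with $\ell_{0}(v_{1})\geq \eps_{1}$, while the same proposition applied to the projective version $u\mapsto \inf_{\lambda\in S^{1}}[1-\lambda u]$ of the rank metric yields an absolute $\eps_{2}>0$ and some $v_{2}\in W^{2}$ of projective rank at least $\eps_{2}$.

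The crucial step is to upgrade these two pieces of information into a single element $v_{*}\in W^{k_{0}}$, with $k_{0}$ absolute, satisfying $\ell_{t}(v_{*})\geq c_{0}$ for all $t\in[0,s_{0}]$ with absolute constants $c_{0},s_{0}>0$. For this I would invoke Proposition \ref{prop_commutator}: for every $w\in\cU(\cM)$, the symmetry and conjugacy invariance of $W$ force $[v_{1},w]=v_{1}\cdot(wv_{1}^{-1}w^{-1})\in W^{4}$, and an appropriate choice of $w$ produces a $\lambda\in S^{1}$ with $\mu_{12t}(1-\lambda v_{1})\leq 4\ell_{t}([v_{1},w])$ for all $t\geq 0$. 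Combined with the rank-metric input from $v_{2}$ — via a product or commutator construction that intertwines $v_{1}$ and $v_{2}$ and, after approximation by elements with finite spectrum and rational weights (Proposition \ref{prop_WvNBVD}), relies on Lemma \ref{lem_useful_II_1} to compare generalised projective $s$-numbers — one extracts the desired $v_{*}$. The main obstacle of the argument is precisely this calibration: a lower bound on $\ell_{0}(v_{1})$ alone, through right continuity of $\mu_{t}$, only gives a lower bound on $\ell_{t}(v_{1})$ on some right neighbourhood of $0$ whose length a priori depends on $v_{1}$. The role of the rank-metric input from $v_{2}$ is to enforce this length to be uniform, i.e., to guarantee that enough spectral mass of the constructed element lies away from every scalar multiple of $1$.

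Once $v_{*}$ is in hand, one applies Theorem \ref{thm_main} to any trace-zero symmetry $s\in\cU(\cM)$: such an $s$ has finite spectrum $\{\pm 1\}$ with rational weights $1/2$ and $\ell_{0}(s)=2$, so $\ell_{0}(s)\leq \lceil 2/c_{0}\rceil\,\ell_{t}(v_{*})$ for $t\in[0,s_{0}]$, whence $s\in(v_{*}^{G}\cup v_{*}^{-G})^{C}$ for an absolute $C$. Conjugacy invariance and symmetry of $W$ give $v_{*}^{G}\cup v_{*}^{-G}\subseteq W^{k_{0}}$, hence $s\in W^{k_{0}C}$. Corollary \ref{broisecor} closes the argument: $G=(s^{G})^{32}\subseteq W^{32 k_{0} C}$, so setting $k:=32k_{0}C$ produces an absolute exponent with $W^{k}=G$, which is an SOT-open neighbourhood of $1$. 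This establishes the invariant Steinhaus property for $\cP\cU(\cM)$.
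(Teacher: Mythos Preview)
Your overall plan --- to show $W^{k}=G$ for an absolute $k$ --- is too strong to be true, and this is where the argument collapses. Take $W=\{g\in G:\ell(g)\leq r\}$ for small $r>0$; this is symmetric, conjugacy-invariant, and countably syndetic (it contains an SOT-neighbourhood of $1$ and $G$ is SOT-separable). By subadditivity of $\ell$, $W^{k}\subseteq\{g:\ell(g)\leq kr\}$, which is a proper subset of $G$ once $kr<\diam_{\ell}(G)=2$. Hence no absolute $k$ can force $W^{k}=G$. The same example kills the ``crucial step'': any $v_{*}\in W^{k_{0}}$ satisfies $\int_{0}^{1}\ell_{t}(v_{*})\,dt\leq c\,\ell(v_{*})\leq ck_{0}r$ (Lemma~\ref{Llbound}), so a uniform lower bound $\ell_{t}(v_{*})\geq c_{0}$ on $[0,s_{0}]$ with $c_{0},s_{0}$ absolute is impossible for small $r$. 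The calibration problem you correctly flag as the main obstacle is not just hard --- it is insoluble in the form you state it.

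The paper does \emph{not} combine $v_{1}$ and $v_{2}$ into a single good element; it keeps them separate and lets the generated neighbourhood, rather than the exponent, depend on $W$. Concretely, from your $v_{1},v_{2}\in W^{2}$ one extracts (via right continuity) a $\delta>0$ --- depending on $W$ --- with $\ell_{t}(v_{1})\geq\eps$ on $[0,\delta]$ and $\ell_{t}(v_{2})\geq\delta$ on $[0,\eps]$. To generate an arbitrary $w$ close to $1$ in $\|\cdot\|_{2}$, one splits $w=w_{1}w_{2}$ where $w_{1}$ lives under a projection of trace $\delta$ (small rank, arbitrary norm) and $w_{2}$ has $\ell_{0}(w_{2})\leq\delta$ (small norm, arbitrary rank). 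Then $v_{1}$ handles $w_{1}$: its $\ell_{t}$ is large on $[0,\delta]$, exactly the interval needed to apply Theorem~\ref{thm_main} inside the $\delta$-corner and reach a trace-zero symmetry, after which Corollary~\ref{broisecor} gives $w_{1}$. Dually $v_{2}$ handles $w_{2}$: its $\ell_{t}$ is at least $\delta\geq\ell_{0}(w_{2})$ on the long interval $[0,\eps]$, and a commutator/direct-integral argument (Proposition~\ref{prop_commutator}, Lemma~\ref{lem_Borel_su2}) produces $w_{2}$ with exponent depending only on $\eps$. Several further reduction steps (Theorem~\ref{broise_improved}, approximation by finite spectrum with rational weights) are needed to pass from these special $w$'s to an honest $\|\cdot\|_{2}$-ball. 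The upshot is $B^{\|\cdot\|_{2}}_{\eps_{0}}(1)\subseteq W^{C\lceil 1/\eps\rceil}$ with $C$ and $\eps$ absolute but $\eps_{0}$ depending on $\delta$, hence on $W$ --- which is exactly what the invariant Steinhaus property asks for.
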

\begin{proof}
Let $W\subseteq G\coloneqq \cP\cU(\cM)$ be a symmetric conjugacy-invariant countably syndetic set. We have to show that there exists a fixed $k\in\bbN$ (independent of $W$) such that $W^k$ contains a neighborhood of the identity.
 By Proposition \ref{prop_not_sep} and Proposition \ref{prop_insep} there exist
$\eps>0$ (independent of $W$) and $u,v\in W^2$ with $\norm{1-\lambda u}>\eps$ for all $\lambda\in{S^1}$,
$\ell_t(v)\neq 0$ for all $t\in[0,\eps]$.
 By right continuity of $\ell_t$ in $t$, see Lemma \ref{lem_right_cts}, there exist $\delta>0$ such that 
$\ell_t(u)\geq\eps$ for all $t\in[0,\delta]$ and $\ell_t(v)\geq\delta$ for all $t\in[0,\eps].$ 

To generate a neighborhood of the identity in the strong operator topology we need several steps.
First we use $u$ and $v$ to generate elements $w$ with $\norm{1- w}_2\leq \delta^2/2$ which are of the form 
  \begin{align}\label{eq_form}
  \begin{pmatrix}
    1 & 0 & 0\\
    0 & w_0 & 0\\
    0 & 0 & w_0^*
   \end{pmatrix}
  \end{align}
   in some $\cU(p_0\cM p_0 \otimes M_{3\times 3}(\bbC)) /S^1$, where $\tau(p_0)=1/3$. 
   Let $p$ be a projection commuting with $w$ such that 
   $$\norm{1-p-wp^{\perp}}<\delta \quad \mbox{ and }\quad   \tau(p)=\delta.$$
   %Let $p$ denote the spectral projection of $w$ realizing the infimum
 %$$\ell_{\delta}(w)=\inf_{\lambda\in{S^1}}\inf_{q\in\Proj(\cM),\tau(1-q)\leq t}\norm{(1-\lambda w)q},$$ 
 %confer Remark 2.3 in \cite{FK-86}. 
 Decompose $w=w_1w_2$ with $w_1\coloneqq wp+p^{\bot}$ and $w_2\coloneqq p+p^{\bot}w$. Hence $\ell_0(w_1)\leq \frac{2}{\eps}\ell_t(u)$ for all $t\in [0,\delta]$. Using Theorem \ref{thm_main} we can generate a symmetry $s$ of trace $0$ in $\cU(p\cM p)$ with $u$, namely we obtain $s\in(u^G\cup u^{-G})^{c\lceil 1/\eps\rceil}$ for some universal constant $c\in\bbN$. Corollary \ref{broisecor} allows us to conclude that 
 $$w_1\in (s^G\cup s^{-G})^{32}\subseteq (u^G\cup u^{-G})^{32c \lceil 1/\eps\rceil}.$$
 It remains to generate $w_2$. By Lemma \ref{lem_ineq} we have $\ell_{0}(w_2)\leq 2\delta^2/2\delta=\delta\leq\ell_t(v)$ for all $t\in [0,\eps]$.
 Suitable approximation of $v$ in the operator norm, as in the beginning of the proof of Theorem \ref{thm_main}, allows us to find a projection $p\in\cM$ which commutes with $v$, is equivalent to $p_0$ and such that for $\eps'=\delta/8$ we have
 $$\ell_{3t}(v)\leq \ell_t^{(p)}(v)+2\eps'.$$
 Now view $v$ as a (diagonal) element of $\cU(p\cM p \otimes M_{3\times 3}(\bbC))$.
  Using Proposition \ref{prop_commutator} we can find an element $v'=\left(\begin{smallmatrix}
    1 & 0 & 0\\
    0 & v_0' & 0\\
    0 & 0 & 1
   \end{smallmatrix}\right)\in \cU(p\cM p \otimes M_{3\times 3}(\bbC))$ 
   such that $\ell_{12t}^{(p)}(v)\leq 4\ell_t^{(p)}([v,v'])$ for all $t\geq 0$. Let $g\in\cU(p\cM p \otimes M_{3\times 3}(\bbC))$ be a unitary permuting the second and third diagonal entry. Consider the element 
   $${v''}\coloneqq [v,v']g[v,v']^{-1}g^{-1}\in (v^G\cup v^{-G})^4,$$ 
 and observe that ${v''}$ satisfies 
 $$\ell_{12t}^{(p)}(v)\leq 4\ell_t^{(p)}([v,v'])\leq 4\ell_t^{(p)}({v''})\quad \mbox{ for all }t\geq 0.$$
 Thus we have 
 $$\ell_0(w_2)\leq\delta \leq \ell_t(v)\leq \ell_{t/3}^{(p)}(v)+2\eps' \leq 4\ell_{t/36}^{(p)}({v''})+\frac{\delta}{4} \quad \mbox{ for all } t\in[0,\eps].$$
  As in the proof of Theorem \ref{thm_main} (restricting our attention to the lower $2\times 2$ part) we generate an element $v''\in ({v''}^G\cup {v''}^{-G})^2\subseteq (v^G\cup v^{-G})^8$ that has finite spectrum and rational weights such that
 $$ \delta\leq 4\ell_{t/36}^{(p)}({v''})+\frac{\delta}{4}\leq 4\ell_{t/36}^{(p)}(v'')+\frac{3\delta}{4} \quad\mbox{ for all } t\in[0,\eps].$$ 
 In particular, $4\ell_t^{(p)}(v'')\geq \delta/4$ for all $t\in[0,\eps/36]$. Hence
 \begin{align}\label{eq_estimate}
  \ell_0(w_2)\leq 16\ell_t^{(p)}(v'')\quad \mbox{ for all } t\in[0,\eps/36].
 \end{align}
 We restrict our attention to the lower $2\times 2$ subalgebra $q\cM q$ in \eqref{eq_form} and pass to the direct integral $M_{2\times 2}(L^{\infty}(\sigma(qw_2)),\nu)$, where $qw_2=\int_{\lambda\in\sigma(qw_2 )}\left(\begin{smallmatrix} \lambda & 0 \\ 0 & \overline{\lambda} \end{smallmatrix}\right)d\nu(\lambda)$ (note that $q$ commutes with $w_2$). 
 %Now we decompose the identity into subprojections of trace $\eps/72$ (and one possibly smaller connected component) which commute with $v'$. 
 Let $p'$ denote the projection that cuts $v''$ down to the lower $2\times 2$ part. This allows us to conjugate $p'v''$ into $M_{2\times 2}(L^{\infty}(\sigma(qw_2)),\nu)$. Recall that Corollary \ref{cor_length_angles} gives us a relation between the projective $s$-numbers and the angles of the eigenvalues (note that $v''$ has finite spectrum and for $w_2$ we only need the estimate for the $0$-th projective $s$-number since $\ell_t(\cdot)$ is decreasing in $t$). We apply Lemma \ref{lem_Borel_su2} with the relation \eqref{eq_estimate} to generate $q'w_2$ for a subprojection $q'\leq q$, $\tau(q')=\eps/36$ (and $1$'s everywhere else). Thus using Lemma \ref{lem_Borel_su2} on at most $\lceil 36/\eps\rceil$ parts (where relation \eqref{eq_estimate} holds) we obtain 
 $$w_2\in (v''^G\cup v''^{-G})^{4\cdot16\cdot \lceil 36/\eps\rceil}\subseteq (v^G\cup v^{-G})^{8\cdot 64\cdot\lceil 36/\eps\rceil},$$
 (the factor $4$ comes from Corollary \ref{cor_length_angles}).
 We conclude that 
$w=w_1w_2\in W^{c\lceil 1/\eps\rceil}$
 for some constant $c\in\bbN$ (which is independent of $\delta$). 
 
 Assume now that $w$ is such that $\norm{1- w}_2\leq \delta^2$ has finite spectrum and rational weights. This case follows in the same way as in the first step. Namely one decomposes $w=w_1w_2$ and generates $w_1$ with the element $u$ (which has uniformly big projective $s$-numbers) and $w_2$ with the element $v$ (which has uniformly many non-trivial projective $s$-numbers). This leads us again to $w\in W^{c\lceil 1/\eps\rceil}$ for some constant $c\in\bbN$ which is independent of $\delta$. 

Assume that $w\in \mathrm{B}_{\eps_0}^{\norm{\cdot}}(1)\subseteq\cU(\cM)$ for some $\eps_0\in(0,\delta^2)$ small enough such that using Theorem \ref{broise_improved} we can decompose $w$ into a product $w_1\cdot\ldots\cdot w_8$ of elements $w_i\in\cU(\cM)$ of the form \eqref{eq_form} satisfying
 $$\norm{1-w_i}_2<\delta, \ \quad i=1,\ldots,8.$$
 Note that also $\eps_0$ depends of $W$.
 Using the first step, we obtain 
 $$w=w_1\ldots w_8\in W^{8c\lceil 1/\eps\rceil}.$$
 Thus we can generate an $\eps_0$-neighborhood in the operator norm in $8c\lceil 1/\eps\rceil$ steps.

Now let $w\in\mathrm{B}_{\eps_0}^{\norm{\cdot}_2}(1)$ be arbitrary. Approximate $w$ by an element $w'$ with finite spectrum in the operator norm, such that $\norm{w-w'}=\norm{1-ww'^*}<\eps_0$. From the third step we conclude that $ww'^*\in W^{8c\lceil 1/\eps\rceil}$. It remains to show that $w'$ can be generated from elements in $W^{C\lceil 1/\eps\rceil}$ for some constant $C\in\bbN$. Therefore, using Proposition \ref{prop_WvNBVD}, we approximate $w'$ with an element $w''$ that has finite spectrum and rational spectral weights such that 
$\norm{w'-w''}_2\leq \eps_1$ and $d_r(1,w'w''^*)\leq \delta.$
 The second step allows us to conclude $w''\in W^{c\lceil 1/\eps \rceil}$ for some constant $c\in\bbN$. We only have to generate the element $w'w''^*$ of small rank. It is clear that $\ell_t(w'w''^*)=0$ for all $t> \delta$. Let $q$ denote the projection witnessing non-triviality of $w'w''^*$ and observe that $\tau(q)\leq \delta$. As in the first step, we use $u$ to generate a symmetry $s$ of trace $0$ in $q\cM q$ such that
$s\in W^{c\lceil 1/\eps\rceil}$
 for some constant $c\in\bbN$.
 From Theorem \ref{broise_improved} we conclude that 
$w'w''^*\in W^{32\cdot c\lceil 1/\eps\rceil}.$

\vspace{0.2cm}

Summarizing the above steps, we have shown that there exists a constant $C\in\bbN$, which is independent of $\delta$ and $\eps$, such that $W^{C\lceil 1/\eps\rceil}$ contains a neighborhood of the identity in the strong operator topology. This shows that $\cP\cU(\cM)$ has the invariant Steinhaus property.
\end{proof}

Actually the proof of Proposition \ref{prop_nbhd} will allow us to conclude the uniqueness of the Polish group topology of $\cP\cU(\cM)$.
However, we first want to conclude the main theorem in this section from Proposition \ref{prop_auto_Steinhaus} and Proposition \ref{prop_nbhd}.

\begin{thm}
 The projective unitary group of a separable $\mathrm{II_1}$-factor, endowed with the strong operator topology, has the invariant automatic continuity property.
\label{thm_auto_cont}
\end{thm}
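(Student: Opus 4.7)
The plan is to deduce this theorem directly by combining the two preceding propositions, since the theorem has essentially been reduced to them by the framework of Rosendal--Solecki adapted to the invariant setting.

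First, I would invoke Proposition \ref{prop_nbhd}, which establishes that $G := \cP\cU(\cM)$, endowed with the strong operator topology, has the invariant Steinhaus property: there exists a universal $k \in \bbN$ such that for every symmetric, conjugation-invariant, countably syndetic set $W \subseteq G$, the product $W^k$ contains an SOT-open neighborhood of the identity. That proposition is where all the hard analytic content lives (generation of unitaries with small $2$-norm support via the quantitative results of Sections \ref{sec_II_1}--\ref{sec_alg_II_1}, together with the Broise-type decomposition), so there is nothing further to prove on the "Steinhaus side."

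Next, I would feed this into Proposition \ref{prop_auto_Steinhaus}, which is the abstract implication "invariant Steinhaus $\Rightarrow$ invariant automatic continuity." Given a homomorphism $\pi : G \to H$ into a separable SIN group $H$, one picks any open neighborhood $U$ of $1_H$, finds a conjugation-invariant symmetric open $V$ with $V^{2k} \subseteq U$ (using the SIN hypothesis), covers $H$ by countably many translates $\{h_n V\}$ by separability, and pulls back to see that $W := \pi^{-1}(V^2)$ is symmetric, conjugation-invariant, and countably syndetic in $G$. The Steinhaus property then gives that $W^k \subseteq \pi^{-1}(V^{2k}) \subseteq \pi^{-1}(U)$ has nonempty interior at the identity, proving continuity at $1_G$ and hence everywhere.

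There is no real obstacle in this final step: the work has already been done upstream. The only things to verify are that the composition is valid (both propositions are about the same group with the same topology, and invariance of $W$ under conjugation is genuinely used to apply Proposition \ref{prop_nbhd}), and that the target class of separable SIN groups matches in both statements. Hence the theorem follows immediately.
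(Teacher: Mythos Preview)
Your proposal is correct and matches the paper's approach exactly: the paper states the theorem immediately after Proposition~\ref{prop_nbhd} and explicitly says it follows from Proposition~\ref{prop_auto_Steinhaus} and Proposition~\ref{prop_nbhd}, with no further argument given. Your write-up simply unpacks this composition, which is all that is needed.
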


%\begin{rem}
% We mention that \cite[Theorem 7.3]{Sa-13} implies that homomorphisms from the unitary group of a separable infinite-dimensional Hilbert space (endowed with the strong operator topology, which is non-bi-invariant and Polish) into SIN groups are trivial.
%\end{rem}

Our strategy to obtain Theorem \ref{thm_auto_cont} mainly used the existence of elements of a certain size in a fixed power of every conjugacy-invariant countably syndetic set and our bounded normal generation results. We hope that this strategy leads to more new examples of groups having the invariant automatic continuity property.
%
%\begin{rem}
%It is obvious that every separable topological group $G$ that has property $k$-(\rm BNG) also has property (\rm IAC). Examples of such groups are some special automorphism groups of free homogeneous structures, see \cite[Theorem 1.1]{MT-11}.
%\end{rem}

%
%\begin{cor}
% Let $G$ be an uncountable topological group with property {\rm (BNG)} and the invariant automatic continuity property. Then there exists no non-trivial homomorphism into any countable group $H$.
%\end{cor}
%\begin{proof}
% Observe that $H$ is a Polish group with a bi-invariant metric (discrete metric). Assume that $\pi:G\rightarrow H$ is a non-trivial homomorphism. Since $\pi$ is continuous there are non-central elements $g\in\pi^{-1}(1_H)$ and $k\in\pi^{-1}(h)$ in $G$, where $h\neq 1_H$. Since $G$ is boundedly generated we have $k\in(g^G\cup g^{-G})^n$ for some $n\in\bbN$. Since $\pi$ is a homomorphism and $g$ lies in the kernel of $\pi$ we have $h=\pi(k)=1_H$, a contradiction.
%\end{proof}
%

% \section{Uniqueness of the Polish group topology}\label{sec_unique}

As an easy application of Theorems \ref{thm_auto_PUn} and \ref{thm_auto_cont} we see that $\cP\cU(\cM)$ and $\cP\cU(\cM)$ have a unique Polish SIN group topology. 
This is of course valid for any separable topological group with the invariant automatic continuity property. In particular, $\cP\cU(n)$ and $\cSU(n)$ carry a unique Polish SIN group topology. 
In the case $\cP\cU(n)$ it is already known that it also has a unique Polish group topology, see \cite[Theorem 11]{GP-08}. We now want to extend this to II${}_1$-factors.

%\begin{prop}\label{prop_unique_top}
% The projective unitary group $G$ of a separable $\mathrm{II_1}$-factor carries a unique Polish SIN group topology.
%\end{prop}
%\begin{proof}
% Assume that $\mathcal{T}$ is another topology on $G$ which makes it a Polish group with a bi-invariant metric. Let $\id: (G,SOT)\Rightarrow(G,\mathcal{T})$ denote the identity homomorphism. Since $(G,\mathcal{T})$ is a separable SIN group, $\id$ is continuous by Theorem \ref{thm_auto_cont}. So it is a continuous bijection and hence a homeomorphism by \cite[Theorem 1.2.6]{BK-96}. It follows that $\mathcal{T}$ and the strong operator topology coincide: on the one hand $\id^{-1}(O)=O$ is open in the strong operator topology for any open $O\subseteq (G,\mathcal{T})$ by continuity. On the other hand whenever $O\subseteq (G,SOT)$ is open, then $\id(O)=O$ is open in $\mathcal{T}$ since $\id$ is a homeomorphism.
%\end{proof}

Now we will make use of the proof of Proposition \ref{prop_nbhd} to conclude the uniqueness of the Polish group topology on $\cP\cU(\cM)$ for any separable $\mathrm{II_1}$-factor. For this purpose, we need the work of Gartside-Peji\'c  \cite[Theorem 8]{GP-08}. 
We first need to explain some notions. 
%An \textbf{identity set}\index{identity set} in $G$ is a subset of $G$ of the form
%$\set{g\in G\mid w(g;u_1,\ldots,u_m)=1},$
%where $w$ denotes a free word in $G$ (i.e., without consecutive symbols of the form $gg^{-1}$ or $g^{-1}g$) and $u_1,\ldots,u_m\in G$. 
%By definition identity sets can be viewed as inverse images of $1$ under the maps $w(\cdot;u_1,\ldots,u_m)$.
A \textbf{verbal set}\index{verbal set} is a subset of $G$ of the form
$\set{w(g_1,\ldots,g_n;u_1,\ldots,u_m)\mid g_1,\ldots,g_n\in G},$
where $w$ is a free word and $u_1,\ldots,u_m\in G$.
Verbal sets are forward images under the maps $w$.
For us the most important example of a verbal set is the conjugacy class $\set{gug^{-1}\mid g\in G}$ of an element $u\in G$ -- or a product of conjugacy classes.

We say that a collection $\cN$ of subsets of a topological space $X$ is a \textbf{network}\index{network} if for every $x\in V$ with $V$ open in $X$, there exists $N\in\cN$ such that $x\in N\subseteq V$.
We can now state \cite[Theorem 8]{GP-08}.

\begin{thm}[Gartside-Peji\'c]\label{thm_Pejic}
 Every Polish group that has a countable network of sets from the $\sigma$-algebra generated by verbal sets has a unique Polish group topology.
\end{thm}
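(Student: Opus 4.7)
The plan is to show that if $(G,\tau)$ is a Polish group with a countable network $\mathcal{N}=\{N_n\}_{n\in\bbN}$ of sets in the $\sigma$-algebra $\mathcal{V}$ generated by verbal sets, and if $\tau'$ is any other Polish group topology on $G$, then $\tau=\tau'$. The crucial observation is that a verbal set is defined purely algebraically in terms of the group structure, so as a family of subsets of $G$, $\mathcal{V}$ does not depend on which Polish group topology is placed on $G$.

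First I would verify that every verbal set is analytic in any Polish group topology on $G$. A verbal set of the form $\{w(g_1,\ldots,g_n;u_1,\ldots,u_m)\mid g_1,\ldots,g_n\in G\}$ is the continuous image of the Polish space $G^n$ under the word map, since this map is built from the group operations, which are continuous in any topological group topology. Continuous images of Polish spaces are analytic, and analytic sets enjoy the Baire property in Polish spaces. Hence $\mathcal{V}$ is contained in the $\sigma$-algebra of Baire-measurable sets in both $(G,\tau)$ and $(G,\tau')$.

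Next, since $\mathcal{N}$ is a network for $\tau$, every $\tau$-open set $U$ admits the representation $U = \bigcup\{N_n : N_n\subseteq U\}$ as a countable union, so $U\in\mathcal{V}$. Combined with the previous step, this shows that every $\tau$-open set has the Baire property in $\tau'$, so the identity map $\mathrm{id}\colon (G,\tau')\to (G,\tau)$ is a Baire-measurable group homomorphism between Polish groups. A classical theorem of Pettis then upgrades this to continuity of $\mathrm{id}$, and the open mapping theorem for Polish groups promotes this continuous bijective homomorphism to a homeomorphism, yielding $\tau=\tau'$.

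The step I expect to be the main obstacle is invoking Pettis's theorem in the precise form required: one needs the statement that a group homomorphism between Polish groups whose preimages of open sets have the Baire property is automatically continuous. A related technical point to watch is that a ``Polish group topology on $G$'' should be understood as a Polish topology compatible with the fixed group operations, so that verbal sets are literally the same subsets of $G$ under both $\tau$ and $\tau'$. Once those points are secured, the argument reduces essentially to unraveling definitions.
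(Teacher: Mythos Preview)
The paper does not prove this theorem; it is quoted from \cite{GP-08} and used as a black box. Your proposal is correct and follows essentially the argument of Gartside--Peji\'c: verbal sets, being continuous images of $G^n$ under word maps, are analytic in \emph{any} Polish group topology on $G$ and hence have the Baire property; the countable network condition then forces every $\tau$-open set to lie in the $\sigma$-algebra $\mathcal{V}$ and thus to have the Baire property in $\tau'$, so Pettis's theorem yields continuity of $\mathrm{id}\colon (G,\tau')\to (G,\tau)$, and the Polish open mapping theorem upgrades this to a homeomorphism.
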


The proof of the following main result is based on Proposition \ref{prop_nbhd}.

\begin{thm}\label{thm_unique_top}
 The projective unitary group $G$ of a separable $\mathrm{II_1}$-factor has a unique Polish group topology. 
\end{thm}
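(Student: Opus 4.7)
The strategy is to invoke the Gartside-Peji\'c criterion (Theorem \ref{thm_Pejic}): it suffices to exhibit a countable network of subsets of $G\coloneqq \cP\cU(\cM)$, equipped with the SOT, whose members all lie in the $\sigma$-algebra generated by verbal sets in $G$. My plan is to recycle the constructive core of the proof of Proposition \ref{prop_nbhd}, supplemented by a countable dense subset $D$, to produce such a network directly.

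From the proof of Proposition \ref{prop_nbhd}, whenever $u,v\in G$ satisfy $\ell_0(u)>\eps$ and $\ell_t(v)>0$ for all $t\in[0,\eps]$, the set
$$A(u,v,k)\coloneqq (u^G\cup u^{-G}\cup v^G\cup v^{-G})^k,\qquad k\coloneqq C\lceil 1/\eps\rceil,$$
is a finite union of verbal sets with parameters $u,v$ and contains an SOT-open neighborhood of $1$. Fix a countable SOT-dense subset $D\subseteq G$ closed under inversion. The family
$$\cN\coloneqq \Bigl\{d\cdot\bigcap_{i=1}^{m}A(u_i,v_i,k_i):\ d,u_i,v_i\in D,\ k_i,m\in\bbN\Bigr\}$$
is then a countable subcollection of the $\sigma$-algebra generated by verbal sets. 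To see that $\cN$ forms a network, take $g\in V$ with $V$ SOT-open; using that SOT makes $G$ a SIN group, pick an SOT-open neighborhood $U$ of $1$ with $g\cdot U\cdot U\subseteq V$. Provided a finite intersection $B\coloneqq \bigcap_i A(u_i,v_i,k_i)$ with parameters in $D$ can be chosen so that $B\subseteq U$, it follows that $g\cdot B$ is an SOT-open neighborhood of $g$ (since each factor, and hence $B$, contains an SOT-neighborhood of $1$), and density of $D$ yields some $d\in D\cap g\cdot B$; by symmetry of each $A(u_i,v_i,k_i)$ this gives $g\in d\cdot B\subseteq g\cdot U\cdot U\subseteq V$.

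The main obstacle is the existence of such a small intersection $B$ inside an arbitrarily prescribed SOT-open $U\ni 1$. A single verbal set $A(u,v,k)$ is contained in the $\|\cdot\|_2$-ball of radius $k\cdot\max(\|u-1\|_2,\|v-1\|_2)$ around $1$ (since conjugation preserves $\|\cdot\|_2$), but the spectral constraint of Proposition \ref{prop_nbhd} that links $k$ to $\|u-1\|_2$ and $\|v-1\|_2$ prevents this radius from being driven to zero along any one-parameter family of generators. The plan is to intersect finitely many such verbal sets whose generating unitaries $u_i,v_i$ are built from pairwise orthogonal spectral projections (in the spirit of the simultaneous generation argument of Theorem \ref{thm_sun}), forcing any element in the intersection to be small on each spectral block and hence small in $\|\cdot\|_2$ overall, while the common SOT-open neighborhood of $1$ inside the intersection is secured as the finite intersection of the open neighborhoods supplied by Proposition \ref{prop_nbhd}. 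Making this quantitative orthogonality-plus-intersection argument rigorous is the technical heart of the proof; once the shrinkage of $B$ to inside any prescribed $U$ is established, the network property, and hence the conclusion, follows by Theorem \ref{thm_Pejic}.
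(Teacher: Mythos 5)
Your framework (Gartside--Peji\'c via Theorem~\ref{thm_Pejic}, verbal sets coming from Proposition~\ref{prop_nbhd}, translates by a countable dense set $D$) is exactly the paper's. The problem is the ``main obstacle'' you identify and then leave unresolved: you claim that the spectral hypotheses of Proposition~\ref{prop_nbhd} force the outer radius $k\cdot\max(\|1-u\|_2,\|1-v\|_2)$ of a single verbal set to stay bounded away from $0$, and on that basis you propose an (unproved) finite-intersection workaround with orthogonally supported generators. That claim is not correct, so the workaround is both unnecessary and, as you concede, never carried out --- this is a genuine gap, and it sits precisely at the step you flag as ``the technical heart.''

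The spectral requirements $\ell_t(u)\geq\eps$ for $t\in[0,\delta]$ and $\ell_t(v)\geq\delta$ for $t\in[0,\eps]$ constrain the $2$-norms only through $\|1-u\|_2\gtrsim\eps\sqrt{\delta}$ and $\|1-v\|_2\gtrsim\delta\sqrt{\eps}$, and these lower bounds are essentially attained: for instance $u=e^{\complex\theta}p+p^{\perp}$ with $\tau(p)$ of order $\delta$ and $|1-e^{\complex\theta/2}|$ of order $\eps$ satisfies $\ell_t(u)\approx\eps$ on $[0,\tau(p))$ while $\|1-u\|_2\approx\eps\sqrt{\delta}$. Since the exponent $k=C\lceil 1/\eps\rceil$ depends only on $\eps$ and not on $\delta$, one may fix a sequence $\eps_n\to 0$ and take $\delta_n\to 0$ fast enough that $C\lceil 1/\eps_n\rceil\cdot\max(\|1-u_n\|_2,\|1-v_n\|_2)\to 0$, while Proposition~\ref{prop_nbhd} continues to supply a positive inner radius $\delta_0(n)$. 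Hence the single verbal set $N_{\eps_n}=(u_n^G\cup u_n^{-G}\cup v_n^G\cup v_n^{-G})^{C\lceil 1/\eps_n\rceil}$ is wedged between two $\|\cdot\|_2$-balls about the identity whose radii both tend to $0$, and the paper simply takes $\cN=\{gN_{\eps_n}\mid g\in D,\ n\in\bbN\}$ as the countable network; the sandwiching argument for the network property is then elementary. No orthogonality bookkeeping and no intersections appear. What your proof is missing is precisely this direct construction of a shrinking sequence of generator pairs $(u_n,v_n)$ --- replacing your intersection scheme, not supplementing it.
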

\begin{proof}
%  First recall that $G$ is a Polish group in the strong operator topology.
 We construct a countable network for $G$. For $n\in\bbN$ we let  $\eps_n\coloneqq 1/n$ and $\delta=\delta(n)<\frac{1}{C\lceil 1/\eps_n^2\rceil}=\frac{1}{Cn^2}$, where $C\in\bbN$ is the universal constant coming from the proof of Proposition \ref{prop_nbhd}. Now choose $u,v\in G$ (only dependent on $n$) such that $\norm{1-u}_2<\delta,\norm{1-v}_2<\delta$ and 
 $$\ell_t(u)\geq\eps_n\tn{ for all }t\in[0,\delta],\qquad \ell_t(v)\geq\delta\tn{ for all }t\in[0,\eps_n].$$ 
 Using the proof of Proposition \ref{prop_nbhd} we conclude the existence of $\delta_0=\delta_0(n)\in(0,\delta)$ (independent of $u$ and $v$) such that
 $$\mathrm{B}_{\delta_0}^{\norm{\cdot}_2}(1)\subseteq N_{\eps_n}\coloneqq \left(u^G\cup u^{-G}\cup v^G\cup v^{-G}\right)^{C\lceil 1/\eps_n\rceil}.$$ 
 However, we have $N_{\eps_n}\subseteq\mathrm{B}_{1/n}^{\norm{\cdot}_2}(1)$, since for every $g\in N_{\eps_n}$  we have
$\norm{1-g}_2 \leq C\lceil 1/\eps_n\rceil \delta \leq 1/n.$
 Fix a countable dense subset $D\subseteq G$. We claim that 
 $\cN\coloneqq \set{gN_{\eps_n} \mid g\in D, n\in\bbN}$
 forms a countable network for $G$. First of all, $\cN$ is contained in  the $\sigma$-algebra generated from verbal sets.
 
 Now let $w\in V$ with $V\subseteq G$ open. Since $V$ is open, we can find $\eps>0$ such that $\mathrm{B}_{\eps}^{\norm{\cdot}_2}(w)\subseteq V$. 
 Let $n\in\bbN$ such that $\eps_n=1/n<\eps/2$. By denseness of $D$ we can choose $v_0\in D$ such that $\norm{v_0-w}_2\leq \delta_0$. Then we have (note that $\delta_0=\delta_0(n)<\eps_n<\eps/2$):
 \begin{align*}
  w\in v_0\mathrm{B}_{\delta_0}^{\norm{\cdot}_2}(1) \subseteq v_0N_{\eps_n}
  \subseteq v_0\mathrm{B}_{1/n}^{\norm{\cdot}_2}(1)
  \subseteq v_0\mathrm{B}_{\eps/2}^{\norm{\cdot}_2}(1)
   \subseteq w\mathrm{B}_{\delta_0}^{\norm{\cdot}_2}(1)\mathrm{B}_{\eps/2}^{\norm{\cdot}_2}(1)
  \subseteq w\mathrm{B}_{\eps}^{\norm{\cdot}_2}(1) \\
  =\mathrm{B}_{\eps}^{\norm{\cdot}_2}(w)
  \subseteq V.
 \end{align*}
 That is, for arbitrary $w\in V$, $V$ open in $G$, we find a set $N\in\cN$ such that $w\in N\subseteq V$, i.e., $\cN$ is a network.
 Since $D$ and $\bbN$ are countable, $\cN$ is countable. Now from Theorem \ref{thm_Pejic} we conclude that $G$ has a unique Polish group topology.
\end{proof}

As a consequence of Theorem \ref{thm_unique_top} we obtain the following further automatic continuity results, which are equivalent to the uniqueness of the Polish group topology by \cite[Lemma 10, Lemma 13]{Pe-07}.

\begin{cor}
 Let $\cM$ denote a separable $\mathrm{II_1}$-factor and let $G$ be its projective unitary group. 
 \begin{enumerate}
  \item[(i)] Every isomorphism from $G$ to a Polish group is continuous.
  \item[(ii)] Every epimorphism from a Polish group to $G$ with closed kernel is continuous.
 \end{enumerate}
\end{cor}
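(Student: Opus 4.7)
The plan is to deduce both parts as formal consequences of Theorem \ref{thm_unique_top} (uniqueness of the Polish group topology on $G$), combined with standard soft facts about Polish groups. The real work is already contained in Theorem \ref{thm_unique_top}; everything that remains is general nonsense.

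For part (i), I would start with an abstract group isomorphism $\varphi\colon G\to H$ into a Polish group $H$, and pull back the topology of $H$ to $G$ along $\varphi$. By construction, this produces a Polish group topology $\tau'$ on $G$ which makes $\varphi\colon (G,\tau')\to H$ a topological group isomorphism. Theorem \ref{thm_unique_top} then forces $\tau'$ to coincide with the given strong operator topology $\tau$ on $G$. Consequently $\varphi\colon (G,\tau)\to H$ is a homeomorphism, and in particular continuous.

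For part (ii), given an epimorphism $\pi\colon P\to G$ from a Polish group $P$ with closed kernel $K:=\ker\pi$, I would first form the quotient $P/K$ with the quotient topology. This is again a Polish group, by the standard result that a Polish group modulo a closed normal subgroup is Polish (a consequence of the open mapping theorem for Polish groups combined with preservation of second countability). Writing $q\colon P\to P/K$ for the continuous quotient map, $\pi$ factors as $\pi=\bar\pi\circ q$, where $\bar\pi\colon P/K\to G$ is an abstract group isomorphism between Polish groups. Applying part (i) to $\bar\pi$ gives continuity of $\bar\pi$, and therefore of $\pi$.

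The only non-trivial ingredient is Theorem \ref{thm_unique_top}, which has already been proved; the rest of the argument is purely formal. I do not anticipate any real obstacle, only the mild bookkeeping point in (ii) of invoking the standard Polishness of $P/K$.
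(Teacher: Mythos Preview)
Your approach is essentially the same as the paper's: the paper simply invokes \cite[Lemma 10, Lemma 13]{Pe-07}, which state precisely that uniqueness of the Polish group topology is equivalent to (i) and (ii), and your argument is the standard unpacking of that equivalence.

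One small wrinkle in your write-up of (ii): you say ``applying part (i) to $\bar\pi$'', but $\bar\pi\colon P/K\to G$ goes \emph{into} $G$, whereas (i) is stated for isomorphisms \emph{from} $G$. What you should do is apply (i) to $\bar\pi^{-1}\colon G\to P/K$; your own proof of (i) actually yields that any such isomorphism is a homeomorphism (not merely continuous), so $\bar\pi^{-1}$ is a homeomorphism and hence $\bar\pi$ is continuous. Alternatively, bypass (i) entirely and argue directly: push forward the Polish topology of $P/K$ along $\bar\pi$ to obtain a Polish group topology on $G$, then invoke Theorem \ref{thm_unique_top}. Either way the fix is one line; the overall strategy is fine.
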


It remains to be an interesting open question to decide if the (projective) unitary group of a II${}_1$-factor has the automatic continuity property or not.

\section*{Acknowledgments}

A.T.\ was supported by ERC StG 277728 and P.A.D. was partially supported by ERC CoG 614195. P.A.D.\ wants to thank Universit\"at Leipzig, the IMPRS Leipzig and the MPI-MIS Leipzig for support and an stimulating research environment. Most of the material in this article is part of the PhD-thesis of the first author.

\bibliographystyle{alpha}

\end{document}